\documentclass[preprint,compress,12pt]{elsarticle}

\usepackage{graphicx}
\usepackage{pifont,latexsym,ifthen,amsthm,rotating,calc,textcase,booktabs}
\usepackage{amsfonts,amssymb,amsbsy,amsmath}
\usepackage{mathrsfs}

\newtheorem{theorem}{Theorem}[section]
\newtheorem{lemma}[theorem]{Lemma}
\newtheorem{corollary}[theorem]{Corollary}
\newtheorem{remark}[theorem]{Remark}
\newtheorem{proposition}[theorem]{Proposition}
\newtheorem{definition}[theorem]{Definition}

\usepackage{natbib}

\numberwithin{equation}{section}
\usepackage{diagbox}
\usepackage{todonotes}
\usepackage{float}
\usepackage{subfigure}
\usepackage{hyperref}
\usepackage{enumitem}
\usepackage{geometry}
\usepackage{setspace}

\hypersetup{
	colorlinks=true, 
	linktoc=all,     
	linkcolor=blue,  
	citecolor=blue
}

\begin{document}
	\journal{ {J. Math. Pure Appl.}}

	\begin{frontmatter}
		\title{Local and global well-posedness for the  nonlinear Schr\"{o}dinger equation  with nonhomogeneous boundary conditions}

		\author[inst1]{Engui Fan$^{*,}$  }
		\author[inst1]{Yuan Li}
		\author[inst1]{Xinhan Liu}
		\address[inst1]{ School of Mathematical Sciences and Key Laboratory for Nonlinear Science, Fudan University, Shanghai, 200433, China\\
			* Corresponding author and e-mail address: faneg@fudan.edu.cn  }

		\begin{abstract}
			In this paper, we study the initial-boundary value problem  for the nonlinear Schr\"{o}dinger equation   in $\mathbb{R}^{n}_{+}$
			\begin{equation*}
				i\partial_{t}u+\Delta u+\lambda |u|^pu=0, \qquad (x, t) \in \mathbb{R}_{+}^{n} \times \mathbb{R}_{+},\ \ p\in\mathbb{R}_{+}
			\end{equation*}
		 with nonhomogeneous Dirichlet boundary conditions. For the corresponding linear problem, endpoint Strichartz estimates  are derived. For the nonlinear problem, we prove local well-posedness in $H^{s}(\mathbb{R}^{n}_{+})$ with $s\in[0,\frac{5}{2})$ and $p<\frac{4}{n-2s}$.  Moreover, global well-posedness is established  in the same regularity range. For $s\in[1,\frac{5}{2})$, the  one-dimensional global theory of \cite{figment} in $H^{s}(\mathbb{R}_{+})$   is extended to  $H^{s}(\mathbb{R}^{n}_{+})$.
		  Additionally, we obtain global solutions in the lower regularity setting $s\in[0,1)$ for the first time.   It is noteworthy that for $s=0$, we overcome the lack of mass conservation resulting from the nonzero boundary data and derive the pivotal $L^{2}(\mathbb{R}^{n}_{+})$ a priori estimates.
		\end{abstract}
		
		\begin{keyword}
			Nonlinear Schr\"{o}dinger equation \sep Initial-boundary value problem \sep  Strichartz estimates \sep Local and global well-posedness
			
			
			\textit{Mathematics Subject Classification:} 35Q55; 35G31; 35A01.
			
		\end{keyword}
	\end{frontmatter}
	\tableofcontents
	\section{Introduction}
	We consider the nonhomogeneous initial-boundary value problem (IBVP) for the nonlinear Schr\"{o}dinger (NLS) equation in $\mathbb{R}^{n}_{+}$, formulated as:
	\begin{equation}\label{ibvp}
		\left\{\begin{aligned}
			&	i\partial_{t}u+\Delta u+\lambda |u|^pu=0, \quad (x, t) \in \mathbb{R}_{+}^{n} \times \mathbb{R}_{+}, \\
			&u(x,0)=u_0(x), \quad x\in\mathbb{R}^{n}_{+},\\& u(x',0,t)=h_{0}(x',t), \quad  (x',t)\in \mathbb{R}^{n-1}\times \mathbb{R}_{+},
		\end{aligned}
		\right.		
	\end{equation}
	where $n\geq 2$ and $x=(x_{1},\cdots,x_{n-1},x_{n})=(x',x_{n})\in \mathbb{R}^{n-1}\times\mathbb{R}_{+}$, with parameters  $\lambda\in\mathbb{R}\setminus\{0\}$ and $p\in\mathbb{R}_{+}$. The cases $\lambda<0$ and $\lambda>0$
	correspond to defocusing and focusing situations, respectively.
	 Our main purpose is to establish local and global well-posedness for the IBVP \eqref{ibvp} in the $L^{2}$-based Sobolev space $H^{s}(\mathbb{R}^{n}_{+})$.
	
	The NLS equation arises as a model for various physical phenomena, it has been widely used in fields such as condensed matter physics, water wave dynamics, nonlinear optics and plasma physics  \cite{Gross, Pitaevskii, wuli, optic, Zakharov, Zakharov2}. Specifically, this equation describes the Bose-Einstein condensate, a state of matter occurring near absolute zero where particles macroscopically manifest quantum mechanical behavior \cite{Gross, Pitaevskii}. It also models the nonlinear waves on the surface of an infinitely deep fluid \cite{Zakharov} and the collapse of Langmuir waves in a hot plasma \cite{Zakharov2}.

	The IBVP for the NLS equation under different boundary conditions has been widely studied. The first class is homogeneous IBVP in the exterior of a compact set, an unbounded domain $\Omega\subset\mathbb{R}^{n}$:
	\begin{equation}\label{ibvpo}
		\left\{\begin{aligned}
			&	i\partial_{t}u+\Delta u+\lambda |u|^pu=0, \quad (x, t) \in \Omega \times \mathbb{R}_{+}, \\
			&u(x,0)=u_0(x), \quad x\in\Omega, \\& u(x,t)|_{x\in\partial \Omega}=0, \quad t\in\mathbb{R}_{+}.
		\end{aligned}
		\right.		
	\end{equation}
	Brezis and Gallouet \cite{Brezis} first considered local and global existence of \eqref{ibvpo} in the Sobolev space $H^{2}(\Omega)\cap H_{0}^{1}(\Omega)$ for $n=2$. In the context of $n>2$, higher regularity assumptions on the initial data were required \cite{TM2,TY}. Lower regularity results in $H_{0}^{1}(\Omega)$ and $L^{2}(\Omega)$ were  provided by Burq, G\'{e}rard, and Tzvetkov \cite{Burq}. Additional related studies of \eqref{ibvpo} can be found in \cite{inveterate, novice, kil}.
	
	The second class is nonhomogeneous IBVP in  $\mathbb{R}^{n}_{+}$ with $n\geq1$. This includes the high-dimensional setting \eqref{ibvp} addressed in this paper and the one-dimensional NLS equation posed on the half line:
	\begin{equation}\label{ibvpde}
		\left\{\begin{aligned}
			&	i\partial_{t}u+\partial_{x}^{2}u+\lambda |u|^pu=0, \quad (x, t) \in \mathbb{R}_{+} \times \mathbb{R}_{+}, \\
			&u(x,0)=u_0(x),\quad x\in\mathbb{R}_{+},\\& u(0,t)=h_{0}(t),\quad t\in\mathbb{R}_{+}.
		\end{aligned}
		\right.		
	\end{equation}
	  The foundational work was established by Holmer \cite{chatter}, who proved the local well-posedness of \eqref{ibvpde} in $H^{s}(\mathbb{R}_{+})$ for $s\in[0,\frac{3}{2})\setminus\{\frac{1}{2}\}$, with $p$ satisfying \(\left\{\begin{aligned}
	  	&1 \leq p < \tfrac{4}{1-2s},\ s \in [0, \tfrac{1}{2}), \\
	  	&\quad p \geq 1,\quad\quad\; s \in (\tfrac{1}{2}, \tfrac{3}{2}).
	  \end{aligned}\right.\) Here, the boundary data $h_{0}(t)$ was selected in $H^{\frac{2s+1}{4}}(\mathbb{R}_{+})$, this stems  from the Kato smoothing estimate 	\cite{ken}
	\begin{equation}\label{o}
		\|e^{it\partial_{x}^{2}} u_{0}\|_{L^{\infty}(\mathbb{R};\dot{H}_{t}^{\frac{2s+1}{4}}(\mathbb{R}))}\lesssim \|u_{0}\|_{\dot{H}^{s}(\mathbb{R})}.
	\end{equation}	
Bona, Sun, and Zhang \cite{figment} extended Holmer's local well-posedness result \cite{chatter} to  $s\in[0,\frac{5}{2})\setminus\{\frac{1}{2}\}$. More significantly, they established global well-posedness of \eqref{ibvpde} in Sobolev space  $H^{s}(\mathbb{R}_{+})$ for $s\in[1,\frac{5}{2})$, including both defocusing case with $p>0$ and focusing case with $0<p\leq2$. In addition, Fokas, Himonas, and Mantzavinos \cite{dew} were the first to employ the unified transform method (Fokas method) to establish local well-posedness for the cubic NLS equation (\eqref{ibvpde} with $p=2$). Further analysis of this case was conducted by Erdogan and Tzirakis, who investigated several regularity properties and proved that in the defocusing setting, the $H^{s}$ norm (for $s\in [1,\frac{5}{2})$) of global solutions grows at most polynomially in time  \cite{Erdogan}.

Compared to the one-dimensional case \eqref{ibvpde}, the $n$-dimensional IBVP \eqref{ibvp} is more challenging, in which the boundary data $h_{0}(x',t)$ depends not only on time variable $t$ but also on the partial space variables $x'=(x_{1},\cdots, x_{n-1})$. For $n=2$, Ran, Sun and Zhang \cite{fallacious} introduced an optimal boundary function space $\mathcal{H}^s$  (given by \eqref{huahs} with $n=2$) and derived the global well-posedness of the IBVP \eqref{ibvp} in $H^{1}(\mathbb{R}_{+}^{2})$ for the defocusing case with $p\geq1$ and the focusing case with $1\leq p\leq \frac{4}{3}$. Subsequently, Audiard \cite{facilitate} generalized the space  $\mathcal{H}^{s}$ to $\mathbb{R}^{n}$ for $n\geq2$, defined by
\begin{equation}\label{huahs}
	\mathcal{H}^s(\mathbb{R}^n):=\left\{f \in \mathscr{S}'(\mathbb{R}^n): \widehat{f}\in L^1_{\mathrm{loc}}(\mathbb{R}^n), \left(1+|\xi'|^2+|\xi_{n}|\right)^{\frac{s}{2}}\left||\xi'|^{2}+\xi_{n}\right|^{\frac{1}{4}}\widehat{f}(\xi)\in L_{\xi}^{2}(\mathbb{R}^n) \right\},
\end{equation}
where $s\in\mathbb{R}$, $\xi=(\xi',\xi_{n})\in \mathbb{R}^{n-1}\times\mathbb{R}$, and equipped with the norm
\begin{equation}\label{blackmail}			\|f\|_{\mathcal{H}^{s}(\mathbb{R}^{n})}:=\left(\int_{\mathbb{R}^{n}}\left(1+|\xi'|^2+|\xi_{n}|\right)^{s}\left||\xi'|^{2}+\xi_{n}\right|^{\frac{1}{2}}|\widehat{f}(\xi)|^{2}d\xi\right)^{\frac{1}{2}}.
\end{equation}
Correspondingly, the trace estimate for the $n$-dimensional Schr\"{o}dinger group reads
\begin{equation}
	\|e^{it\Delta} u_{0}\|_{L^{\infty}(\mathbb{R};\mathcal{H}  ^{s}(\mathbb{R}^{n}))}\lesssim \|u_{0}\|_{H^{s}(\mathbb{R}^{n})}.
\end{equation}
The properties of the space $\mathcal{H}^s$  are thoroughly studied in \cite{facilitate}, and based on this analysis, the author established  Strichartz estimates without endpoint for the linear problem corresponding to \eqref{ibvp}. Moreover, the work in \cite{facilitate} obtained the existence of local solutions (for $0<p<\frac{4}{n-2}$)  and global solutions for small data  (for $\frac{4}{n}\leq p<\frac{4}{n-2}$) to the  IBVP \eqref{ibvp}  in $H^{1}(\mathbb{R}_{+}^{n})$.

The aim of this paper is to establish the local and global well-posedness of the IBVP \eqref{ibvp} for the $n$-dimensional NLS equation in $H^{s}(\mathbb{R}^{n}_{+})$, with $s\in[0,\frac{5}{2})$. To our knowledge, the nonlinear well-posedness results for the one-dimensional IBVP \eqref{ibvpde} established in \cite{chatter,figment} have not been fully extended to the $n$-dimensional setting \eqref{ibvp}. Here, local well-posedness is established under the condition $p<\frac{4}{n-2s}$, which parallels the one-dimensional condition $p<\frac{4}{1-2s}$. Furthermore, we remove the small-norm assumption in \cite{facilitate} and prove global well-posedness in $H^{s}(\mathbb{R}^{n}_{+})$ for the range $s\in[1,\frac{5}{2})$, thereby generalizing the one-dimensional global  result of \cite{figment} in $H^{s}(\mathbb{R}_{+})$. Moreover, global solutions are constructed for the first time in the lower regularity range $s\in[0,1)$, based on the key derivation of the $L^2(\mathbb{R}^{n}_{+})$ a priori estimates. Our results are formally stated in the next section.

\subsection{Statement of results}
In this paper, we consider initial data $u_{0}\in H^{s}(\mathbb{R}^{n}_{+})$ and boundary data $h_{0}\in \mathcal{H}^{s}(\mathbb{R}^{n}_{+})$, and focus on the distributional solutions  with strong traces of the IBVP \eqref{ibvp}  in the sense defined below.  The counterpart of this definition for the one-dimensional nonhomogeneous IBVP \eqref{ibvpde} was provided in \cite{chatter}.
	\begin{definition}
		We say $u:\overline{\mathbb{R}^{n}_{+}}\times [0,T)\rightarrow\mathbb{C}$, $0<T\leq\infty$ is a distributional solution of the IBVP \eqref{ibvp} with strong traces if
		\begin{enumerate}[label=(\roman*)]
			\item $u\in L^{1}_{\mathrm{loc}}(\mathbb{R}_{+}^{n}\times(0,T))$ and $|u|^{p}u\in L^{1}_{\mathrm{loc}}(\mathbb{R}_{+}^{n}\times(0,T))$;
			\item $u$ satisfies  equation  \eqref{ibvp} in the sense of distributions on $\mathbb{R}_{+}^{n}\times(0,T)$;
			\item \label{covenant} space trace:  $t \mapsto u(\cdot,t) \in C([0,T);H^{s}(\mathbb{R}_{+}^{n}))$ and $u(\cdot,0)=u_{0}$;
			\item \label{hollow} time trace:  $x_{n} \mapsto u(\cdot,x_{n},\cdot) \in C\left(\overline{\mathbb{R}_{+}};\mathcal{H}^{s}_{\mathrm{loc}}(\mathbb{R}^{n-1}\times(0,T))\right)$ and $u(\cdot,0,\cdot)=h_{0}$.
		\end{enumerate}
	\end{definition}
Moreover, we always assume that the initial data $u_{0}$ and boundary data  $h_{0}$ satisfy the following compatibility conditions
\begin{equation}\label{comll}
	u_{0}(x',0)=h_{0}(x',0),\quad \text{for}\ \ \tfrac{1}{2}<s<\tfrac{5}{2},
\end{equation}
\begin{equation} \label{agdj} \int_{\mathbb{R}^{n-1}}\int_{\mathbb{R}_{+}}\frac{|u_{0}(x',t)-e^{-it^{2}\Delta'}h_{0}(x',t^{2})|^{2}}{t}dt\,dx'<\infty,\quad \text{for}\ \ s=\tfrac{1}{2},	
\end{equation}
where \eqref{agdj} is given by \cite{facilitate}.
	\begin{definition}	[See \cite{tao}]\label{fraternity}
		We say that the exponent pair $(q,r)$ is an admissible pair if  $q, r\geq 2$, and
		\begin{equation}
			\frac{2}{q}=n\left(\frac{1}{2}-\frac{1}{r}\right),
		\end{equation}
		with the exception of the case $(q, r) = (2, \infty)$ when $n = 2$.
		In particular, for $n>2$, $(q,r)=(2,\frac{2n}{n-2})$ is called the endpoint.
	\end{definition}

We now state the main results of this paper. Theorem \ref{chaste} provides the local well-posedness theory, and Theorems \ref{global} and \ref{new L2} establish the global well-posedness results.
\begin{theorem}[Local well-posedness]\label{chaste}
	Let $s\in [0,\frac{5}{2})$, $p\geq \lceil s\rceil-1$ and $(n-2s)\,p<4$. For initial data $u_{0}\in H^{s}(\mathbb{R}^{n}_{+})$ and boundary data $h_{0}\in\mathcal{H}^{s}(\mathbb{R}_{+}^{n})$  satisfying the compatibility condition, there exists a maximal solution
	\begin{equation}\label{tarnish}
		u\in C\left(\left[0,T_{\max}\right);H^{s}(\mathbb{R}^{n}_{+})\right)\cap C\left(\overline{\mathbb{R}_{+}\mkern-4mu}\,;\mathcal{H}^{s}_{\mathrm{loc}}(\mathbb{R}^{n-1}\times(0,T_{\max}))\right)
	\end{equation}
	to the IBVP \eqref{ibvp}. Moreover, the solution $u$ satisfies the following properties:
	\begin{enumerate}[label=(\roman*)]
		\item When $s < 1$ and $p \leq \frac{2+2s}{n-2s}$ (with $p < \frac{2+2s}{n-2s}$ when $n = 2$) or $s \geq 1$, the solution is unique. In other cases, uniqueness requires an additional condition $u\in L_{\mathrm{loc}}^{q}\left(\left[0,T_{\max}\right);L^{r}(\mathbb{R}^{n}_{+})\right)$, for some admissible pair $(q,r)$.
		\item If $T_{\max}< \infty$, then $\lim\limits_{t\rightarrow T_{\max}} \left\|u(t)\right\|_{H^{s}(\mathbb{R}^{n}_{+})}=\infty$.
		\item $u\in L_{\mathrm{loc}}^{q}\left(\left[0,T_{\max}\right);W^{s,r}(\mathbb{R}^{n}_{+})\right)$ for each admissible pair $(q,r)$.
		\item If $p\geq \lceil s\rceil$ or $p\in2\mathbb{Z}$, the solution map $\left(u_{0},h_{0}\right)\mapsto u $ is continuous.
	\end{enumerate}
\end{theorem}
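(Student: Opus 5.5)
We would follow the standard route for nonhomogeneous IBVPs (Holmer, Bona--Sun--Zhang, Audiard). First reduce the problem to a whole-space Duhamel formulation plus a boundary correction. Extend $u_0$ to $\tilde u_0\in H^s(\mathbb{R}^n)$ with $\|\tilde u_0\|_{H^s(\mathbb{R}^n)}\lesssim\|u_0\|_{H^s(\mathbb{R}^n_+)}$. For the nonlinear term, extend $F(u)=\lambda|u|^pu$ from $\mathbb{R}^n_+$ to $\mathbb{R}^n$ by a bounded $W^{s,r}$ extension operator $E$. We then seek $u$ as the restriction to $\mathbb{R}^n_+\times[0,T]$ of
\begin{equation*}
\Phi(u)(t)=e^{it\Delta}\tilde u_0+i\int_0^t e^{i(t-\tau)\Delta}E F(u)(\tau)\,d\tau+\mathcal{L}\bigl(h_0-\mathrm{tr}_{x_n=0}[\cdots]\bigr)(t).
\end{equation*}
Here $\mathcal{L}$ is the solution operator of the linear problem with zero initial data and boundary data $g$, built by Fourier/Laplace transform in $(x',t)$ as in Audiard's boundary-integral formula. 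The compatibility conditions \eqref{comll}--\eqref{agdj} are exactly what guarantee that $g=h_0-\mathrm{tr}(\ldots)$, with $\cdots$ denoting the first two terms of $\Phi(u)$, belongs to $\mathcal{H}^s$ after extension by zero to $t<0$ (vanishing at $t=0$ when $s>\frac12$).

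The linear estimates are the part established earlier in the paper, which we take as given. These are the full Strichartz estimates including the endpoint $(2,\frac{2n}{n-2})$, for $e^{it\Delta}$, for the Duhamel term, and for $\mathcal{L}$ in $L^q_tW^{s,r}_x\cap C_tH^s_x$, together with the time-trace (Kato smoothing) estimate into $C(\overline{\mathbb{R}_+};\mathcal{H}^s)$. With these, $\Phi$ maps the space
\begin{equation*}
X_T=C([0,T];H^s)\cap\bigcap_{(q,r)\ \mathrm{admissible}}L^q_TW^{s,r}
\end{equation*}
into itself, and it provides the time-trace regularity required in \eqref{tarnish}.

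The nonlinear step is to bound $\|F(u)\|_{L^{q'}_TW^{s,r'}}\lesssim T^{\theta}\|u\|_{X_T}^{p+1}$ with $\theta>0$, which uses $(n-2s)p<4$. For $s<1$ we would use the fractional chain rule together with Sobolev embedding $W^{s,r}\subset L^{\rho}$, choosing admissible pairs as in Cazenave--Weissler. For $1\le s<\frac52$ we differentiate up to $\lceil s\rceil-1$ times and apply the fractional chain rule to $F^{(\lceil s\rceil-1)}$. This requires $F\in C^{\lceil s\rceil}$ in an appropriate Hölder sense, hence the hypothesis $p\ge\lceil s\rceil-1$. When $p$ is not large enough for Lipschitz bounds of the top derivative, we only get boundedness of $\Phi$ in $X_T$ and contraction in a weaker norm (e.g. $L^q_TL^r$). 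Since balls of $X_T$ are closed in that weaker metric, this still gives existence and uniqueness via the contraction mapping principle.

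The remaining conclusions come out as follows.
\begin{enumerate}[label=(\roman*)]
\item Uniqueness without auxiliary conditions for $s\ge1$, or for $s<1$ with $p\le\frac{2+2s}{n-2s}$, follows as in the whole-space theory. Under that condition one shows that any $C_tH^s$ solution automatically lies in some $L^q_tL^r$ via Strichartz applied to the nonlinear term, since $|u|^pu\in L^1_tL^{2}$-type spaces. The endpoint $(2,\frac{2n}{n-2})$ is needed at the borderline, which explains the exclusion when $n=2$. Otherwise uniqueness is only claimed in $L^q_tL^r$.
\item The maximal solution and the blow-up alternative follow because the local time depends only on $\|u_0\|_{H^s}$ and $\|h_0\|_{\mathcal{H}^s}$ on a fixed window, since $(n-2s)p<4$ makes the problem subcritical. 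We restart from $u(t_0)$, with the compatibility condition preserved by the trace continuity.
\item This is built into $X_T$.
\item Continuous dependence is standard when $F$ is smooth enough for a Lipschitz estimate in the top norm ($p\ge\lceil s\rceil$, or $p$ even so that $F$ is polynomial). It follows by the usual difference estimates combined with the Bona--Smith type argument.
\end{enumerate}

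The main obstacle we expect is the boundary correction for $\frac32\le s<\frac52$. There, $\mathrm{tr}_{x_n=0}$ of the Duhamel term must be shown to lie in $\mathcal{H}^s$ with the right time-vanishing behaviour. This involves the time derivative $\partial_tu=i\Delta u+iF(u)$, so the $\mathcal{H}^s$ trace estimate for the inhomogeneous Duhamel part must hold with $F(u)$ only in dual Strichartz spaces. We would first try to prove the inhomogeneous trace bound by a Christ--Kiselev/$TT^*$ argument in the $(x',t)$ Fourier variables, using the weight $||\xi'|^2+\xi_n|^{1/2}$. For the top regularity we would fall back on treating $\partial_t u$ as a new unknown solving the differentiated equation.
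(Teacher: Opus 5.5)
\textbf{The gap: the boundary correction requires time regularity of $F(u)$, and your space $X_T$ carries none.}

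You claim that, with the linear estimates of Section~\ref{sec2}, $\Phi$ maps $X_T=C([0,T];H^s)\cap\bigcap_{(q,r)}L^q_TW^{s,r}$ into itself once $\|F(u)\|_{L^{q'}_TW^{s,r'}}$ is controlled. But every norm of $\Phi(u)$, including $C_TH^s$ and $L^q_TW^{s,r}$, passes through the term $S(t)g$. By \eqref{grit} this costs $\|g\|_{\mathcal{H}^s}$, and $g$ contains the time trace at $x_n=0$ of the Duhamel term. The weight $(1+|\xi'|^2+|\xi_n|)^s$ in $\mathcal{H}^s$ amounts to $s/2$ derivatives in $t$. The paper has no bound of this trace by a dual Strichartz norm of $f$ alone once $s\ge\frac12$.

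\begin{itemize}
\item The $L^1_tH^s$ bound \eqref{covet} is stated only for $s<\frac12$, and it genuinely fails beyond that. Take $f_\epsilon=\epsilon^{-1}\chi_{[t_0,t_0+\epsilon]}(t)g(x)$, which is bounded in $L^1_tH^s$. Its Duhamel trace tends to $-i\chi_{\{t>t_0\}}[e^{i(t-t_0)\Delta}g](x',0)$. This limit jumps at $t_0$, so it is not in $\mathcal{H}^s$ for $s>\frac12$ whenever $g(\cdot,0)\neq0$.
\item What the paper actually proves is \eqref{juvenile}, which also needs $\|f\|_{B^{s/2}_{\gamma',2}(\mathbb{R};L^{\rho'})}$ for $s\in(0,2)$. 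For $s\ge2$ it uses \eqref{dilute} and \eqref{ratify}, which need $\partial_tf$.
\item The only bound free of time regularity, \eqref{taowuji}, uses non-admissible $(\alpha,\beta)$ and loses $\langle T\rangle^{s/3}$ against scaling. By scaling it can give at most $p<\frac{4(3-s)}{3(n-2s)}$, not the full range $(n-2s)p<4$.
\end{itemize}

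So the difficulty you locate at $s\ge\frac32$ is already present for $s\ge\frac12$, and the claim $\Phi(X_T)\subset X_T$ is unsupported on $[\frac12,\frac52)$. A Christ--Kiselev or $TT^*$ argument will not repair this. The target is a positive-order Sobolev norm in the same variable $t$ in which the Duhamel integral is truncated, and that truncation is exactly what produces the jump above.

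\textbf{What is missing is the core of the paper's nonlinear argument.}

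For $s\in(0,2)$, the fixed-point space must include $B^{s/2}_{q,2}(\mathbb{R}_+;L^r)$, and you must bound $\|\psi_TF(u)\|_{B^{s/2}_{q',2}(\mathbb{R}_+;L^{r'})}$ by a positive power of $T$.
\begin{itemize}
\item The paper splits this seminorm into $A_1$, the time differences of $F(u)$, handled by \eqref{uproar} and H\"older.
\item The other piece is $A_2$, the time differences of the cutoff $\psi_T$. It requires interpolating $\|u\|_{L^{r'(p+1)}}$ between $W^{s,r}$ and $H^s$ with a weight $\theta\in(\max\{0,\alpha\},\min\{1,\beta\})$. This weight is supplied by the claims \eqref{qwu}--\eqref{qwui} and \eqref{die}--\eqref{dier}.
\item Non-emptiness of that interval is a second place where $(n-2s)p<4$ is essential.
\end{itemize}

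For $s\in[2,\frac52)$, the space must include $W^{1,q}(0,T;W^{s-2,r})\cap W^{1,\infty}(0,T;H^{s-2})$, together with three further ingredients:
\begin{itemize}
\item the difference-quotient bound for $\partial_tF(u)$ via Lemma~\ref{relay};
\item the identification $F(u)(0)=F(u_0)$, with $\|F(u_0)\|_{H^{s-2}}\lesssim\|u_0\|_{H^s}^{p+1}$, from Lemmas~\ref{assimilate} and \ref{imp};
\item the modified boundary estimate \eqref{squeak}, which integrates $\partial_tf$ only over $(0,T)$ and so avoids a $T^{-1}$ loss from extending $f$ in time.
\end{itemize}

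Your fallback of differentiating the equation in $t$ points toward the $s\ge2$ treatment. Nothing in your plan supplies the fractional-in-time regularity needed on $[\frac12,2)$. Once this is in place, the rest of your outline is broadly in line with the paper. That covers contraction in a weak metric, uniqueness via a dual pair $(\gamma,\rho)$ with $\rho'(p+1)\le\frac{2n}{n-2s}$, the blow-up alternative, and continuity for smoother $F$.
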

Theorem \ref{chaste} concerns two distinct cases:  low regularity solutions for $s\leq\frac{n}{2}$ and high regularity solutions for $s>\frac{n}{2}$. Accordingly, the condition on the exponent $p$ splits into two forms:
\begin{equation}
	(n-2s)p<4 \quad  \text{implies} \quad \left\{
	\begin{aligned}
		&p<\tfrac{4}{n-2s},\quad  s\leq\tfrac{n}{2},\\
		&p<\infty,\quad  s>\tfrac{n}{2},
	\end{aligned}
	\right.
\end{equation}
where the inequality  $p<\frac{4}{n-2s}$ is
interpreted as $p<\infty$ when $s=\frac{n}{2}$.  The main challenge in proving Theorem \ref{chaste} resides in obtaining the low-regularity solutions for $s\leq\frac{n}{2}$ under the condition $p<\frac{4}{n-2s}$.

The upper bound $\frac{4}{n-2s}$ arises from the scaling
\begin{equation*}
	u(x,t) \mapsto u_{\nu}(x,t):=\nu^{\frac{2}{p}}u(\nu x,\nu^{2}t),\quad \nu>0,
\end{equation*}
which leaves the equation invariant. This scaling yields
\begin{equation*}
	\|u_{\nu}(\cdot,0)\|_{\dot{H}^{s}(\mathbb{R}^{n})}=\nu^{\frac{2}{p}-\frac{n}{2}+s} \|u(\cdot,0)\|_{\dot{H}^{s}(\mathbb{R}^{n})},
\end{equation*}
where the case $\frac{2}{p}-\frac{n}{2}+s=0$ is called critical, corresponding to $p = \frac{4}{n-2s}$, and the case $\frac{2}{p}-\frac{n}{2}+s>0$ is called subcritical, that is, $p < \frac{4}{n-2s}$ \cite{LinPon}. The case $p<\frac{4}{n-2s}$   in the initial value problem (IVP) for the $n$-dimensional NLS equation has been widely studied. For instance, Ginibre and Velo \cite{Ginibre} established the existence and uniqueness in  $H^{1}(\mathbb{R}^{n})$ for $p<\frac{4}{n-2}$, a result later extended by Tsutsumi \cite{Tsutsumi} to $L^{2}(\mathbb{R}^{n})$ for $p<\frac{4}{n}$. Cazenave and Weissler \cite{Cazenave} generalized the local existence theory  to $H^{s}(\mathbb{R}^{n})$ for $s\in\left[0,\frac{n}{2}\right)$ with $p\leq \frac{4}{n-2s}$. In addition, the IVP has also been investigated extensively in the critical cases: the energy-critical problem ($\dot{H}^{1}(\mathbb{R}^{n})$ with  $p=\frac{4}{n-2}$) \cite{Bourgain, tyro, Ryckman Visan,Tao 2005,Visan 2007} and the mass-critical problem ($L^{2}(\mathbb{R}^{n})$ with  $p=\frac{4}{n}$)  \cite{cohesion,rookie,neophyte}.

\begin{remark}
In Theorem \ref{chaste}, establishing the local well-posedness for the low-regularity case $s\in[0,\frac{5}{2})$ with $s\leq\frac{n}{2}$ under the condition  $p<\frac{4}{n-2s}$ requires appropriate bounds over the time derivative of the nonlinear term.  The one-dimensional NLS equation does not involve this issue, and the result for $p<\frac{4}{1-2s}$ is obtained in \cite{chatter, figment}. For the $n$-dimensional NLS, the argument in \cite{facilitate} that yields local  $H^{1}(\mathbb{R}^{n}_{+})$ solutions for $p<\frac{4}{n-2}$ cannot be applied to Theorem~\ref{chaste} directly. To overcome this, we employ distinct analytical strategies depending on the range of $s$:
\begin{enumerate}[label=(\roman*)]
\item  For $s \in (0, 2)$, two precise parameter claims  (\eqref{qwu}--\eqref{qwui} and \eqref{die}--\eqref{dier}) are established to bound the norm of the nonlinear term.
\item For $s \in [2, \frac{5}{2})$, we modify the original estimate for the operator $S(t)$ (defined by \eqref{st}) in a norm weaker than $\mathcal{H}^s$. This  refinement yields the $T$-uniform estimates (Proposition~\ref{abss}) to enable us to handle the nonlinearity.
\end{enumerate}
\end{remark}

\begin{theorem}[Global well-posedness for $s \in [1, \frac{5}{2})$]\label{global}
	Let $s\in [1, \frac{5}{2})$ and $p\geq \lceil s\rceil-1$. Given initial and boundary data:
		\begin{itemize}
		\item For $s\in [1,2)$, $u_{0}\in H^{s}(\mathbb{R}^{n}_{+})$  and $h_{0}\in\mathcal{H}^{s}(\mathbb{R}^{n}_{+})\cap H^{1}(\mathbb{R}^{n}_{+})$;
		\item For $s\in [2,\frac{5}{2})$, $u_{0}\in H^{s}(\mathbb{R}^{n}_{+})$ and $h_{0}\in\mathcal{H}^{s}(\mathbb{R}^{n}_{+})$,
		\end{itemize}
	satisfying the compatibility condition, and assume the parameters satisfy
	\begin{equation}
		\lambda<0:\, \begin{cases}
			p<\frac{4}{n-2}, & s=1\ \text{or} \ s\in[2,\frac{5}{2}),\\ p<\frac{8}{3(n-2)}, &s\in(1,2),
		\end{cases}\;
		\quad\ \lambda>0: \ p\leq \tfrac{4}{n+1}.
	\end{equation}
	Then the IBVP \eqref{ibvp} admits a unique global solution $u \in C\left([0,\infty); H^s(\mathbb{R}^n_{+})\right)$.
\end{theorem}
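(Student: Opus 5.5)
By Theorem \ref{chaste}(ii), it suffices to show that on every finite interval $[0,T]$ the norm $\|u(t)\|_{H^{s}(\mathbb{R}^{n}_{+})}$ stays bounded by a constant depending only on $T$, the data, and the parameters. The plan is the one used in \cite{figment}: first an $H^{1}$ a priori bound, then an upgrade to $H^{s}$ by persistence of regularity.

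\textbf{Reduction to homogeneous boundary data.} Choose an extension $v$ of the boundary data, for instance the boundary-forcing solution of the linear problem with data $h_{0}$. For $s\in[1,2)$ this is where the extra assumption $h_{0}\in H^{1}(\mathbb{R}^{n}_{+})$ enters: it gives $v$ together with $\partial_{t}v$ and $\nabla v$ in suitable Strichartz-type spaces on $[0,T]$. Set $w=u-v$. Then $w$ has zero Dirichlet trace and satisfies
\[
i\partial_{t}w+\Delta w=-\lambda|w+v|^{p}(w+v).
\]
Because the trace vanishes, integrations by parts on $\mathbb{R}^{n}_{+}$ produce no boundary terms.

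\textbf{Mass and energy estimates for $w$.} Take the modified energy
\[
E(t)=\tfrac12\|\nabla w\|_{2}^{2}-\tfrac{\lambda}{p+2}\|w+v\|_{p+2}^{p+2}
\]
and differentiate in time. The error terms are integrals involving $\partial_{t}v$, $\Delta v$ and $\nabla v$ against $|u|^{p}u$ and $\nabla w$.

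\begin{itemize}
\item \emph{Defocusing case} ($\lambda<0$): the potential term has a good sign. Using H\"older's inequality, Sobolev embedding with $p<\frac{4}{n-2}$, and Young's inequality, bound
\[
\frac{d}{dt}\bigl(E+\|w\|_{2}^{2}\bigr)\le C(v)\bigl(1+E+\|w\|_{2}^{2}\bigr)
\]
with $C(v)\in L^{1}(0,T)$. Gronwall's inequality then gives the $H^{1}$ bound.
\item \emph{Focusing case} ($\lambda>0$): control $\|w\|_{p+2}^{p+2}$ by Gagliardo--Nirenberg,
\[
\|w\|_{p+2}^{p+2}\lesssim\|w\|_{2}^{p+2-\frac{np}{2}}\|\nabla w\|_{2}^{\frac{np}{2}}.
\]
The mass of $w$ is not conserved, so $\|w\|_{2}$ must itself be bounded by Gronwall, coupled to $\|\nabla w\|_{2}$ through the forcing terms. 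The restriction $p\le\frac{4}{n+1}$ is what I expect to make the total power of $\|\nabla w\|_{2}$ at most $2$ once the worst cross term (roughly $\int|v||w|^{p+1}$ weighted by the mass growth) is included. This keeps the differential inequality linear, so Gronwall closes.
\end{itemize}

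\textbf{Persistence of regularity for $s>1$.}
\begin{itemize}
\item For $s\in(1,2)$: work with $\partial_{t}w$, or fractional derivatives, in Strichartz spaces on short intervals whose length depends only on the $H^{1}$ bound. The constraint $p<\frac{8}{3(n-2)}$ should be exactly what allows the nonlinear estimate to be linear in the higher norm with coefficient controlled by $\|u\|_{H^{1}}$.
\item For $s\in[2,\frac52)$: differentiate the equation in $t$. The function $u_{t}$ solves a linearized IBVP with boundary data $\partial_{t}h_{0}\in\mathcal{H}^{s-2}$. Apply the $L^{2}$-level Strichartz estimates, namely the endpoint estimates and the $T$-uniform bounds of Proposition \ref{abss}, to get linear-in-$\|u_{t}\|$ bounds. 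Recover $\Delta u$ from the equation, using $p\ge\lceil s\rceil-1$ for differentiability of the nonlinearity.
\end{itemize}
Iterating over the short intervals gives an $H^{s}$ bound on $[0,T]$ that may grow exponentially but stays finite.

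\textbf{Main obstacle.} I expect the hardest step to be the focusing $H^{1}$ bound. Without mass conservation, the coupling between $\|w\|_{2}$ and $\|\nabla w\|_{2}$ through the boundary extension must be tracked carefully to verify that $p\le\frac{4}{n+1}$ closes the Gronwall argument. My first attempt will be to estimate $\frac{d}{dt}\|w\|_{2}^{2}$ directly and substitute the result into the energy inequality.
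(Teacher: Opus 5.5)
\textbf{Verdict: genuine gap in the $H^{1}$ step.} Your architecture (an $H^{1}$ a priori bound, then persistence of regularity) matches the paper. But the $H^{1}$ step, as you set it up, does not close, and it is the heart of the theorem.

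With $v$ a free linear solution carrying the data, the energy identity for $w=u-v$ is exactly
\[
\frac{d}{dt}\Bigl(\tfrac12\|\nabla w\|_{2}^{2}-\tfrac{\lambda}{p+2}\|u\|_{p+2}^{p+2}\Bigr)=-\lambda\,\mathrm{Re}\int_{\mathbb{R}^{n}_{+}}\overline{F(u)}\,\partial_{t}v\,dx ,
\]
so everything hinges on $\partial_{t}v$.
\begin{itemize}
\item To bound the right side by $a(t)(1+E)$ for all $p<\frac{4}{n-2}$, you need roughly $\partial_{t}v\in L^{1}(0,T;L^{p+2})$. That is $L^{2}$-level Strichartz control of $\partial_{t}v$, which requires boundary forcing $\partial_{t}h_{0}\in\mathcal{H}^{0}$, essentially $h_{0}\in\mathcal{H}^{2}$.
\item For $s\in[1,2)$ you only have $\partial_{t}h_{0}\in\mathcal{H}^{s-2}\cap L^{2}$, strictly below $\mathcal{H}^{0}$ (recall $L^{2}\hookrightarrow\mathcal{H}^{-1/2}$). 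With $v(0)=u_{0}$ you also have $\partial_{t}v(0)=i\Delta u_{0}\in H^{s-2}$. So $\partial_{t}v$ has negative regularity, and the extra hypothesis $h_{0}\in H^{1}$ does not change this, contrary to what you assert.
\item Pairing a negative-regularity $\partial_{t}v$ with $F(u)$ puts derivatives on $F(u)$, and the resulting bound is superlinear in the energy for large $p$. For example, with $n=2$ and $p\ge4$ (allowed by the theorem), test $u=M\phi(x/\epsilon)$ with $M^{p}\epsilon^{2}=1$: then $E\sim M^{2}$, while the pairing norm of $F(u)$ grows strictly faster, so Gronwall cannot close.
\item A lift with $i\partial_{t}v+\Delta v\neq0$ instead produces terms against $\partial_{t}w$, or a boundary term containing $\partial_{n}w|_{x_{n}=0}$.
\item Even for $H^{2}$ data, where your argument can run, the constant depends on $\|u_{0}\|_{H^{2}}+\|h_{0}\|_{\mathcal{H}^{2}}$. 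It therefore cannot survive the approximation needed to reach $H^{1}$ data.
\item In the focusing case, nothing in your lifted formulation produces the threshold $\frac{4}{n+1}$; it is asserted, not derived.
\end{itemize}

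\textbf{The missing idea: no lift, control the normal-derivative trace directly.} The paper works with $u$ itself on $H^{2}$ solutions, where the identities are justified. It combines the mass identity, the energy identity, and the normal-multiplier identity \eqref{concur} obtained from $2\overline{\partial_{n}u}$.
\begin{itemize}
\item Integrating \eqref{concur} over $\mathbb{R}^{n}_{+}\times(0,t)$ gives \eqref{naive}. It bounds $\|\partial_{n}u(\cdot,0,\cdot)\|_{L^{2}(\mathbb{R}^{n-1}\times(0,t))}^{2}$ by $\|u(t)\|_{2}\|\partial_{n}u(t)\|_{2}$ plus pure data terms. The nonlinearity enters only as $\|h_{0}\|_{L^{p+2}(\mathbb{R}^{n-1}\times(0,t))}^{p+2}$. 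This, together with $\|\nabla'h_{0}\|_{L^{2}}$ and $\|\partial_{t}h_{0}\|_{L^{2}}$, is where $h_{0}\in H^{1}$ is genuinely used.
\item The mass and energy identities produce only the boundary terms $\mathrm{Im}\int\overline{h_{0}}\,\partial_{n}u$ and $\mathrm{Re}\int\overline{\partial_{t}h_{0}}\,\partial_{n}u$. Everything is linear in the trace, so Young's inequality closes the defocusing case for every $p\le\frac{4}{n-2}$.
\item In the focusing case, the mass bound \eqref{smite} carries a factor $\|\partial_{n}u(t)\|_{2}^{2/3}$. Fed into Gagliardo--Nirenberg, this gives the power $\frac{2+p+np}{3}$ of $\|u\|_{\dot H^{1}}$, which is $\le2$ exactly when $p\le\frac{4}{n+1}$. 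The endpoint is handled by cutting $[0,T]$ into intervals on which $\|h_{0}\|_{L^{2}}$ is small.
\item The resulting bound depends only on $\|u_{0}\|_{H^{1}}+\|h_{0}\|_{H^{1}(\mathbb{R}^{n-1}\times(0,T))}$. This is what lets it pass to $H^{1}$ data, via the global $H^{2}$ solutions of Proposition~\ref{ag} (built with the $\mathcal{W}^{2}$ framework when $p<1$) and a Strichartz difference estimate.
\end{itemize}

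\textbf{Secondary point: the $s\in(1,2)$ upgrade is only a sketch.} The real obstacles there are that $H^{1}\not\hookrightarrow L^{\infty}$ for $n\ge2$, and that the $\mathcal{H}^{s}$ trace of the Duhamel term normally requires time regularity of $F(u)$. The paper handles this with the forcing estimate of Proposition~\ref{mago} in $L^{\alpha}_{t}W^{s,\beta}_{x}$, a Gronwall argument driven by $\|u\|_{L^{q}_{t}W^{1,r}_{x}}$, and a recursion in $s$. That is where $p<\frac{8}{3(n-2)}$ comes from.
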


\begin{remark}\label{remaa}
 For $s=1$,	the proof of Theorem \ref{global}  under the condition $0<p< \frac{4}{n-2}$  ($\lambda<0$) or $0<p\leq \frac{4}{n+1}$ ($\lambda>0$)	relies on  the  $H^1$  a priori estimate \eqref{drone}, which requires solutions in $H^{2}$.
	For $p\geq1$, Theorem \ref{chaste} establishes the existence of such solutions. However, in the low-power case where $p<1$, Theorem~\ref{chaste} fails to provide  $H^2$  solutions, as it requires the solution  to satisfy the boundary data in the $\mathcal{H}^2$  sense. To address this, we establish  weaker solutions (in Proposition \ref{weak2}) satisfying the boundary data  in the $\mathcal{W}^2$  sense \eqref{huaw}. This is sufficient to obtain global $H^1$ solutions through approximation.

In addition, for $s\in(1,\frac{5}{2})$, the analytical approach \cite{figment} used to establish global well-posedness for the one-dimensional IBVP \eqref{ibvpde} cannot be directly applied, as the key embedding $H^{1}(\mathbb{R}^n_+) \hookrightarrow L^{\infty}(\mathbb{R}^n_+)$ is not valid for $n\geq2$. Therefore, new analytical strategies are required. Specifically, for  $s\in(1,\frac{3}{2}]$, we first derive a priori bound for  $\|u(t)\|_{H^s}$ on a small interval  $s\in(1,1+\varepsilon)$ with $\varepsilon>0$, and then develop a decomposition-recursion technique \eqref{421+}–\eqref{423+} to extend the bound up to $s=\frac{3}{2}$. To handle the range  $s>\frac{3}{2}$,  the core idea of our argument is to first obtain a difference estimate \eqref{buddy} via Lemma \ref{imp}, and subsequently,  through the $L^{p}$-mean continuity to convert it into a pointwise estimate of the time derivative  \eqref{ideology}.
\end{remark}

\begin{theorem}[Global well-posedness for $s \in [0, 1)$]\label{new L2}	
Let $s \in [0, 1)$. The following global well-posedness results hold:
\begin{enumerate}[label=(\roman*)]
	\item For $n\geq 2$, given initial data $u_{0}\in H^{s}(\mathbb{R}^{n}_{+})$ and boundary data   $h_{0}\in\mathcal{H}^{s}(\mathbb{R}^{n}_{+})$ satisfying the compatibility condition, and with the parameters satisfying
\begin{equation*}
\begin{aligned}
	& s = 0: \ p \leq \tfrac{2}{n} \ \text{with} \ (p,n)
	\neq (1, 2), \\
	& s \in (0, 1): \  p \leq \min\{1, \tfrac{2(s+1)}{n}\} \ \text{with} \
	\begin{cases}
		p < \frac{2}{n-2}, & n < 6, \\
		p \leq \frac{6-4s}{3(n-2)}, & n \geq 6,
	\end{cases}
\end{aligned}
\end{equation*}
then the IBVP \eqref{ibvp} admits a unique global solution
$u\in C\left([0,\infty);H^{s}(\mathbb{R}^{n}_{+})\right).$
\item  For $n=1$, assume $p\leq1$.  Then for any $u_{0}\in H^{s}(\mathbb{R}_{+})$ and $h_{0}\in H^{\frac{2s+1}{4}}(\mathbb{R}_{+})$ satisfying the compatibility condition,
the IBVP \eqref{ibvpde} admits a unique global solution $	u\in C\left([0,\infty);H^{s}(\mathbb{R}_{+})\right).$
\end{enumerate}
\end{theorem}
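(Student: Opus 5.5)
The plan is to combine the local theory of Theorem \ref{chaste} with an a priori bound on $\|u(t)\|_{H^s(\mathbb{R}^n_+)}$ on every finite interval $[0,T]$. The blow-up alternative (ii) of Theorem \ref{chaste} then gives $T_{\max}=\infty$. Uniqueness comes from Theorem \ref{chaste}(i), since the stated ranges of $p$ satisfy $p\le \frac{2+2s}{n-2s}$, with strict inequality when $n=2$; the exclusion $(p,n)\neq(1,2)$ at $s=0$ is exactly this strict case. The core of the proof is the $L^2$ a priori estimate for $s=0$. The case $s\in(0,1)$ is then obtained by a persistence-of-regularity argument built on the $L^2$ bound.

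\textbf{Step 1: the $L^2$ bound ($s=0$).} Mass is not conserved because $h_0\neq0$. I will subtract a boundary corrector: let $v$ solve the linear IBVP $i\partial_t v+\Delta v=0$, $v|_{x_n=0}=h_0$, $v(0)=0$. By the linear Strichartz estimates derived earlier (including the endpoint), $v\in L^q_TW^{0,r}$ for every admissible pair, with norm $\lesssim\|h_0\|_{\mathcal{H}^0}$. The difference $w=u-v$ satisfies the homogeneous Dirichlet problem
\[
i\partial_t w+\Delta w+\lambda|w+v|^p(w+v)=0,\qquad w|_{x_n=0}=0,
\]
so the boundary term in $\frac{d}{dt}\|w\|_{L^2}^2$ vanishes. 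This gives
\[
\frac{d}{dt}\|w\|_{L^2}^2=2\,\mathrm{Im}\,\lambda\int \bigl(|u|^pu-|w|^pw\bigr)\bar w\,dx,
\]
since the $|w|^pw\bar w$ term is real. The right side is bounded by $C\int(|w|^p+|v|^p)|v||w|\,dx$. Using H\"older with $v\in L^{q}L^{r}$ and $p\le 2/n$, I would bound $\int |w|^{p+1}|v|\le \|w\|_{L^2}^{p+1}\|v\|_{L^{r}}$ with $\frac1r=\frac12-\frac p2\ge \frac12-\frac1n$. This $r$ is admissible-range, and the corresponding $q$ is finite and $\ge2$ (equal to $2$ at $p=2/n$, which is the endpoint when $n>2$), so $\|v\|_{L^r}\in L^1_T$. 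Gronwall (or a Bihari inequality for the sublinear power $\|w\|^{p+1}$, $p\le 2/n\le 1$) then bounds $\|w\|_{L^\infty_TL^2}$. One caveat: this differential identity formally needs more regular solutions, so I will justify it by approximating the data with smooth compatible data and passing to the limit using the continuity of the solution map.

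\textbf{Step 2: $s\in(0,1)$.} I will use the Duhamel/Strichartz estimates on short intervals, together with $p\le1$ and the nonlinear estimate $\||u|^pu\|_{W^{s,r'}}\lesssim\|u\|_{L^{a}}^p\|u\|_{W^{s,b}}$ already used for Theorem \ref{chaste}. The goal is to bound the full Strichartz norm of $u$ on an interval of length $\tau$ linearly in $\|u(t_0)\|_{H^s}+\|h_0\|_{\mathcal{H}^s}$. The key point is that $\tau$ should depend only on $L^2$-controlled quantities.

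This is where the constraints enter. The condition $p\le 2(s+1)/n$ lets the factor $\|u\|_{L^a}^p$ be interpolated between the $L^2$ bound from Step 1 and Strichartz norms with a time exponent gain. The conditions $p<\frac{2}{n-2}$ for $n<6$ and $p\le\frac{6-4s}{3(n-2)}$ for $n\ge6$ arise from the need to handle the time-derivative part of the nonlinearity, i.e.\ the $\mathcal{H}^s$ trace norm. Iterating over $[0,T]$ then gives at most exponential growth in time, which suffices.

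\textbf{Step 3: $n=1$.} The same scheme applies with Holmer's boundary operator in place of $v$, using $h_0\in H^{\frac{2s+1}{4}}$.

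The main obstacle I expect is Step 1: controlling the cross term so that it involves only $L^1_T$ norms of $v$. In the critical endpoint case $p=2/n$ this forces the endpoint Strichartz estimate. Rigorously justifying the energy identity at $L^2$ regularity is the other delicate point.
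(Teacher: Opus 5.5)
Your Step 1 is sound and is essentially the paper's Proposition \ref{vindictive}: an energy identity for the difference with a linear IBVP solution, Strichartz including the endpoint at $p=\frac2n$, a sublinear Gronwall, and then approximation plus $L^2$ continuous dependence. The approximating solutions must be $H^2$-regular on the whole interval; the paper gets this from Proposition \ref{ag}. Putting only $h_0$ into $v$ is harmless at $s=0$ if you approximate by data in $H^2_0\times\mathcal{H}^2_0$. Note that $(p,n)\neq(1,2)$ is needed in Step 1 as well, because $r=\frac{2}{1-p}=\infty$ is the excluded pair in 2D.

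\textbf{The gap.} It lies in part (i) for $s\in(0,1)$. The allowed range $p\le\min\{1,\frac{2(s+1)}{n}\}$ goes beyond $\frac2n$, for example $n=3$, $s=\frac12$, $p\in(\frac23,1]$, or $n=2$, $p=1$. For such $p$ Step 1 gives no $L^2$ bound, since $\frac{2}{1-p}$ lies outside the $L^2$-Strichartz range. Yet Step 2 explicitly relies on ``the $L^2$ bound from Step 1'' to keep $\tau$ uniform. So the argument fails to close exactly for the new exponents.

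You have also misplaced the role of $p\le\frac{2(s+1)}{n}$. In the paper (Proposition \ref{pragmatic}) this condition is what lets the $L^2$ energy estimate itself go beyond $\frac2n$. It does so by using $H^s$-level Strichartz control of the linear part, $\|v\|_{L^q_tW^{s,r}}\lesssim\|u_0\|_{H^s}+\|h_0\|_{\mathcal{H}^s}$ from \eqref{confine}, together with:
\begin{itemize}
\item $W^{s,\frac{2n}{n-2}}\hookrightarrow L^{\frac{2}{1-p}}$, which for $p>\frac2n$ holds exactly when $p\le\frac{2(s+1)}{n}$;
\item a suitable $W^{s,r}\hookrightarrow L^{2(p+1)}$;
\item $W^{s,r}\hookrightarrow L^\infty$ for $n=2$, $p=1$.
\end{itemize}
The resulting $L^2$ bound depends on $\|u_0\|_{H^s}+\|h_0\|_{\mathcal{H}^s}$. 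It is transferred to general data by approximating with compatible $H^2\times\mathcal{H}^2$ data (inverse trace theorem for $s>\frac12$) and using $L^2$ continuous dependence, which yields a global $L^2$ solution.

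\textbf{Your splitting must change for this fix.} With $v(0)=0$, $v|_{x_n=0}=h_0$, the linear data violate corner compatibility for $s>\frac12$ unless $h_0(\cdot,0)=0$. Because the corner trace is continuous on $\mathcal{H}^s$, you cannot approximate your way around this, so $v$ has no $W^{s,r}$ Strichartz bound. Letting $v$ carry the full data $(u_0,h_0)$, as the paper does, makes $w(0)=0$ and lets $v$ inherit compatibility.

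\textbf{Step 2 is only a heuristic.} You never say which estimate controls the $\mathcal{H}^s$ trace of the Duhamel term without time regularity of $F(u)$. The paper uses \eqref{buffoon}, whose exponents satisfy $\frac1\alpha+\frac{n}{2\beta}=1+\frac{3n-4s}{12}$. It combines this with $\|F(u)\|_{W^{s,\beta}}\lesssim\|u\|_{L^{p\sigma}}^p\|u\|_{H^s}$ and a Gronwall argument over the whole interval (Proposition \ref{laudatory}). This is where $p\le\frac{6-4s}{3(n-2)}$ comes from. Reaching $p<\frac{2}{n-2}$ for $n<6$ needs an additional recursive bootstrap in $s$. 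Your proposal asserts these constraints rather than deriving them.
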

\begin{remark}
	Unlike the initial value problem and the homogeneous IBVP,  the nonhomogeneous IBVP \eqref{ibvp} lacks mass conservation. 	The standard $L^{2}$ a priori estimate \eqref{endow} produces  the trace of the normal derivative $\partial_{n}u|_{x_{n}=0}$. To eliminate this negative effect,  we consider the solution $v$ of a unforced linear IBVP \eqref{coagulate} with the same initial and boundary data as the nonlinear IBVP \eqref{ibvp}. By  establishing an $L^{2}$ a priori estimate for the  difference  $u-v$, we prove,   for the first time, the global well-posedness of the IBVP \eqref{ibvp} in the space  $L^{2}(\mathbb{R}_+^n)$.  As a corollary, we also obtain the first global well-posedness result in   $L^{2}(\mathbb{R}_+)$ for the one-dimensional IBVP \eqref{ibvpde}.	
\end{remark}



Our results for the nonlinear IBVP \eqref{ibvp} rely on the Strichartz estimates for the corresponding linear problem \eqref{temporal},  which serve as the foundation for the further nonlinear estimates of \eqref{ibvp}.   Notably, in contrast to the linear results provided in \cite{facilitate}, we establish the endpoint Strichartz estimates for the first time; this is guaranteed by Propositions~\ref{peony} and \ref{kiln}. The core idea behind these propositions is to  construct multiplier-type Fourier integral operators  that meet the conditions of the  Keel-Tao theorem \cite{tao}. These operators are designed to reformulate the components of the solution formulae that cannot be represented by the Schr\"{o}dinger group, thereby enabling the application of the  Keel-Tao theorem \cite{tao} to complete the proofs.

An outline of this paper is as follows.    Section \ref{sec2} is devoted to deriving Strichartz estimates including the endpoint for the linear IBVP \eqref{temporal}.  This linear problem is decomposed into three components, each of which is estimated  separately. Section \ref{sec3} develops the local  theory for the nonlinear IBVP \eqref{ibvp} and proves Theorem \ref{chaste} through a fixed-point argument in several settings. In Section \ref{sec4}, we establish $H^1$
and $L^{2}$ a priori estimates for the nonlinear IBVP \eqref{ibvp} and perform associated regularity analyses, which subsequently yields the global  results stated in Theorems \ref{global} and \ref{new L2}.

\subsection{Notation}
\begin{itemize}[label={\textbf{--}}]
	\item For $s\in\mathbb{R}$ and  $1<p<\infty$,  \( W^{s,p}(\mathbb{R}^{n})\) denotes the standard Sobolev space, and $H^{s}(\mathbb{R}^{n})=W^{s,2}(\mathbb{R}^{n})$. For an open set  $\Omega$, $W^{s,p}(\Omega)$ is defined as the restriction of \( W^{s,p}(\mathbb{R}^{n})\) to $\Omega$.
\item For $s\in (0,1)$, $1\leq p,q \leq \infty$, a Banach space $X$, and an open interval $I\subset\mathbb{R}$, the vector-valued Besov space $B^{s}_{p,q}(I;X)$ is defined by the norm
\begin{equation}\label{denounce}
	\begin{aligned} \|f\|_{B^{s}_{p,q}(I;X)}&:=\,\|f\|_{L^{p}(I;X)}+\|f\|_{\dot{B}^{s}_{p,q}(I;X)}\\			&=\left(\int_{I}\|f(t)\|_{X}^{p}dt\right)^{\frac{1}{p}}+\left(\int_{\mathbb{R}} \|f(t+h)-f(t)\|_{L^{p}(I_{h};X)}^{q} \frac{dh}{|h|^{1+sq}}\right)^{\frac{1}{q}},
	\end{aligned}
\end{equation}
where $I_{h}=\{t: t+h\in I \}$, with the standard modifications when $p=\infty$ or $q=\infty$.
	\item $\mathcal{H}^{s}(\mathbb{R}^{n-1}\times (a,b))$ is  defined as the restriction space of $\mathcal{H}^{s}(\mathbb{R}^{n})$   to the domain $\mathbb{R}^{n-1}\times(a,b)$, with norm
\begin{equation}
	\|f\|_{\mathcal{H}^{s}(\mathbb{R}^{n-1}\times (a,b))}:=\inf\left\{\|F\|_{\mathcal{H}^{s}(\mathbb{R}^{n})}: F\in\mathcal{H}^{s}(\mathbb{R}^{n}), F|_{\mathbb{R}^{n-1}\times (a,b)}=f\right\}.
\end{equation}
	\item For function spaces on $\mathbb{R}^{n-1}\times(0,T)$, the subscript ``$\mathrm{loc}$'' refers only to the time variable:  $f\in\mathcal{H}_{\mathrm{loc}}^{s}(\mathbb{R}^{n-1}\times(0,T))$ if and only if $f\in \mathcal{H}^{s}(\mathbb{R}^{n-1}\times(0,T'))$ for any $T'\in (0,T)$. The same convention also applies to $H_{\mathrm{loc}}^{s}$.
	\item For $\xi\in\mathbb{R}^{n}$, we set $\langle\xi\rangle:=\sqrt{1+|\xi|^{2}}$.
	\item We write $A\lesssim B$ if $A\leq CB$ for some constant $C>0$, and $A\approx B$ if $A\lesssim B$ and $B\lesssim A$.
	\item
     Throughout, $(q,r)$ denotes an admissible pair as in Definition \ref{fraternity}, with $q'$ and $r'$ being their respective conjugate exponents. The pair $(\gamma, \rho)$ denotes another admissible pair, with conjugates $\gamma'$ and $\rho'$.
\end{itemize}

\section{Linear estimates}\label{sec2}
In this section, we focus on the following linear IBVP
\begin{align}\label{temporal}
	\left\{\begin{aligned}
		&	i\partial_{t}u+\Delta u=f, \quad (x, t) \in \mathbb{R}_{+}^{n} \times \mathbb{R}_{+}, \\
		&u(x,0)=u_0(x), \quad x\in\mathbb{R}^{n}_{+},\\& u(x',0,t)=h_{0}(x',t), \quad  (x',t)\in \mathbb{R}^{n-1}\times \mathbb{R}_{+}.
	\end{aligned}
	\right.		
\end{align}
Our primary objective is to establish Strichartz-type estimates for solution $u$, as these estimates serve as fundamental tools in establishing both local and global well-posedness for the nonlinear problem. By the linear superposition principle, we decompose the linear IBVP \eqref{temporal} into three components: the unforced initial value problem \eqref{21}, the forced initial value problem \eqref{inh}, and the reduced initial-boundary value problem \eqref{locust}.
\subsection{Unforced initial value problem}\label{sec21}
First, we consider the IVP
\begin{equation}\label{21}
	\left\{\begin{aligned}&i\partial_{t}v+\Delta v=0, \quad (x,t)\in \mathbb{R}^n \times \mathbb{R},\\
		&v(x,0)=v_0(x),\quad x\in \mathbb{R}^{n}.
	\end{aligned}\right.
\end{equation}
The solution to \eqref{21} can be represented as $v=e^{it\Delta}v_{0}$, where the corresponding estimates are established in the following proposition. In particular, the $(x',t)$-estimate \eqref{circuitous} of $v$ provides a feasible function space $\mathcal{H}^{s}$ for boundary data.
\begin{proposition}\label{p21}
	For $s\in\mathbb{R}$, $j\in\mathbb{N}$, and $0<l<1$,  the solution $v$ to initial value problem \eqref{21} satisfies the following estimates:
	\begin{equation}\label{autocrat}	\|v\|_{C_{b}^{j}(\mathbb{R};H^{s-2j}(\mathbb{R}^{n}))}+\|v\|_{W^{j,q}(\mathbb{R};W^{s-2j,r}(\mathbb{R}^{n}))}+\|v\|_{B^{l}_{q,2}(\mathbb{R};W^{s-2l,r}(\mathbb{R}^{n}))}\lesssim \|v_{0}\|_{H^{s}(\mathbb{R}^{n})}.
	\end{equation}
	Moreover, for $s\geq0$, we have
	\begin{equation}\label{circuitous}
		\|v\|_{C_{b}(\mathbb{R};\mathcal{H}^{s}(\mathbb{R}^{n}))}\lesssim \|v_0\|_{H^{s}(\mathbb{R}^{n})},
	\end{equation}
	and
	\begin{equation}\label{tentative}
		\|v\|_{C_{b}^{1}(\mathbb{R};\mathcal{H}^{s-1}(\mathbb{R}^{n}))}\lesssim \|v_0\|_{H^{s}(\mathbb{R}^{n})}.
	\end{equation}
\end{proposition}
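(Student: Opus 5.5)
Three independent pieces are needed: the $H^s$-based estimates \eqref{autocrat} on $\mathbb{R}^n$, the trace-type estimate \eqref{circuitous} in the $(x',t)$ variables, and its time-differentiated version \eqref{tentative}. Throughout, write $\widehat{v}(\xi,t)=e^{-it|\xi|^2}\widehat{v_0}(\xi)$.

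\textbf{Estimate \eqref{autocrat}.} Since $\partial_t^j v=(i\Delta)^j v$, the $C_b^j(H^{s-2j})$ bound follows at once from unitarity of $e^{it\Delta}$ on $H^{s-2j}$ and $\|\Delta^j v_0\|_{H^{s-2j}}\lesssim\|v_0\|_{H^s}$; continuity in $t$ follows from dominated convergence on the Fourier side. For the $W^{j,q}(W^{s-2j,r})$ term, apply the homogeneous Strichartz estimate of Keel--Tao \cite{tao} (including the endpoint) to $\langle\nabla\rangle^{s-2k}\partial_t^k v=e^{it\Delta}(i\Delta)^k\langle\nabla\rangle^{s-2k}v_0$ for $0\le k\le j$. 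For the Besov term with $0<l<1$, write $v(t+h)-v(t)=e^{it\Delta}(e^{ih\Delta}-1)v_0$ and apply Strichartz to get
\[
\|v(\cdot+h)-v\|_{L^q(W^{s-2l,r})}\lesssim\|(e^{ih\Delta}-1)\langle\nabla\rangle^{s-2l}v_0\|_{L^2}.
\]
Squaring, integrating against $dh/|h|^{1+2l}$, and using Plancherel and Fubini reduces the bound to
\[
\int_{\mathbb{R}}|e^{ih|\xi|^2}-1|^2\,|h|^{-1-2l}\,dh = c_l\,|\xi|^{4l}.
\]
This gives $\|\langle\nabla\rangle^{s-2l}|\nabla|^{2l}v_0\|_{L^2}\lesssim\|v_0\|_{H^s}$. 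The $L^q(W^{s-2l,r})$ part is controlled by the $j=0$ estimate.

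\textbf{Estimate \eqref{circuitous}.} Here the $\mathcal{H}^s$ norm is taken in the variables $(x',t)$ at fixed $x_n$, with dual variables $(\xi',\tau)$, so $\tau$ plays the role of $\xi_n$ in \eqref{blackmail}. Fix $x_n$ and take the Fourier transform in $(x',t)$:
\[
\mathcal{F}_{x',t}v(\xi',x_n,\tau)=c\int\delta(\tau+|\xi'|^2+\eta^2)\,e^{ix_n\eta}\,\widehat{v_0}(\xi',\eta)\,d\eta .
\]
The integrand is supported on $\tau<-|\xi'|^2$, i.e.\ $\eta=\pm\sqrt{-\tau-|\xi'|^2}$. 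With $\mu=|\xi'|^2+\tau$, the Jacobian is $1/(2|\eta|)=1/(2\sqrt{|\mu|})$, so the transform equals a sum over both roots of $e^{\pm ix_n\sqrt{|\mu|}}\widehat{v_0}(\xi',\pm\sqrt{|\mu|})/(2\sqrt{|\mu|})$. Squaring and weighting by $(1+|\xi'|^2+|\tau|)^s|\mu|^{1/2}$ leaves $(1+|\xi'|^2+|\tau|)^s|\mu|^{-1/2}|\widehat{v_0}|^2$. Changing variables back, $d\tau=2|\eta|\,d\eta$ cancels the factor $|\mu|^{-1/2}=|\eta|^{-1}$. On the support, $|\tau|=|\xi'|^2+\eta^2$, so $1+|\xi'|^2+|\tau|\approx\langle\xi\rangle^2$. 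Hence
\[
\|v(\cdot,x_n,\cdot)\|_{\mathcal{H}^s}\lesssim\|v_0\|_{H^s}\quad\text{uniformly in }x_n .
\]
Local integrability of the transform is clear away from $\mu=0$, and the weight $|\mu|^{1/2}$ is exactly designed to handle that singularity; this is where the condition $s\ge0$ is used so that the weight comparison holds. Continuity in $x_n$ comes from dominated convergence, since $e^{ix_n\eta}$ depends continuously on $x_n$.

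\textbf{Estimate \eqref{tentative}.} This is the main obstacle, since $\mathcal{H}^{s-1}$ may have negative index and the equivalence $1+|\xi'|^2+|\tau|\approx\langle\xi\rangle^2$ must be tracked carefully.
\begin{itemize}
\item The $C_b^0$ part follows from \eqref{circuitous} because $\mathcal{H}^s\hookrightarrow\mathcal{H}^{s-1}$.
\item For the derivative, the $x_n$-derivative of the trace is the same trace formula with an extra factor $i\eta$, of size $|\eta|\lesssim\langle\xi\rangle$.
\item The same computation as above then gives weight $\langle\xi\rangle^{2(s-1)}|\eta|^2\lesssim\langle\xi\rangle^{2s}$, which yields the bound.
\end{itemize}
If instead the $C^1$ is meant in $t$ with values in $\mathcal{H}^{s-1}$ of the full space, the analogous computation uses the factor $|\xi|^2$ together with a loss of two powers, which is again admissible.
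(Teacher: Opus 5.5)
Your proof is correct, and for \eqref{circuitous}--\eqref{tentative} it is the paper's argument. Computing the $(x',t)$-Fourier transform of $v(\cdot,x_n,\cdot)$ through $\delta(\tau+|\xi'|^2+\eta^2)$ is the same as the change of variables $\xi_n=\pm\sqrt{|\eta+|\xi'|^2|}$ that produces \eqref{vain}. The $x_n$-derivative then contributes a factor $\xi_n$, and $\xi_n^2=|\tau+|\xi'|^2|$ is absorbed by one power of $1+|\xi'|^2+|\tau|$. The first two terms of \eqref{autocrat} are also handled as in the paper, via Strichartz plus $\partial_t v=i\Delta v$.

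The one genuine difference is the Besov term. The paper simply invokes interpolation between $L^q(W^{s,r})$ and $W^{1,q}(W^{s-2,r})$. That is quick, but it leaves implicit the identification of the interpolation space of these vector-valued targets with $B^l_{q,2}(W^{s-2l,r})$. Your route applies Strichartz to $e^{it\Delta}(e^{ih\Delta}-1)v_0$, then uses Fubini/Plancherel and the scaling identity $\int_{\mathbb{R}}|e^{ih|\xi|^2}-1|^2|h|^{-1-2l}\,dh=c_l|\xi|^{4l}$, which is finite exactly for $0<l<1$. It is self-contained and sidesteps the identification issue. It relies on the fine Besov index being $2$, which matches the $L^2$ norm on the right of Strichartz and lets the $h$-integral pass inside the $L^2_\xi$ norm.

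Two side remarks in your write-up are not correct and should be removed.

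\begin{itemize}
\item On the characteristic set, $1+|\xi'|^2+|\tau|=1+2|\xi'|^2+\eta^2$ lies between $\langle\xi\rangle^2$ and $2\langle\xi\rangle^2$. So the weight comparison holds for every real exponent, and this is not where $s\ge 0$ enters.
\item Your closing ``alternative reading'' fails. The $C^1$ in \eqref{tentative} is in $x_n$, as in your main bullet and in the paper. A $t$-derivative would instead multiply the transform by $\tau$, and $\tau^2\approx(1+|\xi'|^2+|\tau|)^2$ on the characteristic set at high frequency. It therefore lands in $\mathcal{H}^{s-2}$, consistent with \eqref{shenjing}, and not in $\mathcal{H}^{s-1}$.
\end{itemize}
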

\begin{proof}
	The estimates for $\|v\|_{C_{b}(\mathbb{R};H^{s})}$ and $\|v\|_{L^{q}(\mathbb{R};W^{s,r})}$  can be found in Chapter 2 of \cite{facetious}. From $v_{t}=i\Delta v$, we derive the bounds for $\|v\|_{C_{b}^{j}(\mathbb{R};H^{s-2j})}$ and $\|v\|_{W^{j,q}(\mathbb{R};W^{s-2j,r})}$. The third term in \eqref{autocrat} is then obtained by interpolation. For the estimate \eqref{circuitous}, note that the solution $v$ to \eqref{21} can be expressed as
	\begin{equation}\label{assault}
		v(x,t)=\frac{1}{(2\pi)^n}\int_{\mathbb{R}^n}e^{ix\cdot\xi} e^{-i|\xi|^{2}t}\widehat{v}_{0}({\xi})d\xi.
	\end{equation}
	Applying the variable substitution to \eqref{assault}: For $\xi_{n}\in\mathbb{R}_{+}$, set $\xi_{n}=\sqrt{|\eta+|\xi'|^{2}|}$; For $\xi_{n}\in\mathbb{R}_{-}$, set $\xi_{n}=-\sqrt{|\eta+|\xi'|^{2}|}$, where $\eta\in(-\infty,-|\xi'|^{2})$. This yields $v=\frac {1}{(2\pi)^n}\left(v_{+}+v_{-}\right)$ with
	\begin{equation}\label{vain}	v_{\pm}(x,t)=\pm\int_{\mathbb{R}^{n-1}}\int_{-\infty}^{-|\xi'|^{2}}e^{ix'\cdot\xi'\pm ix_{n}\sqrt{\left|\eta+|\xi'|^{2}\right|} } \,e^{i\eta t}  \, \tfrac{\widehat{v}_0\left(\xi',\pm\sqrt{\left|\eta+|\xi'|^{2}\right|}\right)}{2\sqrt{\left|\eta+|\xi'|^{2}\right|}}d\eta \, d\xi'.
	\end{equation}
	We only estimate $v_{+}$ since $v_{-}$  can be treated analogously.  By the definition of $\mathcal{H}^s$ and the substitution $\eta=-\xi_{n}^{2}-|\xi'|^{2}$ with $\xi_{n}\in\mathbb{R}_{+}$, we obtain
	\begin{equation}
		\begin{aligned}
			\|v_{+}(\cdot,x_{n},\cdot)\|_{\mathcal{H}^s(\mathbb{R}^{n})}^2	&\lesssim\int_{\mathbb{R}^{n-1}}\int_{-\infty}^{-|\xi'|^{2}}\left(1+|\xi'|^2+|\eta|\right)^s\tfrac{\left|\widehat{v}_0\left(\xi',\sqrt{\left|\eta+|\xi'|^{2}\right|}\right)\right|^2}{\sqrt{\left|\eta+|\xi'|^{2}\right|}}d\eta \, d\xi' \\		
			&\lesssim \int_{\mathbb{R}^{n-1}}\int_{\mathbb{R}_{+}} \left(1+|\xi'|^2+\xi_{n}^2\right)^s|\widehat{v}_0(\xi',\xi_{n})|^2d\xi_{n}\, d\xi'\\
			&\lesssim\|v_0\|_{H^s(\mathbb{R}^n)}^2.
		\end{aligned}
	\end{equation}
	The continuity follows from the dominated convergence theorem, and hence \eqref{circuitous} holds. For \eqref{tentative}, differentiating \eqref{vain} with respect to $x_{n}$ yields
	\begin{equation*}	\partial_{n}v_{+}(x,t)=\frac{i}{2}\int_{\mathbb{R}^{n-1}}\int_{-\infty}^{-|\xi'|^{2}}e^{ix'\cdot\xi'+i x_{n}\sqrt{|\eta+|\xi'|^{2}|}}\, e^{i\eta t} \, \widehat{v}_0(\xi',\sqrt{\left|\eta+|\xi'|^{2}\right|})d\eta\, d\xi'.
	\end{equation*}
	Similarly, by the definition of $\mathcal{H}^s$ and substitution, we have
	\begin{equation*} \left\|\partial_{n} v_{+}(\cdot,x_{n},\cdot)\right\|_{\mathcal{H}^{s-1}(\mathbb{R}^{n})}^{2}
		\lesssim \int_{\mathbb{R}^{n-1}}\int_{\mathbb{R}_{+}}\left(1+|\xi'|^2+\xi_{n}^{2}\right)^{s-1}\xi_{n}^{2}\left|\widehat{v}_0(\xi',\xi_{n})\right|^2d\xi_{n}\, d\xi'
		\lesssim \|v_{0}\|_{H^{s}(\mathbb{R}^{n})}^{2}.
	\end{equation*}
\end{proof}
\subsection{Forced initial value problem}\label{sec22}
Next, we study the forced IVP
\begin{equation}\label{inh}
	\left\{\begin{aligned}
		&i\partial_{t}w+\Delta w=f, \quad (x,t)\in \mathbb{R}^{n} \times \mathbb{R} ,\\
		&w(x,0)=0,\quad x\in \mathbb{R}^{n}.
	\end{aligned}\right.
\end{equation}
By Duhamel's principle, the solution admits the representation
\begin{equation}\label{thorny}
	w(t)=-i\int_{0}^{t}e^{i(t-\tau)\Delta}f(\tau)d\tau.
\end{equation}
We now give the Strichartz estimates for $w$.
\begin{proposition}\label{fuss}
	For $s\in\mathbb{R}$, the $w$ given by \eqref{thorny} satisfies
	the following estimates
	\begin{equation}\label{wrath}
		\|w\|_{C_{b}(\mathbb{R}; H^{s}(\mathbb{R}^{n}))}+\|w\|_{L^{q}(\mathbb{R}; W^{s,r}(\mathbb{R}^{n}))}\lesssim \|f\|_{L^{\gamma'}(\mathbb{R}; W^{s,\rho'}(\mathbb{R}^{n}))},
	\end{equation}
	\begin{equation}\label{famine} \|w\|_{C^{1}([0,T]; H^{s-2}(\mathbb{R}^{n}))}+\|w\|_{W^{1,q}(0,T;W^{s-2,r}(\mathbb{R}^{n}))}\lesssim \|f(0)\|_{H^{s-2}(\mathbb{R}^{n})}+\|f\|_{W^{1,\gamma'}(0,T;W^{s-2,\rho'}(\mathbb{R}^{n}))}.
	\end{equation}
	For $s\in (0,2)$,
	\begin{equation}\label{kidney}
		\|w\|_{L^{q}(\mathbb{R};W^{s,r}(\mathbb{R}^{n}))} +\|w\|_{B^{\frac{s}{2}}_{q,2} (\mathbb{R}; L^{r} (\mathbb{R}^{n}))}\lesssim \|f\|_{L^{\gamma'} (\mathbb{R};  W^{s,\rho'}(\mathbb{R}^{n}))}+\|f\|_{B^{\frac{s}{2}}_{\gamma',2} (\mathbb{R}; L^{\rho'} (\mathbb{R}^{n}))}.
	\end{equation}
\end{proposition}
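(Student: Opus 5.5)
The plan is to reduce all three estimates to the Keel--Tao inhomogeneous Strichartz estimate, including the endpoint, for the Duhamel operator \eqref{thorny}. That estimate reads
\begin{equation*}
\|w\|_{C_b(\mathbb{R};L^2)}+\|w\|_{L^q(\mathbb{R};L^r)}\lesssim \|f\|_{L^{\gamma'}(\mathbb{R};L^{\rho'})}
\end{equation*}
for admissible $(q,r)$ and $(\gamma,\rho)$, and is available from \cite{tao} and Chapter 2 of \cite{facetious}.

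For \eqref{wrath}, commute $\langle\nabla\rangle^{s}$ with $e^{i(t-\tau)\Delta}$. Apply the $L^2$-level estimate to $\langle\nabla\rangle^{s}f$, and use that $\langle\nabla\rangle^{s}$ is an isomorphism $W^{s,r}\to L^{r}$ for $1<r<\infty$. The endpoint $r=\frac{2n}{n-2}$ is still finite, so this causes no trouble.

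For \eqref{famine}, I would differentiate Duhamel's formula in time after rewriting it as
\begin{equation*}
w(t)=-i\int_0^t e^{i\tau\Delta}f(t-\tau)\,d\tau .
\end{equation*}
This gives
\begin{equation*}
\partial_t w(t)=-ie^{it\Delta}f(0)-i\int_0^t e^{i(t-\tau)\Delta}\partial_\tau f(\tau)\,d\tau .
\end{equation*}
The first term is bounded in $C_b(H^{s-2})\cap L^q(W^{s-2,r})$ by Proposition \ref{p21} with $j=0$ and regularity $s-2$. The second term is bounded by \eqref{wrath} at regularity $s-2$ applied to $\partial_t f$ on $(0,T)$, after extending $\partial_t f$ by zero outside $(0,T)$. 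It remains to bound $w$ itself in $C([0,T];H^{s-2})\cap L^q(0,T;W^{s-2,r})$. I would obtain this from \eqref{wrath} at regularity $s-2$, noting that $\|f\|_{L^{\gamma'}(0,T;W^{s-2,\rho'})}$ is controlled by the $W^{1,\gamma'}(0,T;W^{s-2,\rho'})$ norm. The $f$-norms on the right are over $(0,T)$, so the constant may depend on $T$; that is acceptable.

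For \eqref{kidney} with $s\in(0,2)$, the $L^q(W^{s,r})$ part is already contained in \eqref{wrath}, so only the Besov-in-time seminorm needs work. I would use the translation identity
\begin{equation*}
w(t+h)-w(t)=-i\int_0^t e^{i(t-\tau)\Delta}\bigl(f(\tau+h)-f(\tau)\bigr)\,d\tau-i\,e^{it\Delta}\int_{-h}^{0}e^{-i\tau\Delta}f(\tau+h)\,d\tau .
\end{equation*}
The first piece is estimated in $L^q(L^r)$ by Strichartz in terms of $\|f(\cdot+h)-f\|_{L^{\gamma'}(L^{\rho'})}$. Integrating against $|h|^{-1-s}\,dh$ then yields $\|f\|_{\dot B^{s/2}_{\gamma',2}(L^{\rho'})}$. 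The second piece is a free evolution with data
\begin{equation*}
g_h=\int_{-h}^0 e^{-i\tau\Delta}f(\tau+h)\,d\tau .
\end{equation*}
I expect this piece to be the main obstacle. Its $L^2$ norm must be controlled with the right power of $|h|$ for the $h$-integral to converge, and this has to be done at the endpoint as well.

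My first attempt at this obstacle is a Littlewood--Paley alternative. Write $w=P_{\le N}w+P_{>N}w$ with $N\approx|h|^{-1/2}$. For the high frequencies, use the crude bound $\|w(t+h)-w(t)\|\le 2\|P_{>N}w\|$ together with Strichartz and Bernstein. This gains $N^{-s}\approx|h|^{s/2}$ from $\|f\|_{L^{\gamma'}(W^{s,\rho'})}$. For the low frequencies, write $w(t+h)-w(t)=\int_t^{t+h}\partial_t w$ and use $\partial_t w=i\Delta w-if$. The Laplacian costs $N^{2}$ times the $W^{s-2,r}$-type bound, while the $f$ term is handled by the Besov norm of $f$. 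Square-summing over dyadic $h$ should then give the $\dot B^{s/2}_{q,2}(L^r)$ seminorm bounded by the right-hand side of \eqref{kidney}. This is an interpolation between the $j=0$ and $j=1$ endpoints of \eqref{wrath} and \eqref{famine}, in the spirit of the proof of Proposition \ref{p21}.

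If this direct argument becomes messy, the fallback is to interpolate the linear map $f\mapsto w$ abstractly. It maps $L^{\gamma'}(L^{\rho'})\to L^q(L^r)$. It also maps $\{f: f,\ \partial_t f\in L^{\gamma'}(L^{\rho'})\}$ into $W^{1,q}(L^r)$, by the $\partial_t w$ formula above in the case $f(0)=0$, after a cutoff. Real interpolation with parameter $s/2\in(0,1)$ and second index $2$ then gives the Besov estimate.
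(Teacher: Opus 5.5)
Your arguments for \eqref{wrath} and \eqref{famine} are fine and coincide with the paper's: it quotes the Strichartz estimate from \cite{facetious} for \eqref{wrath}, and for \eqref{famine} it differentiates Duhamel's formula exactly as you do. For \eqref{kidney} the paper gives no argument and simply cites Section 3 of \cite{facilitate}. Your direct attempt has a genuine gap there.

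\textbf{The translation identity.} In your identity, $g_h=e^{ih\Delta}\int_0^h e^{-i\sigma\Delta}f(\sigma)\,d\sigma=i\,w(h)$, so the second piece is $e^{it\Delta}w(h)$. You therefore need $\int_{\mathbb{R}}\|w(h)\|_{L^2}^2|h|^{-1-s}\,dh\lesssim$ the right-hand side of \eqref{kidney}. This is itself a time-regularity statement for $w$ at $t=0$. The natural bound, dual Strichartz $\|w(h)\|_{L^2}\lesssim\|f\|_{L^{\gamma'}(0,h;L^{\rho'})}$, behaves like $|h|^{1/\gamma'}$ when $f$ is continuous with $f(0)\neq 0$. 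That is not integrable against $|h|^{-1-s}$ once $s>2/\gamma'$; at the endpoint $\gamma=2$ this happens for every $s>1$. The spatial norm $\|f\|_{L^{\gamma'}(W^{s,\rho'})}$ has to enter here, and your sketch does not say how.

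\textbf{The Littlewood--Paley alternative.} It breaks at the low-frequency $f$-term. The quantity $\int_t^{t+h}P_{\le N}f\,d\tau$ is an average of $f$, not a difference, so $\dot B^{s/2}_{\gamma',2}$ does not control it. Young in time together with Bernstein $L^{\rho'}\to L^r$ at $N\approx|h|^{-1/2}$ produces exactly the factor $(|h|N^2)^{1/q+1/\gamma}\approx 1$. So this term carries no power of $|h|$, and the resulting bound is not integrable against $|h|^{-1-s}\,dh$. Separately, the crude high-frequency bound only gives $\dot B^{s/2}_{q,\infty}$ unless you redo it dyadically with a square function.

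\textbf{The interpolation fallback.} This fails because $f\mapsto w$ does not commute with $\partial_t$. Indeed $\partial_t w$ contains $-ie^{it\Delta}f(0)$ with $f(0)$ only in $L^{\rho'}$, so $W^{1,\gamma'}(\mathbb{R};L^{\rho'})$ is not mapped into $W^{1,q}(\mathbb{R};L^r)$. Restricting to $f(0)=0$ does not help: for $s/2>1/\gamma'$ the interpolation space then becomes the subspace of $B^{s/2}_{\gamma',2}(L^{\rho'})$ with vanishing trace at $t=0$.

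\textbf{The missing idea} is to anchor the Duhamel integral at $-\infty$ rather than at $0$:
\[
w(t)=\mathcal{R}f(t)+e^{it\Delta}g,\qquad \mathcal{R}f(t)=-i\int_{-\infty}^{t}e^{i(t-\tau)\Delta}f(\tau)\,d\tau,\qquad g=i\int_{-\infty}^{0}e^{-i\tau\Delta}f(\tau)\,d\tau .
\]
The operator $\mathcal{R}$ commutes with time translations, and hence with $\partial_t$. The retarded Keel--Tao estimate, valid also at the endpoint, gives $\|\mathcal{R}f(\cdot+h)-\mathcal{R}f\|_{L^q(L^r)}\lesssim\|f(\cdot+h)-f\|_{L^{\gamma'}(L^{\rho'})}$. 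Integrating against $|h|^{-1-s}\,dh$ yields the $\dot B^{s/2}_{q,2}(L^r)$ bound for $\mathcal{R}f$; equivalently, your interpolation argument now applies verbatim to $\mathcal{R}$.

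For the free part, \eqref{utter} gives $\|g\|_{H^s}\lesssim\|f\|_{L^{\gamma'}(W^{s,\rho'})}$. Then \eqref{autocrat} with $l=s/2$ gives $\|e^{it\Delta}g\|_{B^{s/2}_{q,2}(L^r)}\lesssim\|g\|_{H^s}$. This is exactly where the spatial regularity of $f$ supplies the time regularity that your boundary piece $e^{it\Delta}w(h)$ needed. Since $g=-\mathcal{R}f(0)$, it also shows directly that $\int\|w(h)\|_{L^2}^2|h|^{-1-s}\,dh$ is controlled.
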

\begin{proof}
	The estimate \eqref{wrath} can be found in Chapter 2 of \cite{facetious}. For \eqref{famine}, starting from $$\partial_{t}w(t)=-ie^{it\Delta}f(0)-i\int_{0}^{t}e^{i(t-\tau)\Delta}\partial_{t}f(\tau)d\tau,$$ we apply \eqref{autocrat} and \eqref{wrath} to get
	\begin{equation} \label{one}
		\|\partial_{t}w\|_{C([0,T]; H^{s-2}(\mathbb{R}^{n}))\cap L^{q}(0,T;W^{s-2,r}(\mathbb{R}^{n}))}\lesssim \|f(0)\|_{H^{s-2}(\mathbb{R}^{n})}+  \|\partial_{t}f\|_{L^{\gamma'}(0,T;W^{s-2,\rho'}(\mathbb{R}^{n}))}.
	\end{equation}
	Thus \eqref{famine} follows by combining \eqref{wrath} with \eqref{one}. The estimate \eqref{kidney} is presented in Section 3 of \cite{facilitate}.
\end{proof}

The remainder of this section is devoted to the $\mathcal{H}^{s}$ estimates of $w$.

\begin{proposition}\label{peony}
	The $w$ given by \eqref{thorny} satisfies
	\begin{equation}\label{cope}	
		\|w\|_{C_{b}(\mathbb{R};\mathcal{H}^{0}(\mathbb{R}^{n}))}\lesssim \|f\|_{L^{\gamma'}(\mathbb{R};L^{\rho'}(\mathbb{R}^{n}))}.
	\end{equation}
\end{proposition}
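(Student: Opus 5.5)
The plan is a $TT^*$ argument for a fixed-$x_n$ trace operator, so that the Keel--Tao framework applies, as the introduction announces. Fix $x_n\in\mathbb{R}$ and write $w(x',x_n,t)=-i\int_0^t e^{i(t-\tau)\Delta}f(\tau)\,d\tau$. As in the proof of Proposition~\ref{p21}, the change of variables $\eta=-|\xi'|^2-\xi_n^2$ shows that, for a function $g$ on $\mathbb{R}^n$,
\begin{equation*}
\|e^{it\Delta}g(\cdot,x_n,\cdot)\|_{\mathcal{H}^0(\mathbb{R}^n)}^2\approx\int_{\mathbb{R}^n}|\widehat g(\xi)|^2d\xi ,
\end{equation*}
up to the splitting into $\xi_n>0$ and $\xi_n<0$. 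The weight $||\xi'|^2+\eta|^{1/2}=|\xi_n|$ cancels exactly the Jacobian $2|\xi_n|$.

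The first step is to remove the truncation $\tau<t$. I would estimate the untruncated operator $\widetilde w(t)=\int_{\mathbb{R}}e^{i(t-\tau)\Delta}f(\tau)\,d\tau=e^{it\Delta}\int_{\mathbb{R}}e^{-i\tau\Delta}f(\tau)\,d\tau$. By the isometry above, its $\mathcal{H}^0$ trace at each $x_n$ is bounded by $\bigl\|\int_{\mathbb{R}}e^{-i\tau\Delta}f(\tau)\,d\tau\bigr\|_{L^2(\mathbb{R}^n)}$. The dual homogeneous Strichartz estimate, which includes the endpoint by Keel--Tao, bounds this by $\|f\|_{L^{\gamma'}L^{\rho'}}$. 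This handles the non-retarded part uniformly in $x_n$.

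The main obstacle is the retarded version, since the restriction $\tau<t$ is not compatible with taking a trace in the variables $(x',t)$. I would write $w$ on the Fourier side in $(x',t)$. Take the space--time Fourier transform $\widetilde f(\xi,\eta)$ of $f\,\mathbf 1_{\tau>0}$ (or use $w=\tfrac12(\widetilde w_{+}-\widetilde w_{-})$ together with a sign operator in time). In the variables $(\xi',\eta)$, the trace of $w$ at fixed $x_n$ equals
\begin{equation*}
\int_{\mathbb{R}}e^{ix_n\xi_n}\frac{\widetilde f(\xi',\xi_n,\eta)}{\eta+|\xi|^2\pm i0}\,d\xi_n .
\end{equation*}
The $\xi_n$-integral splits into a principal-value/Hilbert-transform part and a delta part supported on $\xi_n=\pm\sqrt{-\eta-|\xi'|^2}$. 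The delta part is exactly the non-retarded object treated above. For the principal-value part, I would express the multiplier $\mathrm{p.v.}\,(\eta+|\xi|^2)^{-1}$ in $\xi_n$ as a Fourier integral operator in $x_n$ of multiplier type, bounded from $L^2$ after weighting by $|\xi_n|^{1/2}$. Equivalently, I would realise it as $\int_{\mathbb{R}}\mathrm{sgn}(t-\tau)e^{i(t-\tau)\Delta}f\,d\tau$ composed with a bounded operator $T$ on $L^2(\mathbb{R}^n)$.

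The Keel--Tao hypotheses would then be checked for the family $U(t)=T e^{it\Delta}$: the energy bound follows from the boundedness of $T$, and the dispersive bound $\|U(t)U(s)^*\|_{L^1\to L^\infty}\lesssim|t-s|^{-n/2}$ holds because $T$ is a Fourier multiplier that commutes with $e^{it\Delta}$ and has a bounded kernel in the relevant sense. Keel--Tao then gives the retarded estimate including the endpoint. Continuity in $x_n$, with values in $C_b$, would follow by density of Schwartz $f$ together with dominated convergence, as in Proposition~\ref{p21}. The step I expect to be hardest is verifying the dispersive decay for the operator produced by the principal-value part. My first attempt there would be a direct stationary-phase computation of its kernel in the $x_n$ direction.
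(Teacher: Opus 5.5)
Your treatment of the free piece, via \eqref{circuitous} and the dual Strichartz estimate, is fine and is what the paper does for $I_1$. Your delta/principal-value splitting is also the paper's decomposition: $I_2$ is exactly the principal-value piece. The gap is in how you handle that principal-value piece.

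\textbf{Why your step fails.} The principal-value piece \emph{is} a constant multiple of $\int_{\mathbb{R}}\operatorname{sgn}(t-\tau)e^{i(t-\tau)\Delta}f(\tau)\,d\tau$. Realising it as this operator ``composed with a bounded $T$ commuting with $e^{it\Delta}$'' therefore just restates the problem. Keel--Tao for a family $U(t)=Te^{it\Delta}$ with $H=L^2(\mathbb{R}^n)$ cannot produce the bound you need, for two reasons.
\begin{itemize}
\item The dual estimate only controls $e^{it\Delta}$ applied to an $L^2$ function. Its trace at fixed $x_n$ has $(x',t)$-Fourier support in $\{\eta\le -|\xi'|^2\}$. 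The trace of the principal-value piece has a genuine component on $\{\eta>-|\xi'|^2\}$, namely the exponentially decaying part of its $x_n$-kernel.
\item The retarded Keel--Tao estimate controls $L^q_tL^r_x$ norms. It does not control the $\mathcal{H}^0(\mathbb{R}^n_{x',t})$ norm of a trace at fixed $x_n$.
\end{itemize}
Your dispersive justification is also off. For $e^{i(t-s)\Delta}TT^*$ to map $L^1\to L^\infty$ with norm $\lesssim |t-s|^{-n/2}$, you need the kernel of $TT^*$ to be integrable, not bounded.

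\textbf{The missing idea.} Compute the $x_n$-convolution kernel of $\mathrm{P.V.}\,(\xi_n^2+\zeta)^{-1}$, with $\zeta=\mu+|k'|^2$. It equals $e^{-|\delta|\sqrt{\zeta}}/(2\sqrt{\zeta})$ for $\zeta>0$ and $\sin(|\delta|\sqrt{|\zeta|})/(2\sqrt{|\zeta|})$ for $\zeta<0$.

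The principal-value piece is a Fourier multiplier in the full space--time variables $(k,\mu)$. Hence its trace at fixed $x_n$ can be written as $\int_{\mathbb{R}}U_{x_n}(\tau)^*f(\tau)\,d\tau$, with no restriction $\tau<t$. Here $U_{x_n}(s)$ is built from this kernel and maps the \emph{trace space} $\mathcal{H}^0(\mathbb{R}^n)$ into $L^2(\mathbb{R}^n)$. One then applies Lemma~\ref{tas} with $H=\mathcal{H}^0(\mathbb{R}^n)$ itself. Only the dual homogeneous estimate \eqref{utter} is needed; no retarded estimate enters.

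The two Keel--Tao hypotheses are checked as follows.
\begin{itemize}
\item \emph{Energy bound.} $\|U_{x_n}(s)g\|_{L^2}\lesssim\|g\|_{\mathcal{H}^0}$ follows from the substitution $\mu=\eta|\eta|-|k'|^2$. One then uses $L^2$-boundedness of the Laplace transform on $\eta>0$ and of the sine transform on $\eta<0$.
\item \emph{Dispersive bound.} It follows from van der Corput applied to $\int_0^\infty e^{i\eta^2(s-\tau)}e^{-\eta(a+b)}\,d\eta$ and $\int_{-\infty}^0 e^{-i\eta^2(s-\tau)}\sin(a\eta)\sin(b\eta)\,d\eta$, uniformly in $a,b\ge 0$. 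This is multiplied by the $(n-1)$-dimensional free dispersive bound.
\end{itemize}
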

We emphasize that the right-hand side of the inequality holds for arbitrary admissible pair $(\gamma,\rho)$. Significantly, the framework successfully extends to the endpoint case $(\gamma,\rho)=\left(2,\frac{2n}{n-2}\right)$, $n>2$, which is established for the first time. The proof of this proposition makes use of the following lemma.
\begin{lemma}[See \cite{tao}]\label{tas}
	Let $H$ be a Hilbert space. Suppose that for all $g\in H$, $h\in L^{1}(\mathbb{R}^{n})\cap L^{2}(\mathbb{R}^{n})$ and $s\in\mathbb{R}$, the operator $U(s): H\rightarrow L^{2}(\mathbb{R}^{n})$ satisfies the following estimates:
	\begin{equation}\label{thrift}
		\|U(s)g\|_{L^{2}(\mathbb{R}^{n})}\lesssim \left\|g\right\|_{H} \quad\text{and}\quad
		\|U(s)U(\tau)^{*}h\|_{L^{\infty}(\mathbb{R}^{n})}\lesssim |s-\tau|^{-\frac{n}{2}} \|h\|_{L^{1}(\mathbb{R}^{n})},
	\end{equation}
	where $s\neq\tau$, and $U(s)^{*}$ denotes the adjoint operator of  $U(s)$. Then the estimates
	\begin{equation}\label{alloy}
		\|U(s) g\|_{L^{\gamma}(\mathbb{R};L^{\rho}(\mathbb{R}^{n}))}\lesssim \left\|g\right\|_{H},
	\end{equation}
	\begin{equation}\label{utter}
		\left\|\int_{\mathbb{R}} U(\tau)^{*}f(\tau)d\tau\right\|_{H}\lesssim \|f\|_{L^{\gamma'}(\mathbb{R};L^{\rho'}(\mathbb{R}^{n}))}
	\end{equation}
	hold for all admissible pairs $(\gamma,\rho)$, and all $f\in L^{1}(\mathbb{R};L^{2}(\mathbb{R}^{n}))\cap L^{\gamma'}(\mathbb{R};L^{\rho'}(\mathbb{R}^{n}))$.
\end{lemma}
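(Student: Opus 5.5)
The plan is the $TT^{*}$ method. Non-endpoint pairs are handled by Hardy--Littlewood--Sobolev in time; the endpoint needs the Keel--Tao bilinear dyadic decomposition. Fix an admissible pair $(\gamma,\rho)$.

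First, \eqref{alloy} and \eqref{utter} are equivalent by duality: for $g\in H$ and $f$ as in the statement, $\langle U(\cdot)g, f\rangle_{L^2_{t,x}} = \langle g, \int_{\mathbb{R}}U(\tau)^{*}f(\tau)\,d\tau\rangle_{H}$. So it suffices to prove \eqref{utter}. Squaring its left-hand side gives the bilinear form
\begin{equation*}
B(f,g)=\iint_{\mathbb{R}^{2}}\langle U(\tau)^{*}f(\tau),U(s)^{*}g(s)\rangle_{H}\,d\tau\,ds=\iint_{\mathbb{R}^{2}}\langle U(s)U(\tau)^{*}f(\tau),g(s)\rangle\,d\tau\,ds .
\end{equation*}
The target is $|B(f,g)|\lesssim\|f\|_{L^{\gamma'}L^{\rho'}}\|g\|_{L^{\gamma'}L^{\rho'}}$.

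The first input is a pointwise-in-time bound on the kernel operator $U(s)U(\tau)^{*}$. From the first estimate in \eqref{thrift} and its adjoint, $\|U(s)U(\tau)^{*}h\|_{L^2}\lesssim\|h\|_{L^2}$. The second estimate gives the $L^1\to L^\infty$ bound with decay $|s-\tau|^{-n/2}$. Riesz--Thorin interpolation between the two yields
\begin{equation*}
\|U(s)U(\tau)^{*}h\|_{L^{\rho}}\lesssim|s-\tau|^{-n(\frac12-\frac1\rho)}\|h\|_{L^{\rho'}}=|s-\tau|^{-\frac{2}{\gamma}}\|h\|_{L^{\rho'}} .
\end{equation*}
The non-endpoint cases then follow directly.
\begin{itemize}
\item For $\gamma=\infty$, $\rho=2$, and \eqref{utter} follows from the energy bound alone: $\|\int U(\tau)^{*}f\,d\tau\|_{H}\le\int\|U(\tau)^{*}f(\tau)\|_{H}\,d\tau\lesssim\|f\|_{L^1L^2}$.
\item For $2<\gamma<\infty$, Hölder in $x$ gives $|B(f,g)|\lesssim\iint|s-\tau|^{-2/\gamma}\|f(\tau)\|_{L^{\rho'}}\|g(s)\|_{L^{\rho'}}\,d\tau\,ds$. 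Hardy--Littlewood--Sobolev in one dimension then applies, because $0<2/\gamma<1$ and $\frac{1}{\gamma'}+\frac{1}{\gamma'}+\frac{2}{\gamma}=2$.
\end{itemize}

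The main obstacle is the endpoint $(\gamma,\rho)=(2,\frac{2n}{n-2})$, $n>2$. There the time kernel is $|s-\tau|^{-1}$ and HLS fails. I would follow Keel--Tao. By symmetry, restrict to $\tau<s$ and split dyadically, $B=\sum_{j\in\mathbb{Z}}B_j$, where $B_j$ integrates over $2^{j}\le s-\tau<2^{j+1}$. The plan for each piece is:
\begin{enumerate}
\item Prove
\begin{equation*}
|B_j(f,g)|\lesssim 2^{-j\beta(a,b)}\|f\|_{L^2L^{a'}}\|g\|_{L^2L^{b'}},\qquad \beta(a,b)=\tfrac n2-1-\tfrac na-\tfrac nb,
\end{equation*}
for $(1/a,1/b)$ in a neighbourhood of $(1/\rho,1/\rho)$.
\item Obtain this neighbourhood by interpolating three bounds. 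At $(a,b)=(\infty,\infty)$, the $L^1\to L^\infty$ decay together with Cauchy--Schwarz in time over the strip gives $2^{-j(\frac n2-1)}$. At $a=2$ with $b$ in the range $[2,\frac{2n}{n-2})$, and symmetrically with $a\leftrightarrow b$, use the non-endpoint estimates on time windows of length $2^{j}$ localized to the strip, followed by Cauchy--Schwarz in time; these are already available from the previous paragraph.
\end{enumerate}
At $(\rho,\rho)$ one has $\beta=0$, so the pieces cannot be summed directly. Instead, view $(B_j)_j$ as a vector-valued bilinear operator into $\ell^{\infty}_{\beta}$. Apply bilinear real interpolation with the choice $\theta_0=\theta_1=\frac12$ and suitable $(p,q)$ exponents. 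This produces a bound into $\ell^{1}_{0}$ with data in the Lorentz-type spaces $L^2_t(L^{\rho',2})=L^2_tL^{\rho'}$ on the Hilbert-valued side, and the sum over $j$ converges. This step, namely arranging the interpolation exponents so that the $\ell^1$ summation is recovered, is the delicate part. I would copy Keel--Tao's Section 6 verbatim, using $\sigma=n/2$.
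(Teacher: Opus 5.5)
Your outline is the Keel--Tao proof: $TT^{*}$, one-dimensional Hardy--Littlewood--Sobolev away from the endpoint, and a dyadic decomposition of the bilinear form plus bilinear real interpolation at the endpoint. That is exactly what the paper relies on, since it gives no argument of its own; the lemma is the case $\sigma=\frac n2$ of the abstract theorem in \cite{tao}. Your duality reduction, the Riesz--Thorin bound $|s-\tau|^{-2/\gamma}$, and the non-endpoint cases are correct as written.

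In the endpoint sketch, however, three details are wrong and would break the argument if taken literally.

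(i) The exponent must be $\beta(a,b)=\frac n2-1-\frac{n}{2a}-\frac{n}{2b}$, which is Keel--Tao's $\sigma-1-\frac{\sigma}{a}-\frac{\sigma}{b}$. With your formula, $\beta(\rho,\rho)=1-\frac n2\neq 0$, which contradicts your own claim that $\beta$ vanishes there. Moreover, the $a=2$ bound you describe (energy estimate for one factor, the non-endpoint dual estimate plus H\"older on a window of length $2^{j}$ for the other) actually yields $\beta(2,b)=\frac n4-1-\frac{n}{2b}$. This agrees with the corrected formula, not with yours.

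(ii) The bilinear real interpolation lemma (Bergh--L\"ofstr\"om, as used by Keel--Tao) requires $0<\theta_0,\theta_1<\theta=\theta_0+\theta_1<1$. So $\theta_0=\theta_1=\frac12$, which gives $\theta=1$, is not allowed and yields nothing. The fix is as follows.
\begin{itemize}
\item Take any $\theta_0=\theta_1<\frac12$, e.g.\ $\frac13$, with $p=q=2$ and target index $1$.
\item Choose $a_0\neq a_1$ close to $\rho$ with $\frac1\rho=\frac{1-\theta_0}{a_0}+\frac{\theta_0}{a_1}$, and use the same exponents in both slots.
\item Set $\beta_0=\beta(a_0,a_0)$ and $\beta_1=\beta(a_0,a_1)=\beta(a_1,a_0)$.
\item Since $\beta$ is affine in $(\frac1a,\frac1b)$ and vanishes at $(\frac1\rho,\frac1\rho)$, the interpolated weight $(1-\theta)\beta_0+\theta\beta_1$ is then exactly $0$.
\item Because $\beta_0\neq\beta_1$, you have $(\ell^{\infty}_{\beta_0},\ell^{\infty}_{\beta_1})_{\theta,1}=\ell^{1}_{0}$, which is what lets you sum over $j$.
\end{itemize}

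(iii) $L^{\rho',2}\neq L^{\rho'}$. Interpolation gives the bound on $L^{2}_tL^{\rho',2}$ (using $(L^2_tX_0,L^2_tX_1)_{\theta_0,2}=L^2_t(X_0,X_1)_{\theta_0,2}$). You then conclude via the embedding $L^{\rho'}\subset L^{\rho',2}$, which is valid because $\rho'<2$.

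With these corrections your sketch is a faithful rendering of the cited proof.
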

\begin{proof}[Proof of Proposition \ref{peony}]
	Following the methodology in \cite{kenig,chatter}, the solution formula can be reformulated as
	\begin{equation}
		\begin{aligned}
			w(x,t)=&-\int_{\mathbb{R}} \frac{\text{sgn}(\tau)}{2} e^{i(t-\tau)\Delta}f(\tau)d\tau\\
			&+\frac{i}{2\pi}\int_{\mathbb{R}^{n-1}} \int_{\mathbb{R}}\left[\lim_{\varepsilon\rightarrow0}\int_{\mu+|k_{n}|^{2}\geq\varepsilon}e^{ik\cdot x}\frac{e^{it\mu}}{\mu+|k|^{2}}\widehat{f}(k,\mu)dk_{n}\right]\,d\mu\, dk'
			\equiv I_{1}+I_{2}.
		\end{aligned}
	\end{equation}
	From estimate \eqref{circuitous} and the Strichartz estimates listed in Chapter 2 of \cite{facetious}, we obtain
	\begin{equation}
		\|I_{1}\|_{C_{b}(\mathbb{R};\mathcal{H}^{0}(\mathbb{R}^{n}))}\lesssim \left\|\int_{\mathbb{R}} \text{sgn}(\tau) e^{-i\tau\Delta}f(\tau)d\tau\right\|_{L^{2}(\mathbb{R}^{n})}\lesssim  \|f\|_{L^{\gamma'}(\mathbb{R};L^{\rho'}(\mathbb{R}^{n}))}.
	\end{equation}
	By applying properties of the Fourier transform, we rewrite $I_{2}$ in convolution form:
	\begin{equation}\label{kee}
		I_{2}(x,t)=	-\frac{1}{2\pi}  \int_{\mathbb{R}} \int_{\mathbb{R}} \int_{\mathbb{R}^{n-1}}e^{it\mu}e^{ik'\cdot x'}\left[\widehat{f}^{\,y'}(k',\cdot,\tau)\ast M(k',\cdot,\tau,\mu) \right](x_{n})\,dk'\,d\mu\, d\tau,
	\end{equation}
	where the integral kernel
	{\small	\begin{equation*}
			M(k',\delta,\tau,\mu)=\frac{e^{-i\mu \tau}}{2\sqrt{\left|\mu+|k'|^{2}\right|}}	\left(-\raisebox{0.5ex}{$\chi$}_{\mathbb{R}_{+}}(\mu+|k'|^{2})e^{-|\delta|\sqrt{\left|\mu+|k'|^{2}\right|}}+\raisebox{0.5ex}{$\chi$}_{\mathbb{R}_{-}}(\mu+|k'|^{2})\sin(|\delta|\sqrt{|\mu+|k'|^{2}|} )\right).
	\end{equation*}}
	Here, we have used $\left(\tfrac{1}{(\cdot)^{2}+\zeta}\right)^{\vee}=\tfrac{e^{-|\cdot|\sqrt{|\zeta|}}}{2\sqrt{|\zeta|}}$ when $\zeta>0$, and  $\left(\text{P.V.}\tfrac{1}{(\cdot)^{2}+\zeta}\right)^{\vee}=\tfrac{\sin\left(|\cdot|\sqrt{|\zeta|}\right)}{2\sqrt{|\zeta|}}$ when $\zeta<0$. The former is a classical conclusion regarding the Poisson kernel, and the latter  follows from the decomposition:
	\begin{equation*}
		\begin{aligned}
			\left(\text{P.V.}\tfrac{1}{(\cdot)^{2}+\zeta}\right)^{\vee} &= \tfrac{1}{2\sqrt{|\zeta|}} \left(\text{P.V.} \tfrac{1}{(\cdot)-\sqrt{|\zeta|}}-\text{P.V.}\tfrac{1}{(\cdot)+\sqrt{|\zeta|}}\right)^{\vee}\\
			&=\tfrac{i\text{ sgn}\left(\cdot\right)}{4\sqrt{|\zeta|}}  \left(e^{i\sqrt{|\zeta|}(\cdot)} -e^{-i\sqrt{|\zeta|}(\cdot)} \right)=\tfrac{\sin \left(|\cdot|\sqrt{|\zeta|}\right)}{2\sqrt{|\zeta|}}.
		\end{aligned}
	\end{equation*}
	
	For all $x_{n}\in\mathbb{R}$, $s\in\mathbb{R}$ and $g\in \mathscr{S}(\mathbb{R}^{n})$, we define
	\begin{equation}\label{oper}
		\left[U_{x_{n}}(s)g\right] (z) :=  \left(\int_{\mathbb{R}} \widehat{g}(k',\mu) \overline{M(k',x_{n}-z_{n},s,\mu)}\sqrt{\left|\mu+|k'|^{2}\right|}\, d\mu \right)^{\vee_{k'}} (z'),
	\end{equation}
	where $z=(z',z_{n})\in\mathbb{R}^{n-1}\times\mathbb{R}$. By Plancherel's identity combined with the variable substitution  $\mu=\eta|\eta|-|k'|^{2}$, we derive
	{\footnotesize	\begin{equation*}
			\begin{aligned}
				&\left\|U_{x_{n}}(s)g\right\|_{L^{2}(\mathbb{R}^{n})}\approx \left\|   \left\|\int_{\mathbb{R}} \widehat{g}(k',\mu) \overline{M(k',x_{n}-z_{n},s,\mu)}\sqrt{\left|\mu+|k'|^{2}\right|} \,d\mu \right\|_{L^{2}_{z_{n}}} \right\|_{L_{k'}^{2}}\\
				\approx &
				\left\|\left\|\int_{\mathbb{R}} \widehat{g}(k',\eta|\eta|-|k'|^{2}) e^{i\left(\eta|\eta|-|k'|^{2}\right)s}\left(-\raisebox{0.5ex}{$\chi$}_{\mathbb{R}_{+}} (\eta) e^{-|x_{n}-z_{n}||\eta|}+\raisebox{0.5ex}{$\chi$}_{\mathbb{R}_{-}}(\eta)\sin \left(\left|x_{n}-z_{n}\right|\left|\eta\right|\right)\right)|\eta|\, d\eta\right\|_{L^{2}_{z_{n}}} \right\|_{L_{k'}^{2}}.
			\end{aligned}
	\end{equation*}}
	Applying the $L^{2}$-boundedness of the Laplace transform (see \cite{kdv}) yields
	\begin{equation}\label{tass}
		\|U_{x_{n}}(s)g\|_{L^{2}(\mathbb{R}^{n})}\lesssim \left\|\widehat{g}\left(k',\eta|\eta|-|k'|^{2}\right)	\left|\eta\right|\right\|_{L^{2}_{k',\eta}(\mathbb{R}^{n})}=\|g\|_{\mathcal{H}^{0}(\mathbb{R}^{n})}.
	\end{equation}
	Then, we can define a bounded operator $U_{x_{n}}(s):\mathcal{H}^{0}(\mathbb{R}^{n})\rightarrow L^{2}(\mathbb{R}^{n})$,  as $\mathscr{S}(\mathbb{R}^{n})$ is dense in $\mathcal{H}^{0}(\mathbb{R}^{n})$. Furthermore, the adjoint operator $U_{x_{n}}(s)^{*}$  can be expressed explicitly as	
	\begin{equation}\label{adj}
		U_{x_{n}}(s)^{*}h= \left(\int_{\mathbb{R}} \widehat{h}^{\,y'}(k',y_{n}) M(k',x_{n}-y_{n},s,\mu)\,dy_{n}\right)^{\vee_{(k',\mu)}},
	\end{equation}
	where $h\in\mathscr{S}(\mathbb{R}^{n})$. We can then reformulate $I_{2}$ in \eqref{kee} as
	\begin{equation}
		I_{2}(x,t)=-\frac{1}{2\pi}\int_{\mathbb{R}}\left[U_{x_{n}}(\tau)^{*}f(\tau)\right](x',t)d\tau. \end{equation}
	From \eqref{oper}, \eqref{adj} and  the variable substitution $\mu=\eta|\eta|-|k'|^{2}$, we have
	\begin{equation*}
		\begin{aligned}
			&\left[U_{x_{n}}(s)U_{x_{n}}(\tau)^{*}h\right](z)\\
			=  & \left(\int_{\mathbb{R}}\int_{\mathbb{R}}   \widehat{h}^{y'}(k',y_{n}) M(k',x_{n}-y_{n},\tau,\mu)\,  \overline{M(k',x_{n}-z_{n},s,\mu)}\sqrt{\left|\mu+|k'|^{2}\right|} \,dy_{n}\,d\mu\right)^{\vee_{k'}}(z')\\
			=&\int_{\mathbb{R}^{n-1}}\int_{\mathbb{R}}\int_{\mathbb{R}}\int_{\mathbb{R}^{n-1}}h(y) e^{i(\eta|\eta|-|k'|^{2})(s-\tau)} e^{ik'\cdot(z'-y')}  \Big(-\raisebox{0.5ex}{$\chi$}_{\mathbb{R}_{+}}(\eta)e^{-|\eta|\left(|x_{n}-y_{n}|+|x_{n}-z_{n}|\right)}\\
			&+\raisebox{0.5ex}{$\chi$}_{\mathbb{R}_{-}}(\eta)\sin(|x_{n}-y_{n}||\eta|) \sin\left(|x_{n}-z_{n}||\eta|\right)\Big) dy'\,dy_{n}\,d\eta\,dk'.
		\end{aligned}
	\end{equation*}
	Applying Fubini's theorem and van der Corput's lemma \cite{mat} gives the estimate
	{\small		\begin{equation}\label{iconic}
			\begin{aligned} &\left\|[U_{x_{n}}(s)U_{x_{n}}(\tau)^{*}h]\right\|_{L^{\infty}(\mathbb{R}^{n})}
				\leq  \varlimsup_{m\rightarrow\infty}\int_{\mathbb{R}^{n}}|h(y)|\Big|\int_{[-m,m]^{n-1}}e^{-i|k'|^{2}(s-\tau)} e^{ik'\cdot(z'-y')}dk'\Big|\\
				\times&\left(\,\Big|\int_{0}^{m}e^{i\eta^{2}(s-\tau)} e^{-\eta\left(|x_{n}-y_{n}|+|x_{n}-z_{n}|\right)} d\eta\Big|
				+\Big|\int_{-m}^{0}e^{-i\eta^{2}(s-\tau)}\sin(|x_{n}-y_{n}|\eta)\sin(|x_{n}-z_{n}|\eta)d\eta\Big|\,\right)dy\\
				\lesssim &\,
				\frac{1}{|s-\tau|^{\frac{n}{2}}}\|h\|_{L^{1}(\mathbb{R}^{n})}.
			\end{aligned}
	\end{equation}}
	Combining the estimates \eqref{tass}, \eqref{iconic} with a density argument, we verify that the hypotheses of Lemma \ref{tas} are fulfilled. Consequently, for any $x_{n}\in\mathbb{R}$, we derive
	\begin{equation}
		\|I_{2}\|_{\mathcal{H}^{0}(\mathbb{R}^{n})}\lesssim \|f\|_{L^{\gamma'}(\mathbb{R};L^{\rho'}(\mathbb{R}^{n}))}.
	\end{equation}
\end{proof}

Building on Proposition \ref{peony}, we establish the following result for $s\in(0,2)$, which is also valid for arbitrary admissible pairs $(\gamma,\rho)$. The norm equivalence
\begin{equation}\label{shenjing}			\|f\|_{\mathcal{H}^{s}(\mathbb{R}^{n})}\approx\|f\|_{\mathcal{H}^{s-2}(\mathbb{R}^{n})}+\|\partial_{t}f\|_{\mathcal{H}^{s-2}(\mathbb{R}^{n})}+\|\Delta'f\|_{\mathcal{H}^{s-2}(\mathbb{R}^{n})},\quad s\in \mathbb{R}
\end{equation}
 is needed, which follows immediately from the standard properties of the Fourier transform.

\begin{proposition}
	For $s\in(0,2)$, the $w$ given by \eqref{thorny} satisfies
	\begin{equation}\label{juvenile}
		\|w\|_{C_{b}(\mathbb{R};\mathcal{H}^{s}(\mathbb{R}^{n}))}\lesssim \|f\|_{L^{\gamma'}(\mathbb{R};W^{s,\rho'}(\mathbb{R}^{n}))}+\|f\|_{B^{\frac{s}{2}}_{\gamma',2}(\mathbb{R};L^{\rho'}(\mathbb{R}^{n}))}.
	\end{equation}
\end{proposition}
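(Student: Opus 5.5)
The plan is to reduce \eqref{juvenile} to the $\mathcal{H}^{0}$ bound \eqref{cope} of Proposition~\ref{peony}. I would use the fact that the weight in \eqref{blackmail} splits, for $s\ge 0$, as
\[
\left(1+|\xi'|^{2}+|\xi_{n}|\right)^{s}\approx \langle\xi'\rangle^{2s}+|\xi_{n}|^{s},
\]
where $\xi_n$ is the variable dual to $t$. Consequently, for a function $g(x',t)$,
\[
\|g\|_{\mathcal{H}^{s}}^{2}\approx \|\langle\nabla'\rangle^{s}g\|_{\mathcal{H}^{0}}^{2}+\int_{\mathbb{R}}\frac{\|g(\cdot,\cdot+h)-g\|_{\mathcal{H}^{0}}^{2}}{|h|^{1+s}}\,dh .
\]
The second term follows from Plancherel with the multiplier $\left||\xi'|^{2}+\xi_{n}\right|^{1/2}$: one has $\int|e^{ih\xi_n}-1|^2|h|^{-1-s}dh=c_s|\xi_n|^{s}$, which is valid precisely because $0<s<2$. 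This is a time-Besov $B^{s/2\cdot 2}$ characterization, and it explains the restriction on $s$. It therefore suffices to control $\mathcal{H}^0$ norms of $x'$-derivatives and of time differences of $w(\cdot,x_n,\cdot)$, uniformly in $x_n$. Continuity in $x_n$ then follows from dominated convergence, as in Proposition~\ref{p21}.

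The obstacle is that $w$ in \eqref{thorny} starts at time $0$, so it does not commute with time translations. I would use the decomposition from the proof of Proposition~\ref{peony}:
\[
w=-\tfrac12 e^{it\Delta}\!\int_{\mathbb{R}}\operatorname{sgn}(\tau)e^{-i\tau\Delta}f(\tau)\,d\tau\;+\;\Big(-\tfrac12\int_{\mathbb{R}}\big(\operatorname{sgn}(t-\tau)-\operatorname{sgn}(\tau)\big)\ldots\Big).
\]
More precisely, I would rewrite $I_1+I_2$ as a free solution $e^{it\Delta}w_0$ plus an operator $\mathcal{T}f$ built from the kernel $e^{i(t-\tau)\Delta}\operatorname{sgn}(t-\tau)$ together with $I_2$. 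By its Fourier form \eqref{kee}, the kernel $M$ of $I_2$ carries the factor $e^{-i\mu\tau}$, so $I_2$ depends on $t$ only through $t-\tau$. Hence $\mathcal{T}$ commutes with translations in $t$ and with Fourier multipliers in $x'$.

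The free part is handled directly. By \eqref{circuitous} and the dual Strichartz estimate applied after $\langle\nabla\rangle^{s}$,
\[
\|e^{it\Delta}w_0\|_{C_b(\mathbb{R};\mathcal{H}^s)}\lesssim\|w_0\|_{H^s}\lesssim\|f\|_{L^{\gamma'}W^{s,\rho'}}.
\]
For $\mathcal{T}$, the proof of Proposition~\ref{peony} (the $TT^*$ argument of Lemma~\ref{tas} together with the bound on $I_1$) shows that $\|\mathcal{T}f(\cdot,x_n,\cdot)\|_{\mathcal{H}^0}\lesssim\|f\|_{L^{\gamma'}L^{\rho'}}$ uniformly in $x_n$.

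With this in hand, the two pieces of the $\mathcal{H}^s$ norm follow. First, by commutation,
\[
\|\langle\nabla'\rangle^{s}\mathcal{T}f\|_{\mathcal{H}^0}=\|\mathcal{T}\langle\nabla'\rangle^{s}f\|_{\mathcal{H}^0}\lesssim\|\langle\nabla'\rangle^{s}f\|_{L^{\gamma'}L^{\rho'}}\lesssim\|f\|_{L^{\gamma'}W^{s,\rho'}},
\]
using Mikhlin's theorem with $1<\rho'<2$; this includes the endpoint. Second,
\[
\mathcal{T}f(\cdot+h)-\mathcal{T}f=\mathcal{T}\big(f(\cdot+h)-f\big),
\]
so squaring, multiplying by $|h|^{-1-s}$ and integrating bounds the time part by $\|f\|_{\dot B^{s/2}_{\gamma',2}(\mathbb{R};L^{\rho'})}^2$, which matches \eqref{denounce} with exponent $1+2\cdot\frac{s}{2}$.

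The main obstacle I expect is the rigorous justification of this splitting: that $I_1+I_2$ really equals a free evolution of an $H^s$-controlled datum plus a translation-covariant operator obeying \eqref{cope}, with the principal-value limits in $I_2$ surviving translation and multiplier commutation. I would first verify this for $f\in\mathscr{S}$ and then extend by density. If that route fails, the fallback is complex interpolation between \eqref{cope} at $s=0$ and an $s=2$ bound obtained from \eqref{shenjing}, treating the $f(0)$ boundary term from \eqref{famine} separately.
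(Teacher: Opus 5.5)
Your argument is correct, but it follows a genuinely different route from the paper's. The paper never works at fractional order directly. For $f\in C_c^\infty$ supported in $(-a,a)$ it writes $w=w_1-w_2$, where $w_1=i\int_{-a}^{0}e^{i(t-\tau)\Delta}f(\tau)\,d\tau$ is a free solution and $w_2=-i\int_{-a}^{t}e^{i(t-\tau)\Delta}f(\tau)\,d\tau$ commutes with $\partial_t$ because $f(-a)=0$. Then \eqref{cope} bounds $w_2$, $\partial_t w_2$ and $\Delta' w_2$ in $\mathcal{H}^0$, \eqref{shenjing} gives the $s=2$ estimate \eqref{bribery}, and \eqref{juvenile} is obtained by interpolating \eqref{bribery} with \eqref{cope}.

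You replace ``integer-order bound plus interpolation'' by two ingredients. The first is the identity $\|g\|_{\mathcal{H}^s}^2\approx\|\langle\nabla'\rangle^s g\|_{\mathcal{H}^0}^2+\int\|g(\cdot,\cdot+h)-g\|_{\mathcal{H}^0}^2|h|^{-1-s}\,dh$, valid exactly for $0<s<2$. The second is a splitting into a free part and a part commuting with time translations and $x'$-multipliers. Feeding $\langle\nabla'\rangle^s f$ and $f(\cdot+h)-f$ into the $\mathcal{H}^0$ estimate then produces exactly the two norms on the right of \eqref{juvenile}. What this buys is an interpolation-free proof, and interpolation is where the paper's argument is thinnest. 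There the $s=2$ data space is the intersection $L^{\gamma'}W^{2,\rho'}\cap W^{1,\gamma'}L^{\rho'}$, and showing that $L^{\gamma'}W^{s,\rho'}\cap B^{s/2}_{\gamma',2}L^{\rho'}$ embeds into its interpolant is asserted rather than proved. The paper's route, in exchange, yields \eqref{bribery} itself, which it needs again in Proposition~\ref{abss} and Proposition~\ref{weak}.

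Three small repairs are needed.

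\emph{The splitting.} Your displayed formula is garbled. The correct identity, checked separately for $t>0$ and $t<0$, is
$w(t)=-\tfrac{i}{2}e^{it\Delta}\int_{\mathbb{R}}\operatorname{sgn}(\tau)e^{-i\tau\Delta}f(\tau)\,d\tau-\tfrac{i}{2}\int_{\mathbb{R}}\operatorname{sgn}(t-\tau)e^{i(t-\tau)\Delta}f(\tau)\,d\tau$.
Your $\mathcal{T}f$ is just the second term, which corresponds to the paper's $I_2$; nothing else is added to it. Defined this way in physical space, $\mathcal{T}$ visibly commutes with time translations and $x'$-multipliers. The bound $\|\mathcal{T}f(\cdot,x_n,\cdot)\|_{\mathcal{H}^0}\lesssim\|f\|_{L^{\gamma'}L^{\rho'}}$ follows from \eqref{cope} after subtracting the free part, which is controlled by \eqref{circuitous} and dual Strichartz. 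So the principal-value difficulty you flag as the main obstacle never arises.

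\emph{The multiplier theorem.} The symbol $\langle\xi'\rangle^s\langle\xi\rangle^{-s}$ is not an isotropic Mikhlin multiplier for $0<s<2$: at $\xi'=e_1$, its $\partial_{\xi_1}^2$ is $\approx c_s|\xi_n|^{-s}$ with $c_s\neq0$, not $O(|\xi|^{-2})$. It does satisfy the product-type bounds $|\xi^\beta\partial^\beta m|\lesssim1$ for $\beta\in\{0,1\}^n$. Hence the Marcinkiewicz--Lizorkin theorem gives $\|\langle\nabla'\rangle^s f\|_{L^{\rho'}}\lesssim\|f\|_{W^{s,\rho'}}$.

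\emph{Continuity in $x_n$.} For the $\mathcal{T}$-part this is more easily obtained by density from the uniform-in-$x_n$ bound than by dominated convergence.
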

\begin{proof}
	We first prove the following inequality
	\begin{equation}\label{bribery}	\|w\|_{C_{b}(\mathbb{R};\mathcal{H}^{2}(\mathbb{R}^{n}))}\lesssim\|f\|_{L^{\gamma'}(\mathbb{R};W^{2,\rho'}(\mathbb{R}^{n}))}+\|f\|_{W^{1,\gamma'}(\mathbb{R};L^{\rho'}(\mathbb{R}^{n}))}.
	\end{equation}
	By a density argument, it suffices to consider $f\in C_{c}^{\infty}(\mathbb{R}^{n+1})$. Clearly,  there exists $a>0$ such that $f(\cdot,t)=0$ for $t\in\mathbb{R}\setminus(-a,a)$. We decompose \eqref{thorny} as
	\begin{equation*}
		\begin{aligned}
			w(t)&=i\int_{-a}^{0}e^{i(t-\tau)\Delta} f(\tau) d\tau-i\int_{-a}^{t}e^{i(t-\tau)\Delta}f(\tau)d\tau \\
			&\equiv w_{1}(t)-w_{2}(t).
		\end{aligned}
	\end{equation*}
	From \eqref{circuitous} and \eqref{utter}, we derive
	\begin{equation*}  \|w_{1}\|_{C_{b}(\mathbb{R};\mathcal{H}^{2}(\mathbb{R}^{n}))}\lesssim \left\|\int_{-a}^{0} e^{-i\tau\Delta} f(\tau)d\tau\right\|_{H^{2}(\mathbb{R}^{n})}\lesssim \|f\|_{L^{\gamma'}(\mathbb{R};W^{2,\rho'}(\mathbb{R}^{n}) )},
	\end{equation*}
	where we have used the fact that the adjoint operator of $e^{i\tau\Delta}$ is $e^{-i\tau\Delta}$. Since $f(-a)=0$, we have $\partial_{t}w_{2}(t)=i\int_{-a}^{t}e^{i(t-\tau)\Delta}\partial_{t}f(\tau)d\tau$. By \eqref{cope}, it follows that
	\begin{equation*}
		\|w_{2}\|_{C_{b}(\mathbb{R};\mathcal{H}^{0}(\mathbb{R}^{n}))}+\|\partial_{t}w_{2}\|_{C_{b}(\mathbb{R};\mathcal{H}^{0}(\mathbb{R}^{n}))}\lesssim \|f\|_{W^{1,\gamma'}(\mathbb{R};L^{\rho'}(\mathbb{R}^{n}))},
	\end{equation*}
	and similarly, $\|\Delta'w_{2}\|_{C_{b}(\mathbb{R};\mathcal{H}^{0}(\mathbb{R}^{n}))}\lesssim \|f\|_{L^{\gamma'}(\mathbb{R};W^{2,\rho'}(\mathbb{R}^{n}))}$.
	Therefore, \eqref{bribery} holds for $w_{2}$  via \eqref{shenjing}. Finally, \eqref{juvenile} follows by interpolation between the estimates \eqref{cope} and \eqref{bribery}.
\end{proof}	
\begin{remark}
	Especially, for the case $s=1$, we have
	\begin{equation}\label{crisp}
		\|w\|_{C_{b}^{1}(\mathbb{R};\mathcal{H}^{s-1} (\mathbb{R}^{n}) )}\lesssim \|f\|_{L^{\gamma'}(\mathbb{R};W^{s,\rho'}(\mathbb{R}^{n}))}.
	\end{equation}
	This result is obtained by differentiating \eqref{thorny} with respect to $x_{n}$  and then applying \eqref{cope}.
\end{remark}

Using the zero extension property, we establish the following  fundamental inequalities.
\begin{proposition}\label{tardy}
	For $s\in\left[0,\frac{1}{2}\right)$, the $w$ given by \eqref{thorny} satisfies
	\begin{equation}\label{covet}		\|w\|_{C_{b}(\mathbb{R};\mathcal{H}^{s}(\mathbb{R}^{n}))}\lesssim \|f\|_{L^{1}(\mathbb{R};H^{s}(\mathbb{R}^{n}))}.
	\end{equation}
\end{proposition}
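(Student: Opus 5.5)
The plan is to use Minkowski's inequality to reduce \eqref{covet} to an estimate for a single free evolution started at time $\tau$, and then to handle the resulting truncation in time with the zero-extension property of $\mathcal{H}^{s}$ for $s<\frac12$.

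\textbf{Step 1 (Duhamel and Minkowski).} Write the integral in \eqref{thorny} as
\begin{equation*}
w(t)=-i\int_{\mathbb{R}}\chi_{[0,t]}(\tau)\,e^{i(t-\tau)\Delta}f(\tau)\,d\tau ,
\end{equation*}
with the sign convention for $t<0$ understood. Fix $x_{n}$ and regard $w(\cdot,x_{n},\cdot)$ as a function of $(x',t)$. Minkowski's inequality in the $\mathcal{H}^{s}(\mathbb{R}^{n})$ norm gives
\begin{equation*}
\|w(\cdot,x_{n},\cdot)\|_{\mathcal{H}^{s}}\le\int_{\mathbb{R}}\big\|\chi_{\{t:\,\tau\in[0,t]\}}(t)\,\big[e^{i(t-\tau)\Delta}f(\tau)\big](\cdot,x_{n})\big\|_{\mathcal{H}^{s}(\mathbb{R}^{n}_{x',t})}\,d\tau .
\end{equation*}
For each fixed $\tau$, the function $G_{\tau}(x',t)=[e^{i(t-\tau)\Delta}f(\tau)](x',x_{n})$ is a free Schr\"odinger solution, time-translated by $\tau$, with data $f(\tau)$. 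Since translation in $t$ is a unitary modulation on the Fourier side and leaves the $\mathcal{H}^{s}$ weight unchanged, \eqref{circuitous} yields $\|G_{\tau}\|_{\mathcal{H}^{s}}\lesssim\|f(\tau)\|_{H^{s}}$, uniformly in $x_{n}$ and $\tau$.

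\textbf{Step 2 (cutoff in time).} For $t\ge0$ the indicator restricts $G_{\tau}$ to the half-space $\{t>\tau\}$ (for $\tau>0$), and similarly for negative times. The claim to prove is that multiplication by $\chi_{\{t>a\}}$ is bounded on $\mathcal{H}^{s}(\mathbb{R}^{n})$ for $0\le s<\frac12$, with a bound uniform in $a$. By translation invariance it suffices to take $a=0$.

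Set $\sigma=|\xi'|^{2}+\xi_{n}$, using the time frequency $\xi_{n}$ as in \eqref{blackmail}. The weight is then $(1+|\xi'|^{2}+|\xi_{n}|)^{s}|\sigma|^{1/2}$. The approach is to conjugate by the modulation $e^{-it\Delta'}$, which commutes with $\chi_{\{t>0\}}$ and turns the weight into $(1+|\xi'|^{2}+|\sigma-|\xi'|^{2}|)^{s}|\sigma|^{1/2}$. Up to constants this is comparable to $\big(1+|\xi'|^{2}+|\sigma|\big)^{s}|\sigma|^{1/2}$ when $|\sigma|\gtrsim|\xi'|^{2}$. So for each fixed $\xi'$ one needs boundedness of $\chi_{\mathbb{R}_{+}}$ on the one-dimensional weighted space with weight $|\sigma|^{1/2}(\langle\xi'\rangle^{2}+|\sigma|)^{s}$. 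This weight is essentially a combination of $\dot H^{1/4}$ and $\dot H^{1/4+s/2}$ in $t$. Since $\frac14+\frac s2<\frac12$ exactly when $s<\frac12$, both powers are $A_{2}$ weights on $\mathbb{R}$ (the Hilbert transform is bounded there), so $\chi_{\mathbb{R}_{+}}$ is bounded on both pieces and hence on the weighted space.

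\textbf{Main obstacle.} The hard step is the region $|\sigma|\ll|\xi'|^{2}$, where the weight degenerates toward $\langle\xi'\rangle^{2s}|\sigma|^{1/2}$. This is still an $A_{2}$ power weight in $\sigma$, since $\frac12<1$, multiplied by the constant $\langle\xi'\rangle^{2s}$. So I expect to bound the whole weight by
\begin{equation*}
\langle\xi'\rangle^{2s}|\sigma|^{1/2}+|\sigma|^{1/2+s},
\end{equation*}
which is comparable to it, and apply the $A_{2}$ argument to each piece, with uniform constants in $\xi'$.

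\textbf{Step 3 (conclusion).} Combining Steps 1 and 2 gives $\|w(\cdot,x_{n},\cdot)\|_{\mathcal{H}^{s}}\lesssim\int\|f(\tau)\|_{H^{s}}\,d\tau$, uniformly in $x_{n}$. Continuity in $x_{n}$ then follows by dominated convergence, as in Proposition \ref{p21}.
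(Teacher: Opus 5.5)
Your proposal is correct and follows the paper's proof: Minkowski's inequality in $\tau$, boundedness of the time truncation (zero extension) on $\mathcal{H}^{s}$ for $s<\frac{1}{2}$, and then \eqref{circuitous} applied to the time-translated free evolution. The only difference is that the paper cites Corollary 2.4(2) of \cite{facilitate} for the truncation bound, whereas you prove it directly by conjugating with $e^{-it\Delta'}$ and using that $|\sigma|^{1/2}$ and $|\sigma|^{1/2+s}$ are $A_{2}$ weights. Note also that $(1+|\xi'|^{2}+|\sigma-|\xi'|^{2}|)^{s}\approx(\langle\xi'\rangle^{2}+|\sigma|)^{s}$ holds for all $\sigma$, so the region $|\sigma|\ll|\xi'|^{2}$ is not a real obstacle.
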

\begin{proof}
	We define
	\begin{equation*}
		G(t,\tau):=\left\{\begin{aligned}&e^{i(t-\tau)\Delta}f(\tau), \quad 0\leq\tau<t, \\
			&-e^{i(t-\tau)\Delta}f(\tau),\quad t\leq\tau<0,  \\
			&0,\quad \text{otherwise},
		\end{aligned}\right.
	\end{equation*}	
	then $w(t)=-i\int_{\mathbb{R}}G(t,\tau)d\tau$ for $t\in \mathbb{R}$. Since $\forall t, \tau\in\mathbb{R}$, $e^{i(t-\tau)\Delta}f(\tau)\in H^{s}(\mathbb{R}^{n})$, we can naturally define $\left[e^{i(t-\tau)\Delta}f(\tau)\right](x',x_{n})$, and $G(t,\tau)$ can be naturally written as $G(x',x_{n},t,\tau)$. Then for any $x_{n}\in\mathbb{R}$, we have
	\begin{equation}\label{jie}			\|w(x',x_{n},t)\|_{\mathcal{H}_{x',t}^{s}(\mathbb{R}^{n})}\leq \int_{\mathbb{R}}\left\|G(x',x_{n},t,\tau)\right\|_{\mathcal{H}_{x',t}^{s}\left(\mathbb{R}^{n}\right)}d\tau.
	\end{equation}
	Note that for any $x_{n}\in \mathbb{R}$ and $\tau\geq0$, $G(x',x_{n},t,\tau)$ is a zero extension of $[e^{i(t-\tau)\Delta}f(\tau)](x',x_{n})$ to the domain $\{(x',t)\in \mathbb{R}^{n-1}\times(\tau,\infty)\}$. When $0\leq s<\frac{1}{2}$, by the boundedness of zero extension shown in the Corollary 2.4 (2) of \cite{facilitate}, we have
	\begin{equation*}
		\begin{aligned}		\|G(\cdot,x_{n},\cdot,\tau)\|_{\mathcal{H}^{s}(\mathbb{R}^{n})}\approx		\left\|\left[e^{i(\cdot-\tau)\Delta}f(\tau)\right](\cdot,x_{n})\right\|_{\mathcal{H}^{s}(\mathbb{R}^{n-1}\times(\tau,\infty))}=\left\|\left[e^{i(\cdot)\Delta}f(\tau)\right](\cdot,x_{n})\right\|_{\mathcal{H}^{s}(\mathbb{R}^{n}_{+})},
		\end{aligned}
	\end{equation*}
	a similar result holds for $\tau<0$, hence
	\begin{equation}\label{zhi}
		\|G(\cdot,x_{n},\cdot,\tau)\|_{\mathcal{H}^{s}(\mathbb{R}^{n})}\lesssim \left\|\left[e^{i(\cdot)\Delta} f(\tau)\right](\cdot,x_{n})\right\|_{\mathcal{H}^{s}(\mathbb{R}^{n})}.
	\end{equation}
	By using \eqref{jie}--\eqref{zhi} and inequality \eqref{circuitous}, we have
	\begin{equation}\label{xx}
		\begin{aligned}			\|w(\cdot,x_{n},\cdot)\|_{\mathcal{H}^{s}(\mathbb{R}^{n})}\lesssim\int_{\mathbb{R}}\left\|\left[e^{i(\cdot)\Delta}f(\tau)\right](\cdot,x_{n})\right\|_{\mathcal{H}^{s}(\mathbb{R}^{n})}d\tau			\lesssim\|f\|_{L^{1}(\mathbb{R};H^{s}(\mathbb{R}^{n}))}.
		\end{aligned}
	\end{equation}
\end{proof}
\begin{corollary}
	For $s\in\left(2,\frac{5}{2}\right)$,  the $w$ given by \eqref{thorny} satisfies
	\begin{equation}\label{perilous} \|w\|_{C_{b}(\mathbb{R};\mathcal{H}^{s}(\mathbb{R}^{n}))}\lesssim\|f\|_{L^{1}(\mathbb{R};H^{s}(\mathbb{R}^{n}))}+\|f\|_{W^{1,1}(\mathbb{R};H^{s-2}(\mathbb{R}^{n}))}.
	\end{equation}
\end{corollary}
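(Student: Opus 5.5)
We reduce to Proposition \ref{tardy} via the norm equivalence \eqref{shenjing}. For $s\in(2,\frac{5}{2})$ we have $s-2\in(0,\frac12)$, and \eqref{shenjing} gives
\begin{equation*}
\|w\|_{\mathcal{H}^{s}}\approx\|w\|_{\mathcal{H}^{s-2}}+\|\partial_{t}w\|_{\mathcal{H}^{s-2}}+\|\Delta'w\|_{\mathcal{H}^{s-2}},
\end{equation*}
with the norms taken in $(x',t)$ for each fixed $x_{n}$. We bound the three terms separately.

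The terms $w$ and $\Delta'w$ are handled directly by \eqref{covet}. Since $\Delta'$ commutes with $e^{i(t-\tau)\Delta}$, $\Delta'w$ is the Duhamel integral \eqref{thorny} with forcing $\Delta'f$. Hence \eqref{covet} at regularity $s-2$ gives
\begin{equation*}
\|w\|_{C_{b}(\mathbb{R};\mathcal{H}^{s-2})}+\|\Delta'w\|_{C_{b}(\mathbb{R};\mathcal{H}^{s-2})}\lesssim\|f\|_{L^{1}(\mathbb{R};H^{s-2})}+\|\Delta'f\|_{L^{1}(\mathbb{R};H^{s-2})}\lesssim\|f\|_{L^{1}(\mathbb{R};H^{s})}.
\end{equation*}

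For $\partial_{t}w$, we use density to assume $f\in C_{c}^{\infty}(\mathbb{R}^{n+1})$, supported in $t\in(-a,a)$, and split $w=w_{1}-w_{2}$ as in the proof of \eqref{juvenile}.
\begin{itemize}
\item The term $w_{1}(t)=e^{it\Delta}\phi$, with $\phi=i\int_{-a}^{0}e^{-i\tau\Delta}f(\tau)\,d\tau$, is a free solution. By \eqref{circuitous}, $\|w_{1}\|_{C_{b}(\mathbb{R};\mathcal{H}^{s})}\lesssim\|\phi\|_{H^{s}}\le\|f\|_{L^{1}(\mathbb{R};H^{s})}$, so no time derivative is needed for $w_{1}$.
\item For $w_{2}(t)=-i\int_{-a}^{t}e^{i(t-\tau)\Delta}f(\tau)\,d\tau$, the vanishing $f(-a)=0$ gives $\partial_{t}w_{2}=-i\int_{-a}^{t}e^{i(t-\tau)\Delta}\partial_{t}f(\tau)\,d\tau$. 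This is again a truncated Duhamel integral.
\end{itemize}

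The truncated Duhamel integral falls under the zero-extension argument of Proposition \ref{tardy}. Its kernel $\chi_{(-a,t)}(\tau)\,e^{i(t-\tau)\Delta}\partial_{t}f(\tau)$ is, for fixed $\tau$, the zero extension in $t$ of a free evolution to $\{t>\tau\}$. For $s-2<\frac12$ this extension is bounded in $\mathcal{H}^{s-2}$ (Corollary 2.4 of \cite{facilitate}). Minkowski's inequality in $\tau$ together with \eqref{circuitous} then gives
\begin{equation*}
\|\partial_{t}w_{2}\|_{C_{b}(\mathbb{R};\mathcal{H}^{s-2})}\lesssim\|\partial_{t}f\|_{L^{1}(\mathbb{R};H^{s-2})}.
\end{equation*}
Since \eqref{shenjing} also requires the $w_{2}$ and $\Delta'w_{2}$ pieces, we note that the same argument applied to $f$ and $\Delta'f$ bounds them by $\|f\|_{L^{1}H^{s}}$. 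Therefore $\|w_{2}\|_{\mathcal{H}^{s}}\lesssim\|f\|_{L^{1}H^{s}}+\|f\|_{W^{1,1}H^{s-2}}$, and with the bound for $w_{1}$ this gives \eqref{perilous}.

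The constants do not depend on $a$, and $W^{1,1}(\mathbb{R};H^{s-2})\hookrightarrow C_{b}$ is dense with $f(-a)=0$ for compactly supported approximants. Passing to the limit therefore completes the proof. The step needing the most care is confirming that the truncation argument of Proposition \ref{tardy} applies verbatim with lower limit $-a$ instead of $0$. I expect it does, since the zero-extension bound is translation invariant in $t$.
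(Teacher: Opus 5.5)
Your argument is correct and is essentially the one the paper intends. It is the proof of \eqref{bribery}: split at $t=-a$, bound the free part $w_1$ via \eqref{circuitous} at full regularity $s$, and handle the truncated Duhamel part through \eqref{shenjing}, with \eqref{covet} at regularity $s-2\in(0,\frac12)$ replacing \eqref{cope}. The lower limit $-a$ is indeed harmless, since a shift in $t$ only multiplies the $(x',t)$-Fourier transform by a phase and so preserves $\mathcal{H}^{s-2}$ norms. As a trivial slip, with your sign convention for $w_2$ one has $w=w_1+w_2$ rather than $w=w_1-w_2$, which does not affect any estimate.
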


In the study of global well-posedness for the $n$-dimensional NLS equation, we consider constructing Gronwall-type inequalities. For this purpose, the derivation of the estimates presented below is necessary, with \eqref{cope} and \eqref{covet} providing the basis for this derivation.
\begin{proposition}\label{p33}
	For $s\in\left[0,\frac{3}{2}\right]$, $1\leq \alpha,\beta\leq 2$ satisfing $\frac{1}{\alpha}+\frac{n}{2\beta}=1+\frac{3n-4s}{12}$,  the $w$ given by \eqref{thorny} satisfies
	\begin{equation}\label{taowuji}
		\|w\|_{C_{b}(\mathbb{R};\mathcal{H}^{s}(\mathbb{R}^{n-1}\times(0,T)))}\lesssim\langle T\rangle^{\frac{s}{3}}\|f\|_{L^{\alpha}(0,T;W^{s,\beta}(\mathbb{R}^{n}))}.
	\end{equation}
	In addition, when  $n=2$ and $s\neq\frac{3}{2}$, we require $\left(\alpha,\beta\right)\neq \left(2,\frac{6n}{3n+6-4s}\right)$.
\end{proposition}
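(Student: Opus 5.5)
Since $f$ enters only on $(0,T)$, I replace $f$ by $f\chi_{(0,T)}$; the Duhamel integral \eqref{thorny} on $(0,T)$ is unchanged. The plan is to interpolate between two endpoint estimates. The low-regularity one, \eqref{cope}, holds with any admissible pair on the right. The high-regularity one is the $L^1_t$-type bound \eqref{covet} at $s=0$, and an $\mathcal{H}^{3/2}$ analogue of it. The factor $\langle T\rangle^{s/3}$ and the relation $\frac1\alpha+\frac{n}{2\beta}=1+\frac{3n-4s}{12}$ should come from H\"older in time on $(0,T)$ together with Sobolev embedding in $x$. This trades integrability for the $s$-derivatives that the $\mathcal{H}^s$ norm at the boundary demands.

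\textbf{Step 1 (the case $s=0$).} Here the relation reads $\frac1\alpha+\frac n{2\beta}=1+\frac n4$, which is exactly the dual admissible condition $\frac1{\gamma'}+\frac n{2\rho'}=1+\frac n4$. So \eqref{taowuji} with $s=0$ is just \eqref{cope} with $(\gamma',\rho')=(\alpha,\beta)$, and there is no $T$ loss. The constraint $1\le\alpha,\beta\le2$ corresponds to $\gamma,\rho\ge2$. The excluded pair for $n=2$ at $s=0$ is $(\alpha,\beta)=(2,1)$, i.e.\ $(\gamma,\rho)=(2,\infty)$, the forbidden endpoint.

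\textbf{Step 2 (the case $s=\frac32$).} The relation becomes $\frac1\alpha+\frac n{2\beta}=\frac12+\frac n4$. I use \eqref{shenjing} in the form $\|w\|_{\mathcal H^{3/2}}\approx\|w\|_{\mathcal H^{-1/2}}+\|\partial_t w\|_{\mathcal H^{-1/2}}+\|\Delta' w\|_{\mathcal H^{-1/2}}$, or rather its fractional version $\|w\|_{\mathcal{H}^{3/2}}\approx\|w\|_{\mathcal H^{0}}+\|\langle\nabla'\rangle^{3/2}w\|_{\mathcal H^0}+\|\langle\partial_t\rangle^{3/4}w\|_{\mathcal H^0}$. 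The spatial part is $\|\langle\nabla'\rangle^{3/2}w\|_{\mathcal H^0}\lesssim\|f\|_{L^{\gamma'}W^{3/2,\rho'}}$ by \eqref{cope}. For the time part, I will not use Besov smoothness of $f$. Instead I exploit the gain from the Schr\"odinger relation $\partial_t\sim\Delta$ (roughly, $\langle\partial_t\rangle^{3/4}\sim\langle\nabla\rangle^{3/2}$ on the characteristic set), as in the proof of \eqref{juvenile}. The cutoff at $t=0,T$ costs at most what the zero extension can absorb, and this is where the restriction $s\le\frac32$ comes from, cf.\ the zero-extension bound used in Proposition~\ref{tardy}. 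This gives a bound by $\|f\|_{L^{\gamma'}(0,T;W^{3/2,\rho'})}$ for admissible $(\gamma,\rho)$. I then apply H\"older on $(0,T)$, $\|f\|_{L^{\gamma'}(0,T)}\le T^{\frac1{\gamma'}-\frac1\alpha}\|f\|_{L^{\alpha}(0,T)}$ with $\beta=\rho'$. This produces the mismatch $\frac1\alpha+\frac n{2\beta}=\frac12+\frac n4$ versus $1+\frac n4$, i.e.\ a loss $T^{1/2}=T^{s/3}$, consistent with the stated exponent.

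\textbf{Step 3 (interpolation).} For general $s\in(0,\frac32)$ I interpolate complexly between Steps 1 and 2, with $\theta=\frac{2s}3$. The spaces $\mathcal H^s$ restricted to $\mathbb R^{n-1}\times(0,T)$ interpolate as weighted $L^2$ spaces, and mixed Lebesgue--Sobolev spaces $L^\alpha W^{s,\beta}$ interpolate in the standard way. The power becomes $\langle T\rangle^{\theta/2}=\langle T\rangle^{s/3}$, and the index relation interpolates linearly, giving $1+\frac{3n-4s}{12}$. For $n=2$, the excluded pair $(2,\frac{6n}{3n+6-4s})$ is precisely the image of the forbidden endpoint $\gamma=2$ (so $\alpha=\gamma'=2$), which must be avoided on the $s=0$ side. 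At $s=\frac32$ the $T$-loss leaves room to avoid it, which matches the exception $s\ne\frac32$.

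The main obstacle is Step 2. There I need an $\mathcal H^{3/2}$ Strichartz-type bound for the truncated Duhamel term that requires only spatial regularity of $f$, using the dispersive relation together with the $T^{1/2}$ loss. My first attempt is to write the time-derivative piece via the $I_2$ kernel from Proposition~\ref{peony}, multiply by $|\mu+|k'|^2|^{3/4}$, and rerun the $TT^*$ argument of Lemma~\ref{tas}, with the time localization handled by H\"older.
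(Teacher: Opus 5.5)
Your Steps 1 and 3 are correct. They coincide with the paper, which also proves the endpoint $s=\frac32$ and then interpolates with \eqref{cope}.

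The gap is Step 2. The bound you aim for there is false, so no refinement of the $TT^*$ argument can produce it. At $s=\frac32$ the constraints $\frac1\alpha\ge\frac12$ and $\frac{n}{2\beta}\ge\frac n4$ force $(\alpha,\beta)=(2,2)$. With $\beta=\rho'$, the only dual admissible pair available is then $(\gamma',\rho')=(1,2)$. Your Step 2 therefore amounts to $\|w\|_{\mathcal{H}^{3/2}}\lesssim\|f\|_{L^{1}(0,T;H^{3/2})}$, an $\mathcal{H}^{3/2}$ analogue of \eqref{covet}.

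Test it on $f_\varepsilon=\varepsilon^{-1}\chi_{(\tau,\tau+\varepsilon)}(t)g(x)$, with $0<\tau<T$ and $g$ having $x'$-frequencies in $\{|\xi'|\le1\}$. Then $\|f_\varepsilon\|_{L^1H^{3/2}}=\|g\|_{H^{3/2}}$. The trace of $w_\varepsilon$ at height $x_n$ vanishes for $t\le\tau$ and is close to $-ig(\cdot,x_n)$ at $t=\tau+\varepsilon$; both times lie inside $(0,T)$.

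Project any extension $F$ of this trace to $x'$-frequencies $|\xi'|\le2$; this leaves $F$ unchanged on $(0,T)$. After projection, the $\mathcal{H}^{3/2}$ weight is $\approx|\xi_n|^2$ for $|\xi_n|\gg1$. Using \eqref{facile}-type control for low time frequencies, one gets $\|F(t_1)-F(t_2)\|_{L^2_{x'}}\lesssim|t_1-t_2|^{1/2}\|F\|_{\mathcal{H}^{3/2}}$ for $|t_1-t_2|\le1$. Hence the restricted norm is $\gtrsim\varepsilon^{-1/2}\|g(\cdot,x_n)\|_{L^2}$.

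The mechanism is that $\partial_t\sim\Delta$ holds only modulo the forcing. Indeed $i\partial_t w=-\Delta w+f$, and the boundary trace of $f$ must itself lie in $\mathcal{H}^{-1/2}$. At high time frequency that space is essentially $L^2_{x',t}$, which no $L^1_t$ bound controls. Three further points follow:
\begin{itemize}
\item The same computation with $f=\chi_{(T/4,T/2)}(t)g$ shows your constant $T^{1/2}$ is too strong as $T\to0$; the paper gets $\langle T\rangle^{1/2}$.
\item The zero-extension bound behind Proposition \ref{tardy} holds only for $s<\frac12$, so it cannot account for the threshold $\frac32$.
\item Appealing to the proof of \eqref{juvenile} contradicts your plan to avoid time regularity of $f$, since \eqref{bribery} uses $\|f\|_{W^{1,\gamma'}(\mathbb{R};L^{\rho'})}$.
\end{itemize}

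The missing idea is how the paper handles $s=\frac32$. Let $\boldsymbol f$ be the zero extension of $f$ and $\boldsymbol w$ the corresponding Duhamel term. By \eqref{shenjing}, $\|\boldsymbol w\|_{\mathcal{H}^{3/2}}$ reduces to the $\mathcal{H}^{-1/2}$ norms of $\boldsymbol w$, $\partial_t\boldsymbol w$ and $\Delta'\boldsymbol w$.
\begin{itemize}
\item The $\boldsymbol w$ term is handled by \eqref{cope}.
\item The equation $i\partial_t\boldsymbol w=-\Delta\boldsymbol w+\boldsymbol f$ trades $\partial_t$ for $\Delta$, at the price of the term $\boldsymbol f$.
\item The traces of $\partial_j^2\boldsymbol w$ are bounded in $L^2_{x',t}\hookrightarrow\mathcal{H}^{-1/2}$. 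One takes the Fourier transform in $x'$ and applies the one-dimensional Kato-smoothing Duhamel bound \eqref{faction} on each $\xi'$-slice. This costs $\|\boldsymbol f\|_{L^1(\mathbb{R};H^{3/2})}$.
\item The trace of $\boldsymbol f$ itself is bounded in $L^2_{x',t}$ by the spatial trace theorem ($H^{\frac12+\varepsilon}$ suffices). This costs $\|\boldsymbol f\|_{L^2(\mathbb{R};H^{3/2})}$.
\end{itemize}
Together these give $\lesssim\langle T\rangle^{1/2}\|f\|_{L^2(0,T;H^{3/2})}$, and interpolation with \eqref{cope} yields \eqref{taowuji}.

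The unavoidable $L^2_t$ term is what makes $s=\frac32$ the threshold. It works only because of the embedding $L^2\hookrightarrow\mathcal{H}^{-1/2}$, which fails for $\mathcal{H}^{\sigma}$ with $\sigma>-\frac12$. Beyond $s=\frac32$, time regularity of $f$ is genuinely required, as in Proposition \ref{tart}.
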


Establishing Proposition \ref{p33} for $s=\frac{3}{2}$ requires norm equivalence \eqref{sj}. However, in high dimensions, we lack a theory for the space $\mathcal{H}^{s}$ with negative index. To this end, we connect it to the one-dimensional estimates, which is achieved through a partial Fourier transform, reducing the expression containing the $n$-dimensional Schr\"{o}dinger group to that involving the one-dimensional Schr\"{o}dinger group \eqref{star}. Let us recall the Kato smoothing inequality \eqref{o}. Next, we use \eqref{o} to derive the estimate \eqref{faction}, thereby proving Proposition \ref{p33}. When $t\geq0$, we define
\begin{equation*}
	g(t,\tau):=\left\{\begin{aligned}&e^{i(t-\tau)\partial_{x}^{2}}f(\tau),  \quad 0\leq\tau <t, \\&0, \quad  0\leq t\leq \tau, \end{aligned}\right.
\end{equation*}
clearly, $\int_{0}^{t} e^{i(t-\tau)\partial_{x}^{2}}f(\tau)d\tau=\int_{0}^{\infty}g(t,\tau)d\tau$.
Note that the solution of \eqref{inh} can be given by $-i\int_{0}^{t} e^{i(t-\tau)\partial_{x}^{2}}f(\tau)d\tau$. Then from \eqref{o}, we have
\begin{equation*}
	\begin{aligned}
		\left\|\int_{0}^{t} e^{i(t-\tau)\partial_{x}^{2}}f(\tau)d\tau\right\|_{L_{t}^{2}(\mathbb{R}_{+})}
		&\leq \int_{0}^{\infty}\|g(t,\tau)\|_{L_{t}^{2}(\mathbb{R}_{+})}d\tau	=\int_{0}^{\infty}\left\|e^{it\partial_{x}^{2}}f(\tau)\right\|_{L_{t}^{2}(\mathbb{R}_{+})}d\tau\\
		&\leq \int_{0}^{\infty}\|f(\tau)\|_{\dot{H}^{-\frac{1}{2}}(\mathbb{R})}d\tau,
	\end{aligned}
\end{equation*}
and similarly for the case of $L_{t}^{2}(\mathbb{R}_{-})$, thus we obtain
\begin{equation}\label{faction}
	\left\|-i\int_{0}^{t} e^{i(t-\tau)\partial_{x}^{2}}f(\tau)d\tau\right\|_{L_{t}^{2}(\mathbb{R})}\lesssim\|f\|_{L^{1}(\mathbb{R};\dot{H}^{-\frac{1}{2}}(\mathbb{R}))}.
\end{equation}	

\begin{proof}[Proof of Proposition \ref{p33}]
	Let $\boldsymbol{f}(x,t):=\left\{\begin{aligned}&f(x,t), \quad t\in (0,T)\\
		&0, \quad t\in\mathbb{R}\setminus (0,T)\end{aligned}\right.$, i.e., $\boldsymbol{f}$ is the zero extension of $f$. Denote $\boldsymbol{w}(t):=-i \int_{0}^{t}e^{i(t-\tau)\Delta}\boldsymbol{f}(\tau)d\tau$, clearly, $\boldsymbol{w}(t)=w(t)$ for $t\in (0,T)$. In order to prove \eqref{taowuji}, we first consider $s=\frac{3}{2}$, where the only case is $(\alpha,\beta)=(2,2)$. According to \eqref{shenjing}, we have
	{\small\begin{equation}\label{sj} \|\boldsymbol{w}(x',x_{n},t)\|_{\mathcal{H}^{\frac{3}{2}}(\mathbb{R}^{n})}\approx\|\boldsymbol{w}(x',x_{n},t)\|_{\mathcal{H}^{-\frac{1}{2}}(\mathbb{R}^{n})}+\|\partial_{t}\boldsymbol{w}(x',x_{n},t)\|_{\mathcal{H}^{-\frac{1}{2}}(\mathbb{R}^{n})}+\|\Delta'\boldsymbol{w}(x',x_{n},t)\|_{\mathcal{H}^{-\frac{1}{2}}(\mathbb{R}^{n})}.
	\end{equation}}
	It follows from \eqref{cope} that
	\begin{equation}\label{fir} \|\boldsymbol{w}\|_{L^{\infty}(\mathbb{R};\mathcal{H}^{-\frac{1}{2}}(\mathbb{R}^{n}))}\leq \|\boldsymbol{w}\|_{L^{\infty}\left(\mathbb{R};\mathcal{H}^{0}(\mathbb{R}^{n})\right)}\lesssim\|\boldsymbol{f}\|_{L^{1}(\mathbb{R};L^{2}(\mathbb{R}^{n}))}.
	\end{equation}
	For $\partial_{t}\boldsymbol{w}$, we have
	\begin{equation}\label{wf}
		i\partial_{t}\boldsymbol{w}=-\Delta \boldsymbol{w}+\boldsymbol{f}.
	\end{equation}
	Next, we will introduce \eqref{qian} to estimate $\boldsymbol{f}$. By trace theorem,
	\begin{equation} \label{qian}
		\begin{aligned}
			\left \| \|\boldsymbol{f}(x',x_{n},t)\|_{L_{x'}^{2}(\mathbb{R}^{n-1})}  \right\|_{L_{t}^{2}(\mathbb{R})} &= \left \| \|\boldsymbol{f}(x',0+x_{n},t)\|_{L_{x'}^{2}(\mathbb{R}^{n-1})}  \right\|_{L_{t}^{2}(\mathbb{R})}\\&\lesssim   \left\|\|\boldsymbol{f}(\cdot+x_{n}e_{n},t)\|_{H^{\frac{1}{2}+\varepsilon} (\mathbb{R}^{n})}\right\|_{L_{t}^{2}(\mathbb{R})}\leq \|\boldsymbol{f}\|_{L^{2}(\mathbb{R};H^{\frac{3}{2}}(\mathbb{R}^{n}))} .
		\end{aligned}
	\end{equation}
	From \eqref{blackmail}, the embedding   $L^{2}(\mathbb{R}^{n})\hookrightarrow\mathcal{H}^{-\frac{1}{2}}(\mathbb{R}^{n})$ follows, together with \eqref{qian}, we get
	\begin{equation}\label{batter}
		\begin{aligned}	\left\|\|\boldsymbol{f}(\cdot,x_{n},\cdot)\|_{\mathcal{H}^{-\frac{1}{2}}(\mathbb{R}^{n})}\right\|_{L_{x_{n}}^{\infty}(\mathbb{R})}\lesssim\|\boldsymbol{f}\|_{L^{2}(\mathbb{R};H^{\frac{3}{2}}(\mathbb{R}^{n}))}.
		\end{aligned}
	\end{equation}
	
	In order to estimate $\Delta \boldsymbol{w}$ and $\Delta' \boldsymbol{w}$, we consider
	\begin{equation}\label{pw}
		\partial_{j}^{2} \boldsymbol{w} =-i\int_{0}^{t}e^{i(t-\tau)\Delta} \partial_{j}^{2}\boldsymbol{f}(\tau)d\tau,\quad j=1,\cdots,n.
	\end{equation}
	It suffices to give the estimate for one case, for example, for $\partial_{n}^{2}\boldsymbol{w}$. The estimate for other cases is analogous.
	Taking the Fourier transform to the both sides of the equality \eqref{pw} with respect to $x'$ gives
	\begin{equation}\label{star}	\widehat{\partial_{n}^{2}\boldsymbol{w}}^{x'}(\xi',x_{n},t)=-i\int_{0}^{t}e^{-i|\xi'|^{2}(t-\tau)}\left[e^{i(t-\tau)\partial_{x}^{2}}\widehat{\partial_{n}^{2}\boldsymbol{f}}^{x'}(\xi',\cdot,\tau)\right](x_{n}) d\tau.
	\end{equation}
	According to the one-dimensional estimate \eqref{faction}, we get
	\begin{equation*} \left\|\widehat{\partial_{n}^{2}\boldsymbol{w}}^{x'}(\xi',x_{n},\cdot)\right\|_{L^{2}(\mathbb{R})}\lesssim \int_{\mathbb{R}} \left\|\widehat{\partial_{n}^{2}\boldsymbol{f}}^{x'}(\xi',\cdot,\tau)\right\|_{\dot{H}^{-\frac{1}{2}}(\mathbb{R})}d\tau\leq\int_{\mathbb{R}} \left\|\widehat{\boldsymbol{f}}^{x'}(\xi',\cdot,\tau)\right\|_{\dot{H}^{\frac{3}{2}}(\mathbb{R})}d\tau.
	\end{equation*}
	Using  Plancherel's identity and Minkowski's integral inequality, we obtain
	\begin{equation*}		\left\|\partial_{n}^{2}\boldsymbol{w}(\cdot,x_{n},\cdot)\right\|_{L^{2}(\mathbb{R}^{n})}\lesssim\int_{\mathbb{R}}\left\|\left\|\widehat{\boldsymbol{f}}^{x'}(\xi',\cdot,\tau)\right\|_{\dot{H}^{\frac{3}{2}}(\mathbb{R})}\right\|_{L^{2}_{\xi'}(\mathbb{R}^{n-1})}d\tau\leq\int_{\mathbb{R}}\|\boldsymbol{f}(\tau)\|_{H^{\frac{3}{2}}(\mathbb{R}^{n})}d\tau,
	\end{equation*}
	thus, we have
	\begin{equation}\label{x} \left\|\partial_{j}^{2}\boldsymbol{w}(\cdot,x_{n},\cdot)\right\|_{L^{2}(\mathbb{R}^{n})}\leq\|\boldsymbol{f}\|_{L^{1}(\mathbb{R};H^{\frac{3}{2}}(\mathbb{R}^{n}))},\quad j=1,\cdots,n.
	\end{equation}
	From $L^{2}(\mathbb{R}^{n})\hookrightarrow\mathcal{H}^{-\frac{1}{2}}(\mathbb{R}^{n})$ and \eqref{x}, we get
	\begin{equation}\label{del}		\|\Delta'\boldsymbol{w}(\cdot,x_{n},\cdot)\|_{\mathcal{H}^{-\frac{1}{2}}(\mathbb{R}^{n})}+\|\Delta\boldsymbol{w}(\cdot,x_{n},\cdot)\|_{\mathcal{H}^{-\frac{1}{2}}(\mathbb{R}^{n})}\lesssim\|\boldsymbol{f}\|_{L^{1}(\mathbb{R};H^{\frac{3}{2}}(\mathbb{R}^{n}))}.
	\end{equation}
	Combining \eqref{wf}, \eqref{batter} and \eqref{del}, we obtain
	\begin{equation}\label{proficient} \left\|\partial_{t}\boldsymbol{w}\right\|_{L^{\infty}(\mathbb{R};\mathcal{H}^{-\frac{1}{2}}(\mathbb{R}^{n}))}+\left\|\Delta'\boldsymbol{w}\right\|_{L^{\infty}(\mathbb{R};\mathcal{H}^{-\frac{1}{2}}(\mathbb{R}^{n}))}\lesssim  \|\boldsymbol{f}\|_{L^{1}(\mathbb{R};H^{\frac{3}{2}}(\mathbb{R}^{n}))}+\|\boldsymbol{f}\|_{L^{2}(\mathbb{R};H^{\frac{3}{2}}(\mathbb{R}^{n}))}.
	\end{equation}
	By substituting \eqref{fir} and \eqref{proficient} into \eqref{sj}, we have
	\begin{equation}\label{32}
		\begin{aligned}
			\|w\|_{L^{\infty}(\mathbb{R};\mathcal{H}^{\frac{3}{2}}(\mathbb{R}^{n-1}\times(0,T)))}\leq \|\boldsymbol{w}\|_{L^{\infty}(\mathbb{R};\mathcal{H}^{\frac{3}{2}}(\mathbb{R}^{n}))}\lesssim&\,\|\boldsymbol{f}\|_{L^{1}(\mathbb{R};H^{\frac{3}{2}}(\mathbb{R}^{n}))}+\|\boldsymbol{f}\|_{L^{2}(\mathbb{R};H^{\frac{3}{2}}(\mathbb{R}^{n}))}\\
			\lesssim&\,\langle T\rangle^{\frac{1}{2}} \|f\|_{L^{2}(0,T;H^{\frac{3}{2}}(\mathbb{R}^{n}))}.
		\end{aligned}
	\end{equation}
	Therefore, we complete the proof of \eqref{taowuji} for $s=\frac{3}{2}$. The estimate \eqref{taowuji} is then  obtained by  interpolating between \eqref{32} and \eqref{cope}.
\end{proof}

When $s>\frac{3}{2}$, $\|w\|_{C_{b}(\mathbb{R};\mathcal{H}^{s}(\mathbb{R}^{n-1}\times(0,T)))}$ is controlled by a larger norm, which involves time derivative of $f$. The proposition stated below is guaranteed by the key embedding~\eqref{embe}, obtained via a frequency-space analysis.
\begin{proposition}\label{tart}
	For $s\in\left(\frac{3}{2},\frac{5}{2}\right)$, the $w$ given by \eqref{thorny} satisfies
	\begin{equation}\label{gender} \|w\|_{C_{b}(\mathbb{R};\mathcal{H}^{s}(\mathbb{R}^{n-1}\times(0,T)))}\lesssim   \left(\langle T\rangle^{\frac{1}{2}}+T^{-\frac{2s-3}{4}}\right)\left(\|f\|_{L^{2}(0,T;H^{s}(\mathbb{R}^{n}))}+\|f\|_{H^{1}(0,T;H^{s-2}(\mathbb{R}^{n}))}\right).
	\end{equation}
\end{proposition}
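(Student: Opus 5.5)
We reduce to the norm equivalence \eqref{shenjing}, as in the proof of Proposition \ref{p33} for $s=\frac32$, now at the level $s-2\in(-\frac12,\frac12)$. Since the restriction norm on $\mathbb{R}^{n-1}\times(0,T)$ is an infimum over extensions, it suffices to build a good extension of $w|_{(0,T)}$ and bound it in $\mathcal{H}^{s}(\mathbb{R}^{n})$. The plan is to control $w$, $\partial_t w$ and $\Delta' w$ separately in $\mathcal{H}^{s-2}$.

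\textbf{Step 1: the terms $w$ and $\Delta' w$.} Let $\boldsymbol f$ be the zero extension of $f$ outside $(0,T)$ and $\boldsymbol w$ the corresponding Duhamel integral. Then $\|\boldsymbol w\|_{\mathcal{H}^{s-2}}\le\|\boldsymbol w\|_{\mathcal{H}^{0}}$ for $s<2$, and it is interpolated/embedded otherwise. For the $\mathcal{H}^{s-2}$ bound on $\boldsymbol w$, use \eqref{cope} with $(\gamma,\rho)=(\infty,2)$ or Proposition \ref{p33}. This gives a bound by $\|f\|_{L^1(0,T;H^{s})}\lesssim T^{1/2}\|f\|_{L^2H^s}$. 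For $\Delta'\boldsymbol w$, we apply Proposition \ref{p33}, or \eqref{covet}/\eqref{cope} in the range $s-2\in(-\frac12,\frac12)$, to the Duhamel integral of $\Delta'\boldsymbol f$. This gives a bound by $\langle T\rangle^{1/2}\|f\|_{L^2(0,T;H^s)}$.

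\textbf{Step 2: the time derivative.} Here the zero extension is harmful, because $\partial_t\boldsymbol f$ produces Dirac masses at $t=0,T$ whenever $f(0),f(T)\neq0$. Instead, we extend $f$ by an $H^1_t$-preserving extension $\tilde f$: reflect across $t=0$ and $t=T$, and multiply by a cutoff adapted to scale $T$. We then write $\partial_t\tilde w=-ie^{it\Delta}\tilde f(0)-i\int_0^t e^{i(t-\tau)\Delta}\partial_t\tilde f\,d\tau$, as in the proof of \eqref{famine}.

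\begin{itemize}
\item The Duhamel part is bounded in $\mathcal{H}^{s-2}$ by $\|\partial_t \tilde f\|_{L^1 H^{s-2}}\lesssim\langle T\rangle^{1/2}\|f\|_{H^1(0,T;H^{s-2})}$, via \eqref{covet} (valid since $s-2<\frac12$; for $s-2<0$ we use the embedding into $\mathcal{H}^0$ estimates).
\item The free part is bounded via \eqref{circuitous} by $\|\tilde f(0)\|_{H^{s-2}}$. By the one-dimensional trace inequality in $t$, $\|f(0)\|_{H^{s-2}}\lesssim (T^{-1/2}+1)\|f\|_{L^2H^{s-2}}+T^{1/2}\|\partial_tf\|_{L^2H^{s-2}}$.
\end{itemize}

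The resulting negative power of $T$, after interpolating against $\mathcal{H}^{s}$ versus the lower-order pieces, is where the factor $T^{-\frac{2s-3}{4}}$ appears. Equivalently, we scale the problem to $T=1$ using $u_\nu$-type dilations in $(x',t)$: the quantity $\mathcal{H}^{s}$ scales with an extra $|\cdot|^{1/4}$ weight, and the exponent $-\frac{2s-3}{4}$ should match the homogeneity gap between $\dot{\mathcal{H}}^{s}$ and $\|f\|_{H^1H^{s-2}}$.

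\textbf{Step 3: the embedding \eqref{embe} and the main obstacle.} The main obstacle is justifying the use of \eqref{shenjing} at the negative index $s-2$ when $s<2$, together with the claimed embedding into $\mathcal{H}^{s-2}$, given that the paper lacks an $\mathcal{H}^{s}$ theory for negative $s$. I would prove an embedding of the form
\[
L^2(\mathbb{R};H^{s-\frac32}(\mathbb{R}^{n-1}))\cap\ldots\hookrightarrow\mathcal{H}^{s-2}(\mathbb{R}^n)
\]
by a direct frequency computation. On the region $\bigl||\xi'|^2+\xi_n\bigr|\lesssim 1$ the weight $\bigl||\xi'|^2+\xi_n\bigr|^{1/2}$ is harmless, while away from it the weight is controlled by $(1+|\xi'|^2+|\xi_n|)^{1/2}$. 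Combining this with the trace bound \eqref{qian} and the partial Fourier reduction \eqref{star}–\eqref{faction}, applied to $\partial_n^2$ of $\Delta^{(s-2)/2}$-type multipliers, yields the needed control of $\Delta\boldsymbol w$ and $\tilde f$ at level $\mathcal{H}^{s-2}$. This is the step where I expect the most care to be needed.
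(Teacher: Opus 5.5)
Your proposal has a genuine gap in Step 2, which is exactly where the factor $T^{-\frac{2s-3}{4}}$ has to come from.

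\textbf{The $T$-dependence is too weak.} You differentiate the Duhamel formula, $\partial_t\tilde w=-ie^{it\Delta}\tilde f(0)-i\int_0^te^{i(t-\tau)\Delta}\partial_\tau\tilde f\,d\tau$, and estimate the two pieces separately. This costs a full time derivative of the extension. Your trace bound gives $T^{-\frac12}\|f\|_{L^2(0,T;H^{s-2})}$ for the free piece. For the Duhamel piece, $\|\partial_t\tilde f\|_{L^1H^{s-2}}\lesssim T^{\frac12}\|\partial_tf\|_{L^2H^{s-2}}+T^{-\frac12}\|f\|_{L^2H^{s-2}}$. Since $\frac{2s-3}{4}<\frac12$, this is strictly weaker than \eqref{gender} for $T<1$.

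The loss cannot be removed afterwards. For $f\equiv g$ on $(0,T)$, the right side of \eqref{gender} is $O(T^{\frac{5-2s}{4}}\|g\|_{H^s})$. Your free piece alone is only bounded by $\|g\|_{H^{s-2}}$, independent of $T$; the two pieces nearly cancel, and the triangle inequality destroys that. Neither ``interpolation'' nor rescaling to $T=1$ repairs this, since $H^s$ and $\mathcal{H}^s$ are inhomogeneous. The weaker power would also be useless later: in \eqref{311}--\eqref{319} this factor multiplies $T^{\frac12}$, so $T^{-\frac12}$ would leave no smallness.

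\textbf{The missing idea.} The paper does not differentiate the Duhamel integral here. It uses the equation $i\partial_t\boldsymbol w=-\Delta\boldsymbol w+\boldsymbol f$, with $\boldsymbol f$ a Sobolev extension supported in $(-T,2T)$ satisfying $\|\boldsymbol f\|_{H^1(\mathbb{R};H^{s-2})}\lesssim(1+T^{-1})\|f\|_{H^1(0,T;H^{s-2})}$.
\begin{itemize}
\item The trace $\boldsymbol f(\cdot,x_n,\cdot)$ is placed in $\mathcal{H}^{s-2}$ via \eqref{emb} at index $s-2$, i.e.\ in $L^2_tH^{s-\frac32}_{x'}\cap H^{\frac{2s-3}{4}}_tL^2_{x'}$.
\item Then \eqref{embe} and $H^{3-s}_{x_n}\hookrightarrow L^\infty_{x_n}$ (valid since $3-s>\frac12$) finish the trace bound. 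This needs only $\frac{2s-3}{4}$ time derivatives.
\item The interpolation $\|\boldsymbol f\|_{H^{\frac{2s-3}{4}}H^{\frac32}}\lesssim\|\boldsymbol f\|_{L^2H^s}^{\frac{7-2s}{4}}\|\boldsymbol f\|_{H^1H^{s-2}}^{\frac{2s-3}{4}}$ makes the extension loss enter only as $(1+T^{-1})^{\frac{2s-3}{4}}$.
\end{itemize}
Your Step 3 gestures at such an embedding, but it leaves the time regularity as ``$\cap\ldots$'' and never feeds it into Step 2.

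\textbf{The negative index $s-2$ for $s\in(\frac32,2)$ is not handled.}
\begin{itemize}
\item In Step 2 you need $\|\int_0^te^{i(t-\tau)\Delta}g\,d\tau\|_{\mathcal{H}^{s-2}}\lesssim\|g\|_{L^1H^{s-2}}$ with $s-2<0$. But \eqref{covet} only covers indices in $[0,\frac12)$. Passing through $\mathcal{H}^0$ via \eqref{cope} requires $g\in L^{\gamma'}L^{\rho'}$, which $\partial_tf\in L^2H^{s-2}$ (negative order) does not provide.
\item The same objection applies to $\Delta'\boldsymbol w$ in Step 1, since $\Delta'\boldsymbol f$ lies only in $L^1H^{s-2}$. (Your bound for $w$ itself in Step 1 is fine.)
\end{itemize}
The paper avoids negative-order forcing entirely, since no $\partial_tf$ ever sits inside a Duhamel integral. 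It bounds $\Delta\boldsymbol w$ and $\Delta'\boldsymbol w$ by \eqref{del} at $s=\frac32$, proved through the partial Fourier transform \eqref{star} and the one-dimensional bound \eqref{faction}. It then interpolates with the $\mathcal{H}^0$ bound from \eqref{cope} at $s=2$.

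Finally, \eqref{shenjing} must be applied to a single extension of $w|_{(0,T)}$. You cannot use the zero extension for $w$ and $\Delta'w$ but $\tilde f$ for $\partial_tw$; use one extension throughout.
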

\begin{proof}
	By the Sobolev extension theory \cite{leo}, we can choose $\boldsymbol{f}$ such that $\boldsymbol{f}=f$ in $(0,T)$, $\text{supp} (\boldsymbol{f})\subset(-T,2T)$ and the following estimates hold:
	\begin{equation}\label{y1}
		\|\boldsymbol{f}\|_{L^{2}(\mathbb{R};H^{s}(\mathbb{R}^{n}))}\lesssim\|f\|_{L^{2}(0,T;H^{s}(\mathbb{R}^{n}))},
	\end{equation}
	\begin{equation}\label{y11}
		\|\boldsymbol{f}\|_{H^{1}(\mathbb{R};H^{s-2}(\mathbb{R}^{n}))}\lesssim \left(1 + T^{-1}\right)\|f\|_{H^{1}(0,T;H^{s-2}(\mathbb{R}^{n}))}.
	\end{equation}
	We denote
	\begin{equation*}
		\boldsymbol{w}(t):=-i \int_{0}^{t}e^{i(t-\tau)\Delta}\boldsymbol{f}(\tau)d\tau,
	\end{equation*}
	then $\boldsymbol{w}(t)=w(t)$ for $t\in (0,T)$. From \eqref{shenjing}, we need to consider some $\mathcal{H}^{s-2}(\mathbb{R}^{n})$ estimates. First, from \eqref{cope} and \eqref{covet}, we can obtain the estimate
	\begin{equation}\label{w1} \|\boldsymbol{w}\|_{L^{\infty}(\mathbb{R};\mathcal{H}^{s-2}(\mathbb{R}^{n}))}\leq\|\boldsymbol{w}\|_{L^{\infty}(\mathbb{R};\mathcal{H}^{0}(\mathbb{R}^{n}))}\lesssim\|\boldsymbol{f}\|_{L^{1}(\mathbb{R};H^{s}(\mathbb{R}^{n}))}\lesssim T^{\frac{1}{2}} \|\boldsymbol{f}\|_{L^{2}(\mathbb{R};H^{s}(\mathbb{R}^{n}))}
	\end{equation}
	holds for $s\in\left(\frac{3}{2},2\right)$ and
	\begin{equation}\label{w2} \|\boldsymbol{w}\|_{L^{\infty}(\mathbb{R};\mathcal{H}^{s-2}(\mathbb{R}^{n}))}\lesssim\|\boldsymbol{f}\|_{L^{1}(\mathbb{R};H^{s}(\mathbb{R}^{n}))}
	\end{equation}
	holds for $s\in \left[2,\frac{5}{2}\right)$.
	
	Next, for $\partial_{t}\boldsymbol{w}$ we have
	\begin{equation}\label{par}
		i\partial_{t}\boldsymbol{w}=-\Delta \boldsymbol{w}+\boldsymbol{f}.
	\end{equation}
	We introduce the space $H^{\alpha,\beta}(\mathbb{R}^{n-1}\times \mathbb{R})$ defined by the norm
	\begin{equation}
		\|f\|_{H^{\alpha,\beta}(\mathbb{R}^{n-1}\times \mathbb{R})}:=\left\|\left(\langle \xi \rangle^{\alpha}+\langle\eta\rangle^{\beta}\right)\widehat{f}(\xi,\eta)\right\|_{L^{2}(\mathbb{R}^{n-1}_{\xi}\times\mathbb{R}_{\eta})}.
	\end{equation}
	We first deduce two embeddings \eqref{emb} and \eqref{embe} to estimate $\boldsymbol{f}$. Let $(\xi', \xi_{n})\in \mathbb{R}^{n-1}\times\mathbb{R}$. By using the fact that
	\begin{equation*} \left||\xi'|^{2}+\xi_{n}\right|^{\frac{1}{2}}\leq\left(\langle\xi'\rangle^{2}+\langle\xi_{n}\rangle\right)^{\frac{1}{2}} \ \text{and}\ \left(1+|\xi'|^2+|\xi_n|\right)^s\approx \left(\langle\xi'\rangle^{2}+\langle\xi_{n}\rangle\right)^{s},
	\end{equation*}
	we obtain
	\begin{equation} \label{emb} H^{s+\frac{1}{2},\frac{2s+1}{4}}(\mathbb{R}^{n-1}\times\mathbb{R})\hookrightarrow\mathcal{H}^{s}(\mathbb{R}^{n}).
	\end{equation}
	When $s\in\left(\frac{3}{2},\frac{5}{2}\right)$, both $\frac{3}{6-2s}$ and $\frac{3}{2s-3}$ are belong to $(1,\infty)$, and their reciprocal sum equal to $1$. Then an application of the Young's equality implies
	\begin{equation*}		\langle\xi_{n}\rangle^{3-s}\langle\xi'\rangle^{s-\frac{3}{2}}\leq\frac{6-2s}{3}\langle\xi_{n}\rangle^{\frac{3}{2}}+
		\frac{2s-3}{3}\langle\xi'\rangle^{\frac{3}{2}}.
	\end{equation*}
	Clearly, $\langle\xi_{n}\rangle^{3-s}\leq\langle\xi_{n}\rangle^{\frac{3}{2}}$, so we have
	\begin{equation*}	\langle\xi_{n}\rangle^{3-s}\left(\langle\xi'\rangle^{s-\frac{3}{2}}+\langle\eta\rangle^{\frac{2s-3}{4}}\right)\lesssim\langle\eta\rangle^{\frac{2s-3}{4}}\left(\langle\xi'\rangle^{\frac{3}{2}}+\langle\xi_{n}\rangle^{\frac{3}{2}}\right)\approx\langle\eta\rangle^{\frac{2s-3}{4}}\langle\xi\rangle^{\frac{3}{2}},
	\end{equation*}
	where $\eta\in\mathbb{R}$. Based on these inequalities above we can deduce the second embedding
	\begin{equation}\label{embe}	H_{t}^{\frac{2s-3}{4}}(\mathbb{R};H_{x}^{\frac{3}{2}}(\mathbb{R}^{n}))\hookrightarrow		H^{3-s}_{x_{n}}(\mathbb{R};H^{s-\frac{3}{2},\frac{2s-3}{4}}(\mathbb{R}_{x'}^{n-1}\times\mathbb{R}_{t})).
	\end{equation}
	From \eqref{emb} and \eqref{embe}, we obtain
	\begin{equation}\label{fallacious}	
		\begin{aligned}	\|\boldsymbol{f}(\cdot,x_{n},\cdot)\|_{\mathcal{H}^{s-2}(\mathbb{R}^{n})}\lesssim&\,\|\boldsymbol{f}(\cdot,x_{n},\cdot)\|_{H^{s-\frac{3}{2},\frac{2s-3}{4}}(\mathbb{R}^{n-1}\times\mathbb{R})}\lesssim\|\boldsymbol{f}\|_{L_{x_{n}}^{\infty}(\mathbb{R};H^{s-\frac{3}{2},\frac{2s-3}{4}}(\mathbb{R}^{n-1}\times\mathbb{R}))}\\ \lesssim&\,\|\boldsymbol{f}\|_{H^{3-s}(\mathbb{R};H^{s-\frac{3}{2},\frac{2s-3}{4}}(\mathbb{R}^{n-1}\times\mathbb{R}))}\lesssim\|\boldsymbol{f}\|_{H^{\frac{2s-3}{4}}(\mathbb{R};H^{\frac{3}{2}}(\mathbb{R}^{n}))}\\			\lesssim&\,\|\boldsymbol{f}\|_{L^{2}(\mathbb{R};H^{s}(\mathbb{R}^{n}))}^{\frac{7-2s}{4}}\|\boldsymbol{f}\|_{H^{1}(\mathbb{R};H^{s-2}(\mathbb{R}^{n}))}^{\frac{2s-3}{4}}.
		\end{aligned}
	\end{equation}
	Here, the last inequality follows from the fact
	\begin{equation*}
		\begin{aligned}
			&\left\|\langle\xi\rangle^{\frac{3}{2}} \langle\eta\rangle^{\frac{2s-3}{4}} |\widehat{\boldsymbol{f}}(\xi,\eta)|\right\|_{L^{2}(\mathbb{R}^{n+1})}\\
			\leq&\left\|\langle\xi\rangle^{\frac{s(7-2s)}{4}} |\widehat{\boldsymbol{f}}(\xi,\eta)|^{\frac{7-2s}{4}}\right\|_{L^{\frac{8}{7-2s}}(\mathbb{R}^{n+1})}  \left\|\langle\xi\rangle^{\frac{(s-2)(2s-3)}{4}}\langle\eta\rangle^{\frac{2s-3}{4}} |\widehat{\boldsymbol{f}}(\xi,\eta)|^{\frac{2s-3}{4}}\right\|_{L^{\frac{8}{2s-3}}(\mathbb{R}^{n+1})}.
		\end{aligned}
	\end{equation*}
	
	It follows from \eqref{cope} and \eqref{covet} that
	\begin{equation}\label{cha}			\|\Delta'\boldsymbol{w}(\cdot,x_{n},\cdot)\|_{\mathcal{H}^{s-2}(\mathbb{R}^{n})}+ \|\Delta\boldsymbol{w}(\cdot,x_{n},\cdot)\|_{\mathcal{H}^{s-2}(\mathbb{R}^{n})}\lesssim\|\boldsymbol{f}\|_{L^{1}(\mathbb{R};H^{s}(\mathbb{R}^{n}))}
	\end{equation}
	holds for $s\in \left[2,\frac{5}{2}\right)$. The inequality \eqref{del} implies that \eqref{cha} remains valid for $s=\frac{3}{2}$, and by interpolation, it extends to $s\in \left(\frac{3}{2},2\right)$. Combining with \eqref{par}, \eqref{fallacious}--\eqref{cha}, and  the support condition $\text{supp} (\boldsymbol{f})\subset(-T,2T)$, we obtain
	\begin{equation}\label{chacha}
		\begin{aligned}			&\left\|\Delta'\boldsymbol{w}\right\|_{L^{\infty}(\mathbb{R};\mathcal{H}^{s-2}(\mathbb{R}^{n}))}+\left\|\partial_{t}\boldsymbol{w}\right\|_{L^{\infty}(\mathbb{R};\mathcal{H}^{s-2}(\mathbb{R}^{n}))}\\
			\lesssim &\ T^{\frac{1}{2}}\left\|\boldsymbol{f}\right\|_{L^{2}(\mathbb{R};H^{s}(\mathbb{R}^{n}))}+\left\|\boldsymbol{f}\right\|_{L^{2}(\mathbb{R};H^{s}(\mathbb{R}^{n}))}^{\frac{7-2s}{4}} \left\|\boldsymbol{f}\right\|_{H^{1}(\mathbb{R};H^{s-2}(\mathbb{R}^{n}))}^{\frac{2s-3}{4}}.
		\end{aligned}
	\end{equation}
	Finally, combining \eqref{shenjing},  \eqref{w1}--\eqref{w2}, \eqref{chacha} and \eqref{y1}--\eqref{y11}, we establish the desired estimate \eqref{gender}.
\end{proof}

\subsection{Reduced initial-boundary value problem}\label{sec23}
Finally, we study the reduced IBVP
\begin{equation}\label{locust}
	\left\{\begin{aligned}
		&	i\partial_{t}\phi+\Delta \phi=0, \quad (x, t) \in \mathbb{R}_{+}^{n} \times \mathbb{R}_{+}, \\
		&\phi(x,0)=0, \quad x\in\mathbb{R}^{n}_{+},\\& \phi(x',0,t)=g_{0}(x',t), \quad  (x',t)\in \mathbb{R}^{n-1}\times \mathbb{R}_{+}.
	\end{aligned}
	\right.
\end{equation}
In the following, we  employ the Fokas method to derive the solution formula for \eqref{locust} and establish the Strichartz estimates. The Fokas method is a novel approach for solving IBVP  for linear and integrable nonlinear PDEs, it was discovered by Fokas in 1997 \cite{Fokas} in his request to generalize inverse scattering method and was further developed by \cite{Fokas1, Fok}. This method has been successfully applied to establish well-posedness for both the one-dimensional  and  two-dimensional NLS equations \cite{dew,Himonas}.  As noted in \cite{facilitate}, Strichartz estimates for the $n$-dimensional problem have not been obtained via this method; in this work, we  provide these estimates.

For $\phi$ sufficiently smooth and sufficiently decaying as $x\rightarrow\infty$, the half space Fourier transform is defined by
\begin{equation}\label{torture}
	\mathscr{F}[\phi](\zeta,k,t):= \int_{\mathbb{R}^{n-1}}\int_{\mathbb{R}_{+}} \phi(x',x_{n},t) e^{-i\zeta\cdot x'-ik x_{n}} dx_{n}\,dx',\quad \zeta\in\mathbb{R}^{n-1},\ \text{Im} k\leq 0,
\end{equation}
where	$\mathscr{F}[\phi]\,(\zeta,\cdot,t)$ is analytic in $\mathbb{C}^{-}$ and continuous up to  $\mathbb{R}$.  Apply the half space Fourier transform to the equality in \eqref{locust}. After integrating by parts twice with respect to $x\in\mathbb{R}_{+}^{n}$ and integrating in time over $(0,t)$, we obtain
\begin{equation*} ie^{i(|\zeta|^{2}+k^{2})t}\mathscr{F}[\phi]\,(\zeta,k,t)=\int_{0}^{t}e^{i(|\zeta|^{2}+k^{2})\tau} \left(ik\widehat{\phi}^{\,x'}\,(\zeta,0,\tau) +\widehat{\phi}^{\,x'}_{x_{n}}(\zeta,0,\tau) \right) d\tau.
\end{equation*}
We then obtain the global relation
\begin{equation}\label{hawker}
	ie^{i(|\zeta|^{2}+k^{2})t}\mathscr{F}[\phi]\,(\zeta,k,t)=ik \widetilde{g}_{0}\left(\zeta,i(|\zeta|^{2}+k^{2}),t\right)+\widetilde{g}_{1}\left(\zeta,i(|\zeta|^{2}+k^{2}),t\right),\quad k\in\overline{\mathbb{C}^{-}},
\end{equation}
where\begin{equation}\label{251}
	\widetilde{g}_{0}(\zeta,k,t)=\int_{0}^{t} e^{k\tau} \widehat{\phi}^{\,x'}(\zeta,0,\tau)d\tau,\quad\widetilde{g}_{1}(\zeta,k,t)=\int_{0}^{t} e^{k\tau} \widehat{\phi}^{\,x'}_{x_{n}}(\zeta,0,\tau)d\tau.
\end{equation}

Taking the inverse Fourier transform of the global relation \eqref{hawker}, we obtain
\begin{equation} \label{253} \phi(x,t)=\frac{-i}{(2\pi)^{n}}\int_{\mathbb{R}^{n-1}}\int_{\mathbb{R}}e^{i\zeta\cdot x'+ikx_{n}} G(\zeta,k,t) dk\, d\zeta,
\end{equation}
where\begin{equation*}
	G(\zeta,k,t)=e^{-i(|\zeta|^{2}+k^{2})t} \left(ik \widetilde{g}_{0}(\zeta,i(|\zeta|^{2}+k^{2}),t)+\widetilde{g}_{1}(\zeta,i(|\zeta|^{2}+k^{2}),t)\right).
\end{equation*}
By applying integration by parts twice with respect to $\tau$ to \eqref{251}, we derive
\begin{equation*} e^{-i(|\zeta|^{2}+k^{2})t}\widetilde{g}_{0}\left(\zeta,i(|\zeta|^{2}+k^{2}),t\right)=\frac{\widehat{\phi}^{\,x'}(\zeta,0,t)}{i(|\zeta|^{2}+k^{2})}+\frac{\widehat{\phi}^{\,x'}_{t}(\zeta,0,t)}{(|\zeta|^{2}+k^{2})^{2}}-\int_{0}^{t} \widehat{\phi}^{\,x'}_{\tau\tau}(\zeta,0,\tau)\frac{e^{i(|\zeta|^{2}+k^{2})(\tau-t)}}{(|\zeta|^{2}+k^{2})^{2}} d\tau,
\end{equation*}
and similar equality holds for $\widetilde{g}_{1}$. Consequently,
\begin{equation}\label{smash}
	G(\zeta,k,t)=O\left(k^{-1}\right) \ \text{as}\ k\rightarrow\infty, \quad	G_{k}(\zeta,k,t)= O\left(k^{-2}\right) \ \text{as} \ k\rightarrow \infty.
\end{equation}
Considering the contour $\Gamma_{R}=\left\{Re^{i\theta},\theta\in\left[\frac{\pi}{2},\pi\right]\right\}$ and applying integration by parts, we obtain
\begin{equation*}
	\begin{aligned}
		\int_{\Gamma_{R}}e^{ikx_{n}} G(\zeta,k,t)dk&=
		\frac{G(\zeta,iR,t)e^{-Rx_{n}}-G(\zeta,-R,t)e^{-iR x_{n}}}{ix_{n}}-\int_{\Gamma_{R}} G_{k}(\zeta,k,t) \frac{e^{ikx_{n}}}{ix_{n}} dk\\
		&\rightarrow0,\quad R\rightarrow\infty,
	\end{aligned}
\end{equation*}
which is guaranteed by \eqref{smash}. By Cauchy's theorem, the real integral over $k$ in \eqref{253} deforms  into a complex integral, and then
{\small\begin{equation}\label{oral}
		\phi(x,t)=\frac{-i}{(2\pi)^{n}}\int_{\mathbb{R}^{n-1}}\int_{\partial D^{+}}e^{i\zeta\cdot x'+ik x_{n}-i(|\zeta|^{2}+k^{2})t} \left(ik \widetilde{g}_{0}(\zeta,i(|\zeta|^{2}+k^{2}),t)+\widetilde{g}_{1}(\zeta,i(|\zeta|^{2}+k^{2}),t)\right) dkd\zeta,
\end{equation}}
where $D^{+}$ and the direction of $\partial D^{+}$ are shown in Figure \ref{figure1}.
\begin{figure}
	\begin{center}
		\begin{tikzpicture}
			\node at (1.35, 1.35)  {$D^{+}$};
			\draw [ ](0,0)--(2,0)  node[right, scale=1] {\text{Re}\,$k$};
			\draw [ ](0,0)--(0,2)  node[above, scale=1] {\text{Im}\,$k$};
			\draw [dashed](0,-1)--(0,0)  node[above, scale=1]{};
			\draw [dashed](-1,0)--(0,0)  node[right, scale=1]{};
			\draw [-latex](1,0)--(1.15,0);
			\draw [-latex](0,1.15)--(0,1);
			\node at (-0.2, -0.2 )  {$0$};
		\end{tikzpicture}
	\end{center}
	\caption{The region $D^{+}$ and its positively oriented boundary $\partial D^{+}$.}
	\label{figure1}
\end{figure}

On the other hand, for any $k\in \overline{D^{+}}$, note that $-k\in \overline{\mathbb{C}^{-}}$ . Applying the global relation \eqref{hawker} to $\mathscr{F}[\phi]\,(\zeta, -k,t)$, we eliminate the $\widetilde{g}_{1}$ term in \eqref{oral}, yielding
{\small\begin{equation}\label{burglar}
		\phi(x,t)=\frac{-i}{(2\pi)^{n}}\int_{\mathbb{R}^{n-1}}\int_{\partial D^{+}} e^{i\zeta\cdot x'+ik x_{n}}\left(e^{-i(|\zeta|^{2}+k^{2})t} 2ik \widetilde{g}_{0}(\zeta,i(|\zeta|^{2}+k^{2}),t)+i\mathscr{F}[\phi](\zeta,-k,t)\right) dkd\zeta.
\end{equation}}
It is easy to see that $\lim\limits_{k\rightarrow\infty}\mathscr{F}[\phi]\,\left(\zeta,-k,t\right)=0$. Then, by Jordan's lemma and Cauchy's theorem, we have
\begin{equation}\label{aka}
	\int_{\partial D^{+}}e^{ik x_{n}}\mathscr{F}[\phi]\,\left(\zeta,-k,t\right) dk=0, \quad \text{for}\quad x_{n}>0.
\end{equation}
Using the boundary condition $\phi(x,0,t)=g_{0}(x,t)$ and definition \eqref{251}, we obtain
\begin{equation}\label{brittle} \mathscr{F}[g_{0}]\left(\zeta,-|\zeta|^{2}-k^{2}\right)=\widetilde{g}_{0}\left(\zeta,i(|\zeta|^{2}+k^{2}),t\right)+\int_{\mathbb{R}^{n-1}}\int_{t}^{\infty}    g_{0}(x',\tau) e^{i\zeta\cdot x'} e^{i(|\zeta|^{2}+k^{2})\tau} d\tau\,dx'.
\end{equation}
Substituting  \eqref{brittle} into \eqref{burglar} yields an exponential term $e^{ik^{2}(\tau-t)}$. Here, $\text{Re} \left(ik^{2}(\tau-t)\right)<0$ when $k\in D^{+}$. By combining \eqref{burglar}--\eqref{brittle}, we derive the final solution formula to \eqref{locust}
\begin{equation}\label{mutter}
	\phi(x,t)=\frac{1}{(2\pi)^{n}}\int_{\mathbb{R}^{n-1}}\int_{\partial D^{+}} e^{i\zeta\cdot x'+ik x_{n}-i(|\zeta|^{2}+k^{2})t} 2k \mathscr{F}[g_{0}]\left(\zeta,-|\zeta|^{2}-k^{2}\right) dk\,d\zeta.
\end{equation}

Next, we bound the solution to \eqref{locust} by the boundary data. The boundary data $g_{0}$ is chosen from the space $\mathcal{H}_{0}^{s}(\mathbb{R}^{n}_{+})$, a subspace of $\mathcal{H}^{s}(\mathbb{R}^{n}_{+})$, with the additional assumption of a vanishing trace on the boundary if $s\in(\frac{1}{2},\frac{5}{2})$, and this space can also be described as the closure of $C_{c}^{\infty}(\mathbb{R}^{n}_{+})$ in  $\mathcal{H}^{s}(\mathbb{R}^{n}_{+})$. When  $s=\frac{1}{2}$, the space for boundary data $g_{0}$ is $\mathcal{H}_{00}^{\frac{1}{2}}(\mathbb{R}^{n}_{+})$, which consists of functions whose zero extension belongs to $\mathcal{H}^{\frac{1}{2}}(\mathbb{R}^{n})$. The relevant properties, such as boundedness of the zero extension, density and interpolation, are shown in \cite{facilitate}.

The Strichartz estimate for the solution $\phi$, involving the endpoint $(q,r)=\left(2,\frac{2n}{n-2}\right)$ for $n>2$, is as follows.
\begin{proposition}\label{kiln}	
	Let $s\in\left[0, \frac{5}{2}\right)$, $j\in\mathbb{N}$, $0<l<1$, and $s-2l\geq0$. For $g_{0}\in\mathcal{H}_{0}^{s}(\mathbb{R}_{+}^{n})$ ( $\mathcal{H}_{00}^{\frac{1}{2}}(\mathbb{R}_{+}^{n})$ when $s=\frac{1}{2}$), the following estimate holds:
	\begin{equation}\label{detergent}	\|\phi\|_{C_{b}^{j}(\overline{\mathbb{R}_{+}\mkern-4mu}\,;H^{s-2j}(\mathbb{R}^{n}_{+}))}+\|\phi\|_{W^{j,q}(\mathbb{R}_{+};W ^{s-2j,r}(\mathbb{R}_{+}^{n}))}+\|\phi\|_{B^{l}_{q,2}(\mathbb{R}_{+};W^{s-2l,r}(\mathbb{R}^{n}_{+}))}\lesssim\|g_{0}\|_{\mathcal{H}_{0}^{s}{(\mathbb{R}_{+}^{n})}}.
	\end{equation}
\end{proposition}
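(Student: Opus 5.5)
The plan is to reduce everything to the Keel--Tao framework of Lemma \ref{tas}, in the spirit of the proof of Proposition \ref{peony}. First I treat the base case $j=0$, $s=0$, i.e.\ $\|\phi\|_{C_b(\overline{\mathbb{R}_+};L^2(\mathbb{R}^n_+))}+\|\phi\|_{L^q(\mathbb{R}_+;L^r(\mathbb{R}^n_+))}\lesssim\|g_0\|_{\mathcal{H}^0}$. Extend $g_0$ by zero to $\mathbb{R}^n$, which is bounded by the zero-extension properties in \cite{facilitate}. Then split the contour $\partial D^+$ in \eqref{mutter} into the real half-line $k\in\mathbb{R}_+$ and the imaginary half-line $k=i\kappa$, $\kappa>0$.

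On the real part, substitute $\mu=-|\zeta|^2-k^2$. This piece becomes $e^{it\Delta}$ applied to a function $\Phi_1$ with $\widehat{\Phi}_1(\zeta,k)=2k\,\widehat{g_0}(\zeta,-|\zeta|^2-k^2)\chi_{k>0}$. Changing variables gives $\|\Phi_1\|_{L^2}^2\approx\int|\widehat g_0(\zeta,\mu)|^2\,|\mu+|\zeta|^2|^{1/2}\,d\mu\,d\zeta=\|g_0\|_{\mathcal{H}^0}^2$. The standard Strichartz estimates (including the endpoint, \cite{tao}) then settle this piece.

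On the imaginary part, the integrand is $e^{-\kappa x_n}e^{-i(|\zeta|^2-\kappa^2)t}\,\kappa\,\widehat g_0(\zeta,\kappa^2-|\zeta|^2)$. For this piece I define a multiplier-type operator
\[
[U(t)g](x)=\int_{\mathbb{R}^{n-1}}\int_{0}^{\infty}e^{i\zeta\cdot x'-\kappa x_n}e^{-i(|\zeta|^2-\kappa^2)t}\,\kappa\,\widehat g(\zeta,\kappa^2-|\zeta|^2)\,d\kappa\,d\zeta ,\qquad x_n>0 .
\]
I then verify the two hypotheses of Lemma \ref{tas} with $H=\mathcal{H}^0$.
\begin{enumerate}
\item The energy bound $\|U(t)g\|_{L^2(\mathbb{R}^n_+)}\lesssim\|g\|_{\mathcal{H}^0}$ follows from Plancherel in $x'$ together with $L^2$-boundedness of the Laplace transform in $x_n$, exactly as in \eqref{tass}.
\item For dispersion, $U(t)U(\tau)^*$ has a kernel factorizing as an $(n-1)$-dimensional Schr\"odinger kernel times the one-dimensional integral $\int_0^\infty e^{i\kappa^2(t-\tau)}e^{-\kappa(x_n+y_n)}d\kappa$. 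Van der Corput gives the bound $|t-\tau|^{-1/2}$ for the latter, uniformly in $x_n+y_n\ge0$, and hence the total decay $|t-\tau|^{-n/2}$, as in \eqref{iconic}.
\end{enumerate}
Lemma \ref{tas} then yields all admissible $(q,r)$, endpoint included. Continuity in time follows by density of $C_c^\infty$ data and dominated convergence.

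Next I lift to $s=2$. Since $\partial_t\phi$ and $\Delta'\phi$ solve \eqref{locust} with data $\partial_tg_0$ and $\Delta'g_0$, and these are admissible because of the vanishing trace and density of $C_c^\infty(\mathbb{R}^n_+)$, the $s=0$ bound plus \eqref{shenjing} control $\partial_t\phi$ and $\Delta'\phi$ in $C_b L^2\cap L^qL^r$ by $\|g_0\|_{\mathcal{H}^2_0}$. Then $\partial_n^2\phi=i\partial_t\phi-\Delta'\phi$ and elliptic regularity on the half space give the $H^2$ and $W^{2,r}$ bounds. Iterating once more (data $\partial_t^jg_0$) and interpolating yields the $s\in[0,\frac52)$ range. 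The $j$-th time derivatives come from $\partial_t^j\phi=(i\Delta)^j\phi$, and the Besov term follows by real interpolation between $j=0$ and $j=1$, as in Proposition \ref{p21}. The interpolation uses the interpolation property of $\mathcal{H}^s_0$, and of $\mathcal{H}^{1/2}_{00}$ at $s=\frac12$, from \cite{facilitate}.

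I expect the main obstacle to be the dispersive estimate for $U(t)U(\tau)^*$. The kernel's decaying factor $e^{-\kappa(x_n+y_n)}$ must not destroy the uniform van der Corput bound, which requires a monotonicity/bounded-variation argument in $\kappa$ applied to the amplitude. A second delicate point is the limited regularity range: for $s$ near $\frac52$, justifying $\partial_tg_0\in\mathcal{H}^{s-2}_0$ with the right trace conditions is needed so that the lifting does not break.
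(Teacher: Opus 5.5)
Your base case $s=0$ is the paper's argument. It uses the same splitting of $\partial D^{+}$ into $\phi_1+\phi_2$, writes $\phi_1$ as $e^{it\Delta}$ of a datum with $L^2$ norm $\|g\|_{\mathcal{H}^0}$, and handles $\phi_2$ through the operator \eqref{chagrin}: the Laplace-transform $L^2$ bound, van der Corput with the monotone amplitude $e^{-\kappa(x_n+y_n)}$, then Lemma \ref{tas}.

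You genuinely differ in the passage to $s>0$. The paper differentiates \eqref{paucity} directly in $x$. On the contour $(ik)^2=\eta+|\zeta|^2$ with $\eta=-|\zeta|^2-k^2$, so the factor $(i\zeta)^\beta(ik)^\nu$ becomes the Fourier multiplier $(i\zeta)^\beta(\eta+|\zeta|^2)^{\nu/2}$ acting on the datum. Hence every $\partial_x^\alpha\phi$, normal derivatives included, is again a Fokas-type solution with datum $\psi$, and $\|\psi\|_{\mathcal{H}^0}\lesssim\|g\|_{\mathcal{H}^{|\alpha|}}$ (see \eqref{meditation}). This gives every integer order at once with no regularity input beyond the formula.

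You instead use only the tangential multipliers $\partial_t$ and $\Delta'$ and recover $\partial_n^2\phi=-i\partial_t\phi-\Delta'\phi$ from the equation. This works, but the lemma you need is not Dirichlet elliptic regularity, which would require control of the boundary trace at each fixed time. It is the anisotropic estimate $\|\phi\|_{W^{2,r}(\mathbb{R}^n_+)}\lesssim\|\phi\|_{L^r}+\|\Delta'\phi\|_{L^r}+\|\partial_n^2\phi\|_{L^r}$. It follows from a $C^1$-matching reflection across $x_n=0$, which makes $\Delta$ of the extension a sum of reflections of $\Delta'\phi$ and $\partial_n^2\phi$, together with Calder\'on--Zygmund on $\mathbb{R}^n$; this is valid since $2\le r<\infty$. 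Your route avoids the branch bookkeeping for $(ik)^\nu$, but it costs an extra lemma and yields only even orders. The paper's route is more direct and also supplies the odd orders used in interpolation.

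For $s\in(2,\frac52)$, which you flagged yourself, there is a clean fix. Both your bounds and the paper's hold for the map $g\mapsto\phi$ on the full-space scale $\mathcal{H}^s(\mathbb{R}^n)$. That scale is a weighted $L^2$ space in $(\zeta,\eta)$, so interpolation there is trivial, e.g.\ between $s=2$ and $s=4$ with $s=4$ obtained from your scheme applied to $g$. The space $\mathcal{H}^s_0$ then enters only once, through the zero-extension bound $\|g\|_{\mathcal{H}^s(\mathbb{R}^n)}\lesssim\|g_0\|_{\mathcal{H}^s_0}$ for $s<\frac52$.

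Iterating at the level of the IBVP with data $\partial_t^jg_0$ would instead need extra compatibility such as $\partial_tg_0(\cdot,0)=0$. It would also need an interpolation identity for $\mathcal{H}^s_0$ involving $s=4$, which you have not justified. If you prefer to lift directly from $s-2$ to $s$ using $\partial_tg_0\in\mathcal{H}^{s-2}$, the reflection must match derivatives through order two. Otherwise the reflected $\partial_n^2\phi$ has a jump and is not in $W^{s-2,r}(\mathbb{R}^n)$ when $s-2\ge\frac1r$.
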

\begin{proof}
	Let $g:\mathbb{R}^{n-1} \times\mathbb{R}\rightarrow \mathbb{C}$ be the zero extension of $g_0$, satisfying  $g(x',t)=g_{0}(x',t)$ for $t\in\mathbb{R}_{+}$. The representation \eqref{mutter} can be reformulated in Fourier transform terms as
	\begin{equation}\label{paucity}
		\phi(x,t)=\frac{1}{(2\pi)^{n}}\int_{\mathbb{R}^{n-1}}\int_{\partial D^{+}}e^{i\zeta\cdot x'+ikx_{n}-i(|\zeta|^{2}+k^{2})t}2k\, \widehat{g}\left(\zeta,-|\zeta|^{2}-k^{2}\right)dk\,d\zeta.
	\end{equation}
	By decomposing the contour $\partial D^{+}$ into $[0,+\infty)$ and $(i\infty,0]$, we can express \eqref{paucity} as $\phi=\phi_{1}+\phi_{2}$, where
	\begin{align}	&\phi_{1}(x,t)=\frac{1}{(2\pi)^{n}}\int_{\mathbb{R}^{n-1}}\int_{\mathbb{R}_{+}}e^{i\zeta\cdot x'+imx_{n}-i|\zeta|^{2}t-im^{2}t}\, 2m \, \widehat{g}(\zeta,-m^{2}-|\zeta|^{2})dm\, d\zeta,\label{q1}\\ &\phi_{2}(x,t)=\frac{1}{(2\pi)^{n}}\int_{\mathbb{R}^{n-1}}\int_{\mathbb{R}_{+}}e^{i\zeta\cdot x'-mx_{n}-i|\zeta|^{2}t+im^{2}t} \,2m\,\widehat{g}(\zeta,m^{2}-|\zeta|^{2}) dm\,d\zeta.\label{overwhelm}
	\end{align}
	We observe that the solution component $\phi_{1}$ admits the representation
	\begin{equation}\label{trestle}
		\phi_{1}(x,t)= \left[e^{it\Delta}\left(2m \, \raisebox{0.5ex}{$\chi$}_{\mathbb{R}_{+}}(m) \,\widehat{g}(\zeta ,-m^{2}-|\zeta|^{2})\right)^{\vee\,(\zeta,m)}\right](x).
	\end{equation}
	Then, by \eqref{autocrat} we have
	\begin{equation} \label{ahdi} \|\phi_{1}\|_{L^{q}(\mathbb{R}_{+};L^{r}(\mathbb{R}^{n}_{+}))}\lesssim\left\|2m\, \raisebox{0.5ex}{$\chi$}_{\mathbb{R}_{+}}(m)\, \widehat{g}(\zeta ,-m^{2}-|\zeta|^{2})\right\|_{L^{2}_{\zeta,m}(\mathbb{R}^{n})}\lesssim \|g\|_{\mathcal{H}^{0}(\mathbb{R}^{n})}.
	\end{equation}
	
	To analyze $\phi_{2}$, we define the following operator for any $t\in\mathbb{R}_{+}$:
	\begin{equation}\label{chagrin}
		[U(t)g](x):=\int_{\mathbb{R}_{+}}2m e^{-mx_{n}} e^{im^{2}t} \left(e^{-i|\zeta|^{2}t}\,\widehat{g}(\zeta,m^{2}-|\zeta|^{2})\right)^{\vee_{\zeta}}(x') \,dm.
	\end{equation}
	By using the $L^{2}$-boundedness of the Laplace transform (see \cite{kdv}) and Plancherel's identity, we have
	\begin{equation}\label{ajfd}
		\|U(t)g \|_{L^{2}(\mathbb{R}^{n}_{+})}\lesssim \left\|2m \,\widehat {g}(\zeta,m^{2}-|\zeta|^{2})\right\|_{L^{2}_{\zeta,m}(\mathbb{R}^{n}_{+})}\lesssim \|g\|_{\mathcal{H}^{0}(\mathbb{R}^{n})}.
	\end{equation}
	Then the operator $U(t)$ is bounded from $\mathcal{H}^{0}(\mathbb{R}^{n})$ to $L^{2}(\mathbb{R}^{n}_{+})$. Its adjoint operator has the explicit form:
	\begin{equation*}
		U(t)^{*}h=  \left(\int_{\mathbb{R}_{+}} \frac{\widehat{h}^{\,y'}(\zeta,y_{n})e^{-\sqrt{|\eta+|\zeta|^{2}|}\,y_{n}}e^{-i\eta t}}{\sqrt{|\eta+|\zeta|^{2}|}} dy_{n}\right)^{\vee\,(\zeta,\eta)}.
	\end{equation*}
	Moreover, the composition $U(t)U(\tau)^{*}$ satisfies
	{\small	\begin{equation*}
			[U(t)U(\tau)^{*}h](x)= \int_{\mathbb{R}^{n-1}}\int_{\mathbb{R}_{+}}\int_{\mathbb{R}_{+}}\int_{\mathbb{R}^{n-1}}h(y',y_{n}) e^{-m(x_{n}+y_{n})} e^{i(m^{2}-|\zeta|^{2})(t-\tau)} e^{i\zeta\cdot x'-i\zeta\cdot y'}dy'\,dy_{n}\,dm\,d\zeta.
	\end{equation*}}
	By applying Fubini's theorem and van der Corput's lemma \cite{mat}, we derive
	\begin{equation}\label{sagf}
		\begin{aligned}
			&\left\|U(t)U(\tau)^{*}h\right\|_{L^{\infty}(\mathbb{R}^{n}_{+})}\\		\leq&\varlimsup_{\kappa\rightarrow\infty}\int_{\mathbb{R}^{n}_{+}}|h(y)|\left|\int_{[-\kappa,\kappa]^{n-1}}e^{-i|\zeta|^{2}(t-\tau)} e^{i\zeta\cdot(x'-y')}d\zeta\right|\,\left|\int_{\mathbb{R}_{+}}e^{im^{2}(t-\tau)} e^{-m(x_{n}+y_{n})} dm\right| dy\\
			\lesssim&\frac{1}{|t-\tau|^{\frac{n}{2}}}\|h\|_{L^{1}(\mathbb{R}^{n}_{+})}.
		\end{aligned}
	\end{equation}
	From \eqref{ajfd} and \eqref{sagf}, the assumptions of Lemma \ref{tas} are satisfied, and thus
	\begin{equation}\label{sdf} \|q_{2}\|_{L^{q}(\mathbb{R}_{+};L^{r}(\mathbb{R}^{n}_{+}))}\lesssim \|g\|_{\mathcal{H}^{0}(\mathbb{R}^{n})}.
	\end{equation}
	
	Differentiating \eqref{paucity} yields
	\begin{equation}\label{meditation}
		\begin{aligned}		
			&\partial^{\alpha}_{x}\phi(x,t)\\
			=\,&\frac{1}{(2\pi)^{n}}\int_{\mathbb{R}^{n-1}}\int_{\partial D^{+}} (i\zeta)^{\beta}(ik)^{\nu}e^{i\zeta\cdot x'+ikx_{n}-i(|\zeta|^{2}+k^{2})t}2k\,\widehat{g}\left(\zeta,-(k^{2}+|\zeta|^{2})\right) \,dk\,d\zeta\\
			=\,&\frac{1}{(2\pi)^{n}}\int_{\mathbb{R}^{n-1}}\int_{\partial D^{+}}e^{i\zeta\cdot x'+ikx_{n}-i(|\zeta|^{2}+k^{2})t}2k\, \widehat{\psi}(\zeta,-(k^{2}+|\zeta|^{2}))  \,dk\,d\zeta,\\
		\end{aligned}
	\end{equation}
	where		 $\alpha=(\beta,\nu)\in \mathbb{N}^{n-1}\times\mathbb{N}$ and $\psi=\left[(i\zeta)^{\beta} (\eta+|\zeta|^{2})^{\frac{\nu}{2}} \widehat{g}(\zeta,\eta)\right]^{\vee_{(\zeta,\eta)}}$. Here, $z^{\frac{\nu}{2}}=|z|^{\frac{\nu}{2}}e^{\frac{i\nu}{2}\arg(z)}$, $\arg(z)\in (0,2\pi]$. From the estimates \eqref{ahdi} and \eqref{sdf}, we have
	\begin{equation*}
		\begin{aligned} \|\partial^{\alpha}_{x}\phi\|_{L^{q}(\mathbb{R}_{+};L^{r}(\mathbb{R}^{n}_{+}))}^{2}
			&\lesssim \|\psi\|_{\mathcal{H}^{0}(\mathbb{R}^{n})}^{2}
			=\int_{\mathbb{R}^{n}}|\zeta|^{2|\beta|} \, \left|\eta+|\zeta|^{2}\right|^{\nu+\frac{1}{2}}  \,   |\widehat{g}(\zeta,\eta)|^{2}d\eta\, d\zeta\\
			&\lesssim \int_{\mathbb{R}^{n}}\left|1+|\zeta|^{2}+|\eta|\right|^{|\beta|+\nu} \left|\eta+|\zeta|^{2}\right|^{\frac{1}{2}} \left|\widehat{g}(\zeta,\eta)\right|^{2}d\eta \, d\zeta=\|g\|_{\mathcal{H}^{|\alpha|}(\mathbb{R}^{n})}^{2}.
		\end{aligned}
	\end{equation*}
	Hence, estimates for the positive integer order derivatives are obtained. Using the evolution equation $\phi_{t}=i\Delta \phi$ and interpolation, the estimate \eqref{detergent} holds.
\end{proof}

\begin{proposition}\label{vilify}
	For  $g_{0}\in\mathcal{H}_{0}^{s}(\mathbb{R}^{n}_{+})$ with $s\in\left[0,\frac{5}{2}\right)$ (  $\mathcal{H}^{\frac{1}{2}}_{00}(\mathbb{R}^{n}_{+})$ when $s=\frac{1}{2}$ ), the solution $q$ satisfies
	\begin{equation}\label{confront}
		\|\phi\|_{C_{b}(\overline{\mathbb{R}_{+}\mkern-4mu}\,;\mathcal{H}^{s}(\mathbb{R}_{+}^{n}) )}\lesssim \|g_{0}\|_{\mathcal{H}_{0}^{s}(\mathbb{R}_{+}^{n})},
	\end{equation}
	and
	\begin{equation}\label{scrap} \|\phi\|_{C_{b}^{1}(\overline{\mathbb{R}_{+}\mkern-4mu}\,;\mathcal{H}^{s-1}(\mathbb{R}_{+}^{n}) )}\lesssim \|g_{0}\|_{\mathcal{H}_{0}^{s}(\mathbb{R}_{+}^{n})}.
	\end{equation}
\end{proposition}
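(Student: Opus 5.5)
Fix $x_{n}\ge 0$ and work with the zero extension $g$ of $g_{0}$ to $\mathbb{R}^{n-1}\times\mathbb{R}$, as in the proof of Proposition \ref{kiln}. By the boundedness of zero extension on $\mathcal{H}_{0}^{s}$ (and on $\mathcal{H}^{\frac12}_{00}$ when $s=\frac12$) from \cite{facilitate}, we have $\|g\|_{\mathcal{H}^{s}(\mathbb{R}^{n})}\lesssim\|g_{0}\|_{\mathcal{H}^{s}_{0}(\mathbb{R}^{n}_{+})}$. Using the splitting $\phi=\phi_{1}+\phi_{2}$ from \eqref{q1}--\eqref{overwhelm}, it suffices to show
\[
\sup_{x_{n}\ge0}\big\|\phi_{j}(\cdot,x_{n},\cdot)\big\|_{\mathcal{H}^{s}(\mathbb{R}^{n})}\lesssim\|g\|_{\mathcal{H}^{s}(\mathbb{R}^{n})},\qquad j=1,2,
\]
where $\phi_{j}$ is the formula \eqref{q1} or \eqref{overwhelm} evaluated for all $t\in\mathbb{R}$. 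Restricting to $t>0$ then gives \eqref{confront}.

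For $\phi_{1}$, use the representation \eqref{trestle}: $\phi_{1}=e^{it\Delta}v_{0}$ with $\widehat{v_{0}}(\zeta,m)=2m\,\chi_{\mathbb{R}_{+}}(m)\,\widehat g(\zeta,-m^{2}-|\zeta|^{2})$. Then \eqref{circuitous} gives $\|\phi_{1}(\cdot,x_{n},\cdot)\|_{\mathcal{H}^{s}}\lesssim\|v_{0}\|_{H^{s}}$. Substituting $\eta=-m^{2}-|\zeta|^{2}$, so that $d\eta=2m\,dm$ and $|\eta+|\zeta|^{2}|^{1/2}=m$, we get
\[
\|v_{0}\|_{H^{s}}^{2}=\int(1+|\zeta|^{2}+m^{2})^{s}\,4m^{2}\,|\widehat g|^{2}\,dm\,d\zeta\approx\int_{\eta<-|\zeta|^{2}}(1+|\zeta|^{2}+|\eta|)^{s}\,\big|\eta+|\zeta|^{2}\big|^{\frac12}|\widehat g(\zeta,\eta)|^{2}\,d\eta\,d\zeta,
\]
which is bounded by $\|g\|_{\mathcal{H}^{s}}^{2}$. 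This uses $1+|\zeta|^{2}+m^{2}\approx 1+|\zeta|^{2}+|\eta|$ on this region.

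For $\phi_{2}$, compute its Fourier transform in $(x',t)$ directly from \eqref{overwhelm}. With $\eta=m^{2}-|\zeta|^{2}$,
\[
\widehat{\phi_{2}(\cdot,x_{n},\cdot)}(\zeta,\eta)=c\,\chi_{\{\eta>-|\zeta|^{2}\}}\,e^{-x_{n}\sqrt{\eta+|\zeta|^{2}}}\,\widehat g(\zeta,\eta).
\]
Since $x_{n}\ge0$, the factor $e^{-x_{n}\sqrt{\cdot}}$ is at most $1$. The $\mathcal{H}^{s}$ weight is untouched, so $\|\phi_{2}(\cdot,x_{n},\cdot)\|_{\mathcal{H}^{s}}\le\|g\|_{\mathcal{H}^{s}}$ uniformly in $x_{n}$. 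Continuity in $x_{n}$ follows by dominated convergence for both pieces: for $\phi_{1}$ the $(x',t)$-Fourier multiplier is $e^{ix_{n}m}$, of modulus one, and for $\phi_{2}$ it is $e^{-x_{n}\sqrt{\cdot}}$, continuous in $x_{n}\ge 0$ and bounded by $1$.

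For \eqref{scrap}, differentiate in $x_{n}$. The multiplier of $\partial_{n}$ is $im$ on the $\phi_{1}$ part and $-m$ on the $\phi_{2}$ part, with $m=|\eta+|\zeta|^{2}|^{1/2}$ in both cases. Since $|\eta+|\zeta|^{2}|\lesssim 1+|\zeta|^{2}+|\eta|$, this costs one unit of the $\mathcal{H}$-index. Hence $\|\partial_{n}\phi(\cdot,x_{n},\cdot)\|_{\mathcal{H}^{s-1}}\lesssim\|g\|_{\mathcal{H}^{s}}$, and continuity again follows from dominated convergence.

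The main obstacle is the bookkeeping rather than any estimate: one must check that the time-extension and zero-extension conventions are consistent, so that the $\mathcal{H}^{s}(\mathbb{R}^{n}_{+})$ restriction norm really is controlled by the whole-line computation. In the ranges $s>\frac12$, where the zero extension is not automatically bounded, this is exactly where the vanishing-trace hypotheses in $\mathcal{H}_{0}^{s}$ and $\mathcal{H}^{\frac12}_{00}$ must be invoked.
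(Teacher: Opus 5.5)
Your proposal is correct and matches the paper's proof: the same splitting $\phi=\phi_1+\phi_2$, with $\phi_1$ controlled through \eqref{trestle} and \eqref{circuitous}, and $\phi_2$ handled by the change of variables $\eta=m^{2}-|\zeta|^{2}$ that yields \eqref{illusory} and the multiplier $e^{-x_n\sqrt{\eta+|\zeta|^{2}}}\le 1$. For \eqref{scrap}, the factor $|\eta+|\zeta|^{2}|^{1/2}$ from $\partial_n$ costs exactly one unit of the index, and the $\|\phi\|_{\mathcal{H}^{s-1}}$ part of the $C_b^1$ norm, which you leave implicit, follows immediately from \eqref{confront}.
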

\begin{proof}
	Let us recall the decomposition $\phi=\phi_{1}+\phi_{2}$ used in the proof of Proposition \ref{kiln}. For \eqref{confront}, it follows  from \eqref{trestle} and \eqref{circuitous} that
	\begin{equation*}
		\|\phi_{1}\|_{C_{b}(\overline{\mathbb{R}_{+}\mkern-4mu}\,;\mathcal{H}^{s}(\mathbb{R}_{+}^{n}) )}\lesssim \|g\|_{\mathcal{H}^{s}(\mathbb{R}_{+}^{n})}.
	\end{equation*}
	Applying the variable substitution $m=\sqrt{\eta+|\zeta|^{2}}$ with $\eta\in(-|\zeta|^{2},\infty)$, we reformulate \eqref{overwhelm} as
	\begin{equation}\label{illusory}
		\phi_{2}(x,t)=\frac{1}{(2\pi)^{n}}\int_{\mathbb{R}^{n-1}} \int_{\mathbb{R}}  \raisebox{0.5ex}{$\chi$}_{(-|\zeta|^{2},\infty)}(\eta)e^{i\zeta\cdot x'+i\eta t-\sqrt{\eta+|\zeta|^{2}}x_{n}} \widehat{g}(\zeta,\eta)d\eta\, d\zeta,
	\end{equation}
	thus establishing \eqref{confront} for $\phi_{2}$ through the definition of the $\mathcal{H}^{s}$ spaces.
	For \eqref{scrap}, the estimate for $\phi_{1}$ is obtained from \eqref{tentative}. For $\phi_{2}$, by \eqref{illusory}, we have
	\begin{equation*}
		\begin{aligned} \|\phi\|_{C_{b}^{1}(\overline{\mathbb{R}_{+}\mkern-4mu}\,;\mathcal{H}^{s-1}(\mathbb{R}_{+}^{n}) )}^{2}\lesssim \|\psi\|_{\mathcal{H}^{s-1}(\mathbb{R}^{n})}^{2}
			&=\int_{\mathbb{R}^{n-1}}\int_{\mathbb{R}}  \left(1+|\zeta|^{2}+|\eta|\right)^{s-1} \left|\eta+|\zeta|^{2}\right|^{\frac{3}{2}}|\widehat{g}(\zeta,\eta)|^{2}d\eta \, d\zeta\\
			& \lesssim \|g\|_{\mathcal{H}^{s}(\mathbb{R}^{n})}^{2}.
		\end{aligned}
	\end{equation*}
\end{proof}
We now extend our results from the temporal half-line to $(0,T)$. Define ${}_{0}\mathcal{H}^{s}(\mathbb{R}^{n-1}\times(0,T))$ as the restriction space
of $\mathcal{H}_{0}^{s}(\mathbb{R}^{n}_{+})$ to $\mathbb{R}^{n-1}\times(0,T)$. Recall that Proposition \ref{kiln} implies for any  $g_{0}\in\mathcal{H}_{0}^{s}(\mathbb{R}_{+}^{n})$, we can obtain the corresponding solution $q(\cdot,t)\in H^{s}(\mathbb{R}^{n}_{+})$ to \eqref{locust}. We therefore define the operator
\begin{equation}\label{st}
	S(t): {}_{0}\mathcal{H}^{s}(\mathbb{R}^{n-1}\times(0,T))\to H^{s}(\mathbb{R}^{n}_{+}),\quad  t\in\overline{\mathbb{R}_{+}},
\end{equation}
where $S(\cdot)g_{0}$  solves \eqref{locust} on $\mathbb{R}^{n}_{+}\times (0,T)$. Moreover, for $g_{0}\in{}_{0}\mathcal{H}^{s}(\mathbb{R}^{n-1}\times(0,T))$, the solution $S(\cdot)g_{0}$ satisfies
{\small
	\begin{equation}\label{grit}
		\|S(\cdot)g_{0}\|_{C_{b}^{j}([0,T]\,;H^{s-2j}(\mathbb{R}^{n}_{+}))\cap W^{j,q}(0,T;W ^{s-2j,r}(\mathbb{R}_{+}^{n}))}
		+\|S(\cdot)g_{0}\|_{ B^{l}_{q,2}(0,T;W^{s-2l,r}(\mathbb{R}^{n}_{+})) }
		\lesssim\|g_{0}\|_{_{0}\mathcal{H}^{s}({\mathbb{R}^{n-1}\times(0,T))}},
\end{equation}}
which follows from \eqref{detergent}. Similarly, $S(\cdot)g_{0}$ also satisfies the estimates in the form of \eqref{confront} and \eqref{scrap}, with the right-hand side being $\|g_{0}\|_{{}_{0}\mathcal{H}^{s}(\mathbb{R}^{n-1}\times(0,T))}$. Furthermore, for  $s\in\left[2,\frac{5}{2}\right)$, the solution $S(\cdot)g_{0}$ can be controlled by a norm weaker than $\mathcal{H}^{s}(\mathbb{R}^{n-1}\times(0,T))$.
\begin{proposition}
	For $s\in \left[2,\frac{5}{2}\right)$, $j\in\mathbb{N}$ and  $g_{0}\in  {}_{0}\mathcal{H}^{s}(\mathbb{R}^{n-1}\times(0,T))$, solution $S(\cdot)g_{0}$ satisfies
	\begin{equation}\label{squeak}
		\begin{aligned} &\|S(\cdot)g_{0}\|_{C^{j}([0,T];H^{s-2j}(\mathbb{R}^{n}_{+}))\cap W^{j,q}(0,T;W^{s-2j,r}(\mathbb{R}^{n}_{+}))}+\|S(\cdot)g_{0}\|_{C_{b}(\overline{\mathbb{R}_{+}\mkern-4mu}\,;\mathcal{H}^{s-2}(\mathbb{R}^{n-1}\times(0,T))}\\
			&\quad+\|\partial_{t}S(\cdot)g_{0}\|_{C_{b}(\overline{\mathbb{R}_{+}\mkern-4mu}\,;\mathcal{H}^{s-2}(\mathbb{R}^{n-1}\times(0,T))}
			\lesssim \|g_{0}\|_{\mathcal{H}^{s-2}(\mathbb{R}^{n-1}\times(0,T))} +\|\partial_{t}g_{0}\|_{\mathcal{H}^{s-2}(\mathbb{R}^{n-1}\times(0,T))}.
		\end{aligned}
	\end{equation}
\end{proposition}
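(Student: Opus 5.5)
The idea is to use that $\partial_t$ commutes with the reduced IBVP \eqref{locust}, since the initial datum is zero. Set $\psi:=S(\cdot)g_{0}$. For $g_{0}\in{}_{0}\mathcal{H}^{s}(\mathbb{R}^{n-1}\times(0,T))$ with $s\geq2$, the compatibility built into this space (vanishing trace at $t=0$) gives $\partial_t g_0\in{}_{0}\mathcal{H}^{s-2}(\mathbb{R}^{n-1}\times(0,T))$. For $s-2\in(\frac12,\frac52)$ this is not needed; for $s-2\in[0,\frac12)$ membership is automatic. Then $\partial_t\psi$ solves \eqref{locust} with boundary datum $\partial_t g_0$ and initial datum $\partial_t\psi(0)=i\Delta\psi(0)=0$. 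By uniqueness for the linear problem, $\partial_t\psi=S(\cdot)\partial_tg_0$. I would verify this identity directly from the representation \eqref{paucity}. Differentiating in $t$ multiplies the integrand by $-i(|\zeta|^2+k^2)$. This is exactly $\widehat{\partial_t g}(\zeta,-|\zeta|^2-k^2)$, because $g$ is the zero extension of $g_0$ and the vanishing trace at $t=0$ lets $\partial_t$ commute with the extension.

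With this identity, the $\mathcal{H}^{s-2}$ bounds follow from Proposition \ref{vilify} applied at regularity $s-2\in[0,\frac12)$ to the two data $g_0$ and $\partial_t g_0$. The only care needed is that the restriction norm on $(0,T)$ is controlled via zero extension, which is bounded on $\mathcal{H}^{s-2}$ for $s-2<\frac12$ by Corollary 2.4 of \cite{facilitate}. The result is
\[
\|\psi\|_{C_b(\overline{\mathbb{R}_+};\mathcal{H}^{s-2})}\lesssim\|g_0\|_{\mathcal{H}^{s-2}(\mathbb{R}^{n-1}\times(0,T))},\qquad \|\partial_t\psi\|_{C_b(\overline{\mathbb{R}_+};\mathcal{H}^{s-2})}\lesssim\|\partial_tg_0\|_{\mathcal{H}^{s-2}(\mathbb{R}^{n-1}\times(0,T))}.
\]
The zero-extension bound is the reason the argument works with the weaker norm: no $T$-dependent constant arises, unlike a Sobolev extension.

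For the Strichartz and energy parts I would use Proposition \ref{kiln} at level $s-2$ for both $\psi$ and $\partial_t\psi$. This bounds $\|\psi\|_{C([0,T];H^{s-2})\cap L^q(0,T;W^{s-2,r})}$ and $\|\partial_t\psi\|_{C([0,T];H^{s-2})\cap L^q(0,T;W^{s-2,r})}$ by the right-hand side of \eqref{squeak}. To recover the $H^{s}$ and $W^{s,r}$ norms for $j=0$, I would use $\Delta\psi=-i\partial_t\psi$ together with elliptic regularity for the Dirichlet problem on $\mathbb{R}^n_+$, with boundary value $g_0(\cdot,t)$. This gives, for each $t$,
\[
\|\psi(t)\|_{H^s}\lesssim \|\partial_t\psi(t)\|_{H^{s-2}}+\|\psi(t)\|_{H^{s-2}}+\|g_0(\cdot,t)\|_{H^{s-\frac12}(\mathbb{R}^{n-1})},
\]
and the $W^{s,r}$ analogue.

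The main obstacle is the trace term $\|g_0(\cdot,t)\|_{H^{s-1/2}(\mathbb{R}^{n-1})}$. It must be controlled, uniformly in $t$ (resp.\ in $L^q_t$), by $\|g_0\|_{\mathcal{H}^{s-2}}+\|\partial_tg_0\|_{\mathcal{H}^{s-2}}$. My first attempt is a frequency-side embedding in the spirit of \eqref{emb}. I would show that $\langle\xi'\rangle^{2s-1}$, integrated against the $\eta$-variable, is dominated by $(1+|\xi'|^2+|\eta|)^{s-2}\big||\xi'|^2+\eta\big|^{1/2}\langle\eta\rangle^2$, up to a one-dimensional Sobolev trace in $t$. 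This works away from the curve $\eta=-|\xi'|^2$, where the weight degenerates. If it fails, I would avoid elliptic regularity entirely and estimate $\partial_{x'}^{\beta}\psi$ directly through \eqref{meditation}. Tangential derivatives cost $|\zeta|^{2}\lesssim$ the $\mathcal{H}^{s-2}$ weight times $\langle\eta\rangle$. Normal derivatives reduce, via $\partial_n^2\psi=-i\partial_t\psi-\Delta'\psi$, to the quantities already controlled. I would then interpolate to reach the fractional order $s$, and the $j\geq1$ cases follow by applying the same estimates to $\partial_t^{j}\psi$.
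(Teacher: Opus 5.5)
Your argument for the parts that hold is the paper's argument. You use the commutation $\partial_t S(\cdot)g_0=S(\cdot)\partial_t g_0$ (zero initial datum plus linear uniqueness), then Propositions \ref{kiln} and \ref{vilify} at regularity $s-2\in[0,\frac12)$, applied to $g_0$ and $\partial_t g_0$. This yields the two $\mathcal{H}^{s-2}$ terms and all $j\geq1$ terms of \eqref{squeak}.

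The genuine gap is the $j=0$ step, recovering $C([0,T];H^{s})\cap L^{q}(0,T;W^{s,r})$. It cannot be closed, because for $j=0$ the inequality is false. You are right that Dirichlet elliptic regularity on $\mathbb{R}^n_+$ needs the boundary trace. The paper's proof has the same omission: it passes from bounds on $\|S(\cdot)g_0\|_{L^q(0,T;W^{s-2,r})}$ and $\|\Delta S(\cdot)g_0\|_{L^q(0,T;W^{s-2,r})}$ to the $W^{s,r}$ norm without that term. But the trace is not controlled by the right-hand side.

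Take $g_0(x',t)=\chi(t)a(x')$ with $\chi\in C_c^\infty(0,T)$ and $\widehat a$ supported in $\{N\leq|\zeta|\leq 2N\}$. Here $\widehat\chi$ is concentrated on $|\eta|\lesssim1$, where $(1+|\zeta|^2+|\eta|)^{s-2}\bigl||\zeta|^2+\eta\bigr|^{1/2}\approx N^{2s-3}$. Hence
\[
\|g_0\|_{\mathcal{H}^{s-2}(\mathbb{R}^{n-1}\times(0,T))}+\|\partial_t g_0\|_{\mathcal{H}^{s-2}(\mathbb{R}^{n-1}\times(0,T))}\lesssim_\chi N^{s-\frac32}\|a\|_{L^2}.
\]
On the other hand, $S(t)g_0|_{x_n=0}=g_0(\cdot,t)$, so the trace theorem gives, for each $t\in[0,T]$,
\[
\|S(t)g_0\|_{H^{s}(\mathbb{R}^n_+)}\gtrsim\|g_0(\cdot,t)\|_{H^{s-\frac12}(\mathbb{R}^{n-1})}\approx N^{s-\frac12}|\chi(t)|\,\|a\|_{L^2}.
\]
The ratio grows like $N$. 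For $s=2$ this also shows that no bound of $C([0,T];H^2)$ by the $\mathcal{W}^2$ norm can hold.

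Both of your proposed repairs break in exactly this regime, $|\eta|\lesssim1\ll|\zeta|^2$. This region is far from the curve $\eta=-|\zeta|^2$, not near it. There, the available weight times $\langle\eta\rangle^2$ is about $|\zeta|^{2s-3}$, while you need $|\zeta|^{2s-1}$, and the claim $|\zeta|^2\lesssim\langle\eta\rangle$ is false.

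Your tangential-derivative bound does hold on the $\phi_1$ piece, where $\eta=-m^2-|\zeta|^2$. The obstruction is the $\phi_2$ piece \eqref{illusory}. In the example it is essentially the boundary layer $\chi(t)\,e^{-x_n|\nabla'|}a$, whose $H^s$ norm is governed by the $H^{s-\frac12}$ norm of its boundary value.

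What is missing from the right-hand side is precisely $\|\Delta' g_0\|_{\mathcal{H}^{s-2}}$. By \eqref{shenjing}, adding it back restores the full $\|g_0\|_{\mathcal{H}^s}$ of \eqref{grit}. So the weaker-norm estimate can only be proved for the $\mathcal{H}^{s-2}$ terms and for $j\geq1$.
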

\begin{proof}
	Since $g_{0}\in{}_{0}\mathcal{H}^{s}(\mathbb{R}^{n-1}\times(0,T))$, \eqref{shenjing} implies  $\partial_{t}g_{0}\in \mathcal{H}^{s-2}(\mathbb{R}^{n-1}\times(0,T))$ with $s-2\in\left[0,\frac{1}{2}\right)$. By the uniqueness of solutions, we have, $\partial_{t}S(t)g_{0}=S(t)\partial_{t}g_{0}$. Then use \eqref{grit}, we derive
	\begin{equation*}
		\|S(\cdot)g_{0}\|_{L^{q}(0,T;W^{s-2,r}(\mathbb{R}^{n}_{+}))}\lesssim  \|g_{0}\|_{\mathcal{H}^{s-2}(\mathbb{R}^{n-1}\times(0,T))},
	\end{equation*}
	and
	\begin{equation*}
		\begin{aligned}
		\|\Delta S(\cdot)g_{0}\|_{L^{q}(0,T;W^{s-2,r}(\mathbb{R}^{n}_{+}))}&\lesssim\|\partial_{t}S(\cdot)g_{0}\|_{L^{q}(0,T;W^{s-2,r}(\mathbb{R}^{n}_{+}))}\\
		&\lesssim  \|\partial_{t}g_{0}\|_{\mathcal{H}^{s-2}(\mathbb{R}^{n-1}\times(0,T))}.
		\end{aligned}
	\end{equation*}
	Combining these estimates with \eqref{confront} yields \eqref{squeak}.
\end{proof}

Based on the estimate for $s=2$ in \eqref{squeak}, we consider the new function space $\mathcal{W}^{2}(\mathbb{R}^{n-1}\times(0,T))$ defined by the norm
\begin{equation}\label{huaw}
	\|h\|_{\mathcal{W}^{2}(\mathbb{R}^{n-1}\times(0,T))}:=\|h\|_{\mathcal{H}^{0}(\mathbb{R}^{n-1}\times(0,T))}+\|h_{t}\|_{\mathcal{H}^{0}(\mathbb{R}^{n-1}\times(0,T))},
\end{equation}
which is weaker than $\mathcal{H}^{2}(\mathbb{R}^{n-1}\times(0,T))$ according to \eqref{shenjing}, and clearly ${}_{0}\mathcal{H}^{2}(\mathbb{R}^{n-1}\times(0,T))\subset \mathcal{W}^{2}(\mathbb{R}^{n-1}\times(0,T))$. We then define its closed subspace \[{}_{0}\mathcal{W}^{2}(\mathbb{R}^{n-1}\times(0,T)):=\overline{{}_{0}\mathcal{H}^{2}(\mathbb{R}^{n-1}\times(0,T))}.\] Through a density argument, the operator $S(t)$ can be extended to ${}_{0}\mathcal{W}^{2}(\mathbb{R}^{n-1}\times(0,T))$. For $g_{0}\in {}_{0}\mathcal{W}^{2}(\mathbb{R}^{n-1}\times(0,T))$, $S(\cdot)g_{0}$ satisfies \eqref{locust}  in a weaker way: \[\lim\limits_{a\rightarrow0}\big\|S(\cdot)g_{0}|_{x_{n}=a}-g_{0}\big\|_{\mathcal{W}^{2}(\mathbb{R}^{n-1}\times(0,T))}=0. \]
Moreover, the following estimates hold.
\begin{corollary}
	For $g_{0}\in{}_{0}\mathcal{W}^{2}(\mathbb{R}^{n-1}\times(0,T))$, we have
	{\small	\begin{equation}\label{invoice}
			\begin{aligned} \|&S(\cdot)g_{0}\|_{C^{j}([0,T];H^{2-2j}(\mathbb{R}^{n}_{+}))\cap W^{j,q}(0,T;W^{2-2j,r}(\mathbb{R}^{n}_{+}))}\\&+\|S(\cdot)g_{0}\|_{C_{b}(\overline{\mathbb{R}_{+}\mkern-4mu}\,;\mathcal{W}^{2}(\mathbb{R}^{n-1}\times(0,T)))}\lesssim\|g_{0}\|_{\mathcal{W}^{2}(\mathbb{R}^{n-1}\times(0,T))}.
			\end{aligned}
	\end{equation}}
\end{corollary}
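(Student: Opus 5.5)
The estimate \eqref{invoice} is the $s=2$ case of \eqref{squeak}, extended by density from ${}_{0}\mathcal{H}^{2}(\mathbb{R}^{n-1}\times(0,T))$ to its closure ${}_{0}\mathcal{W}^{2}(\mathbb{R}^{n-1}\times(0,T))$. The plan has three steps: verify the inequality on the dense subspace, extend it by Cauchy sequences, and identify the limit with the extended operator $S(\cdot)$.

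\emph{Step 1: the dense subspace.} Take first $g_{0}\in{}_{0}\mathcal{H}^{2}(\mathbb{R}^{n-1}\times(0,T))$. Setting $s=2$ in \eqref{squeak} bounds the $C^{j}([0,T];H^{2-2j})\cap W^{j,q}(0,T;W^{2-2j,r})$ norm of $S(\cdot)g_{0}$, together with the $C_{b}(\overline{\mathbb{R}_{+}};\mathcal{H}^{0})$ norms of $S(\cdot)g_{0}$ and $\partial_{t}S(\cdot)g_{0}$, by $\|g_{0}\|_{\mathcal{H}^{0}}+\|\partial_{t}g_{0}\|_{\mathcal{H}^{0}}$. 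By the definition \eqref{huaw}, this right-hand side is exactly $\|g_{0}\|_{\mathcal{W}^{2}(\mathbb{R}^{n-1}\times(0,T))}$. The two $\mathcal{H}^{0}$ trace terms on the left combine into $\|S(\cdot)g_{0}\|_{C_{b}(\overline{\mathbb{R}_{+}};\mathcal{W}^{2})}$. Here one must check that $\partial_{t}$ of the restriction of $x_{n}\mapsto S(\cdot)g_{0}|_{x_{n}}$ coincides with $(\partial_{t}S(\cdot)g_{0})|_{x_{n}}$. This follows from the identity $\partial_{t}S(t)g_{0}=S(t)\partial_{t}g_{0}$ already used in the proof of \eqref{squeak}, so \eqref{invoice} holds on ${}_{0}\mathcal{H}^{2}$.

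\emph{Step 2: density.} Let $g_{0}\in{}_{0}\mathcal{W}^{2}$ and choose $g_{0}^{(k)}\in{}_{0}\mathcal{H}^{2}$ with $g_{0}^{(k)}\to g_{0}$ in $\mathcal{W}^{2}$. By linearity of $S$ and Step 1, $S(\cdot)g_{0}^{(k)}$ is Cauchy in each space on the left of \eqref{invoice}. All of these spaces are Banach, including $C_{b}(\overline{\mathbb{R}_{+}};\mathcal{W}^{2})$, since $\mathcal{W}^{2}$ is complete as a closed graph-type space built from $\mathcal{H}^{0}$. The limits in the various spaces agree as distributions, because each space embeds continuously into $L^{q}(0,T;L^{r}(\mathbb{R}^{n}_{+}))$ or into $C_{b}(\overline{\mathbb{R}_{+}};L^{2}_{\mathrm{loc}})$. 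This common limit is by definition the extended $S(\cdot)g_{0}$. Passing to the limit in the inequality of Step 1 gives \eqref{invoice}, and continuity in $x_{n}$ and $t$ is preserved under uniform limits.

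\emph{Main obstacle.} I expect the only delicate point to be Step 2's claim that the extension is well defined and that its trace at $x_{n}=0$ is $g_{0}$ in the $\mathcal{W}^{2}$ sense. Well-definedness holds because the bound on the dense set makes $S$ uniformly continuous in $\mathcal{W}^{2}$, so the limit does not depend on the approximating sequence. For the trace, the $C_{b}(\overline{\mathbb{R}_{+}};\mathcal{W}^{2})$ convergence lets one interchange the limit $a\to0$ with the limit $k\to\infty$. This yields the weak boundary condition stated before the corollary.
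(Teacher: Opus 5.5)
Your proposal is correct and follows the paper's own (implicit) argument. The paper, too, takes \eqref{squeak} at $s=2$, whose right-hand side is exactly the $\mathcal{W}^{2}$ norm from \eqref{huaw}, and extends the resulting bound from ${}_{0}\mathcal{H}^{2}$ to its closure ${}_{0}\mathcal{W}^{2}$ by density. Your additional checks — that $\partial_t$ commutes with the trace at $x_n=a$, that $\mathcal{W}^{2}$ is complete, and that the limits in the different spaces coincide — supply details the paper leaves unstated.
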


\subsection{End of the linear estimates}\label{sec24}

In this section, we combine the results in Section \ref{sec21}--\ref{sec23} and obtain the final estimates for the linear IBVP \eqref{temporal}, which prepares the establishment of the nonlinear problem theory. By the superposition principle, the solution to \eqref{temporal} admits the representation
{\small \begin{equation}\label{turbine}	
		\begin{aligned}		u(t)&=e^{it\Delta}Eu_{0}-i\int_{0}^{t}e^{i(t-\tau)\Delta}Ef(\tau)d\tau+S(t)\left(h_{0}-e^{i(\cdot)\Delta}Eu_{0}|_{x_{n}=0}+i\int_{0}^{\cdot}e^{i(\cdot\,-\tau)\Delta}Ef(\tau)d\tau|_{x_{n}=0}\right)\\
			&\equiv I+II+III,
		\end{aligned}
\end{equation}}
where $E$ denotes a linear extension operator satisfying the bound $\|Ef\|_{W^{s,r}(\mathbb{R}^{n})}\lesssim \|f\|_{W^{s,r}(\mathbb{R}^{n}_{+})}$, and $S(t)$ is given by \eqref{st}.

\emph{Hereafter, to simplify notation, we often drop $\mathbb{R}_{+}^{n}$ in function spaces and norms when no confusion arises.}

\begin{proposition}\label{abss}
	Let $s\in \left[0,\frac{5}{2}\right)$, $u_{0}\in H^{s}(\mathbb{R}_{+}^{n})$ and $h_{0}\in\mathcal{H}^{s}(\mathbb{R}_{+}^{n})$ satisfy the compatibility condition. For any $T>0$, if $f$ belongs to the function spaces determined by the right-hand side norms in the following inequalities, then the solution $u$ given by \eqref{turbine} satisfies
	\begin{equation}\label{lo}
		u\in C\left(\left[0,T\right];H^{s}(\mathbb{R}^{n}_{+})\right)\cap C_{b}\left(\overline{\mathbb{R}_{+}\mkern-4mu}\,;\mathcal{H}^{s}(\mathbb{R}^{n-1}\times(0,T))\right)
	\end{equation}
	and the following estimates:
	\begin{enumerate}[label=(\arabic*)]
		\item For $s=0$:
		\begin{equation}\label{intimate}
			\|u\|_{L^{q}(\mathbb{R}_{+};L^{r})}\lesssim \|u_{0}\|_{L^{2}}+\|h_{0}\|_{\mathcal{H}^{0}}+ \|f\|_{L^{\gamma'}(\mathbb{R}_{+};L^{\rho'})}.
		\end{equation}
		\item For $s\in (0,2)$:
		\begin{equation}\label{confine}
			\begin{aligned}
				\|u\|_{L^{q}(\mathbb{R}_{+};W^{s,r})} +\|u\|_{B^{\frac{s}{2}}_{q,2} (\mathbb{R}_{+}; L^{r} )}
				\lesssim &\, \|u_{0}\|_{H^{s}}+\|h_{0}\|_{\mathcal{H}^{s}}+ \|f\|_{L^{\gamma'} (\mathbb{R}_{+};  W^{s,\rho'})}\\
				&+\|f\|_{B^{\frac{s}{2}}_{\gamma',2} (\mathbb{R}_{+}; L^{\rho'})}.
			\end{aligned}
		\end{equation}
		\item For $s=2$:
		\begin{equation}\label{dilute}
			\begin{aligned}
				\|u\|_{L^{q}(0,T;W^{2,r})}+\|u\|_{W^{1,q}(0,T;L^{r})}
				\lesssim &\, \|u_{0}\|_{H^{2}}+\|h_{0}\|_{\mathcal{H}^{2}}+\|f(0)\|_{L^{2}}\\
				&+\|f\|_{L^{\gamma'}(0,T;W^{2,\rho'})}+\|f\|_{W^{1,\gamma'}(0,T;L^{\rho'})}.
			\end{aligned}
		\end{equation}
		\item For $s\in \left(2,\frac{5}{2}\right)$:
		\begin{equation}\label{ratify}
			\begin{aligned}
				\|u\|_{L^{q}(0,T;W^{s,r})}+\|u\|_{W^{1,q}(0,T;W^{s-2,r})}
				\lesssim &\, \|u_{0}\|_{H^{s}}+\|h_{0}\|_{\mathcal{H}^{s}}+\|f(0)\|_{H^{s-2}}\\
				&+\|f\|_{L^{1}(0,T;H^{s})}+\|f\|_{W^{1,1}(0,T;H^{s-2})}.
			\end{aligned}
		\end{equation}
	\end{enumerate}
\end{proposition}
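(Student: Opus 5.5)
The plan is to estimate the three pieces $I$, $II$, $III$ of the representation \eqref{turbine} separately. Each will be controlled by the linear results of Sections \ref{sec21}--\ref{sec23}, and then the bounds are summed. Throughout, the extension operator $E$ replaces $u_0$ and $f$ by functions on $\mathbb{R}^n$ with comparable norms: $\|Eu_0\|_{H^s(\mathbb{R}^n)}\lesssim\|u_0\|_{H^s}$, and similarly for $f$ in $W^{s,\rho'}$ and in the time-Besov/Sobolev norms, since $E$ acts only in $x$.

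\textbf{Terms $I$ and $II$.} For $I=e^{it\Delta}Eu_0$, Proposition \ref{p21} gives the Strichartz, $B^{s/2}_{q,2}$ and $W^{1,q}$ bounds via \eqref{autocrat}. For instance, with $j=1$ we get $\|I\|_{W^{1,q}(W^{s-2,r})}$, and with $l=s/2$ we get $\|I\|_{B^{s/2}_{q,2}(L^r)}$. Proposition \ref{p21} also gives the trace bound $\|I|_{x_n=0}\|_{\mathcal{H}^s}\lesssim\|u_0\|_{H^s}$ via \eqref{circuitous}. For $II$:
\begin{itemize}
\item When $s=0$ or $s\in(0,2)$, use \eqref{wrath} and \eqref{kidney} for the space-time norms, and \eqref{cope} and \eqref{juvenile} for the $\mathcal{H}^s$ trace.
\item When $s=2$, use \eqref{famine}, which produces the $\|f(0)\|_{L^2}$ term, together with \eqref{bribery}.
\item When $s\in(2,\frac52)$, use \eqref{wrath} and \eqref{famine} with $(\gamma',\rho')=(1,2)$, and \eqref{perilous} for the trace.
\end{itemize}
Continuity in time in $H^s$ and in $x_n$ in $\mathcal{H}^s$ comes from the $C_b$ statements in those propositions.

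\textbf{Term $III$.} Set $g_0:=h_0-I|_{x_n=0}-II|_{x_n=0}$ restricted to $\mathbb{R}^{n-1}\times(0,T)$. By the previous step, $\|g_0\|_{\mathcal{H}^s}$ is bounded by the right-hand side of each inequality. Next I must check that $g_0\in{}_0\mathcal{H}^s(\mathbb{R}^{n-1}\times(0,T))$, i.e.\ that it has vanishing trace at $t=0$ when $s>\frac12$. This holds because $II(0)=0$, $I(0)=Eu_0$, and the compatibility condition \eqref{comll} gives $h_0(x',0)=u_0(x',0)$. When $s=\frac12$, \eqref{agdj} is exactly the condition that puts $g_0$ in $\mathcal{H}^{1/2}_{00}$.

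With this in hand, \eqref{grit}, which comes from Proposition \ref{kiln}, yields all the space-time norms of $III=S(t)g_0$. The trace continuity in $\mathcal{H}^s$ follows from Proposition \ref{vilify}.

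For $s\in[2,\frac52)$ the statement is global in $\mathbb{R}_+$ only through $[0,T]$. Here I would bound $III$ using \eqref{squeak}, which needs only $\|g_0\|_{\mathcal{H}^{s-2}}+\|\partial_t g_0\|_{\mathcal{H}^{s-2}}$ and therefore gives a $T$-uniform bound. The quantity $\partial_t g_0$ would be estimated through $\partial_t I=i\Delta I$ via \eqref{tentative}/\eqref{circuitous} at level $s-2$. For $II$ I would use $i\partial_t w=-\Delta w+f$: the $\Delta w$ part is bounded by $\|f\|_{L^1H^s}$ via \eqref{cope}/\eqref{covet}, and the trace of $f$ is bounded by $\|f\|_{W^{1,1}H^{s-2}}+\|f\|_{L^1H^s}$ through a Sobolev trace embedding in $(x',t)$ as in \eqref{fallacious}. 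The $\partial_t h_0$ contribution is bounded by $\|h_0\|_{\mathcal{H}^s}$ by \eqref{shenjing}.

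\textbf{Main obstacle.} I expect the main difficulty to be this last point, namely controlling $\|f|_{x_n=0}\|_{\mathcal{H}^{s-2}}$ using only $L^1$/$W^{1,1}$-in-time norms of $f$ without losing powers of $T$. If the direct trace fails, I would instead use \eqref{perilous} to bound $II|_{x_n=0}$ in $\mathcal{H}^s$ on $(0,T)$ and then apply \eqref{grit}, accepting the full $\mathcal{H}^s$ norm of $g_0$. That norm is still controlled by the stated right-hand side of \eqref{ratify}.
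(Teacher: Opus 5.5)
Your treatment of cases (1) and (2) is essentially the paper's: Strichartz bounds for $I$ and $II$, then \eqref{grit} for $III$, with the boundary datum controlled by \eqref{circuitous}, \eqref{cope}, \eqref{juvenile}, and the compatibility condition placing $g_0$ in ${}_0\mathcal{H}^s$. The gap is in cases (3)--(4), where you suspected it. To apply \eqref{squeak} you need a $T$-uniform bound on $\|\partial_t(II|_{x_n=0})\|_{\mathcal{H}^{s-2}(\mathbb{R}^{n-1}\times(0,T))}$. Going through $i\partial_t w=-\Delta w+f$ forces you to bound the trace of $f$ itself. The bound you propose is by $\|f\|_{L^1(0,T;H^s)}+\|f\|_{W^{1,1}(0,T;H^{s-2})}$, or its $L^{\gamma'}W^{2,\rho'}\cap W^{1,\gamma'}L^{\rho'}$ analogue at $s=2$. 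It is false uniformly in $T$. Take $f(x,t)=g(x)$ on $(0,T)$: the right side is $O(T)$ (resp.\ $O(T^{1/\gamma'})$ with $1/\gamma'\geq\frac12$). The left side is $\gtrsim T^{1/4}\|g(\cdot,0)\|_{L^2(\mathbb{R}^{n-1})}$, by \eqref{facile} and $\mathcal{H}^{s-2}\hookrightarrow\mathcal{H}^0$. Nor is \eqref{fallacious} the right tool. It needs $L^2(\mathbb{R})$-in-time norms of $f$, and passing from $(0,T)$ to $\mathbb{R}$ costs negative powers of $T$; this is the origin of the factor $T^{-\frac{2s-3}{4}}$ in \eqref{gender}. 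The $\|f(0)\|$ term on the right of \eqref{dilute}--\eqref{ratify} has to enter the boundary correction. In your plan it only enters the interior norms of $II$, via \eqref{famine}.

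The missing idea is the derivative formula \eqref{militant}: differentiate the Duhamel integral in time instead of using the equation.
\begin{itemize}
\item The trace of $e^{it\Delta}Ef(0)$ is bounded in $\mathcal{H}^{s-2}$ by $\|f(0)\|_{H^{s-2}}$, via \eqref{circuitous}.
\item The term $\int_0^t e^{i(t-\tau)\Delta}\partial_t Ef(\tau)\,d\tau$ is a Duhamel term with forcing $\partial_t f$. That forcing may be extended by zero outside $(0,T)$ at no cost. Then \eqref{cope} (for $s=2$) or \eqref{covet} (for $s\in(2,\frac52)$) bounds its trace by $\|\partial_t f\|_{L^{\gamma'}(0,T;L^{\rho'})}$, resp.\ $\|\partial_t f\|_{L^1(0,T;H^{s-2})}$, with no $T$-dependence.
\end{itemize}
The remaining pieces of \eqref{squeak} are as you describe.

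Your fallback via \eqref{perilous} and \eqref{grit} can be rescued for $s\in(2,\frac52)$, but only with a specific extension of $f$, which you do not give. Reflect $f$ evenly across $t=0$ and $t=T$, multiply by $\chi(t/T)$, and absorb the cutoff cost through $T^{-1}\|f\|_{L^1(0,T;H^{s-2})}\le\|f(0)\|_{H^{s-2}}+\|\partial_t f\|_{L^1(0,T;H^{s-2})}$. A generic extension as in \eqref{y11} instead yields a factor $1+T^{-1}$. For $s=2$ you offer no fallback. The analogous route through \eqref{bribery} is the one the remark following this proposition rules out: the same reflection--cutoff device leaves a term $T^{-1/\gamma}\|f(0)\|_{L^{\rho'}}$.
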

\begin{proof}
	We present a detailed proof for (1), the arguments for cases (2)--(4) are analoguous and we omit the details.
	\begin{enumerate}[label=(\arabic*)]
		\item For the first two terms $I$ and $II$ in \eqref{turbine}, it follows from \eqref{autocrat} and \eqref{wrath} that
		\begin{equation*}
			\left\|e^{it\Delta}Eu_{0}\right\|_{L^{q}_{t}(\mathbb{R}_{+};L^{r})}+\left\|i\int_{0}^{t}e^{i(t-\tau)\Delta}Ef(\tau)d\tau\right\|_{L_{t}^{q}(\mathbb{R}_{+};L^{r})}\lesssim\|u_{0}\|_{L^{2}}+\|f\|_{L^{\gamma'}(\mathbb{R}_{+}; L^{\rho'})}.
		\end{equation*}
		By successively applying \eqref{detergent}, \eqref{circuitous}, and \eqref{cope}, we derive the following estimate for the third term $III$:
		\begin{equation*}
			\begin{aligned}
				&\left\| S(t)\left(h_{0}-e^{i(\cdot)\Delta}Eu_{0}|_{x_{n}=0}+i\int_{0}^{\cdot}e^{i(\cdot\,-\tau)\Delta}Ef(\tau)d\tau|_{x_{n}=0}\right)\right\|_{L_{t}^{q}(\mathbb{R}_{+};L^{r})}\\
				\lesssim &\, \|h_{0}\|_{\mathcal{H}^{0}}+\left\|e^{i(\cdot)\Delta}Eu_{0}|_{x_{n}=0}\right\|_{\mathcal{H}^{0}}+\left\|i\int_{0}^{\cdot}e^{i(\cdot\,-\tau)\Delta}Ef(\tau)d\tau|_{x_{n}=0}\right\|_{\mathcal{H}^{0}}\\
				\lesssim &\, \|h_{0}\|_{\mathcal{H}^{0}}+\|u_{0}\|_{L^{2}}+\|f\|_{L^{\gamma'}(\mathbb{R}_{+};L^{\rho'})}.
			\end{aligned}
		\end{equation*}
		\item Following analogous methodology, the first two terms $I$ and $II$ admit estimation through \eqref{autocrat} and \eqref{kidney}. By sequentially applying \eqref{detergent}, \eqref{circuitous} and \eqref{juvenile}, we can estimate the third term $III$. Moreover, for $s=1$, applying \eqref{tentative}, \eqref{crisp}, \eqref{scrap}, and \eqref{juvenile} yields
		\begin{equation}\label{contagious}
			\|u\|_{ C_{b}^{1}(\overline{\mathbb{R}_{+}\mkern-4mu}\,;\mathcal{H}^{s-1}(\mathbb{R}_{+}^{n}))}\lesssim\|u_{0}\|_{H^{s}}+\|h_{0}\|_{\mathcal{H}^{s}}+ \|f\|_{L^{\gamma'} (\mathbb{R}_{+};  W^{s,\rho'})}+\|f\|_{B^{\frac{s}{2}}_{\gamma',2} (\mathbb{R}_{+}; L^{\rho'})}.
		\end{equation}
		\item The estimates for the first two terms follow directly from inequalities \eqref{autocrat}, \eqref{wrath}, and \eqref{famine}. The third term is estimated by employing \eqref{squeak}, \eqref{circuitous}, and \eqref{cope} in combination with the derivative formula
		\begin{equation}\label{militant} \partial_{t}\int_{0}^{t}e^{i(t-\tau)\Delta}Ef(\tau)d\tau=e^{it\Delta}Ef(0)+\int_{0}^{t}e^{i(t-\tau)\Delta}\partial_{t}Ef(\tau)d\tau.
		\end{equation}
		\item The proof of \eqref{ratify} follows the same approach as \eqref{dilute}, with the only modification being the substitution of \eqref{covet} for \eqref{cope}.
	\end{enumerate}
	When proving cases (2)--(4), it is necessary to verify the applicability conditions of \eqref{detergent} and \eqref{squeak}.
	In fact, from the compatibility condition between $u_{0}$ and $h_{0}$, as well as the estimate \eqref{circuitous}, we know that \[h_{0}-e^{i(\cdot)\Delta}Eu_{0}|_{x_{n}=0}\in {}_{0}\mathcal{H}^{s}(\mathbb{R}^{n}_{+}).\]
	By extending $f$ to the real axis and using \eqref{juvenile}, \eqref{bribery}, and \eqref{perilous}, it follows that  \[i\int_{0}^{\cdot}e^{i(\cdot\,-\tau)\Delta}Ef(\tau)d\tau|_{x_{n}=0}\in{}_{0}\mathcal{H}^{s}(\mathbb{R}^{n-1}\times(0,T)).\] Finally, the validity of \eqref{lo} can be established through
	an analogous argument developed above.
\end{proof}

\begin{remark}
	In the proof of \eqref{dilute} and \eqref{ratify}, it is necessary to use the modified estimate \eqref{squeak} instead of the previous one \eqref{grit}. Consider \eqref{dilute} as an illustration: Applying \eqref{grit} directly would introduce the $\mathcal{H}^{2}$-norm, which compels us to adopt \eqref{bribery} and thus obtain
	\begin{equation*}
		\|u\|_{L^{q}(\mathbb{R}_{+};W^{2,r})}+\|u\|_{W^{1,q}(\mathbb{R}_{+};L^{r})}\lesssim\|u_{0}\|_{H^{2}}+\|h_{0}\|_{\mathcal{H}^{2}}+\|f\|_{L^{\gamma'}(\mathbb{R}_{+};W^{2,\rho'})}+\|f\|_{W^{1,\gamma'}(\mathbb{R}_{+};L^{\rho'})},
	\end{equation*}
	which was derived in \cite{facilitate}.
	Appling this to local solution analysis would lead to an inevitable $T^{-1}$ singularity, as it requires the global integrability of $\partial_{t}f$. Our estimate \eqref{dilute} circumvents this issue by restricting the integrability to  $(0,T)$, weakening the requirements on the forcing term, as will be demonstrated in \eqref{afh}--\eqref{afhh}.
\end{remark}

Parallel to Proposition \ref{abss}, we apply Propositions \ref{p33} and \ref{tart} given in Section \ref{sec22} to obtain the estimates below which mainly contribute to global well-posedness. Specifically, they are employed in the regularity argument (Sections \ref{sec42} and \ref{painstakingly}). The involvement of the time derivative here, with $s=\frac{3}{2}$ as a  dividing line,  parallels that in the one-dimensional result of \cite{figment}.
\begin{proposition}\label{mago}
	Let $s\in \left[0,\frac{5}{2}\right)$, $u_{0}\in H^{s}(\mathbb{R}_{+}^{n})$ and
	$h_{0}\in\mathcal{H}^{s}(\mathbb{R}_{+}^{n})$ satisfy the  compatibility condition. Then the solution $u$ given by \eqref{turbine} satisfies $u\in C\left(\left[0,T\right];H^{s}(\mathbb{R}^{n}_{+})\right)\cap C_{b}\left(\overline{\mathbb{R}_{+}\mkern-4mu}\,;\mathcal{H}^{s}(\mathbb{R}^{n-1}\times(0,T))\right)$ and the following estimates:
	\begin{enumerate}[label=(\arabic*)]
		\item For $s\in\left[0,\frac{3}{2}\right]$, $1\leq \alpha,\beta\leq2$ satisfing $\frac{1}{\alpha}+\frac{n}{2\beta}=1+\frac{3n-4s}{12}$:
		\begin{equation}\label{buffoon}
			\|u\|_{L^{q}(0,T;W^{s,r})}\lesssim \|u_{0}\|_{H^{s}}+\|h_{0}\|_{\mathcal{H}^{s}}+\langle T\rangle^{\frac{s}{3}}\|f\|_{L^{\alpha}(0,T;W^{s,\beta})}.
		\end{equation}
		In addition,  when  $n=2$ and $s\neq\frac{3}{2}$, we require $\left(\alpha,\beta\right)\neq \left(2,\frac{6n}{3n+6-4s}\right)$.
		\item For $s\in \left(\frac{3}{2},\frac{5}{2}\right)$:
		\begin{equation}\label{aggravate}
			\begin{aligned}
				&\|u\|_{L^{q}(0,T;W^{s,r})}+\|u\|_{W^{1,q}(0,T;W^{s-2,r})}
				\lesssim \|u_{0}\|_{H^{s}}+\|h_{0}\|_{\mathcal{H}^{s}}+\|f(0)\|_{H^{s-2}}\\
				&\quad
				+\left(\langle T\rangle^{\frac{1}{2}}+T^{-\frac{2s-3}{4}}\right)\left(\|f\|_{L^{2}(0,T;H^{s}) }+\|f\|_{H^{1}(0,T;H^{s-2})}\right).
			\end{aligned}
		\end{equation}
	\end{enumerate}
\end{proposition}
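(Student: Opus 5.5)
The plan is to follow the proof of Proposition \ref{abss} verbatim, using the representation \eqref{turbine} $u=I+II+III$. The only change is that the forcing term is estimated through Propositions \ref{p33} and \ref{tart} rather than \eqref{cope}--\eqref{juvenile}, \eqref{bribery} and \eqref{perilous}. Since all norms are on $(0,T)$, I will first replace $f$ by its zero extension (case (1)) or by the Sobolev extension $\boldsymbol f$ used in the proof of Proposition \ref{tart} (case (2)). Then $II$ on $(0,T)$ coincides with the Duhamel term \eqref{thorny} applied to $E\boldsymbol f$, and by causality $III$ on $(0,T)$ depends only on the boundary trace restricted to $\mathbb{R}^{n-1}\times(0,T)$.

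\textbf{Case (1).} For $I$, \eqref{autocrat} gives $\|u_0\|_{H^s}$. For $III$, \eqref{grit} bounds it by $\|h_0\|_{\mathcal H^s}$ plus the ${}_0\mathcal H^s(\mathbb{R}^{n-1}\times(0,T))$ norms of $e^{i(\cdot)\Delta}Eu_0|_{x_n=0}$ and $w|_{x_n=0}$. The first is handled by \eqref{circuitous}, and the second is exactly \eqref{taowuji}, which yields $\langle T\rangle^{s/3}\|f\|_{L^\alpha(0,T;W^{s,\beta})}$.

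The interior term $II$ is the main point. I need $\|w\|_{L^q(0,T;W^{s,r})}\lesssim \|f\|_{L^\alpha W^{s,\beta}}$ for the pairs $(\alpha,\beta)$ allowed here. I would do this via \eqref{wrath}, whose right-hand side is $L^{\gamma'}W^{s,\rho'}$ with $\frac1{\gamma'}+\frac{n}{2\rho'}=1+\frac n4$. The constraint $\frac1\alpha+\frac n{2\beta}=1+\frac n4-\frac s3$ has a larger scaling gap, so I would use Sobolev embedding in $x$ (trading $s$ derivatives) together with H\"older in time on $(0,T)$. The H\"older step yields powers of $T$ that can be absorbed into $\langle T\rangle^{s/3}$. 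I expect this matching of exponents to be the delicate bookkeeping, and it is the reason for the excluded pair when $n=2$ (the forbidden endpoint $(2,\infty)$). If this step fails for some range, the fallback is to interpolate: between $s=0$, where \eqref{wrath} with $(\alpha,\beta)=(\gamma',\rho')$ holds directly, and $s=\frac32$, $(\alpha,\beta)=(2,2)$, where $L^2(0,T;H^{3/2})\subset T^{1/2}L^1H^{3/2}$. I would do this exactly as Proposition \ref{p33} was interpolated between \eqref{32} and \eqref{cope}.

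\textbf{Case (2).} I treat it like \eqref{ratify}. For $I$ use \eqref{autocrat} with $j=1$. For $II$ use \eqref{wrath} and \eqref{famine} with $(\gamma,\rho)=(\infty,2)$, after bounding $L^2(0,T)\subset T^{1/2}L^1(0,T)$. The latter estimate produces the term $\|f(0)\|_{H^{s-2}}$ and the $H^1(0,T;H^{s-2})$ norm.

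For $III$ use \eqref{squeak} when $s\ge 2$, and \eqref{grit} when $s\in(\frac32,2)$. The data are then $h_0$, the trace of $I$ (controlled by \eqref{circuitous} and \eqref{tentative}), and the trace of $w$. The trace of $w$ is controlled in $\mathcal H^s(\mathbb{R}^{n-1}\times(0,T))$ by \eqref{gender}, which is precisely the source of the factor $\langle T\rangle^{1/2}+T^{-\frac{2s-3}{4}}$.

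For $s\ge2$, the $\partial_t$ of the trace of $w$ in $\mathcal H^{s-2}$ is read off from \eqref{shenjing} applied to the $\mathcal H^s$ bound, so \eqref{gender} again suffices. Compatibility, through \eqref{comll}, places the combined boundary datum in ${}_0\mathcal H^s$, as in the final paragraph of the proof of Proposition \ref{abss}.

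Finally, continuity in time and the $C_b(\overline{\mathbb{R}_+};\mathcal H^s)$ property follow by density, exactly as for \eqref{lo}.
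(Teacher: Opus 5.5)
Your proposal is correct and is essentially the argument the paper intends. The paper gives no separate proof, only ``parallel to Proposition \ref{abss}, using Propositions \ref{p33} and \ref{tart}'': decompose via \eqref{turbine}, bound $I$ and $II$ by \eqref{autocrat}, \eqref{wrath}, \eqref{famine} plus H\"older in time, and feed the boundary traces into \eqref{grit} (or \eqref{squeak}) using \eqref{circuitous} together with \eqref{taowuji} or \eqref{gender}.

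One correction to your treatment of $II$ in case (1): no spatial Sobolev embedding is involved, and none could help, since both sides carry exactly $s$ derivatives. The constraint with $\alpha\le 2$ forces $\beta\ge\frac{6n}{3n+6-4s}\ge\frac{2n}{n+2}$. So $\rho:=\beta'$ is an admissible exponent whose partner satisfies $\frac{1}{\gamma'}=\frac{1}{\alpha}+\frac{s}{3}$, and H\"older in time on $(0,T)$ gives exactly the factor $T^{s/3}$. In particular, for $s>0$ the pair excluded when $n=2$ causes no difficulty in $II$. That exclusion is inherited from \eqref{taowuji}, where it arises from interpolating against the forbidden endpoint in \eqref{cope}.
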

By using the $\mathcal{W}^{2}$ theoretical framework presented in \eqref{huaw}--\eqref{invoice}, we derive the following additional $H^{2}$-estimate.
\begin{proposition}\label{weak}
	Let $u_{0}\in H^{2}(\mathbb{R}_{+}^{n})$ and $h_{0}\in\mathcal{H}^{2}(\mathbb{R}^{n}_{+})$ satisfy the  compatibility condition. Then the $u$ given by \eqref{turbine} satisfies \(u\in C\left(\left[0,T\right];H^{2}(\mathbb{R}^{n}_{+})\right)\cap C_{b}\left(\overline{\mathbb{R}_{+}\mkern-4mu}\,;\mathcal{W}^{2}(\mathbb{R}^{n-1}\times(0,T))\right)\) and the following estimate
	\begin{equation}\label{begrudge}
		\|u\|_{C([0,T];H^{2})\cap W^{1,q}(0,T;L^{r})}\lesssim\|u_{0}\|_{H^{2}}+\|h_{0}\|_{\mathcal{H}^{2}}+\|f\|_{C([0,T];L^{2})\cap W^{1,\gamma'}(0,T;L^{\rho'})}.
	\end{equation}

\end{proposition}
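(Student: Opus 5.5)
We work with the decomposition \eqref{turbine}, $u=I+II+III$, and estimate each term in $C([0,T];H^{2})\cap W^{1,q}(0,T;L^{r})$. The point of the statement is that the boundary operator $S(t)$ will only be controlled through the weak norm $\mathcal{W}^{2}$ via \eqref{invoice}, not through $\mathcal{H}^{2}$. The forcing $f$ then only needs to be controlled in $C([0,T];L^{2})\cap W^{1,\gamma'}(0,T;L^{\rho'})$, with no spatial derivatives on $f$.

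For $I=e^{it\Delta}Eu_{0}$ we apply \eqref{autocrat} with $s=2$, $j=0,1$. This bounds it by $\|u_{0}\|_{H^{2}}$ in $C_{b}(\mathbb{R};H^{2})\cap W^{1,q}(\mathbb{R};L^{r})$.

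For the Duhamel term $II=w$ we avoid spatial derivatives of $f$ by using the equation. By \eqref{militant},
\[
\partial_{t}w=-ie^{it\Delta}Ef(0)-i\int_{0}^{t}e^{i(t-\tau)\Delta}\partial_{t}Ef(\tau)\,d\tau .
\]
Then \eqref{autocrat} and \eqref{wrath} with $s=0$ give
\[
\|\partial_{t}w\|_{C([0,T];L^{2})\cap L^{q}(0,T;L^{r})}\lesssim\|f(0)\|_{L^{2}}+\|\partial_{t}f\|_{L^{\gamma'}(0,T;L^{\rho'})}.
\]
From $\Delta w=-i\partial_{t}w+Ef$ we get $\|w(t)\|_{H^{2}}\lesssim\|w(t)\|_{L^{2}}+\|\partial_{t}w(t)\|_{L^{2}}+\|f(t)\|_{L^{2}}$. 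This is bounded by the $C([0,T];L^{2})$ norm of $f$ together with the estimate above, since $\|w\|_{L^{2}}\lesssim T^{\cdot}\|f\|$ by \eqref{wrath}. (Continuity in time follows likewise.) The $L^{q}_{t}L^{r}$ part of $w$ is again \eqref{wrath}.

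For $III=S(t)g_{0}$ with $g_{0}=h_{0}-I|_{x_{n}=0}+i\int_{0}^{\cdot}e^{i(\cdot-\tau)\Delta}Ef\,d\tau|_{x_{n}=0}$, we first check that $g_{0}\in{}_{0}\mathcal{W}^{2}(\mathbb{R}^{n-1}\times(0,T))$. The first two pieces lie in $\mathcal{H}^{2}$ by \eqref{circuitous}, and the compatibility condition makes their difference vanish at $t=0$. For the third piece we approximate $f$ by smooth functions, so that it lies in ${}_{0}\mathcal{H}^{2}$, and pass to the limit in $\mathcal{W}^{2}$. Then \eqref{invoice} gives
\[
\|III\|_{C([0,T];H^{2})\cap W^{1,q}(0,T;L^{r})}+\|III\|_{C_{b}(\overline{\mathbb{R}_{+}};\mathcal{W}^{2})}\lesssim\|g_{0}\|_{\mathcal{W}^{2}}.
\]
To bound $\|g_{0}\|_{\mathcal{W}^{2}}=\|g_{0}\|_{\mathcal{H}^{0}}+\|\partial_{t}g_{0}\|_{\mathcal{H}^{0}}$, we treat the pieces separately:
\begin{itemize}
\item the $h_{0}$ piece is controlled by $\|h_{0}\|_{\mathcal{H}^{2}}$ through \eqref{shenjing};
\item the free-evolution piece is controlled by $\|u_{0}\|_{H^{2}}$ via \eqref{circuitous}, using $\partial_{t}e^{it\Delta}Eu_{0}=e^{it\Delta}i\Delta Eu_{0}$;
\item the Duhamel trace is controlled by \eqref{cope} for $w$, and for $\partial_{t}w$ by \eqref{circuitous} on $e^{it\Delta}Ef(0)$ plus \eqref{cope} on the integral of $\partial_{t}Ef$.
\end{itemize}
Since $f$ is only given on $(0,T)$, \eqref{cope} is applied to the zero extension of $\partial_{t}f$ on $(0,T)$, and $w$ on $(0,T)$ is unchanged. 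Summing the three pieces yields \eqref{begrudge}, and the continuity statements follow from the same bounds applied to differences.

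The main obstacle is the membership $g_{0}\in{}_{0}\mathcal{W}^{2}$, that is, the closure and density step. We need smooth approximations $f_{k}$ such that both the $\mathcal{W}^{2}$ norm of the traces and the right-hand side converge, and such that the compatibility at $t=0$ is preserved. Otherwise \eqref{invoice} cannot be invoked, since $S(t)$ was extended to ${}_{0}\mathcal{W}^{2}$ only by density.
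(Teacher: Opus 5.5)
Your proposal is correct and follows the paper's proof essentially step by step. It uses the same splitting $u=I+II+III$, the same control of $\Delta II$ via \eqref{militant}, \eqref{autocrat}, \eqref{wrath} and the equation, and the same density argument (smoother $f_k$, \eqref{bribery} to put the Duhamel trace in ${}_{0}\mathcal{H}^{2}$, then passage to the limit in $\mathcal{W}^{2}$ using \eqref{circuitous} and \eqref{cope}) before invoking \eqref{invoice}.

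Two small remarks. First, no power of $T$ is needed in the $L^{2}$ bound for $II$: \eqref{wrath} with $s=0$ already gives $\|II\|_{C([0,T];L^{2})}\lesssim\|f\|_{L^{\gamma'}(0,T;L^{\rho'})}$. Second, the density step you flag as the main obstacle is routine: spatial mollification of $Ef$ converges in $C([0,T];L^{2})\cap W^{1,\gamma'}(0,T;L^{\rho'})$, and the Duhamel trace vanishes at $t=0$ automatically, so no compatibility needs to be preserved.
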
	
\begin{proof}
	The estimate for the first term $I$ follow from \eqref{autocrat}. For the second term $II$, from \eqref{famine} we have
	\begin{equation}\label{trifle}		
		\|II\|_{W^{1,q}(0,T;L^{r})}\lesssim \|f(0)\|_{L^{2}}+\|f\|_{W^{1,\gamma'}(0,T;L^{\rho'})}.
	\end{equation}
	Since $II$ satisfies the equation $i\partial_{t}II+\Delta II=f$, the estimate \eqref{trifle} yields
	\begin{equation*}
		\|\Delta II\|_{C([0,T];L^{2})}\lesssim  \|f\|_{C([0,T];L^{2})}+\|f\|_{W^{1,\gamma'}(0,T;L^{\rho'})},
	\end{equation*}
	hence we obtain the estimate for $II$. Finally, we consider the third term $III$. For $f\in C([0,T];L^{2})\cap W^{1,\gamma'}(0,T;L^{\rho'})$, we denote
	\begin{equation*}
		\Phi_{f}(t):=\int_{0}^{t}e^{i(t-\tau)\Delta}Ef(\tau)d\tau.
	\end{equation*}
	Using the definition  \eqref{huaw}, identity \eqref{militant} and inequalities \eqref{circuitous}, \eqref{cope}, we obtain
	$\Phi_{f}\in C_{b}\left(\overline{\mathbb{R}_{+}\mkern-4mu}\,;\mathcal{W}^{2}(\mathbb{R}^{n-1}\times(0,T))\right)$ with the estimate
	\begin{equation}\label{rampant}
		\begin{aligned}
			\left\|\Phi_{f}|_{x_{n}=0}\right\|_{\mathcal{W}^{2}(\mathbb{R}^{n-1}\times(0,T))}\lesssim\|f(0)\|_{L^{2}}+\|f\|_{W^{1,\gamma'}(0,T;L^{\rho'})}.
		\end{aligned}
	\end{equation} By a density argument, we can choose $f_{n}\in C([0,T];L^{2})\cap W^{1,\gamma'}(0,T;L^{\rho'})\cap L^{\gamma'}(0,T;W^{2,\rho'})$ to approximate $f$. By
	\eqref{bribery}, $\Phi_{f_{n}}\in C_{b}\left(\overline{\mathbb{R}_{+}\mkern-4mu}\,;{}_{0}\mathcal{H}^{2}(\mathbb{R}^{n-1}\times(0,T))\right)$. Then, from \eqref{militant}, \eqref{circuitous} and \eqref{cope}, we have
	\[
	\lim\limits_{n\rightarrow\infty}\Phi_{f_{n}}= \Phi_{f}\quad \text{in} \quad C_{b}\left(\overline{\mathbb{R}_{+}\mkern-4mu}\,;\mathcal{W}^{2}(\mathbb{R}^{n-1}\times(0,T))\right).\]
	Recall the definition of ${}_{0}\mathcal{W}^{2}(\mathbb{R}^{n-1}\times(0,T))$, we also have
	\begin{equation*}
		\Phi_{f}\in C_{b}\left(\overline{\mathbb{R}_{+}\mkern-4mu}\,;{}_{0}\mathcal{W}^{2}(\mathbb{R}^{n-1}\times(0,T))\right).
	\end{equation*}
	The compatibility condition ensures that $h_{0}-e^{i(\cdot)\Delta}Eu_{0}|_{x_{n}=0}\in{}_{0}\mathcal{W}^{2}(\mathbb{R}^{n-1}\times(0,T))$. Applying \eqref{invoice}, \eqref{circuitous} and \eqref{rampant} to term $III$, we derive
	\begin{equation*}
		\begin{aligned}
			\|III\|_{C([0,T];H^{2})\cap W^{1,q}(0,T;L^{r}) }\lesssim & \left\|h_{0}-e^{i(\cdot)\Delta}Eu_{0}|_{x_{n}=0}+i\Phi_{f}|_{x_{n}=0}\right\|_{\mathcal{W}^{2}(\mathbb{R}^{n-1}\times(0,T))}\\
			\lesssim & \, \|h_{0}\|_{\mathcal{W}^{2}}+\|u_{0}\|_{H^{2}}+\|f\|_{C([0,T];L^{2})\cap W^{1,\gamma'}(0,T;L^{\rho'})}.
		\end{aligned}
	\end{equation*}
\end{proof}
\begin{remark}
	In contrast to \eqref{dilute} and \eqref{aggravate}, the estimate \eqref{begrudge} does not involve the spatial derivative of $f$. This allows us, even if the nonlinear term lacks smoothness, to obtain $H^{2}$ solutions of \eqref{ibvp} (satisfing boundary data in a weaker sense, see Section~\ref{stipple}), thereby deriving global $H^{1}$ solutions through approximation in this setting.
\end{remark}
The linear problem \eqref{temporal} exhibits uniqueness: Assume that
\begin{equation*}
	u_{1}, u_{2}\in C([0,T);L^{2}(\mathbb{R}^{n}_{+}))\cap C_{b}(\overline{\mathbb{R}_{+}\mkern-4mu}\,;\mathcal{H}^{0}(\mathbb{R}^{n-1}\times(0,T)))
\end{equation*}
solve \eqref{temporal} with identical initial and boundary data, then $u_{1}=u_{2}$. The one-dimensional case is proven in Section 8 of \cite{chatter}, employing mollification techniques under the framework where $u(\cdot,t)\in L^{2}(\mathbb{R}_{+})$ and $u(x,\cdot)\in L^{2}(0,T)$. The high-dimensional case follows analogously, since our boundary traces belong to $\mathcal{H}^{0}(\mathbb{R}^{n}_{+})$, which is embedded into $L^{2}(\mathbb{R}^{n-1}\times(0,T))$ by \eqref{facile}.

\section{Local well-posedness for the nonlinear Schr\"{o}dinger equation}\label{sec3}
In this section, we develop the local theory for the nonlinear IBVP \eqref{ibvp} via the estimates established in Section \ref{sec2}. For initial data $u_{0}\in H^{s}(\mathbb{R}^{n}_{+})$ and boundary data  $h_{0} \in \mathcal{H}^{s}(\mathbb{R}_{+}^{n})$, by replacing $f$ with $-\lambda\psi_{T}F(u)$ in \eqref{turbine}, we formally define the operator
\begin{equation}\label{ahha}
	\begin{aligned}
		\Gamma u(t):=&\,e^{it\Delta}Eu_{0}+i\lambda\int_{0}^{t}e^{i(t-\tau)\Delta}E\psi_{T}(\tau)F(u(\tau))d\tau\\	&+S(t)\left(h_{0}-e^{i(\cdot)\Delta}Eu_{0}|_{x_{n}=0}-i\lambda\int_{0}^{\cdot}e^{i(\cdot-\tau)\Delta}E\psi_{T}(\tau)F(u(\tau))d\tau|_{x_{n}=0}\right),
	\end{aligned}
\end{equation}
where $E$ is the spatial extension operator in \eqref{turbine}, $\psi_{T}(t):=\psi\left(\frac{t}{T}\right)$ with $\psi\in C^{\infty}(\mathbb{R})$ satisfying $\psi(t)=1$ for $t\in[0,1]$, and $S(t)$ is given by \eqref{st}.   \emph{Throughout this work, we consistently denote $F(u)=|u|^{p}u$, where $p>0$.} The objective is to find a $u$ such that $u=\Gamma u$ by using the Banach fixed-point theorem, thereby constructing the local solution to \eqref{ibvp}. Here we list the lemmas and remarks that will be used subsequently.
\begin{lemma}[See \cite{gul}]\label{elicit}
	Let $1<\kappa<\infty$, $s\geq0$, and  $\frac{1}{\kappa}=\frac{1}{p_{i}}+\frac{1}{q_{i}}$ with $i=1,2,$
	$1<p_{1},q_{2}<\infty$, $1<p_{2},q_{1}\leq\infty$. Then for all $f\in L^{p_{2}}(\mathbb{R}^{n}_{+})\cap W^{s,p_{1}}(\mathbb{R}^{n}_{+})$, $g\in L^{q_{1}}(\mathbb{R}^{n}_{+})\cap W^{s,q_{2}}(\mathbb{R}^{n}_{+})$, one has
	\begin{equation}\label{trot}
		\|fg\|_{W^{s,\kappa}}\lesssim \|f\|_{W^{s,p_{1}}} \|g\|_{L^{q_{1}}} +\|f\|_{L^{p_{2}}}\|g\|_{W^{s,q_{2}}}.
	\end{equation}
\end{lemma}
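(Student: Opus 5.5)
The natural route is to reduce the estimate on $\mathbb{R}^n_+$ to the corresponding fractional Leibniz (Kato--Ponce) inequality on the whole space $\mathbb{R}^n$, via a bounded extension operator. Let $E$ be a universal linear extension operator (Stein's, or a reflection-type operator of sufficiently high order) that is bounded $W^{\sigma,m}(\mathbb{R}^n_+)\to W^{\sigma,m}(\mathbb{R}^n)$ simultaneously for all $0\le\sigma\le s$ and all $1<m\le\infty$; the case $m=\infty,\sigma=0$ is covered since reflection operators are bounded on $L^\infty$. Then $Ef\cdot Eg$ is an extension of $fg$. Since $W^{s,\kappa}(\mathbb{R}^n_+)$ is defined as a restriction space with the quotient norm, we get
\begin{equation*}
\|fg\|_{W^{s,\kappa}(\mathbb{R}^n_+)}\le \|Ef\,Eg\|_{W^{s,\kappa}(\mathbb{R}^n)}.
\end{equation*}

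Next I would apply the whole-space estimate to the right-hand side:
\begin{equation*}
\|FG\|_{W^{s,\kappa}(\mathbb{R}^n)}\lesssim \|F\|_{W^{s,p_1}}\|G\|_{L^{q_1}}+\|F\|_{L^{p_2}}\|G\|_{W^{s,q_2}},
\end{equation*}
valid under exactly the stated exponent ranges. This is the Kato--Ponce / Christ--Weinstein / Grafakos--Oh inequality, whose proof uses a Littlewood--Paley paraproduct decomposition $FG=\sum_j P_{<j-2}F\,P_jG+\sum_j P_jF\,P_{<j-2}G+\sum_{|j-k|\le2}P_jF\,P_kG$. In that argument the factor carrying the $W^{s,\cdot}$ norm needs a finite Lebesgue exponent (square-function bounds), while the other factor may sit in $L^\infty$ through the maximal function. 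This is why $p_1,q_2<\infty$ while $q_1,p_2\le\infty$ are allowed.

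Inserting $F=Ef$, $G=Eg$ and using the boundedness of $E$ on each space involved then yields \eqref{trot}. I expect the only delicate point to be the extension step itself. We need a single operator that is bounded on both $L^\infty$ and fractional $W^{s,m}$ for every $s\ge0$, since $s$ ranges up to $5/2$ in the applications. Stein's extension operator handles all integer orders for $1\le m\le\infty$, and the fractional orders then follow by complex interpolation, because $W^{s,m}$ is understood as the Bessel potential space $H^{s,m}$. Given this, the result is a direct transfer of the known whole-space inequality.
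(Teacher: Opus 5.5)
Your proposal is correct and matches the paper. The paper gives no argument of its own: it cites the whole-space fractional Leibniz rule of Gulisashvili--Kon, stated with exactly these exponent ranges, and leaves the passage to $\mathbb{R}^n_+$ implicit. Your extension step (quotient norm plus a single extension operator bounded on $L^{m}$ for $m\le\infty$ and on $W^{s,m}$ for $1<m<\infty$) is precisely that implicit step made explicit.
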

\begin{lemma}[See \cite{fanxian}]\label{accessory}
	Let $0\leq s\leq k\leq \sigma$, $k\in\mathbb{Z}_{+}$. Suppose $H\in C^{k}(\mathbb{C};\mathbb{C})$ and there exists a constant $C>0$ such that
	\begin{equation*}
		\left|H^{(j)}(z)\right|\leq C |z|^{\sigma-j}, \quad j=0,1, \cdots,k.
	\end{equation*}
	Then for all $1<q_{0},q_{1}<\infty, 1<q_{2}\leq\infty$, $\frac{1}{q_{0}}=\frac{1}{q_{1}}+\frac{\sigma-1}{q_{2}}$ and $f\in L^{q_{2}}(\mathbb{R}^{n}_{+})\cap W^{s,q_{1}}(\mathbb{R}^{n}_{+})$, one has
	\begin{equation}\label{riesz}
		\|H(f)\|_{W^{s,q_{0} }}\lesssim \|f\|_{L^{q_{2}}}^{\sigma-1}\|f\|_{W^{s,q_{1}}}.
	\end{equation}
\end{lemma}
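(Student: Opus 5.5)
The plan is to reduce to the whole space and then run a Faà di Bruno / fractional chain rule argument, with all Lebesgue exponents chosen by Gagliardo--Nirenberg interpolation between $L^{q_{2}}$ and $W^{s,q_{1}}$.

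\emph{Step 1: reduction to $\mathbb{R}^{n}$.} Let $E$ be a Stein-type universal extension operator for $\mathbb{R}^{n}_{+}$. It is bounded simultaneously on $L^{q_{2}}$ (including $q_{2}=\infty$) and on $W^{s,q_{1}}$ for every $s\ge0$. Since $H(Ef)|_{\mathbb{R}^{n}_{+}}=H(f)$ and restriction is bounded, it suffices to prove \eqref{riesz} on $\mathbb{R}^{n}$ for $Ef$. The case $s=0$ follows directly from $|H(f)|\le C|f|^{\sigma}$ and H\"older's inequality with $\frac{1}{q_{0}}=\frac{1}{q_{1}}+\frac{\sigma-1}{q_{2}}$. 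So assume $s>0$ and write $s=m+\theta$ with $m\in\mathbb{N}$ and $\theta\in[0,1)$.

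\emph{Step 2: integer part.} For $|\alpha|\le m$, the Faà di Bruno formula gives $\partial^{\alpha}H(f)$ as a sum of terms
\[
H^{(j)}(f)\prod_{i=1}^{j}\partial^{\beta_{i}}f,\qquad \sum_{i}|\beta_{i}|=|\alpha|,\ 1\le j\le|\alpha|.
\]
Using $|H^{(j)}(f)|\lesssim|f|^{\sigma-j}$, I will place $|f|^{\sigma-j}$ in $L^{q_{2}/(\sigma-j)}$ and each factor $\partial^{\beta_{i}}f$ in $L^{p_{i}}$, where
\[
\frac{1}{p_{i}}=\Big(1-\frac{|\beta_{i}|}{s}\Big)\frac{1}{q_{2}}+\frac{|\beta_{i}|}{s}\,\frac{1}{q_{1}}.
\]
The Gagliardo--Nirenberg inequality then gives $\|\partial^{\beta_{i}}f\|_{L^{p_{i}}}\lesssim\|f\|_{L^{q_{2}}}^{1-|\beta_{i}|/s}\|f\|_{W^{s,q_{1}}}^{|\beta_{i}|/s}$. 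This is valid for $q_{2}=\infty$ and $1<q_{1}<\infty$ as well. Summing the reciprocal exponents yields exactly $\frac{1}{q_{0}}$ when $|\alpha|=s$, and the total power $\|f\|_{L^{q_{2}}}^{\sigma-1}\|f\|_{W^{s,q_{1}}}$. Lower-order terms are handled by the same computation with $s$ replaced by $|\alpha|$ (or simply by interpolation). This settles the case $\theta=0$.

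\emph{Step 3: fractional part ($\theta>0$).} Here $m+1\le k$, so $H^{(m)}$ is still $C^{1}$ with $|H^{(m+1)}(z)|\lesssim|z|^{\sigma-m-1}$. I will apply $D^{\theta}$ to each Faà di Bruno term with $|\alpha|=m$. The fractional Leibniz rule (Lemma \ref{elicit}) distributes $D^{\theta}$ either onto one factor $\partial^{\beta_{i}}f$ or onto $H^{(j)}(f)$.
\begin{itemize}
\item \emph{On a factor $\partial^{\beta_{i}}f$:} this produces $\|f\|_{W^{|\beta_{i}|+\theta,p}}$, controlled by Gagliardo--Nirenberg as in Step 2 with $|\beta_{i}|$ replaced by $|\beta_{i}|+\theta$.
\item \emph{On $H^{(j)}(f)$:} I use the Christ--Weinstein fractional chain rule $\|D^{\theta}G(f)\|_{L^{a}}\lesssim\|G'(f)\|_{L^{b}}\|D^{\theta}f\|_{L^{c}}$ with $G=H^{(j)}$ and $|G'(f)|\lesssim|f|^{\sigma-j-1}$, followed by Gagliardo--Nirenberg for $D^{\theta}f$.
\end{itemize}
In both cases the exponent bookkeeping is the same linear computation as before, since the derivative count adds up to $s$.

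\emph{Main obstacle.} I expect the main difficulty to be Step 3. The Leibniz and chain rules require all intermediate Lebesgue exponents to lie strictly in $(1,\infty)$, which can fail when $q_{2}=\infty$ or when $\sigma-j-1$ is small. Moreover, if $\sigma-j-1<0$, the chain rule for $H^{(j)}$ is not directly available. The first remedy I would try is to use the H\"older-type chain rule of Visan/Kenig--Ponce--Vega: when $H^{(m)}$ is only $C^{0,\sigma-m}$-H\"older, it gives $\|D^{\theta}G(f)\|\lesssim\||f|^{\sigma-m-\theta/\mu}\|\,\|D^{\mu}f\|^{\theta/\mu}$ for some $\mu\in(\theta,1)$. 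If that does not suffice, I would perturb the exponents slightly off the endpoints and then reinterpolate.
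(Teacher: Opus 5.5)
The paper does not prove Lemma \ref{accessory}; it quotes it from \cite{fanxian}, so there is no internal argument to compare yours against. Your outline is the classical proof of this Moser-type estimate, and it goes through: extension to $\mathbb{R}^{n}$, Fa\`a di Bruno, the fractional Leibniz and chain rules, and Gagliardo--Nirenberg to turn each derivative count $\mu$ into $\|f\|_{L^{q_2}}^{1-\mu/s}\|f\|_{W^{s,q_1}}^{\mu/s}$. What needs correcting is your ``main obstacle'' paragraph. Under the stated hypotheses neither difficulty arises, so you can drop the Visan-type H\"older chain rule and the perturbation fallback.

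\textbf{The sign of $\sigma-j-1$.} In Step 3, $s=m+\theta$ with $\theta>0$ and $s\le k$ forces $m\le k-1$. Every Fa\`a di Bruno term has $0\le j\le m$, so $\sigma-j-1\ge\sigma-k\ge0$.
\begin{itemize}
\item Hence $G=H^{(j)}$ is $C^{1}$ with $|G'(\tau z_1+(1-\tau)z_2)|\lesssim |z_1|^{\sigma-j-1}+|z_2|^{\sigma-j-1}$. This is exactly the structure the Christ--Weinstein/Kenig--Ponce--Vega chain rule requires.
\item The borderline $\sigma-j-1=0$ forces $j=m=k-1=\sigma-1$. It is then just the Lipschitz composition estimate on $\dot W^{\theta,a}$ for $0<\theta<1$, $1<a<\infty$.
\item The same reduction handles $q_2=\infty$: replace $G$ by $G$ composed with the radial retraction onto the disc of radius $\|f\|_{L^\infty}$.
\end{itemize}

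\textbf{The exponents.} Every factor carrying $\mu\in(0,s]$ derivatives, integer or fractional, sits in $L^{p}$ with
\[
\frac1p=\Big(1-\frac{\mu}{s}\Big)\frac1{q_2}+\frac{\mu}{s}\,\frac1{q_1}\in(0,1),
\]
since $1/q_1\in(0,1)$ and $1/q_2\in[0,1)$.
\begin{itemize}
\item The undifferentiated powers $|f|^{\sigma-j}$ and $|f|^{\sigma-j-1}$ sit in $L^{q_2/(\sigma-j)}$ and $L^{q_2/(\sigma-j-1)}$. These exponents exceed $1$ because $(\sigma-1)/q_2=1/q_0-1/q_1<1$.
\item They may equal $\infty$, which Lemma \ref{elicit} and the chain rule allow in the undifferentiated slot.
\item Every intermediate exponent in the iterated Leibniz rule is a partial sum of these reciprocals. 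For slots carrying $D^{\theta}$ it therefore lies in $(0,1/q_0]$.
\end{itemize}
So no endpoint issue occurs.

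\textbf{Routine points to state.}
\begin{itemize}
\item Justify Fa\`a di Bruno for nonsmooth $f$ by mollifying, using the uniform bound and weak compactness in the reflexive space $W^{s,q_0}$.
\item After extending, use the whole-space homogeneous Leibniz rule rather than Lemma \ref{elicit} on $\mathbb{R}^n_+$ literally.
\item The lower-order terms in Step 2 are unnecessary. Since $\|g\|_{W^{s,q_0}}\approx\|g\|_{L^{q_0}}+\|D^{s}g\|_{L^{q_0}}$, the $L^{q_0}$ part is already your $s=0$ H\"older bound.
\end{itemize}
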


\begin{lemma}[See \cite{outage}]\label{relay}
	Let $I$ be an open interval in $\mathbb{R}$ and $X$ be a Banach space.
	\begin{enumerate}[label=(\roman*)]
		\item  Suppose $1\leq q\leq\infty$ and  $f\in W^{1,q}(I;X)$.  Then for all $K\subset\subset I$ and all $h\in\mathbb{R}$ with $|h|< d \,(K,\partial I)$, one has
		\begin{equation}
			\|f(\cdot+h)-f(\cdot)\|_{L^{q}(K;X)}\leq  \|f'\|_{L^{q}(I;X)}|h|.
		\end{equation}
		\item Suppose $1< q\leq\infty$, $f\in L^{q}(I;X)$, $X$ is reflexive and there exists a constant $C>0$ such that
		\begin{equation}
			\|f(\cdot+h)-f(\cdot)\|_{L^{q}(K;X)}\leq C |h|,
		\end{equation}
		for all $K\subset\subset I$ and all $h\in\mathbb{R}$ with $|h|<d(K,\partial I)$. Then
		\begin{equation}
			f\in  W^{1,q}(I;X),\ \ \text{with} \ \ \|f'\|_{L^{q}(I;X)}\leq C.
		\end{equation}
	\end{enumerate}	
\end{lemma}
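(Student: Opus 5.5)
For part (i), the plan is to use the fact that an element of $W^{1,q}(I;X)$ has an absolutely continuous representative satisfying the vector-valued fundamental theorem of calculus. Concretely, I would first recall that for $f\in W^{1,q}(I;X)$ one has, after modification on a null set,
\begin{equation*}
f(t+h)-f(t)=\int_{0}^{1} f'(t+\theta h)\,h\,d\theta ,\qquad t\in K,\ |h|<d(K,\partial I),
\end{equation*}
as an identity of Bochner integrals. This follows by mollifying $f$ in time: it holds for smooth functions, and one passes to the limit in $L^{1}_{\mathrm{loc}}(I;X)$. Taking the $L^{q}(K;X)$ norm and applying Minkowski's integral inequality gives
\begin{equation*}
\|f(\cdot+h)-f(\cdot)\|_{L^{q}(K;X)}\le |h|\int_0^1\|f'(\cdot+\theta h)\|_{L^{q}(K;X)}\,d\theta\le |h|\,\|f'\|_{L^{q}(I;X)}.
\end{equation*}
This holds because $K+\theta h\subset I$. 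The case $q=\infty$ is the same argument, since only translation invariance of the norm is used.

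For part (ii), I would argue by weak compactness of difference quotients. Fix $K\subset\subset I$ and set $D_hf:=(f(\cdot+h)-f(\cdot))/h$. The hypothesis says $\|D_hf\|_{L^q(K;X)}\le C$ uniformly for small $|h|$.
\begin{itemize}
\item If $1<q<\infty$, the space $L^q(K;X)$ is reflexive, since $X$ is reflexive and $(L^q(K;X))^*=L^{q'}(K;X^*)$. Hence a sequence $h_j\to0$ has a subsequence with $D_{h_j}f\rightharpoonup g_K$ weakly.
\item If $q=\infty$, a reflexive $X$ has the Radon--Nikod\'ym property, so $L^\infty(K;X)=(L^1(K;X^*))^*$. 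Then Banach--Alaoglu gives a weak-$*$ limit instead.
\end{itemize}
By weak (or weak-$*$) lower semicontinuity of the norm, $\|g_K\|_{L^q(K;X)}\le C$.

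Next I identify $g_K$ as the weak derivative. For $\varphi\in C_c^\infty(\operatorname{int}K)$ and $x^*\in X^*$, the change of variables gives
\begin{equation*}
\int \langle x^*,D_hf\rangle\varphi\,dt=-\int\langle x^*,f\rangle\,\frac{\varphi(\cdot)-\varphi(\cdot-h)}{h}\,dt .
\end{equation*}
As $h\to 0$, the right side tends to $-\int\langle x^*,f\rangle\varphi'\,dt$, because $f\in L^1_{\mathrm{loc}}$ and the backward difference quotient of $\varphi$ converges uniformly to $\varphi'$ with fixed compact support. The left side tends to $\int\langle x^*,g_K\rangle\varphi\,dt$, since $x^*\varphi\in L^{q'}(K;X^*)$. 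Because $X^*$ separates points, this gives $\int g_K\varphi=-\int f\varphi'$. So $g_K=f'$ on $\operatorname{int}K$ in the distributional sense, and in particular $g_K$ is independent of the subsequence and of $K$.

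Exhausting $I$ by an increasing sequence of compacts then defines $f'\in L^q_{\mathrm{loc}}(I;X)$ with $\|f'\|_{L^q(K;X)}\le C$ for every $K$. Monotone convergence yields $\|f'\|_{L^q(I;X)}\le C$, and together with $f\in L^q(I;X)$ this gives $f\in W^{1,q}(I;X)$.

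The main obstacle is the compactness step in (ii). Boundedness in a Bochner space does not yield weak limits without structure on $X$, and this is exactly where reflexivity enters: it gives reflexivity of $L^q(K;X)$ for $1<q<\infty$, and through the Radon--Nikod\'ym property the duality needed for $q=\infty$. The exclusion of $q=1$ is also essential here, since $L^1$ fails weak compactness of bounded sets.
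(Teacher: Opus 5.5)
Your proof is correct. The paper does not prove this lemma; it quotes it from the literature. Your argument is the standard proof of this classical fact: for (i), the absolutely continuous representative plus Minkowski's inequality; for (ii), uniformly bounded difference quotients, weak compactness in $L^{q}(K;X)$ from reflexivity of $X$, lower semicontinuity, and identification of the limit as the distributional derivative.

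The one point to tighten is the case $q=\infty$. Banach--Alaoglu in $L^{\infty}(K;X)=(L^{1}(K;X^{*}))^{*}$ by itself only gives a weak-$*$ convergent subnet; a subsequence is guaranteed when $X^{*}$ is separable. This is harmless, since your identification step shows that every weak-$*$ cluster point equals $f'$. Alternatively, run the finite-exponent argument on the bounded set $K$ to get $\|f'\|_{L^{p}(K;X)}\le C|K|^{1/p}$ for all $p<\infty$, and let $p\to\infty$.
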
	

\begin{lemma}[See \cite{chatter}]\label{holmer}
	If	$1\leq q_{1}<q_{2}\leq\infty$ and for all $t\geq0$,
	\begin{equation}
		\|f\|_{L^{q_{2}}(0,t)}\leq c \delta + c \|f\|_{L^{q_{1}}(0,t)},
	\end{equation}
	then defining $\gamma$ via $2c\gamma^{\frac{1}{q_{1}}-\frac{1}{q_{2}}}=1$, we have for all $t\geq0$:
	\begin{equation}
		\|f\|_{L^{q_{2}}(0,t)}\leq(\gamma t)^{\gamma t}\delta.
	\end{equation}
\end{lemma}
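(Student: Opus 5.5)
The plan is to run a Gronwall-type iteration on $t$. Fix $q_1<q_2$ and set $\theta:=\frac{1}{q_1}-\frac{1}{q_2}>0$. The idea is that the hypothesis is a reverse Hölder inequality whose constant is dominated by the short-interval Hölder gain. On a short interval of length $\tau$ with $2c\tau^{\theta}\le 1$, the $L^{q_1}$ term on the right can be absorbed into the left. The growth $(\gamma t)^{\gamma t}$ then comes from iterating over about $\gamma t$ such intervals.

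\textbf{Step 1 (first interval).} Take $\gamma$ with $2c\gamma^{\theta}=1$. This normalization is chosen so that the absorption step has constant exactly $\tfrac12$ on an interval of length $\gamma$. For $0\le t\le \gamma$, Hölder gives $\|f\|_{L^{q_1}(0,t)}\le t^{\theta}\|f\|_{L^{q_2}(0,t)}\le \gamma^{\theta}\|f\|_{L^{q_2}(0,t)}$. Substituting into the hypothesis yields
\[\|f\|_{L^{q_2}(0,t)}\le c\delta+\tfrac12\|f\|_{L^{q_2}(0,t)}.\]
Provided $\|f\|_{L^{q_2}(0,t)}<\infty$, this gives $\|f\|_{L^{q_2}(0,t)}\le 2c\delta$. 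Finiteness is implicit in the hypothesis, whose right side is finite when $f\in L^{q_1}_{\rm loc}$ since then the left side is finite.

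\textbf{Step 2 (induction on intervals).} Let $t_k=k\gamma$ and $A_k=\|f\|_{L^{q_2}(0,t_k)}$. For $t\in[t_k,t_{k+1}]$, split
\[\|f\|_{L^{q_1}(0,t)}\le \|f\|_{L^{q_1}(0,t_k)}+\|f\|_{L^{q_1}(t_k,t)}.\]
Bound the first piece by $t_k^{\theta}A_k$ and the second by $\gamma^{\theta}\|f\|_{L^{q_2}(0,t)}$. Absorbing the second piece as in Step 1 gives the recursion
\[\|f\|_{L^{q_2}(0,t)}\le 2c\delta+2c\,t_k^{\theta}A_k = 2c\delta + k^{\theta}A_k.\]
Iterating this produces $A_{k}\lesssim \delta\prod_{j\le k}(1+j^{\theta})$, which is at most about $(k!)^{\theta}$ up to factors $2^k$. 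With $k\approx \gamma t$, this is of the form $C^{\gamma t}(\gamma t)^{\theta\gamma t}$.

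\textbf{Main obstacle.} Matching the exact constant $(\gamma t)^{\gamma t}\delta$, with no implicit constant, is the delicate part. It may require choosing the subinterval length slightly differently, for instance $\gamma/2$ or a geometric choice. It may also require checking that $\theta\le 1$ together with $(k!)^{\theta}2^{k}\le k^{k}$ holds for $k\ge 2$, with small $t$ handled by Step 1. I would verify that elementary inequality last. If it fails for small $k$, I would adjust by noting $(\gamma t)^{\gamma t}$ is only claimed in the paper's cited form from \cite{chatter}, and reproduce that bookkeeping instead.
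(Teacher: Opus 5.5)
The gap is in the step you postpone: the bound $\|f\|_{L^{q_2}(0,t)}\le(\gamma t)^{\gamma t}\delta$ cannot come out of any bookkeeping. As stated, the inequality is false once $c\ge 1$ (the paper gives no proof here, only the citation). Take $q_1=1$, $q_2=\infty$, $\delta>0$ and $f\equiv c\delta$ on $[0,\infty)$. Then $\|f\|_{L^\infty(0,t)}=c\delta\le c\delta+c^2\delta t$ for all $t$, so the hypothesis holds, and $\gamma=1/(2c)$. But for $0<t<2c$ we have $0<\gamma t<1$, hence $(\gamma t)^{\gamma t}<1\le c$, and the conclusion would force $c\delta<\delta$.

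Your Step 1 already exhibits the obstruction. On the first interval the best available bound is a multiple of $c\delta$ (you get $2c\delta$), while $(\gamma t)^{\gamma t}\delta\le\delta$ there, so no bound without a $c$-dependent prefactor is possible. Two further slips:
\begin{itemize}
\item The number of subintervals of length $\gamma$ in $[0,t]$ is about $t/\gamma$, not $\gamma t$. Since $\gamma=(2c)^{-1/\theta}$ is small when $c$ is large, these two quantities behave very differently.
\item The inequality $(k!)^{\theta}2^{k}\le k^{k}$ that you planned to check fails already at $k=2$ for every $\theta>0$.
\end{itemize}

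What your argument does prove is correct, and it is what the paper actually needs. Your recursion $A_{k+1}\le 2c\delta+k^{\theta}A_k$ is right. It gives, for $t\le (k+1)\gamma$, $\|f\|_{L^{q_2}(0,t)}\le 2c\,2^{k}(k!)^{\theta}\delta$. This needs $f\in L^{q_1}_{\mathrm{loc}}$ so that the left side is finite and can be absorbed; that should be stated as an added hypothesis.

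The factorial comes only from applying H\"older on all of $[0,t_k]$. If instead you track $B_k:=\|f\|_{L^{q_1}(0,t_k)}$ and apply H\"older only on the last subinterval, you get for $t\in[t_k,t_{k+1}]$
\[
\|f\|_{L^{q_2}(0,t)}\le 2c\delta+2cB_k,\qquad B_{k+1}\le B_k+\gamma^{\theta}\|f\|_{L^{q_2}(0,t_{k+1})}\le 2B_k+\delta .
\]
Hence $B_k\le(2^{k}-1)\delta$, and $\|f\|_{L^{q_2}(0,t)}\le 2c\,2^{t/\gamma}\delta$ for all $t\ge0$.

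Either version suffices for the paper's applications. In the uniqueness proof $\delta=0$, and in \eqref{gui} only a finite $T$-dependent constant is used. So prove this corrected form and flag the misstatement, rather than trying to match $(\gamma t)^{\gamma t}\delta$.
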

\begin{lemma} The space $\mathcal{H}^{s}$  admits the following properties:
	\begin{enumerate}[label=(\arabic*)]
		\item If $f\in \mathcal{H}^{0}(\mathbb{R}^{n-1}\times(a,b))$, then $f\in L^{4}(a,b;L^{2}(\mathbb{R}^{n-1}))$ with
		\begin{equation}\label{facile}		\|f\|_{L^{4}(a,b;L^{2}(\mathbb{R}^{n-1}))}\lesssim\|f\|_{\mathcal{H}^{0}(\mathbb{R}^{n-1}\times(a,b))}.
		\end{equation}
		\item For any $T>0$, if $f\in \mathcal{H}^{2}(\mathbb{R}^{n-1}\times(0,T))$, then $f\in H^{1}(\mathbb{R}^{n-1}\times(0,T))$ and satisfies
		\begin{equation} \label{pleat}
			\|f\|_{H^{1}(\mathbb{R}^{n-1}\times(0,T))}\lesssim T^{\frac{1}{4}}\|f\|_{\mathcal{H}^{2}(\mathbb{R}^{n-1}\times(0,T))}.
		\end{equation}
	\end{enumerate}
\end{lemma}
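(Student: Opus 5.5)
The plan is to reduce both statements to estimates on the full space $\mathbb{R}^n=\mathbb{R}^{n-1}_{x'}\times\mathbb{R}_t$ and then restrict to $\mathbb{R}^{n-1}\times(a,b)$. Given $f$ in the restricted space, I pick an extension $F\in\mathcal{H}^s(\mathbb{R}^n)$ with $F|_{\mathbb{R}^{n-1}\times(a,b)}=f$ and $\|F\|_{\mathcal{H}^s(\mathbb{R}^n)}\le 2\|f\|_{\mathcal{H}^s(\mathbb{R}^{n-1}\times(a,b))}$. Every norm on the left-hand sides only increases when passing from $f$ to $F$, so it suffices to prove the bounds for $F$ on $\mathbb{R}^n$ (with $(a,b)$ or $(0,T)$ kept only where the time length enters).

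\emph{Part (1).} Write $\widehat F(\xi',\xi_n)$, where $\xi_n$ is dual to $t$, and let $G(\xi',t)=\widehat F^{\,x'}(\xi',t)$ be the partial Fourier transform in $x'$.
\begin{enumerate}[label=(\roman*)]
\item Fix $\xi'$ and set $a=|\xi'|^2$. The function $e^{iat}G(\xi',t)$ has time Fourier transform $\widehat F(\xi',\xi_n-a)$, up to the sign convention; the sign can be absorbed by choosing $e^{\pm iat}$.
\item The one-dimensional homogeneous Sobolev embedding $\dot H^{1/4}(\mathbb{R})\hookrightarrow L^4(\mathbb{R})$ then gives
\[
\|G(\xi',\cdot)\|_{L^4_t}=\|e^{iat}G(\xi',\cdot)\|_{L^4_t}\lesssim \Big(\int_{\mathbb{R}}\big||\xi'|^2+\xi_n\big|^{\frac12}|\widehat F(\xi',\xi_n)|^2d\xi_n\Big)^{\frac12}.
\]
\item Plancherel in $x'$ and Minkowski's inequality (valid since $4\ge 2$) give
\[
\|F\|_{L^4_tL^2_{x'}}=\|G\|_{L^4_tL^2_{\xi'}}\le \|G\|_{L^2_{\xi'}L^4_t}\lesssim\|F\|_{\mathcal{H}^0(\mathbb{R}^n)}.
\]
\end{enumerate}
Restricting $t$ to $(a,b)$ yields \eqref{facile}.

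\emph{Part (2).} The weight in the $\mathcal{H}^0$-norm may vanish on the paraboloid $\xi_n=-|\xi'|^2$, so $\mathcal{H}^2$ does not control $L^2$ directly. This is the only real obstacle, and I will bypass it through part (1).
\begin{enumerate}[label=(\roman*)]
\item The multipliers of $\partial_{x_j}$ ($j\le n-1$) and of $\partial_t$ are bounded by $|\xi_j|\le(1+|\xi'|^2+|\xi_n|)^{1/2}$ and $|\xi_n|\le 1+|\xi'|^2+|\xi_n|$. Hence
\[
\|F\|_{\mathcal{H}^0}+\|\nabla'F\|_{\mathcal{H}^0}+\|\partial_tF\|_{\mathcal{H}^0}\lesssim\|F\|_{\mathcal{H}^2(\mathbb{R}^n)}.
\]
\item Apply \eqref{facile} to each of $g\in\{F,\partial_{x_j}F,\partial_tF\}$, then use Hölder in time on $(0,T)$:
\[
\|g\|_{L^2(\mathbb{R}^{n-1}\times(0,T))}\le T^{\frac14}\|g\|_{L^4(0,T;L^2(\mathbb{R}^{n-1}))}\lesssim T^{\frac14}\|g\|_{\mathcal{H}^0}.
\]
\item Summing over the three types of $g$ gives $\|f\|_{H^1(\mathbb{R}^{n-1}\times(0,T))}\lesssim T^{1/4}\|F\|_{\mathcal{H}^2}\lesssim T^{1/4}\|f\|_{\mathcal{H}^2(\mathbb{R}^{n-1}\times(0,T))}$.
\end{enumerate}
This proves \eqref{pleat}, with the $T^{1/4}$ factor coming from the Hölder step. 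The density of Schwartz functions justifies the formal Fourier manipulations throughout.
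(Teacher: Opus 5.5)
Your proof is correct and is essentially the paper's argument. Your fiberwise modulation by $e^{i|\xi'|^2t}$ is exactly the Fourier-side form of the paper's conjugation by $e^{-it\Delta'}$, and the scalar embedding $\dot H^{1/4}(\mathbb{R})\hookrightarrow L^4(\mathbb{R})$ plus Minkowski unpacks the vector-valued embedding $\dot H^{1/4}(\mathbb{R};L^2)\hookrightarrow L^4(\mathbb{R};L^2)$ and the norm identification the paper cites from \cite{facilitate}; part (2) follows the paper step for step, controlling the $x'$- and $t$-derivatives in $\mathcal{H}^0$ by the $\mathcal{H}^2$ norm (the paper invokes \eqref{shenjing}) and then using \eqref{facile} with H\"older on $(0,T)$ to get $T^{1/4}$.
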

\begin{proof}
	 By the extension principle, it suffices to consider  $(a,b)=\mathbb{R}$. From the isometry property of the group $\{e^{it\Delta'}\}_{t\in\mathbb{R}}$ and Corollary 2.4 in \cite{facilitate}, we derive
		\begin{equation*} \|f\|_{L^{4}(\mathbb{R};L^{2}(\mathbb{R}^{n-1}))}=\|e^{-it\Delta'}f\|_{L^{4}(\mathbb{R};L^{2}(\mathbb{R}^{n-1}))}\lesssim\|e^{-it\Delta'}f\|_{\dot{H}^{\frac{1}{4}}(\mathbb{R};L^{2}(\mathbb{R}^{n-1}))}\approx\|f\|_{\mathcal{H}^{0}(\mathbb{R}^{n})}.
		\end{equation*}
		Therefore, \eqref{facile} holds. From \eqref{facile}, we obtain
		\begin{equation*}
			\|f\|_{L^{2}(0,T;L^{2}(\mathbb{R}^{n-1}))}\lesssim T^{\frac{1}{4}}\|f\|_{L^{4}(0,T;L^{2}(\mathbb{R}^{n-1}))} \lesssim  T^{\frac{1}{4}}\|f\|_{\mathcal{H}^{2}(\mathbb{R}^{n-1}\times(0,T))}.
		\end{equation*}
		Furthermore, \eqref{shenjing}  implies $\partial_{i}f\in\mathcal{H}^{0}(\mathbb{R}^{n-1}\times(0,T))$. Applying \eqref{facile} again yields
		\begin{equation*}
			\begin{aligned}
				\|\partial_{i}f\|_{L^{2}(0,T;L^{2}(\mathbb{R}^{n-1}))}\leq T^{\frac{1}{4}}	\|\partial_{i}f\|_{L^{4}(0,T;L^{2}(\mathbb{R}^{n-1}))}&\lesssim T^{\frac{1}{4}}\|\partial_{i}f\|_{\mathcal{H}^{0}(\mathbb{R}^{n-1}\times(0,T))}\\&\lesssim T^{\frac{1}{4}}\|f\|_{\mathcal{H}^{2}(\mathbb{R}^{n-1}\times(0,T))}.
			\end{aligned}
		\end{equation*}
		This completes the proof of \eqref{pleat}.
\end{proof}
\begin{remark}
	By using parametrization, we can calculate that
	\begin{equation}\label{slate}
		\begin{aligned} F(u_{1})-F(u_{2})=&\,\frac{p+2}{2}(u_{1}-u_{2})\int_{0}^{1}\left|(1-\theta)u_{1}+\theta u_{2}\right|^{p}d\theta \\
			&+\frac{p}{2}(\overline{u_{1}-u_{2}})\int_{0}^{1} |(1-\theta)u_{1}+\theta u_{2}|^{p-2}\left((1-\theta)u_{1}+\theta u_{2}\right)^{2}d\theta,
		\end{aligned}
	\end{equation}
	and the well-known inequality
	\begin{equation}\label{uproar}
		F(u_{1})-F(u_{2})\lesssim \left(|u_{1}|^{p}+|u_{2}|^{p}\right)|u_{1}-u_{2}|.
	\end{equation}
\end{remark}

In the following, we prove Theorem \ref{chaste}  by considering two cases: the low regularity case $s\leq\frac{n}{2}$ and the high regularity case $s>\frac{n}{2}$.

\subsection{Low regularity local solutions}
In this section, we establish the local existence of IBVP \eqref{ibvp} in $H^{s}(\mathbb{R}^{n}_{+})$ with   $s\in[0,\frac{5}{2})$ (Theorem \ref{chaste})  for the low regularity case  $s\leq \frac{n}{2}$.

\subsubsection{Local $L^{2}$ solutions}\label{65}
\begin{proof}[Proof of Theorem \ref{chaste} for $s=0$]
	We define the space  $$X:=L^{q}(\mathbb{R}_{+};L^{r}(\mathbb{R}_{+}^n)),$$ where $(q,r)=\left(\frac{4(p+2)}{np},p+2\right)$. The closed ball $B[0,M]\subset X$ is complete under the metric $d(u,\tilde{u})=\|u-\tilde{u}\|_{L^{q}(\mathbb{R}_{+};L^{r})}$. Let $\psi\in C_{c}^{\infty}(\mathbb{R})$, the cut-off function $\psi_{T}$ in \eqref{ahha} satisfies  $\|\psi_{T}\|_{L^{q}(\mathbb{R})}\approx T^{\frac{1}{q}}$. We will show that for any $R>0$, any initial data $u_{0}$ and boundary data $h_{0}$ satisfying $\|u_{0}\|_{L^{2}}+\|h_{0}\|_{\mathcal{H}^{0}}\leq R$, there exist constants $M>0$ and $T_{*}>0$ such that the operator $\Gamma$ is a contraction mapping on the closed ball $B[0, M]\subset X $. It follows from \eqref{intimate} that
	\begin{equation}\label{nocturnal}
		\|\Gamma u\|_{X}\lesssim  \|u_{0}\|_{L^{2}}+\|h_{0}\|_{\mathcal{H}^{0}}+\|\psi_{T}F(u)\|_{L^{q'}(\mathbb{R}_{+};L^{r'})}.
	\end{equation}
	Through computation $\frac{1}{r'}=\frac{p}{r}+\frac{1}{r}$ and
	$\frac{1}{q'}=\frac{p+1}{q}+\frac{q-(p+2)}{q}$, the application of  H\"{o}lder's inequality in both spatial and temporal variables yields
	\begin{equation}\label{lad}
		\|\psi_{T}F(u)\|_{L^{q'}(\mathbb{R}_{+};L^{r'})}\lesssim \|\psi_{T}\|_{L^{\frac{q}{q-(p+2)}}(\mathbb{R}_{+})}\|u\|_{L^{q}(\mathbb{R}_{+};L^{r})}^{p+1}\lesssim T^{\frac{q-(p+2)}{q}} \|u\|_{L^{q}(\mathbb{R}_{+};L^{r})}^{p+1}.
	\end{equation}
	Combining \eqref{nocturnal} and \eqref{lad}, we have
	\begin{equation}\label{laptop}
		\|\Gamma u\|_{X}\leq C \left( \|u_{0}\|_{L^{2}}+\|h_{0}\|_{\mathcal{H}^{0}}+ T^{\frac{q-(p+2)}{q}} \|u\|_{X}^{p+1}\right),
	\end{equation}
	where $C$ is a constant independent of $T$. For any $u, v\in B[0,M]$,  successive applications of \eqref{intimate} and \eqref{uproar} yield
	\begin{equation*}\label{lame}
		\begin{aligned} d\,(\Gamma u,\Gamma \tilde{u})&\lesssim \left\|\psi_{T}(F(u)-F(\tilde{u}))\right\|_{L^{q'}(\mathbb{R}_{+};L^{r'})}\\
			&\lesssim T^{\frac{q-(p+2)}{q}}		\left(\|u\|_{L^{q}(\mathbb{R}_{+};L^{r})}^{p}+\|\tilde{u}\|_{L^{q}(\mathbb{R}_{+};L^{r})}^{p}\right) d(u,\tilde{u}),
		\end{aligned}
	\end{equation*}
	leading to
	\begin{equation}\label{lapse}
		d\,(\Gamma u,\Gamma \tilde{u})\leq \widetilde{C}\, T^{\frac{q-(p+2)}{q}}
		\left(\|u\|_{X}^{p}+\|\tilde{u}\|_{X}^{p}\right) d(u,\tilde{u}),
	\end{equation}
	where $\widetilde{C}$ is $T$-independent.
	
	Let $M=2CR$.
	The condition $0<p<\frac{4}{n}$ ensures $q-(p+2)>0$, allowing us to select  $T_{*}>0$ satisfying
	\begin{equation}\label{lane}
		C\,T_{*}^{\frac{q-(p+2)}{q}}M^{p}\leq \frac{1}{2}\ \ \text{and} \ \ 2\,\widetilde{C}\,T_{*}^{\frac{q-(p+2)}{q}}M^{p}\leq \frac{1}{2}.
	\end{equation}
	For $\|u\|_{X}\leq M$, inequalities \eqref{laptop}, \eqref{lapse} and \eqref{lane} imply
	\begin{equation*}
		\|\Gamma u\|_{X}\leq M\quad \text{and} \quad d\left(\Gamma u,\Gamma \tilde{u}\right)\leq\frac{1}{2}\,d(u,\tilde{u}).
	\end{equation*}
	Therefore, $\Gamma$ is a contraction on $B[0,M]\subset X$, yielding a fixed point $u\in X$ of $\Gamma$. Finally, it follows from Proposition \ref{abss} that $u\in C\left(\left[0,T\right];L^{2}(\mathbb{R}^{n}_{+})\right)\cap C_{b}\left(\overline{\mathbb{R}_{+}\mkern-4mu}\,;\mathcal{H}^{0}(\mathbb{R}^{n-1}\times(0,T))\right)$.
\end{proof}
\subsubsection{Local $H^{s}$ solutions for $s\in(0,2)$}
We now consider $s\in(0,2)$ with $s\leq\frac{n}{2}$. We extend  Audiard's result \cite{facilitate} (which treated only the case $s=1$) to the $s\in(0,2)$ under a constraint  of the same form $p<\frac{4}{n-2s}$. To address the difficulties caused by  the time derivative of the nonlinear term, we establish two precise parameter claims  (\eqref{qwu}--\eqref{qwui} and \eqref{die}--\eqref{dier}) to bound  the norm of the nonlinearity.

\begin{proof}[Proof of Theorem \ref{chaste} for $s\in(0,2)$ with $s\leq \frac{n}{2}$]
	The solution space $X$ is defined by
	\begin{equation}
		X:=L^{\infty}(\mathbb{R}_{+};H^{s}(\mathbb{R}_{+}^n))\cap L^{q}(\mathbb{R}_{+};W^{s,r}(\mathbb{R}_{+}^n))\cap B^{\frac{s}{2}}_{q,2}(\mathbb{R}_{+};L^{r}(\mathbb{R}_{+}^n)).
	\end{equation}
	The closed ball in $X$ is complete under the metric $d(u,\tilde{u})=\|u-\tilde{u}\|_{L^{\infty}(\mathbb{R}_{+};L^{2})}+\|u-\tilde{u}\|_{L^{q}(\mathbb{R}_{+};L^{r})}$ (see Chapter 4 of \cite{facetious}). Let $\psi\in C_{c}^{\infty}(\mathbb{R})$, the cut-off function $\psi_{T}$ in \eqref{ahha} satisfies $\|\psi_{T}\|_{\dot{B}^{\frac{s}{2}}_{q,2}(\mathbb{R})}\approx T^{\frac{1}{q}-\frac{s}{2}}$.
	
	We first consider $s\in(0,1]$ with $s<\frac{n}{2}$. For this case, we choose the admissible pair
	$(q,r)=\left(\frac{4(p+2)}{p(n-2s)},\frac{n(p+2)}{n+sp}\right)$. From \eqref{confine}, we have
	\begin{equation}\label{orr}
		\|\Gamma u\|_{X}\lesssim \|u_{0}\|_{H^{s}}+\|h_{0}\|_{\mathcal{H}^{s}}+\|\psi_{T}F(u)\|_{L^{q'}(\mathbb{R}_{+};W^{s,r'})}+\|\psi_{T}F(u)\|_{B^{\frac{s}{2}}_{q',2}(\mathbb{R}_{+};L^{r'})}.
	\end{equation}
	From \eqref{riesz} and $W^{s,r}(\mathbb{R}_{+}^{n})\hookrightarrow L^{\frac{nr}{n-sr}}(\mathbb{R}_{+}^{n})$, we obtain
	\begin{equation*}
		\|F(u)(t)\|_{W^{s,r'}}\lesssim \|u(t)\|_{L^{\frac{nr}{n-sr}}}^{p} \|u(t)\|_{W^{s,r}}\lesssim  \|u(t)\|_{W^{s,r}}^{p+1}.
	\end{equation*}
	Then, applying H\"{o}lder's inequality yields
	\begin{equation}\label{bigf}
		\|\psi_{T} F(u)\|_{L^{q'}(\mathbb{R}_{+};W^{s,r'})}\lesssim\|\psi_{T}\|_{L^{\frac{q}{q-(p+2)}}(\mathbb{R}_{+})}  \|u\|_{L^{q}(\mathbb{R}_{+};W^{s,r})}^{p+1}\lesssim T^{\frac{q-(p+2)}{q}} \|u\|_{X}^{p+1}.
	\end{equation}
	By definition \eqref{denounce}, we have $\|\psi_{T}F(u)\|_{\dot{B}^{\frac{s}{2}}_{q',2}(\mathbb{R}_{+};L^{r'})}\lesssim A_{1}+A_{2}$, where
	\begin{align}
		&A_{1}=\left(\int_{\mathbb{R}} \left\|\psi_{T}(t+h)\left[ F(u)(t+h)-F(u)(t)\right]\right\|_{L^{q'}(I_{h};L^{r'})}^{2}  \frac { dh}{{|h|^{1+2s}}}\right)^{\frac{1}{2}},\label{a11}\\
		&A_{2}=\left(\int_{\mathbb{R}}  \left\| \left[\psi_{T}(t+h)-\psi_{T}(t) \right] F(u)(t)\right\|_{L^{q'}(I_{h};L^{r'})}^{2} \frac{dh}{|h|^{1+2s}}\right)^{\frac{1}{2}},\label{a2}
	\end{align}
	and $I_{h}=\{t\in\mathbb{R}_{+}: t+h\in\mathbb{R}_{+}\}$. For $A_{1}$, applying the \eqref{uproar} and  H\"{o}lder's
	inequality in space-time with the identities $\frac{1}{r'}=\frac{(n-sr)p}{nr}+\frac{1}{r}$ and $\frac{1}{q'}=\frac{q-(p+2)}{q}+\frac{p}{q}+\frac{1}{q}$, we derive
	\begin{equation*}
		\begin{aligned}
			&\left\|\psi_{T}(t+h)\left[ F(u)(t+h)-F(u)(t)\right]\right\|_{L^{q'}(I_{h};L^{r'})}\\
			\lesssim&\,
			\|\psi_{T}\|_{L^{\frac{q}{q-(p+2)}}(\mathbb{R}_+)}  \left(\|u(\cdot+h)\|_{L^{q}(\mathbb{R}_{+};W^{s,r})}^{p}+ \|u(\cdot)\| ^{p}_{L^{q}(\mathbb{R}_{+};W^{s,r})}\right) \|u(t+h)-u(t)\|_{L^{q}(I_{h};L^{r})},
		\end{aligned}
	\end{equation*}
	leading to
	\begin{equation}\label{a1}
		A_{1}\lesssim  T^{\frac{q-(p+2)}{q}} \|u\|_{L^{q}(\mathbb{R}_{+};W^{s,r})}^{p}\|u\|_{\dot{B}^{\frac{s}{2}}_{q,2}(\mathbb{R}_{+};L^{r})}.
	\end{equation}
	where $q >(p+2)$ is guaranteed by $(n-2s)p<4$.
	
	For $A_{2}$, we present a parameter existence claim: Let $\alpha$ and $\beta$ be defined as
	\begin{equation}\label{qwu}
		\alpha=\frac{(n-2s)p^{2}+(n-4s)p-4s}{(n-2s)p(p+1)}, \quad \beta=\frac{(2-s)q-2}{2(p+1)},
	\end{equation}
	where $(s, p, n)$ satisfy the hypotheses of Theorem \ref{chaste}.  There exist
	$\theta\in\left(\max\{0,\alpha\},\min\{1,\beta\}\right)$, $q_{1}\in \left(r,\frac{nr}{n-sr}\right)$ and $q_{2}\in \left(2,\frac{2n}{n-2s}\right)$ such that
	\begin{equation}\label{qwui}
		\frac{1}{r'(p+1)}=\frac{\theta}{q_{1}}+\frac{1-\theta}{q_{2}}.
	\end{equation}
	Let us prove this claim. We first show that $\left(\max\{0,\alpha\},\min\{1,\beta\}\right)$ is non-empty: $(i)$ Under the condition $s\leq 1$, the inequality $(2-s)q-2\geq q-2>0$ implies $\beta>0$.
	$(ii)$ Through direct computation, we have $\alpha<1$.
	$(iii)$ The relation $\alpha<\beta$ is guaranteed by the constraint $(n-2s)p<4$. So we can select $\theta$ in the above open interval. Next, the existence of  $q_{1}$ and $q_{2}$ follows from verifying that
	\begin{equation}\label{indad} \theta\left(\frac{1}{r}-\frac{s}{n}\right)+(1-\theta)\left(\frac{1}{2}-\frac{s}{n}\right)-\frac{1}{r'(p+1)}<0\quad \text{and} \quad \frac{\theta}{r}+\frac{1-\theta}{2}-\frac{1}{r'(p+1)}>0.
	\end{equation}
	Through detailed computation using the expression of $r$, the two inequalities in \eqref{indad} are equivalent to
	\begin{equation*}
		\theta> \frac{(n-2s)\,p^{2}+(n-4s)\, p-4s}{(n-2s)\, p\,(p+1)}=\alpha\quad \text{and} \quad 2\theta sp\,(p+1)+(1-\theta)\,np\,(p+1)+2sp>0,
	\end{equation*}
	respectively. Finally, it follows from the Intermediate Value Theorem that we can choose $q_{1}$ and $q_{2}$ satisfying \eqref{qwui}.
	
	Based on the above claim, we can estimate $A_{2}$ under the condition $(n-2s)p<4$. By using \eqref{qwui} with interpolation inequality, we have
	\begin{equation*}
		\|F(u)(t)\|_{L^{r'}}=\|u(t)\|_{L^{r'(p+1)}}^{p+1}\leq \left( \|u(t)\|_{L^{q_{1}}}^{\theta} \|u(t)\|_{L^{q_{2}}}^{1-\theta}\right)^{p+1}\lesssim \left( \|u(t)\|_{W^{s,r}}^{\theta} \|u(t)\|_{H^{s}}^{1-\theta}\right)^{p+1}.
	\end{equation*}
	Then, an application of H\"{o}lder's inequality gives
	{\small \begin{equation}\label{a22}
			\left\| \left[\psi_{T}(t+h)-\psi_{T}(t) \right] F\left(u(t)\right)\right\|_{L^{q'}(I_{h};L^{r'})}
			\lesssim \|\psi_{T}(t+h)-\psi_{T}(t)\|_{L^{q^{*}} (\mathbb{R}_{+})}  \|u\|_{L^{q}(\mathbb{R}_{+};W^{s,r})}^{\theta(p+1)}\|u\|_{L^{\infty}(\mathbb{R}_{+};H^{s})}^{(1-\theta)(p+1)},
	\end{equation}}
	where $q^{*}=\frac{q}{q-\theta(p+1)-1}$. Substituting \eqref{a22} into \eqref{a2} yields
	\begin{equation}\label{aa2}
		A_{2}\lesssim\|\psi_{T}\|_{\dot{B}^{s}_{q^{*},2}} \|u\|_{X}^{p+1}\lesssim T^{\frac{1}{q^{*}} - \frac{s}{2}}\|u\|_{X}^{p+1}.
	\end{equation}
	Combining \eqref{orr}, \eqref{bigf}, \eqref{a1} and \eqref{aa2},  we obtain
	\begin{equation}\label{s1}
		\|\Gamma u\|_{X}\lesssim \|u_{0}\|_{H^{s}}+\|h_{0}\|_{\mathcal{H}^{s}}+\left(T^{\frac{q-(p+2)}{q}}+T^{\frac{1}{q^{*}}-\frac{s}{2}}\right)\|u\|_{X}^{p+1},
	\end{equation}
	where $q>(p+2)$ is guaranteed by $p<\frac{4}{n-2s}$ and $\frac{1}{q^{*}}-\frac{s}{2}>0$ is equivalent to the constraint  $\theta<\beta$.
	From similar computations applied to \eqref{uproar} and \eqref{intimate}, we get
	\begin{equation}\label{s12}
		d\left(\Gamma u,\Gamma\tilde{u}\right)\lesssim T^{\frac{q-(p+2)}{q}} \left(\|u\|_{L^{q}(\mathbb{R}_{+};W^{s,r})}^{p}+\|\tilde{u}\|_{L^{q}(\mathbb{R}_{+};W^{s,r})}^{p}\right)d(u,\tilde{u}).
	\end{equation}
	
	For $s\in(1,2)$ with $s<\frac{n}{2}$, we select another admissible pair $(q,r)=\left(\frac{4(p+1)}{p(n-2s)}, \frac{2n(p+1)}{n+2sp}\right)$.  The techniques employed are analogous to those used in the case of $s\in(0,1]$ with $s<\frac{n}{2}$; therefore, we shall only present certain crucial steps and inequalities. It follows from \eqref{confine} that
	\begin{equation}\label{extrude}
		\|\Gamma u\|_{X}\lesssim \|u_{0}\|_{H^{s}}+\|h_{0}\|_{\mathcal{H}^{s}}+\|\psi_{T}F(u)\|_{L^{1}(\mathbb{R}_{+};H^{s})}+\|\psi_{T}F(u)\|_{B^{\frac{s}{2}}_{1,2}(\mathbb{R}_{+};L^{2})}.
	\end{equation}
	Starting from the identity $\frac{1}{2}=\frac{(n-sr)p}{nr}+\frac{1}{r}$, and combining it with inequalities \eqref{riesz} and \eqref{uproar}, we derive the results
	\begin{equation*}
		\|\psi_{T}F(u)\|_{L^{1}(\mathbb{R}_{+};H^{s})}\lesssim T^{\frac{q-(p+1)}{q}} \|u\|_{L^{q}(\mathbb{R}_{+};W^{s,r})}^{p+1},\quad A_{1}\lesssim T^{\frac{q-(p+1)}{q}}\|u\|_{L^{q}(\mathbb{R}_{+};W^{s,r})}^{p}\|u\|_{\dot{B}^{\frac{s}{2}}_{q,2}(\mathbb{R}_{+};L^{r})},
	\end{equation*}
	where $q>p+1$ is ensured by $(n-2s)p<4$. Here, the expressions for $A_{1}$ and $A_{2}$ are structurally congruent to \eqref{a11} and \eqref{a2}, requiring only the substitution of $q'$ and $r'$ with $1$ and $2$ respectively. We introduce a similar claim for $A_{2}$ : Let
	\begin{equation}\label{die}
		\alpha=\frac{(n-2s)(p+1)-n}{(n-2s)p}\quad \text{and}\quad \beta= \frac{(2-s)q}{2(p+1)},
	\end{equation}
	where $(s, p, n)$ satisfy the assumptions in Theorem \ref{chaste}, then there exist  parameters $\theta\in$ $\left(\max\{0,\alpha\},\min\{1,\beta\}\right)$, $q_{1}\in \left(r,\frac{nr}{n-sr}\right)$ and $q_{2}\in \left(2,\frac{2n}{n-2s}\right)$ satisfying
	\begin{equation}\label{dier}
		\frac{1}{2(p+1)} =\frac{\theta}{q_{1}}+\frac{1-\theta}{q_{2}}.
	\end{equation}
	From \eqref{dier} and interpolation inequality, we can obtain
	\begin{equation*}	A_{2}\lesssim\|\psi_{T}\|_{\dot{B}^{s}_{q^{*},2}(\mathbb{R}_{+})}\|u\|_{X}^{p+1}\lesssim T^{\frac{1}{q^{*}}-\frac{s}{2}}\|u\|_{X}^{p+1},
	\end{equation*}
	where $q^{*}=\frac{q}{q-\theta(p+1)}$. Therefore, we have
	\begin{equation}
		\|\Gamma u\|_{X}\lesssim \|u_{0}\|_{H^{s}}+\|h_{0}\|_{\mathcal{H}^{s}}+\left(T^{\frac{q-(p+1)}{q}}+T^{\frac{1}{q^{*}}-\frac{s}{2}}\right)\|u\|_{X}^{p+1},
	\end{equation}
	and
	\begin{equation}\label{lave}
		d\,(\Gamma u,\Gamma\tilde{u})\lesssim T^{\frac{q-(p+1)}{q}} \left(\|u\|_{L^{q}(\mathbb{R}_{+};W^{s,r})}^{p}+\|\tilde{u}\|_{L^{q}(\mathbb{R}_{+};W^{s,r})}^{p}\right)d(u,\tilde{u}),
	\end{equation}
	where $p<\frac{4}{n-2s}$ implies $q>(p+1)$ and $\frac{1}{q^{*}}-\frac{s}{2}>0$ is equivalent to $\theta<\beta$.
	
	Finally, consider the case \(s = \frac{n}{2}\), which can be handled by a slight modification to use the framework for \(s < \frac{n}{2}\) above. In fact, only two cases \(n=2, s=1\) and \(n=3, s=\frac{3}{2}\) are involved, and we choose \(\varepsilon > 0\) such that \(p < \frac{4}{n-2(s-\varepsilon)}\). For the former, take $(q, r) = \left( \frac{4(p+2)}{p(n-2(s-\varepsilon))}, \frac{n(p+2)}{n+(s-\varepsilon)p} \right)$ to carry out the analysis from \eqref{orr} to \eqref{s12}. For the latter, take $(q, r) = \left( \frac{4(p+1)}{p(n-2(s-\varepsilon))}, \frac{2n(p+1)}{n+2(s-\varepsilon)p} \right)$, thereby applying the treatment from \eqref{extrude} to \eqref{lave}.
\end{proof}
\subsubsection{Local $H^{s}$ solutions for $s\in[2,\frac{5}{2})$}
In the subsequent local solution theory, we fix $\psi=1$,  so that the operator $\Gamma$ defined in \eqref{ahha} reduces to
\begin{equation}\label{turbid}
	\begin{aligned}
		\Gamma u(t)=&\,e^{it\Delta}Eu_{0}+i\lambda\int_{0}^{t}e^{i(t-\tau)\Delta}EF(u(\tau))d\tau\\	&+S(t)\left(h_{0}-e^{i(\cdot)\Delta}Eu_{0}|_{x_{n}=0}-i\lambda\int_{0}^{\cdot}e^{i(\cdot-\tau)\Delta}EF(u(\tau))d\tau|_{x_{n}=0}\right).
	\end{aligned}
\end{equation}
\begin{lemma}\label{assimilate}
	Assume the parameters $(s,p,n)$ satisfy one of the cases:
	\begin{enumerate}[label=(\roman*)]
		\item  $s\in\left(\frac{3}{2},2\right]$ and $s<\frac{n}{2}$ for  $\frac{4-2s}{n-2s}<p<\frac{4}{n-2s}$ (or $s=\frac{n}{2}$ for $0<p<\infty$).
		\item $s\in\left(2,\frac{5}{2}\right)$ and $s<\frac{n}{2}$ for $1\leq p<\frac{4}{n-2s}$.
	\end{enumerate}
	Then there exist $\theta\in(0,1)$ such that for all $\varphi, \psi\in H^{s}(\mathbb{R}_{+}^{n})$, the following inequality holds
	\begin{equation}\label{cen}
		\|F(\varphi)-F(\psi)\|_{H^{s-2}}\lesssim \left(\|\varphi\|_{H^{s}}^{p+1-\theta}+\|\psi\|_{H^{s}}^{p+1-\theta}\right)
		\|\varphi-\psi\|_{H^{s-2}}^{\theta}.
	\end{equation}
\end{lemma}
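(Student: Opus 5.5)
The plan is to interpolate between a low-regularity Lipschitz bound for $F(\varphi)-F(\psi)$ and a crude high-regularity bound. Set $\sigma:=s-2\in(-\tfrac12,\tfrac12)$ and $w:=\varphi-\psi$.

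\emph{Case (i), $\sigma\in(-\tfrac12,0]$.} Here $H^{\sigma}$ is the dual-type space $\supseteq L^{\kappa}$ with $\frac1\kappa=\frac12-\frac{\sigma}{n}$; for $\sigma=0$ take $\kappa=2$. Sobolev duality gives $L^{\kappa}(\mathbb{R}^n_+)\hookrightarrow H^{\sigma}(\mathbb{R}^n_+)$, so it suffices to bound $\|F(\varphi)-F(\psi)\|_{L^\kappa}$.
\begin{itemize}
\item By \eqref{uproar} and H\"older, $\|F(\varphi)-F(\psi)\|_{L^\kappa}\lesssim (\|\varphi\|_{L^{a}}^p+\|\psi\|_{L^a}^p)\|w\|_{L^{b}}$ with $\frac1\kappa=\frac pa+\frac1b$.
\item Interpolate $\|w\|_{L^b}\lesssim \|w\|_{H^{\sigma}}^{\theta}\|w\|_{H^{s}}^{1-\theta}$, which is valid provided $b$ lies strictly between the Sobolev exponents of $H^{\sigma}$ and $H^{s}$; note that $w$ is defined on the half-space and is extended by $E$.
\item Bound $\|\varphi\|_{L^a}\lesssim\|\varphi\|_{H^s}$ via $H^s\hookrightarrow L^a$ for $a\in[2,\frac{2n}{n-2s}]$.
\end{itemize}
Matching exponents, $b$ must satisfy $\frac{1}{b}=\frac1\kappa-\frac pa$ with $\frac1b\in(\frac12-\frac sn,\frac12-\frac{\sigma}{n})$. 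Taking $\frac1a$ as close to $\frac12-\frac sn$ as needed, the right endpoint condition becomes $p(\frac12-\frac sn)>0$, which is fine. The left endpoint condition becomes $\frac12-\frac{\sigma}{n}-p(\frac12-\frac sn)>\frac12-\frac sn$, i.e. $\frac{2}{n}>p\frac{n-2s}{2n}$, i.e. $p<\frac{4}{n-2s}$. The lower bound $p>\frac{4-2s}{n-2s}$ enters when we need $a\ge2$: choosing $a=2$ on the other side gives the admissible range for $b$, and we need it to reach $\frac1b>\frac12-\frac sn$ compatibly with $\frac1b$ falling inside the interpolation window from $H^{\sigma}$. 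I expect the actual constraint to come from requiring $\theta<1$ while keeping $\frac1b<\frac12-\frac{\sigma}{n}$ strictly, so that $\theta>0$. I will verify both conditions by choosing $a$ through the intermediate value theorem, as in the claims \eqref{qwu}--\eqref{qwui}. The endpoint case $s=\frac n2$ is handled with $a$ arbitrarily large.

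\emph{Case (ii), $\sigma\in(0,\tfrac12)$.} Now $H^{\sigma}$ carries positive regularity, so I use the difference formula \eqref{slate}: $F(\varphi)-F(\psi)=w\,G_1+\bar w\,G_2$, with $G_j$ integrals of $|z_\theta|^{p}$-type expressions in $z_\theta=(1-\theta)\varphi+\theta\psi$.
\begin{itemize}
\item Apply the fractional Leibniz rule (Lemma \ref{elicit}) at order $\sigma$: $\|wG\|_{H^{\sigma}}\lesssim \|w\|_{W^{\sigma,b_1}}\|G\|_{L^{c_1}}+\|w\|_{L^{b_2}}\|G\|_{W^{\sigma,c_2}}$.
\item Control $\|G\|_{W^{\sigma,c_2}}$ by Lemma \ref{accessory}, which needs $p\ge1>\sigma$, together with Minkowski in $\theta$ and Sobolev embeddings of $H^s$.
\item Interpolate $\|w\|_{W^{\sigma,b_1}}$ and $\|w\|_{L^{b_2}}$ between $H^{\sigma}$ and $H^s$, yielding the factor $\|w\|_{H^{\sigma}}^{\theta}\|w\|_{H^s}^{1-\theta}$.
\item Finally use $\|w\|_{H^s}\le\|\varphi\|_{H^s}+\|\psi\|_{H^s}$ and $(x+y)^{p}(x+y)^{1-\theta}\lesssim x^{p+1-\theta}+y^{p+1-\theta}$.
\end{itemize}

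The main obstacle is the exponent bookkeeping in both cases. Each Lebesgue or Sobolev index must lie strictly inside the range where the embedding or interpolation holds, and this must produce some $\theta\in(0,1)$ exactly under $p<\frac{4}{n-2s}$, with the extra lower bound on $p$ in case (i). I would first fix the Sobolev exponent of $G$ near the critical value $\frac{2n}{n-2s}$ and solve linearly for the remaining indices. Any leftover gap is then closed by an intermediate-value choice, as in \eqref{indad}.
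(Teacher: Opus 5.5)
Your route is the paper's. In case (i) you embed $L^{\kappa}\hookrightarrow H^{s-2}$, apply \eqref{uproar}, H\"older and Sobolev, and interpolate the difference between $H^{s-2}$ and $H^{s}$. In case (ii) you use \eqref{slate} with Lemmas~\ref{elicit} and~\ref{accessory}, which needs $p\ge 1$, and interpolate the same way. The paper first bounds everything by $\|\varphi-\psi\|_{H^{s-\varepsilon}}$ and then interpolates $H^{s-\varepsilon}$ between $H^{s-2}$ and $H^{s}$, which is equivalent to your direct $L^b$ interpolation.

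There is one false claim in case (i), and it hides where the hypothesis $p>\frac{4-2s}{n-2s}$ is actually used.

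\textbf{The interpolation window is too large.} The bound $\|w\|_{L^b}\lesssim\|w\|_{H^{s-2}}^{\theta}\|w\|_{H^{s}}^{1-\theta}$ does not hold for all $\frac1b$ up to $\frac12-\frac{s-2}{n}$. When $s<2$ that window contains exponents $b<2$, and no $L^2$-based Sobolev norm on $\mathbb{R}^n_+$ controls $L^b$ for $b<2$. To see this, take $w_R=R^{-n/2}\chi(\cdot/R)$ with $\chi\in C_c^\infty(\mathbb{R}^n_+)$ and let $R\to\infty$: its $H^{s-2}$ and $H^{s}$ norms stay bounded while $\|w_R\|_{L^b}=R^{n(\frac1b-\frac12)}\|\chi\|_{L^b}\to\infty$.

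\textbf{The correct window and where the lower bound enters.} The valid window is $\frac12-\frac sn<\frac1b\le\frac12$. There $\|w\|_{L^b}\lesssim\|w\|_{H^{s'}}$ with $s'=n(\frac12-\frac1b)\in[0,s)$, and $\theta=\frac{s-s'}{2}\in(0,1)$ automatically, so $\theta<1$ is not the constraint either. With $\frac1a=\frac12-\frac sn$ you get
\[\tfrac1b=\tfrac12+\tfrac{2-s}{n}-\tfrac{(n-2s)p}{2n}.\]
The two endpoint conditions are then exactly $p<\frac{4}{n-2s}$ and $p\ge\frac{4-2s}{n-2s}$. The latter is the right-hand inequality in \eqref{guar}, i.e.\ the paper's requirement $\tilde q\ge2$ so that $H^{s-\varepsilon}\hookrightarrow L^{\tilde q}$.

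So your ``right endpoint condition $p(\frac12-\frac sn)>0$'' should read $p(\frac12-\frac sn)\ge\frac{2-s}{n}$. The lower bound on $p$ comes from $b\ge 2$, not from $a\ge2$ or from $\theta<1$. (Letting $a$ decrease toward $2$ would weaken the lower bound to $p\ge\frac{2(2-s)}{n}$, but that is not needed.)

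With this correction, and using the strict inequalities to leave room for an $\varepsilon$-perturbation as in the paper, case (i) closes. Your case (ii) plan corresponds to the paper's choice of $p_1,q_1,p_2,q_2$ in \eqref{oppress}. There the Lebesgue exponents carried by $w$ are automatically at least $2$, because the H\"older splittings are of $\frac12$.
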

\begin{proof}		For case $(i)$: $s \in \left(\frac{3}{2},2\right]$, the embedding $L^{\frac{2n}{n+2(2-s)}}(\mathbb{R}_{+}^{n})\hookrightarrow H^{s-2}(\mathbb{R}_{+}^{n})$  is valid because $s-2\in \left(-\frac{1}{2},0\right]$. Combining this with \eqref{uproar} yields
	\begin{equation}\label{shfi}
		\|F(\varphi)-F(\psi)\|_{H^{s-2}}\lesssim \|F(\varphi)-F(\psi)\|_{L^{\frac{2n}{n+2(2-s)}}}\lesssim
		\left\|\left(|\varphi|^{p}+|\psi|^{p}\right)\left|\varphi-\psi\right|\right\|_{L^{\frac{2n}{n+2(2-s)}}}.
	\end{equation}
	For parameters $(s,p,n)$, the inequality
	\begin{equation*}
		\frac{n-2s}{2n}<\frac{n+2(2-s)}{2n}-\frac{np-2s p}{2n}<\frac{1}{2}
	\end{equation*}
	ensures that there exists $\varepsilon\in \left(0,\frac{1}{100}\right)$ such that
	\begin{equation}\label{guar}		\frac{n-2(s-\varepsilon)}{2n}<\frac{n+2(2-s)}{2n}-\frac{np-2(s-\varepsilon)p}{2n}<\frac{1}{2}.
	\end{equation}
	This leads to the decomposition
	$\frac{n+2(2-s)}{2n}=\frac{1}{\tilde{p}}+\frac{1}{\tilde{q}},$ where
	\begin{equation*}
		\frac{1}{\tilde{p}}=\frac{np-2(s-\varepsilon)p}{2n},\quad \frac{1}{\tilde{q}}=\frac{n+2(2-s)}{2n}-\frac{np-2(s-\varepsilon)p}{2n}.
	\end{equation*}
	Note that the Sobolev embedding $H^{s}(\mathbb{R}_{+}^{n})\hookrightarrow H^{s-\varepsilon}(\mathbb{R}_{+}^{n})\hookrightarrow L^{\frac{2n}{n-2(s-\varepsilon)}}(\mathbb{R}_{+}^{n})$ holds. From  inequality \eqref{guar}, we obtain $H^{s-\varepsilon}(\mathbb{R}_{+}^{n})\hookrightarrow L^{\tilde{q}}(\mathbb{R}_{+}^{n})$. Applying H\"{o}lder's inequality to \eqref{shfi} and leveraging these two embeddings, we have
	\begin{equation}\label{udsf}
		\|F(\varphi)-F(\psi)\|_{H^{s-2}}\lesssim \left(\|\varphi\|_{H^{s}}^{p}+\|\psi\|_{H^{s}}^{p}\right)\|\varphi-\psi\|_{H^{s-\varepsilon}}.
	\end{equation}
	By the Gagliardo-Nirenberg's inequality, there exists $\theta\in (0,1)$ such that
	\begin{equation}\label{sf}
		\|\varphi-\psi\|_{H^{s-\varepsilon}}\lesssim \|\varphi-\psi\|_{H^{s}}^{1-\theta} \|\varphi-\psi\|_{H^{s-2}}^{\theta}\lesssim\left( \|\varphi\|_{H^{s}}^{1-\theta} +\|\psi\|_{H^{s}}^{1-\theta}\right)\|\varphi-\psi\|_{H^{s-2}}^{\theta}.
	\end{equation}
	Substituting \eqref{sf} into \eqref{udsf}, we obtain \eqref{cen}.

	For case (ii): $s\in \left(2,\frac{5}{2}\right)$, the parameter relation $\frac{n-4}{2n}<\frac{1}{2}-\frac{np-2s p}{2n}<\frac{1}{2}$
	ensures a perturbation parameter $\varepsilon\in\left(0,\frac{1}{100}\right)$ maintaining
	\begin{equation}\label{oppress} 0<\frac{n-2(2-\varepsilon)}{2n}<\frac{1}{2}-\frac{np-2(s-\varepsilon)p}{2n}<\frac{1}{2},
	\end{equation}
	where the left inequality follows from $n>4$.
	We therefore decompose $\frac{1}{2}=\frac{1}{p_{1}}+\frac{1}{q_{1}}$, where
	\begin{equation*}			\frac{1}{p_{1}}=\frac{1}{2}-\frac{np-2(s-\varepsilon)p}{2n},\quad\frac{1}{q_{1}}=\frac{np-2(s-\varepsilon)p}{2n}.
	\end{equation*}
	This decomposition can also be reformulated as $\frac{1}{2}=\frac{1}{p_{2}}+\frac{1}{q_{2}}$, where
	\begin{equation*} \frac{1}{p_{2}}=\frac{n-2(s-\varepsilon)}{2n},\quad\frac{1}{q_{2}}=\frac{1}{2}-\frac{n-2(s-\varepsilon)}{2n}.
	\end{equation*}
	By sequentially using \eqref{slate}, \eqref{trot}, and \eqref{riesz} with $\frac{1}{q_{2}}
	=\frac{1}{p_{1}}+\frac{p-1}{p_{2}}$, we obtain
	{\small	\begin{equation*}
			\begin{aligned}
				\|F(\varphi)-F(\psi)\|_{H^{s-2}}
				\lesssim&\|\varphi-\psi\|_{W^{s-2,p_{1}}}
				\sup_{\rho\in[0,1]} \left\|(1-\rho)\varphi+\rho\,\psi\right\|_{L^{\frac{2n}{n-2(s-\varepsilon)}}} ^{p}    \\
				&+ \|\varphi-\psi\|_{L^{\frac{2n}{n-2(s-\varepsilon)}}} \sup_{\rho\in[0,1]} \left\|(1-\rho)\varphi+\rho\,\psi\right\|_{L^{\frac{2n}{n-2(s-\varepsilon)}}}^{p-1} \left\|(1-\rho)\varphi+\rho\,\psi\right\|_{W^{s-2,p_{1}}} \\					\lesssim&\left(\|\varphi\|_{H^{s}}^{p}+\|\psi\|_{H^{s}}^{p}\right) \|\varphi-\psi\|_{H^{s-\varepsilon}}.
			\end{aligned}
	\end{equation*}}
	Here, the Sobolev embeddings $H^{s-\varepsilon}(\mathbb{R}_{+}^{n})\hookrightarrow L^{\frac{2n}{n-2(s-\varepsilon)}}(\mathbb{R}_{+}^{n})$ and $H^{s-\varepsilon}(\mathbb{R}_{+}^{n})\hookrightarrow W^{s-2,p_{1}}(\mathbb{R}_{+}^{n})$ are employed, where the latter is guaranteed by \eqref{oppress}.
	Similarly, applying the Gagliardo-Nirenberg's inequality yields \eqref{cen}.
\end{proof}

\begin{proof}[Proof of Theorem \ref{chaste} for $s\in[2,\frac{5}{2})$ with $s\leq \frac{n}{2}$]
	We define the solution space
	\begin{equation}\label{afh}
		X_{T}:=L^{q}(0,T;W^{s,r})\cap W^{1,q}(0,T;W^{s-2,r})\cap L^{\infty}(0,T;H^{s})
		\cap W^{1,\infty}(0,T;H^{s-2}).
	\end{equation}
	The set $E\subset X_{T}$, given by
	\begin{equation*}
		E=\{ u\in X_{T} |\, u(0)=u_{0}, \|u\|_{X_{T}}\leq M\}
	\end{equation*}
	is complete under the metric $d(u,\tilde{u})=\|u-\tilde{u}\|_{L^{\infty}(0,T;L^{2})}+\|u-\tilde{u}\|_{L^{q}(0,T;L^{r})}$.
	
	For $s<\frac{n}{2}$, we choose the admissible pair $(q,r)=\left(\frac{4(p+1)}{p(n-2s)}, \frac{2n(p+1)}{n+2sp}\right)$. Using the estimates \eqref{dilute} and \eqref{ratify}, we have
	\begin{equation}\label{ab1}
		\|\Gamma u\|_{X_{T}}\lesssim \|u_{0}\|_{H^{s}}+\|h_{0}\|_{\mathcal{H}^{s}}+\|F(u)(0)\|_{H^{s-2}}+\|F(u)\|_{L^{1}(0,T;H^{s})}+\|F(u)\|_{W^{1,1}(0,T;H^{s-2})}.
	\end{equation}
	Applying \eqref{riesz} with $\frac{1}{2}=\frac{1}{r}+\frac{(n-sr)p}{nr}$ and H\"{o}lder's inequality in the time variable, we obtain
	\begin{equation}\label{ab3}
		\|F(u)\|_{L^{1}(0,T;H^{s})}\lesssim T^{\frac{q-(p+1)}{q}}\|u\|_{L^{q}(0,T;W^{s,r})}^{p+1}.
	\end{equation}
	We can also rewrite $\frac{1}{2}=\frac{n-(s-2)r}{nr}+\frac{1}{r^{*}}$, where $\frac{1}{r^{*}}=\frac{n-2r}{nr}+\frac{(n-sr)(p-1)}{nr}$. By sequentially using inequalities \eqref{slate}, \eqref{trot} and \eqref{riesz}, we get
	{\small \begin{equation}\label{adorn}
			\begin{aligned}
				\|F(\varphi)-F(\psi)\|_{H^{s-2}}
				\lesssim &\, \|\varphi-\psi\|_{W^{s-2,r}}\sup_{\rho\in[0,1]}\|(1-\rho)\varphi+\rho\psi\|_{L^{\frac{nr}{n-sr}}}^{p}\\
				&+\|\varphi-\psi\|_{L^{\frac{nr}{n-(s-2)r}}}\sup_{\rho\in[0,1]}\|(1-\rho)\varphi+\rho\psi\|_{L^{\frac{nr}{n-sr}}}^{p-1}
				\|(1-\rho)\varphi+\rho\psi\|_{W^{s-2,\frac{nr}{n-2r}}} \\
				\lesssim  & \left(\|\varphi\|_{W^{s,r}}^{p}+\|\psi\|_{W^{s,r}}^{p}\right)\|\varphi-\psi\|_{W^{s-2,r}}.
			\end{aligned}
	\end{equation}}
	Here, the embedding $W^{s,r}(\mathbb{R}_{+}^{n})\hookrightarrow W^{s-2,\frac{nr}{n-2r}}(\mathbb{R}_{+}^{n})$ is used. Since $u\in W^{1,q}(0,T;W^{s-2,r})$, Lemma \ref{relay} implies that for any $V\subset\subset (0,T)$ and $|h|<d(V,\{0,T\})$, the inequality  $\|u(\cdot+h)-u(\cdot)\|_{L^{q}(V;W^{s-2,r})}\leq\|\partial_{t}u\|_{L^{q}(0,T;W^{s-2,r})}|h|$ holds. Then from estimate \eqref{adorn}, we have
	\begin{equation}\label{abs}
		\begin{aligned}
			\|F(u)(\cdot+h)-F(u)(\cdot)\|_{L^{\frac{q}{p+1}}(V;H^{s-2})}&\lesssim \|u\|_{L^{q}(0,T;W^{s,r})}^{p} \|u(\cdot+h)-u(\cdot)\|_{L^{q}(V;W^{s-2,r})}\\&\lesssim\|u\|_{L^{q}(0,T;W^{s,r})}^{p}\|\partial_{t}u\|_{L^{q}(0,T;W^{s-2,r})}|h|.
		\end{aligned}
	\end{equation}
	Since $W^{s-2,r}$ is reflexive, by \eqref{abs} and Lemma \ref{relay}, we obtain $F(u)\in W^{1,\frac{q}{p+1}}(0,T;H^{s-2})$ with
	\begin{equation}\label{ab4}
		\|F(u)\|_{\dot{W}^{1,1}(0,T;H^{s-2})}\lesssim T^{\frac{q-(p+1)}{q}} \|u\|_{L^{q}(0,T;W^{s,r})}^{p} \|u\|_{\dot{W}^{1,q}(0,T;W^{s-2,r})}.
	\end{equation}
	An application of Lemma \ref{assimilate} with $\varphi = u(t)$ and $\psi = u_0$ gives
	\begin{equation*}
		\|F(u)(t)-F(u_{0})\|_{H^{s-2}}\lesssim \left(\|u(t)\|_{H^{s}}^{p+1-\theta}+\|u_{0}\|_{H^{s}}^{p+1-\theta}\right) \|u(t)-u_{0}\|_{H^{s-2}}^{\theta},
	\end{equation*}
	where $\theta \in (0,1)$. Consequently, $u\in L^{\infty}(0,T;H^{s})\cap W^{1,\infty}(0,T;H^{s-2})$ implies $F(u)(0)=F(u_{0})$. Moreover, we have
	\begin{equation}\label{ab2}
		\|F(u)(0)\|_{H^{s-2}}\lesssim \|u_{0}\|_{H^{s}}^{p+1},
	\end{equation}
	which follows analogously by setting $\varphi = 0$ and $\psi=u_{0}$. Substituting \eqref{ab3}, \eqref{ab4} and \eqref{ab2} into \eqref{ab1}, we have
	\begin{equation}
		\|\Gamma u\|_{X_{T}}\lesssim \|u_{0}\|_{H^{s}}+\|h_{0}\|_{\mathcal{H}^{s}}+\|u_{0}\|_{H^{s}}^{p+1}+T^{\frac{q-(p+1)}{q}}\|u\|_{X_{T}}^{p+1},
	\end{equation}
	where $q > p+1$ is ensured by $p < \frac{4}{n-2s}$.	Similarly, from \eqref{uproar} and \eqref{intimate} we have
	\begin{equation}\label{afhh}
		d\left(\Gamma u,\Gamma \tilde{u}\right)\lesssim T^{\frac{q-(p+1)}{q}} \left(\|u\|_{L^{q}(0,T;W^{s,r})}^{p}+\|\tilde{u}\|_{L^{q}(0,T;W^{s,r})}^{p}\right) d(u,\tilde{u}).
	\end{equation}	
	
	For $s=\frac{n}{2}$, we need only consider $n=4$. Choosing $\varepsilon>0$ such that $p<\frac{4}{n-2(s-\varepsilon)}$, we can take $(q, r)=\left(\frac{4(p+1)}{p(n-2(s-\varepsilon))},\frac{2n(p+1)}{n+2(s-\varepsilon)p}\right)$ to apply the framework above and obtain the corresponding result.
\end{proof}
\subsection{High regularity local solutions}
This section addresses the high regularity case $s > \frac{n}{2}$ of Theorem \ref{chaste}. Since $n\geq2$, it suffices to consider $s \in \left(1, \frac{5}{2}\right)$. We continue to use the operator $\Gamma$ defined in \eqref{turbid}.
\begin{proof}[Proof of Theorem \ref{chaste} for \text{$s\in \left(1,\frac{3}{2}\right]$} with $s> \frac{n}{2}$] In this case, we only need to treat $n=2$. Let $$X_{T}:=L^{\infty}(0,T;H^{s}(\mathbb{R}_{+}^n)),$$ its subset $E=\{ u\in X_{T}: \|u\|_{X_{T}}\leq M \}$ with metric $d(u,\tilde{u})=\|u-\tilde{u}\|_{L^{\infty}(0,T;L^{2})}$. Since $s>1$ and $n=2$, by Lemma \ref{accessory} we have $\|F(u)(t)\|_{H^{s}}\lesssim \|u(t)\|_{L^{\infty}}^{p}\|u(t)\|_{H^{s}}$. Then from \eqref{buffoon} with $(\alpha,\beta)=(\frac{3}{3-s},2)$, we obtain
	\[
	\|\Gamma u\|_{X_{T}}\lesssim  \|u_{0}\|+\|h_{0}\|_{\mathcal{H}^{s}}+\langle T\rangle^{\frac{s}{3}}  T^{\frac{3-s}{3}}\|u\|_{X_{T}}^{p+1},
	\]
	and
	\[
	d\left(\Gamma u, \Gamma \tilde{u}\right)\lesssim \langle T\rangle^{\frac{s}{3}}  T^{\frac{3-s}{3}}\left(\|u\|_{X_{T}}^{p}+\|\tilde{u}\|_{X_{T}}^{p}\right)d(u,\tilde{u}).
	\]
\end{proof}
Before addressing the case $s\in \left(\frac{3}{2},\frac{5}{2}\right)$ with $s>\frac{n}{2}$, we present a necessary lemma, which also guarantees the regularity analysis in Section \ref{sec4}.
\begin{lemma}\label{imp}
	Let $s\in \left(\frac{3}{2},\frac{5}{2}\right)\setminus\{2\}$ with $1\leq p<\infty$ or $s=2$ with $0< p<\infty$,  then for all $\varphi(x,\theta)\in L^{\infty}_{\theta}(0,1; W_{x}^{|s-2|,\frac{n}{|s-2|}}\cap L_{x}^{\infty})$, $\psi\in H^{s-2}(\mathbb{R}_{+}^{n})$,
	the following estimate holds:
	{\small\begin{equation}\label{obstruct}
			\left\|\int_{0}^{1}\left|\varphi(\cdot,\theta)\right|^{p}d\theta\,\psi\right\|_{H^{s-2}}+\,\left\|\int_{0}^{1}\left|\varphi(\cdot,\theta)\right|^{p-2}\varphi^{2}(\cdot,\theta)d\theta\,\psi\right\|_{H^{s-2}}\lesssim \left\|\varphi\right\|_{L^{\infty}(0,1; W^{|s-2|,\frac{n}{|s-2|}}\cap L^{\infty})}^{p}
			\left\|\psi\right\|_{H^{s-2}}.
	\end{equation}}
\end{lemma}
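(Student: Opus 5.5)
The proof reduces to a multiplier estimate. Write $G(x):=\int_{0}^{1}|\varphi(x,\theta)|^{p}d\theta$ or $G(x):=\int_{0}^{1}|\varphi|^{p-2}\varphi^{2}(x,\theta)d\theta$. In both cases $G$ is an average over $\theta$ of $H(\varphi(\cdot,\theta))$, where $H(z)=|z|^{p}$ or $H(z)=|z|^{p-2}z^{2}$. Such an $H$ satisfies $|H(z)|\lesssim|z|^{p}$, and when $p\geq1$ also $|H'(z)|\lesssim|z|^{p-1}$. The claim is then that multiplication by $G$ is bounded on $H^{s-2}(\mathbb{R}^{n}_{+})$ with norm $\lesssim\|G\|_{W^{|s-2|,\frac{n}{|s-2|}}\cap L^{\infty}}$. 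After that, it remains only to bound $\|G\|_{W^{|s-2|,n/|s-2|}\cap L^\infty}\lesssim\|\varphi\|^{p}_{L^{\infty}_\theta(W^{|s-2|,n/|s-2|}\cap L^\infty)}$ by Minkowski's inequality in $\theta$.

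The case $s=2$ is trivial. Here $H^{0}=L^{2}$, so $\|G\psi\|_{L^{2}}\le\|G\|_{L^{\infty}}\|\psi\|_{L^{2}}\le\|\varphi\|_{L^\infty_\theta L^\infty_x}^{p}\|\psi\|_{L^{2}}$, and this works for every $p>0$ (the space $W^{0,\infty}$ is just $L^{\infty}$ under the convention $|s-2|=0$).

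Next take $s\in(2,\frac52)$ and set $\sigma=s-2\in(0,\frac12)$. Apply Lemma \ref{elicit} with $\kappa=2$, $(p_1,q_1)=(2,\infty)$ and $(p_2,q_2)=(\frac{n}{\sigma},\frac{2n}{n-2\sigma})$ to get
\[
\|G\psi\|_{H^{\sigma}}\lesssim\|\psi\|_{H^{\sigma}}\|G\|_{L^{\infty}}+\|G\|_{W^{\sigma,n/\sigma}}\|\psi\|_{L^{\frac{2n}{n-2\sigma}}}.
\]
The Sobolev embedding $H^{\sigma}\hookrightarrow L^{\frac{2n}{n-2\sigma}}$ then closes the estimate. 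To bound $\|G\|_{W^{\sigma,n/\sigma}}$, use Minkowski in $\theta$ to move the $W^{\sigma,n/\sigma}$ norm inside the integral. Then apply Lemma \ref{accessory} with $k=1$ and $\sigma$-parameter $p$, which requires $p\geq1$, taking $q_0=q_1=\frac n\sigma$ and $q_2=\infty$. This gives $\|H(\varphi(\theta))\|_{W^{\sigma,n/\sigma}}\lesssim\|\varphi(\theta)\|_{L^{\infty}}^{p-1}\|\varphi(\theta)\|_{W^{\sigma,n/\sigma}}$. The only point needing care is the fractional order: since $\sigma<1$, the difference characterization $|H(a)-H(b)|\lesssim(|a|^{p-1}+|b|^{p-1})|a-b|$ is enough.

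Finally take $s\in(\frac32,2)$ and set $\sigma=2-s\in(0,\frac12)$, so $H^{s-2}=H^{-\sigma}$. I would argue by duality. The space $H^{-\sigma}(\mathbb{R}^n_+)$ is the dual of $H^{\sigma}_0(\mathbb{R}^n_+)$, which coincides with $H^{\sigma}(\mathbb{R}^n_+)$ because $\sigma<\frac12$, so zero extension is bounded. Hence
\[
\|G\psi\|_{H^{-\sigma}}=\sup_{\|\chi\|_{H^{\sigma}}\le1}\Big|\int G\psi\chi\Big|\le\|\psi\|_{H^{-\sigma}}\sup\|G\chi\|_{H^{\sigma}},
\]
using that $G$ is real in the first case, or passing to $\bar G$ in the second. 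The last supremum is bounded by the previous paragraph with the same $\sigma$. The nonlinear bound for $G$ is identical to the case above.

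I expect the main obstacle to be the identification of the half-space spaces in this last case. One has to make sure that the multiplier estimate transfers from $\mathbb{R}^n$ to $\mathbb{R}^n_+$ in the restriction-space definition, and that negative-order duality on $\mathbb{R}^n_+$ is legitimate; both rely on $|s-2|<\frac12$. A secondary technical point is the fractional chain rule for the merely Lipschitz-type $H$ when $p$ is close to $1$, which is why $p\geq1$ is assumed away from $s=2$.
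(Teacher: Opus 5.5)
Your proposal is correct and follows essentially the same route as the paper: the product estimate \eqref{trot} pairing $\|G\|_{L^\infty}$ with $\|\psi\|_{H^{s-2}}$ and $\|G\|_{W^{s-2,\frac{n}{s-2}}}$ with $\|\psi\|_{L^{\frac{2n}{n-2(s-2)}}}$, then the Sobolev embedding $H^{s-2}\hookrightarrow L^{\frac{2n}{n-2(s-2)}}$, the chain rule \eqref{riesz} for $|\varphi|^{p}$, and duality for $s<2$. One small slip: with $f=\psi$, $g=G$ in Lemma \ref{elicit}, the second pair must be $(p_2,q_2)=(\frac{2n}{n-2\sigma},\frac{n}{\sigma})$ rather than the reverse; apart from that, your separate treatment of $s=2$ and your difference-quotient argument for the non-$C^{1}$ case $p=1$ (where \eqref{riesz} with $k=1$ does not literally apply) are, if anything, more careful than the paper's sketch.
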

\begin{proof}
	We note that it suffices to prove the case $s-2\geq0$, since $s-2<0$ follows from a duality argument. Using the estimates \eqref{trot} and \eqref{riesz}, we have
	\begin{equation*}
		\begin{aligned}
			\left\|\int_{0}^{1}|\varphi(\cdot,\theta)|^{p}d\theta\, \psi\right\|_{H^{s-2}}&\lesssim \left\||\varphi|^{p}\right\|_{L^{\infty}(0,1;W^{s-2,\frac{n}{s-2}})}\|\psi\|_{L^{\frac{2n}{n-2(s-2)}}}+\left\||\varphi|^{p}\right\|_{L^{\infty}(0,1;L^{\infty})} \|\psi\|_{H^{s-2}}\\
			&\lesssim \left(\|\varphi\|^{p-1}_{L^{\infty}(0,1;L^{\infty})}\|\varphi\|_{L^{\infty}(0,1;W^{s-2,\frac{n}{s-2}})}+\|\varphi\|^{p}_{L^{\infty}(0,1;L^{\infty})} \right)\|\psi\|_{H^{s-2}},
		\end{aligned}
	\end{equation*}
	where we have used the embedding $H^{s-2}(\mathbb{R}_{+}^{n})\hookrightarrow L^{\frac{2n}{n-2(s-2)}}(\mathbb{R}_{+}^{n})$.
	We can use a similar approach to estimate $\left\|\int_{0}^{1}|\varphi(\cdot,\theta)|^{p-2} \varphi^{2}(\cdot,\theta) d\theta\, \psi\right\|_{H^{s-2}}$, thus establishing the inequality~\eqref{obstruct}.
\end{proof}

\begin{proof}[Proof of Theorem \ref{chaste} for $s\in\left(\frac{3}{2},\frac{5}{2}\right)$ with $s>\frac{n}{2}$]
	We define the solution space
	\begin{equation}
		X_{T}:=L^{\infty}(0,T;H^{s}(\mathbb{R}_{+}^n))\cap W^{1,\infty}(0,T;H^{s-2}(\mathbb{R}_{+}^n)),
	\end{equation}
	and its subset
	\begin{equation*}
		E=\{ u\in X_{T}|\, u(0)=u_{0}, \|u\|_{X_{T}}\leq M \}
	\end{equation*}
	with metric $d(u,\tilde{u})=\|u-\tilde{u}\|_{L^{\infty}(0,T;L^{2})}$. It follows from \eqref{aggravate} that
	\begin{equation}\label{311}
		\begin{aligned}
			\|\Gamma			u\|_{X_{T}}\lesssim&\,\|u_{0}\|_{H^{s}}+\|h_{0}\|_{\mathcal{H}^{s}}+\|F(u)(0)\|_{H^{s-2}}\\
			&+\langle T\rangle ^{\frac{1}{2}}(1+T^{-\frac{2s-3}{4}})\left(\|F(u)\|_{L^{2}(0,T;H^{s})}+\|F(u)\|_{H^{1}(0,T;H^{s-2})}\right).
		\end{aligned}	
	\end{equation}
	It is easy to see from \eqref{riesz} that
	\begin{equation}\label{314}
		\|F(u)\|_{L^{2}(0,T;H^{s})}\lesssim T^{\frac{1}{2}} \|u\|_{L^{\infty}(0,T;H^{s})}^{p+1}.
	\end{equation}
	Applying equation \eqref{slate} and Lemma \ref{imp}, we obtain \begin{equation}\label{impo}
		\|F(u)(0)\|_{H^{s-2}}=\|F(u_{0})\|_{H^{s-2}}\lesssim \|u_{0}\|_{H^{s}}^{p+1},
	\end{equation}
	\begin{equation}
		\|F(u)(t+h)-F(u)(t)\|_{H^{s-2}}
		\lesssim\left(\|u(t+h)\|_{H^{s}}^{p}+\|u(t)\|_{H^{s}}^{p}\right) \|u(t+h)-u(t)\|_{H^{s-2}}.
	\end{equation}
	Here, the embedding $H^{s}(\mathbb{R}_{+}^{n})\hookrightarrow W^{|s-2|,\frac{n}{|s-2|}}(\mathbb{R}_{+}^{n})\cap L^{\infty}(\mathbb{R}_{+}^{n})$ is employed. Since $u\in W^{1,\infty}(0,T;H^{s-2})$, Lemma \ref{relay} implies that for any $V\subset\subset (0,T)$ and $|h|<d(V,\{0,T\})$, the inequality
	\begin{equation*}
		\|F(u)(\cdot+h)-F(u)(\cdot)\|_{L^{\infty}(V;H^{s-2})}\lesssim \|u\|_{L^{\infty}(0,T;H^{s})}^{p} \|\partial_{t}u\|_{L^{\infty}(0,T;H^{s-2})} |h|
	\end{equation*}
	holds. Moreover, we have $F(u)\in W^{1,\infty}(0,T;H^{s-2})$ and
	\begin{equation}\label{uahi}
		\|F(u)\|_{\dot{H}^{1}(0,T;H^{s-2})}\lesssim T^{\frac{1}{2}} \|u\|_{L^{\infty}(0,T;H^{s})}^{p}\|u\|_{\dot{W}^{1,\infty}(0,T;H^{s-2})}.
	\end{equation}
	Substituting \eqref{314}, \eqref{impo} and \eqref{uahi} into \eqref{311}, we have
	\begin{equation}\label{319}
		\|\Gamma u\|_{X_{T}}\lesssim \|u_{0}\|_{H^{s}}+\|u_{0}\|_{H^{s}}^{p+1}+\|h_{0}\|_{\mathcal{H}^{s}} +\langle T\rangle^{\frac{1}{2}} (T^{\frac{1}{2}}+T^{\frac{5-2s}{4}})\|u\|_{X_{T}}^{p+1}.
	\end{equation}
	Similarly, from \eqref{uproar} and  \eqref{intimate} we obtain
	\begin{equation}
		d(\Gamma u,\Gamma \tilde{u})\lesssim T \left(\|u\|_{L^{\infty}(0,T;H^{s})}^{p}+\|\tilde{u}\|_{L^{\infty}(0,T;H^{s})}^{p}\right) d(u,\tilde{u}).
	\end{equation}
\end{proof}	
Thus, we complete the proof of local existence for all cases in Theorem \ref{chaste}. Regarding uniqueness, we have the following proof.
\begin{proof}[Proof of Theorem \ref{chaste} for uniqueness]
	For any $T>0$, suppose the solutions $u,\tilde{u}$ belong to $C\left([0,T];H^{s}(\mathbb{R}^{n}_{+})\right)$.	For $s<1$ and $p\leq \frac{2+2s}{n-2s}$ (with $p<\frac{2+2s}{n-2s}$ when $n=2$), there exists $\rho\in \big[2,\frac{2n}{n-2}\big]$ (and $\rho\in\big[2,\frac{2n}{n-2}\big)$ when $n=2$) such that $
	\rho'(p+1) \in \left[2, \frac{2n}{n-2s}\right]$. The following argument is presented for $n>2$; the case $n=2$ requires only minor adjustments to the parameters. To choose such $\rho$, we consider two cases:
	\begin{enumerate}[label=(\roman*)]
		\item When $p < \frac{2}{n}$, we can choose $\rho \in \big[2, \frac{2n}{n-2}\big]$ such that $\rho'(p+1) = 2$.
		\item When $\frac{2}{n} \leq p \leq \frac{2+2s}{n-2s}$, taking $\rho = \frac{2n}{n-2}$, we can verify that $\rho'(p+1) \in \big[2, \frac{2n}{n-2s}\big]$.
	\end{enumerate}
	Consequently, we can choose $\gamma \in [2, \infty]$ such that $(\gamma, \rho)$ is an admissible pair. Then, from $u \in L^{\infty}(0, T; H^{s})$, it follows that
	\begin{equation*}
		F(u) \in L^{\gamma'}(0, T; L^{\rho'(p+1)}(\mathbb{R}^{n}_{+})).
	\end{equation*}
	The condition $s < 1$ also implies $\rho'(p+1) \in \big[2, \frac{2n}{n-2}\big)$. Next, choose $q \in (2, \infty]$ such that $\big(q, \rho'(p+1)\big)$ is an admissible pair. By  \eqref{intimate} and H\"{o}lder's inequality, we have for any $t\in[0,T]$,
	\begin{equation}
		\|u - \tilde{u}\|_{L^{q}(0, t; L^{\rho'(p+1)})} \lesssim \big( \|u\|_{L^{\infty}(0, T; H^{s})}^{p} + \|\tilde{u}\|_{L^{\infty}(0, T; H^{s})}^{p} \big) \|u - \tilde{u}\|_{L^{\gamma'}(0, t; L^{\rho'(p+1)})}.
	\end{equation}
	Note that $q > \gamma'$; then using \eqref{holmer}, we conclude that $u = \tilde{u}$ on $[0, T]$.
	
	For $s\geq1 $,  the embedding
	\[L^{\infty}\left(0,T;H^{s}(\mathbb{R}^{n}_{+})\right)\hookrightarrow L^{q}\left(0,T;L^{r}(\mathbb{R}^{n}_{+})\right)\] holds for any admissible pair $(q,r)$. When  $s\leq \frac{n}{2}$, we choose an $\varepsilon\geq 0$ such that $p<\frac{4}{n-2(s-\varepsilon)}$ and $s-\varepsilon\geq0$; then take the admissible pair $(q,r)=(\frac{8}{n-2(s-\varepsilon)},\frac{4n}{2n-(n-2(s-\varepsilon))p})$. Using \eqref{intimate}, \eqref{uproar} and $\frac{1}{r'}=\frac{np-2(s-\varepsilon)p}{2n}+\frac{1}{r}$, we obtain for any $t\in[0,T]$,
	\begin{equation}\label{dsf}
		\begin{aligned}
			\|u-\tilde{u}\|_{L^{q}(0,t;L^{r})}&\lesssim \|F(u)-F(\tilde{u})\|_{L^{q'}(0,t;L^{r'})}\\
			&\lesssim  \left(\|u\|_{L^{\infty}(0,T;H^{s-\varepsilon})}^{p} + \|\tilde{u}\|_{L^{\infty}(0,T;H^{s-\varepsilon})}^{p}\right)\|u-\tilde{u}\|_{L^{q'}(0,t;L^{r})}.
		\end{aligned}
	\end{equation}
	When $s>\frac{n}{2}$, selecting the admissible pair $(q,r)=(\infty, 2)$  and using $H^{s}(\mathbb{R}_{+}^{n})\hookrightarrow L^{\infty}(\mathbb{R}_{+}^{n})$ yields an estimate analogous to \eqref{dsf}. Note that in any case we have $q>2$, which implies $q>q'$. It then follows from \eqref{holmer} that $u = \tilde{u}$ on $[0, T]$.
	
	In other cases, if we further assume $u, \tilde{u} \in L^{q}(0,T; L^{r}(\mathbb{R}^{n}_{+}))$ for $(q, r)$ chosen as in the case $s \geq 1$, then an estimate similar to \eqref{dsf} holds, and thereby uniqueness follows.
\end{proof}
The maximal existence interval $[0,T_{\max})$ and blow-up statement for the solution $u$ follow from the classical extension procedure. Moreover, the local solution given by the fixed point argument after extension belongs to $ L_{\mathrm{loc}}^{q}\left([0,T_{\max});W^{s,r}(\mathbb{R}^{n}_{+})\right)$ for some admissible pair $(q,r)$. This property can be  extended to arbitrary admissible pairs, since the estimates in Section \ref{sec24} hold for all such pairs. Under the condition $p\geq \lceil s\rceil$ or $p\in2\mathbb{Z}_{+}$, continuous dependence follows by an argument parallel to the existence proof.
\subsection{Additional local results}\label{stipple}
The preceding two sections present the main body of the local theory, namely the proof of Theorem \ref{chaste}. This section provides some additional local results, which will be effectively applied in the later global analysis. Recall that Proposition \ref{weak} provides an additional $H^{2}$-estimate \eqref{begrudge} by using the $\mathcal{W}^{2}$ framework \eqref{huaw}--\eqref{invoice}. As stated in Remark \ref{remaa}, although Theorem \ref{chaste} does not yield solutions satisfying the boundary data in  $\mathcal{H}^2$ for $ p < 1$, we can use \eqref{begrudge} to construct  solutions satisfying the boundary data in  $\mathcal{W}^2$. Specifically, we have the following proposition.
\begin{proposition}\label{weak2}
	Let $0<p<1$ and $(n-4)p<4$. For $u_{0}\in H^{2}(\mathbb{R}^{n}_{+})$ and $h_{0}\in\mathcal{H}^{2}(\mathbb{R}_{+}^{n})$ satisfying the compatibility condition, there exists a unique maximal solution
	\begin{equation}\label{ew}
		u\in C\left(\left[0,T_{\max}\right);H^{2}(\mathbb{R}^{n}_{+})\right)\cap C\left(\overline{\mathbb{R}_{+}\mkern-4mu}\,;\mathcal{W}^{2}_{\mathrm{loc}}(\mathbb{R}^{n-1}\times(0,T_{\max}))\right)
	\end{equation}
	to the IBVP \eqref{ibvp}. Moreover, if $T_{\max}< \infty$, then $\lim\limits_{t\rightarrow T_{\max}} \left\|u(t)\right\|_{H^{2}(\mathbb{R}^{n}_{+})}=\infty$.
\end{proposition}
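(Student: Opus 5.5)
We run a contraction argument for the operator $\Gamma$ in \eqref{turbid}, now measured through the $\mathcal{W}^{2}$ estimate \eqref{begrudge} of Proposition \ref{weak}. Because \eqref{begrudge} involves no spatial derivative of $f$, the non-smoothness of $F(u)=|u|^{p}u$ for $p<1$ never enters. We take the solution space
\begin{equation*}
X_{T}:=C([0,T];H^{2})\cap W^{1,q}(0,T;L^{r})\cap W^{1,\infty}(0,T;L^{2}),
\end{equation*}
with $(q,r)$ admissible and chosen below. We work on the set $E=\{u\in X_{T}:u(0)=u_{0},\ \|u\|_{X_{T}}\le M\}$ with the metric $d(u,\tilde u)=\|u-\tilde u\|_{L^{\infty}(0,T;L^{2})}+\|u-\tilde u\|_{L^{q}(0,T;L^{r})}$, which is complete on $E$ by the usual weak-$*$ lower semicontinuity argument.

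By \eqref{begrudge} it suffices to control $\|F(u)\|_{C([0,T];L^{2})}$ and $\|F(u)\|_{W^{1,\gamma'}(0,T;L^{\rho'})}$. The first bound is immediate: $\|F(u)(t)\|_{L^{2}}=\|u(t)\|_{L^{2(p+1)}}^{p+1}\lesssim\|u(t)\|_{H^{2}}^{p+1}$. The Sobolev embedding $H^{2}\hookrightarrow L^{2(p+1)}$ holds since $(n-4)p<4$, and continuity in $t$ follows from \eqref{uproar}. For the time derivative we use $|\partial_{t}F(u)|\lesssim|u|^{p}|\partial_{t}u|$, valid for every $p>0$; rigorously, as in \eqref{abs}, we apply \eqref{uproar} to difference quotients and then Lemma \ref{relay}. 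We choose $(\gamma,\rho)=(q,r)$ and split $\frac{1}{r'}=\frac{p}{a}+\frac{1}{r}$, which requires $a=\frac{pr}{r-2}\le\frac{2n}{n-4}$ (any $a$ for $n\le4$). Hölder in $x$ then gives $\||u|^{p}\partial_{t}u\|_{L^{r'}}\lesssim\|u\|_{H^{2}}^{p}\|\partial_{t}u\|_{L^{r}}$, and Hölder in $t$ gives
\begin{equation*}
\|\partial_{t}F(u)\|_{L^{q'}(0,T;L^{r'})}\lesssim T^{1-\frac{2}{q}}\|u\|_{L^{\infty}(0,T;H^{2})}^{p}\|\partial_{t}u\|_{L^{q}(0,T;L^{r})}.
\end{equation*}
The factor $T^{1-2/q}$ is a positive power because $q>2$. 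Since $p<1$ and $(n-4)p<4$, an $r$ slightly above $2$ (i.e.\ $q<\infty$ large, non-endpoint) makes $a$ small enough for every $n$, so this parameter choice is the routine part. Hence $\|\Gamma u\|_{X_{T}}\le C(\|u_{0}\|_{H^{2}}+\|h_{0}\|_{\mathcal{H}^{2}})+C\|u_{0}\|_{H^{2}}^{p+1}+CT^{\theta}M^{p+1}$ for some $\theta>0$. The $\|F(u)(0)\|_{L^{2}}$ term is bounded by $\|u_{0}\|_{H^{2}}^{p+1}$. The $L^{\infty}_{t}L^{2}$ norm of $\partial_{t}u$ is recovered from $\Delta u$ and the equation, or from \eqref{begrudge}.

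The contraction in the weak metric $d$ follows from \eqref{intimate} together with \eqref{uproar}, exactly as in \eqref{afhh}, with $s=2$ and the $H^{2}$ bound feeding the $L^{\infty}$-in-time factor. \textbf{The main obstacle} is that $F$ is not $C^{2}$ for $p<1$, so we cannot contract in the $X_{T}$ norm itself. The weak metric avoids this: only the Lipschitz bound \eqref{uproar} is used, and closedness of $E$ in $d$ supplies the fixed point. We must then check that the fixed point satisfies the boundary condition in the $\mathcal{W}^{2}$ sense. This follows from Proposition \ref{weak}, which puts $u$ in $C_{b}(\overline{\mathbb{R}_{+}};\mathcal{W}^{2}(\mathbb{R}^{n-1}\times(0,T)))$, because the fixed point lies in the class where $F(u)\in C([0,T];L^{2})\cap W^{1,q'}(0,T;L^{r'})$.

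Uniqueness follows from the $s\ge1$ uniqueness argument in the proof of Theorem \ref{chaste}, since $H^{2}\subset H^{1}$ and $p<1<\frac{4}{n-2}$ for the relevant $n$. For the remaining large $n$ we use the $L^{q}L^{r}$ estimate with $H^{2-\varepsilon}$ embeddings as in \eqref{dsf}. The local time $T$ depends only on $\|u_{0}\|_{H^{2}}+\|h_{0}\|_{\mathcal{H}^{2}}$. The classical continuation procedure then yields the maximal solution and the blow-up alternative: if $\|u(t)\|_{H^{2}}$ stayed bounded near $T_{\max}<\infty$, we could restart with a uniform time step, after shifting the boundary data in time.
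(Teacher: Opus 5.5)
Your framework is the paper's: a space with $W^{1,\infty}(0,T;L^2)$ and $W^{1,q}(0,T;L^r)$ components, the weak metric $d$, estimate \eqref{begrudge}, and difference quotients with Lemma~\ref{relay} for $\partial_tF(u)$. \textbf{However, the self-map estimate does not close as written.}

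Estimate \eqref{begrudge} contains the full norm $\|F(u)\|_{C([0,T];L^{2})}=\sup_{t\in[0,T]}\|F(u)(t)\|_{L^2}$, not just $\|F(u)(0)\|_{L^2}$ as in \eqref{dilute}; it enters through $\Delta II=f-i\partial_t II$. Your ``immediate'' bound $\|F(u)(t)\|_{L^2}\lesssim\|u(t)\|_{H^2}^{p+1}$ gives only $\|F(u)\|_{C([0,T];L^2)}\lesssim M^{p+1}$, with no positive power of $T$. So the inequality $\|\Gamma u\|_{X_T}\le C(\|u_0\|_{H^2}+\|h_0\|_{\mathcal{H}^2})+C\|u_0\|_{H^2}^{p+1}+CT^{\theta}M^{p+1}$ is asserted but not derived. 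With a term $CM^{p+1}$ present, the ball is invariant only for small data. Then neither large-data local existence nor the blow-up alternative follows, since the latter needs a time step depending only on the size of the data. Your bound on $\partial_tF(u)$ lives in $L^{q'}(0,T;L^{r'})$ with $r'<2$, so by itself it does not make $F(u)(t)-F(u_0)$ small in $L^2$.

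The missing step is the paper's \eqref{chamber}. Write $F(u)(t)=F(u_0)+[F(u)(t)-F(u_0)]$ and apply Lemma~\ref{assimilate} with $s=2$, $\varphi=u(t)$, $\psi=u_0$; for $n<4$ the Lipschitz bound via $H^2\hookrightarrow L^\infty$ does the same job. This gives $\|F(u)(t)-F(u_0)\|_{L^2}\lesssim(\|u(t)\|_{H^2}^{p+1-\theta}+\|u_0\|_{H^2}^{p+1-\theta})\|u(t)-u_0\|_{L^2}^{\theta}$. Combine it with $\|u(t)-u_0\|_{L^2}\le t\|\partial_t u\|_{L^\infty(0,T;L^2)}$; this is why $W^{1,\infty}(0,T;L^2)$ is built into the space. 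The result is $\|F(u)\|_{C([0,T];L^2)}\lesssim\|u_0\|_{H^2}^{p+1}+T^{\theta}M^{p+1}$, together with time continuity of $F(u)$ in $L^2$. Alternatively, you could interpolate $L^2$ between $L^{r'}$, where $\|F(u)(t)-F(u_0)\|_{L^{r'}}\le t^{1/q}\|\partial_tF(u)\|_{L^{q'}(0,T;L^{r'})}$, and some $L^{c}$ with $c>2$ and $H^2\hookrightarrow L^{c(p+1)}$. Such a $c$ exists because $(n-4)p<4$.

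Two smaller corrections.

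First, your parameter choice goes the wrong way. Since $a=\frac{pr}{r-2}\to\infty$ as $r\downarrow 2$, an $r$ near $2$ violates $H^2\hookrightarrow L^a$ for $n\ge5$. You need $\frac{(n-4)p}{2n}\le 1-\frac{2}{r}\le\frac{p}{2}$, with $1-\frac{2}{r}<\frac{2}{n}$ to stay off the endpoint. So $r$ must move toward the endpoint as $(n-4)p\to4$, not toward $2$. The paper takes $1-\frac{2}{r}=\frac{(n-4+2\varepsilon)p}{2n}$, so that $a$ is the Sobolev exponent of $H^{2-\varepsilon}$. For $n<4$ it simply uses $(q,r)=(\infty,2)$ and $H^2\hookrightarrow L^\infty$.

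Second, a $d$-closed ball of $C([0,T];H^2)$ is not complete, because $d$-limits of bounded sequences lie only in $L^\infty(0,T;H^2)$. Put $L^\infty(0,T;H^2)$ in $X_T$, as the paper does. Then recover $C([0,T];H^2)$ and the $\mathcal{W}^2$ trace afterwards from Proposition~\ref{weak}, once $F(u)\in C([0,T];L^2)\cap W^{1,q'}(0,T;L^{r'})$ is established.
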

\begin{proof}
	The solution space is defined as
	\begin{equation}
		X_{T}:=L^{\infty}(0,T;H^{2}(\mathbb{R}^{n}_{+}))\cap W^{1,\infty}(0,T;L^{2}(\mathbb{R}^{n}_{+}))\cap W^{1,q}(0,T;L^{r}(\mathbb{R}^{n}_{+})),
	\end{equation}
	with its subset
	\begin{equation*}
		E=\{ u\in X_{T}|\,u(0)=u_{0},\|u\|_{X_{T}}\leq M\},
	\end{equation*}
	where the metric is given by $d(u,\tilde{u})=\|u-\tilde{u}\|_{L^{\infty}(0,T;L^{2})}+\|u-\tilde{u}\|_{L^{q}(0,T;L^{r})}$.
	
	For $n\geq4$, there exists an $\varepsilon>0$ such that $p<\frac{4}{n-2(2-\varepsilon)}$ ensures the  pair $(q,r)=\left(\frac{8}{n-2(2-\varepsilon)},\frac{4n}{2n-(n-2(2-\varepsilon))p}\right)$ is admissible.  It follows from \eqref{begrudge} that
	\begin{equation}\label{rhetoric}
		\|\Gamma u\|_{X_{T}}\lesssim \|u_{0}\|_{H^{2}}+\|h_{0}\|_{\mathcal{H}^{2}}+\|F(u)\|_{C([0,T];L^{2})}+\|F(u)\|_{W^{1,q'}(0,T;L^{r'})}.
	\end{equation}
	Since $u \in L^\infty(0,T;H^2) \cap W^{1,\infty}(0,T;L^2)$, an application of Lemma \ref{assimilate} with $\varphi = u(t)$ and $\psi = u(\tau)$ yields
	$F(u) \in B^\theta_{\infty,\infty}(0,T;L^2(\mathbb{R}_{+}^{n})) \hookrightarrow C([0,T];L^2(\mathbb{R}_{+}^{n}))$.  Additionally, we have $\|F(u)(0)\|_{L^2} \lesssim \|u_0\|_{H^2}^{p+1}$ analogously. Combining Lemmas \ref{assimilate} and \ref{relay}, we derive
	\begin{equation*}
		\begin{aligned}
			\|F(u)(t)-F(u)(0)\|_{L^{2}}&\lesssim   \left(\|u(t)\|_{H^{2}}^{p+1-\theta}+\|u_{0}\|_{H^{2}}^{p+1-\theta}\right) \|u(t)-u_{0}\|_{L^{2}}^{\theta}\\&\lesssim  t^{\theta}\left( \|u\|_{L^{\infty}(0,T;H^{2})}^{p+1-\theta}+\|u_{0}\|_{H^{2}}^{p+1-\theta}\right)\|\partial_{t}u\|_{L^{\infty}(0,T;L^{2})}^{\theta}.
		\end{aligned}
	\end{equation*}
	Therefore, we obtain
	\begin{equation}\label{chamber}
		\|F(u)\|_{C([0,T];L^{2})}\lesssim  T^{\theta} \left(\|u\|_{L^{\infty}(0,T;H^{2})}^{p+1-\theta} +\|u_{0}\|_{H^{2}}^{p+1-\theta}\right)\|\partial_{t}u\|_{L^{\infty}(0,T;L^{2})}^{\theta}+\|u_{0}\|_{H^{2}}^{p+1}.
	\end{equation}
	From \eqref{uproar} and
	$\frac{1}{r'}= \frac{np-2(2-\varepsilon)p}{2n}+\frac{1}{r}$, we have
	\begin{equation}\label{agitation}
		\|F(\varphi)-F(\psi)\|_{L^{r'}}\lesssim \left(\|\varphi\|_{H^{2}}^{p}+ \|\psi\|_{H^{2}}^{p}\right) \|\varphi-\psi\|_{L^{r}}.
	\end{equation}
	By substituting $\varphi=u(t+h)$ and $\psi=u(t)$ into \eqref{agitation} and applying Lemma \ref{relay}, we get
	\begin{equation}\label{suffrage}
		\|F(u)\|_{W^{1,q'}(0,T;L^{r'})}\lesssim T^{1-\frac{2}{q}} \|u\|_{L^{\infty}(0,T;H^{2})}^{p} \|u\|_{W^{1,q}(0,T;L^{r})}.
	\end{equation}
	Substituting \eqref{chamber} and \eqref{suffrage} into \eqref{rhetoric} yields
	\begin{equation}\label{longevity}
		\|\Gamma u\|_{X_{T}}\lesssim	 \|u_{0}\|_{H^{2}}+\|h_{0}\|_{\mathcal{H}^{2}}+\|u_{0}\|_{H^{2}}^{p+1}+\left(T^{1-\frac{2}{q}}+T^{\theta}\right) \left(\|u\|_{X_{T}}^{p+1-\theta}+\|u_{0}\|_{H^{2}}^{p+1-\theta}\right)\|u\|_{X_{T}}^{\theta},
	\end{equation}
	where $q>2$. Similarly, from \eqref{uproar} and \eqref{intimate} we obtain
	\begin{equation}
		d\,(\Gamma u,\Gamma \tilde{u})\lesssim T^{1-\frac{2}{q}} \left(\|u\|_{X_{T}}^{p}+\|\tilde{u}\|_{X_{T}}^{p}\right)d\,(u,\tilde{u}).
	\end{equation}
	
	For $n<4$, the embedding $H^{2}(\mathbb{R}_{+}^{n})\hookrightarrow L^{\infty}(\mathbb{R}_{+}^{n})$ holds, which allows us to choose the  admissible pair $(q, r)=(\infty, 2)$. The desired estimates then follow by similar arguments as in the case $n\geq4$. The uniqueness argument is analogous to that of Theorem~\ref{chaste}.
\end{proof}

Finally, based on Theorem \ref{chaste} and Proposition \ref{weak2}, we establish the following two corollaries that present some additional properties.
\begin{corollary}\label{core}
	The solution $u$ provided by Theorem \ref{chaste} satisfies the following properties:
	\begin{enumerate}[label=(\roman*)]
		\item For $s\in \left(\frac{3}{2},\frac{5}{2}\right)$, $u\in C^{1}\left([0,T_{\max});H^{s-2}(\mathbb{R}^{n}_{+})\right)$.
		\item The local $H^{1}$ solution $u$ belongs to $C^{1} \left(\overline{\mathbb{R}_{+}\mkern-4mu}\,; \mathcal{H}^{0}_{\mathrm{loc}}(\mathbb{R}^{n-1}\times (0,T_{\max}))\right)$.
	\end{enumerate}
\end{corollary}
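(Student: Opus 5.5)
Both parts come from the fact that the solution of Theorem \ref{chaste} is, on each $[0,T]\subset[0,T_{\max})$, the fixed point $u=\Gamma u$ of \eqref{turbid}. So the plan is to read off the extra time regularity from the linear representation \eqref{turbine} with $f=-\lambda F(u)$, applying the $C^1$-type linear estimates term by term.

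\emph{Part (i).} For $s\in(\frac32,\frac52)$ I would split into the two regimes of the local proof. If $s>\frac n2$, the fixed point was constructed in $X_T=L^\infty(0,T;H^s)\cap W^{1,\infty}(0,T;H^{s-2})$. If $s\le\frac n2$, the space \eqref{afh} also contains $W^{1,\infty}(0,T;H^{s-2})$. In both cases the proofs show $F(u)\in W^{1,1}(0,T;H^{s-2})$ (via \eqref{ab4} or \eqref{uahi}), together with $F(u)(0)=F(u_0)$. Then I would differentiate each piece of \eqref{turbine} in time:
\begin{itemize}
\item $\partial_t e^{it\Delta}Eu_0=ie^{it\Delta}\Delta Eu_0\in C([0,T];H^{s-2})$ by \eqref{autocrat} with $j=1$.
\item For the Duhamel term, use \eqref{militant}; its right side lies in $C([0,T];H^{s-2})$ by \eqref{autocrat} and \eqref{wrath} with $(\gamma,\rho)=(\infty,2)$ and $f\in W^{1,1}(0,T;H^{s-2})$. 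This is exactly \eqref{famine}.
\item For the boundary term $S(t)g_0$, use $\partial_tS(t)g_0=S(t)\partial_tg_0$ from the proof of \eqref{squeak}, and \eqref{squeak} itself, which gives $C^1([0,T];H^{s-2})$ (the $j=1$ case).
\end{itemize}
The step needing care is that $g_0=h_0-e^{i(\cdot)\Delta}Eu_0|_{x_n=0}+i\lambda\int_0^{\cdot}\cdots|_{x_n=0}$ lies in ${}_0\mathcal H^{s}(\mathbb{R}^{n-1}\times(0,T))$; this was already verified in Proposition \ref{abss}. Since $T<T_{\max}$ is arbitrary, (i) follows.

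\emph{Part (ii).} For $s=1$ the estimate \eqref{contagious} gives $u\in C^1_b(\overline{\mathbb R_+};\mathcal H^0(\mathbb R^{n-1}\times(0,T)))$ for the linear solution, provided $f=-\lambda F(u)$ is in the space on the right-hand side. The main obstacle is that the $s=1$ local proof used a cutoff $\psi_T$ and the norms $L^{q'}(\mathbb R_+;W^{1,r'})\cap B^{1/2}_{q',2}(\mathbb R_+;L^{r'})$. So I must check that $\psi_TF(u)$ belongs to these spaces, which is exactly what \eqref{bigf}, \eqref{a1} and \eqref{aa2} establish for the fixed point. I would then apply \eqref{contagious} on each $[0,T]$ (on each step of the extension procedure) and conclude membership in $C^1(\overline{\mathbb R_+};\mathcal H^0_{\mathrm{loc}})$.

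If instead one prefers to avoid the Besov norm, the alternative is to use \eqref{crisp} for the Duhamel term, \eqref{tentative} for the free term and \eqref{scrap} for $S(t)$. These require only $F(u)\in L^{\gamma'}(0,T;W^{1,\rho'})$ for the Duhamel part, but the $S$-term still needs the trace in $\mathcal H^1$, so \eqref{juvenile} is used there.

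If $s=1=\frac n2$ (that is, $n=2$), I would use the modified pair with $s-\varepsilon$ from the local proof; the estimates are the same.
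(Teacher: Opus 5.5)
Your part (ii) is the paper's argument: \eqref{contagious} applied to the fixed point, which the paper cites as a direct consequence.

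For part (i) your route has a gap. You assert that for $s\le\frac n2$ the fixed point lives in the space \eqref{afh}, which contains $W^{1,\infty}(0,T;H^{s-2})$, and then invoke \eqref{ab4}. But \eqref{afh} is used only for $s\in[2,\frac52)$. For $s\in(\frac32,2)$ with $s\le\frac n2$ (exactly the case $n\ge4$), the solution is built in $X=L^\infty(\mathbb{R}_+;H^s)\cap L^q(\mathbb{R}_+;W^{s,r})\cap B^{s/2}_{q,2}(\mathbb{R}_+;L^r)$ via \eqref{confine}. That space gives no control of $\partial_t u$ in any $H^{s-2}$-type norm; it only gives fractional time regularity $s/2<1$ in $L^r$. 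So $F(u)\in W^{1,1}(0,T;H^{s-2})$ is not available, and you cannot differentiate the Duhamel term via \eqref{militant}. Getting $\partial_t u$ in that case would force you to use the PDE anyway. A minor point: \eqref{squeak} is stated only for $s\ge2$, so for $s\in(\frac32,2)$ the $S(t)$-term should use \eqref{grit} with $j=1$ instead, which does work.

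The paper avoids the Duhamel formula altogether:
\begin{itemize}
\item Since $u\in C([0,T_{\max});H^s)$, you have $\Delta u\in C([0,T_{\max});H^{s-2})$.
\item Lemma \ref{assimilate} (for $s\le\frac n2$), or \eqref{slate} with Lemma \ref{imp} (for $s>\frac n2$), gives $\|F(u(t))-F(u(t'))\|_{H^{s-2}}\to0$ as $t'\to t$. Hence $F(u)\in C([0,T_{\max});H^{s-2})$.
\item The equation holds in the sense of distributions, so $\partial_t u=i\Delta u+i\lambda F(u)\in C([0,T_{\max});H^{s-2})$, which is the claimed $C^1$ property.
\end{itemize}
This argument is uniform over all subcases and covers all of $[0,T_{\max})$ at once. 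You need not patch short-time fixed points along the extension procedure, which your phrase ``since $T<T_{\max}$ is arbitrary'' passes over. Your linear-estimate argument is valid where the fixed-point space really contains a time derivative: $s>\frac n2$, or $s\in[2,\frac52)$ with $s\le\frac n2$. In the remaining case you should switch to the equation-based argument.
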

\begin{proof}
	Since $u\in C\left([0,T_{\max});H^{s}\right)$, we apply Lemma \ref{assimilate} for $s\leq \frac{n}{2}$ and Lemma \ref{imp} for $s>\frac{n}{2}$ to deduce  $F(u)\in C\left([0,T_{\max});H^{s-2}\right)$.
	Property (i) then follows by combining this with the equation in \eqref{ibvp}, while property (ii) is a direct consequence of \eqref{contagious}.
\end{proof}
\begin{corollary}\label{core1}
	The solution given by Proposition \ref{weak2} satisfies $u\in C^{1}([0,T_{\max});L^{2}(\mathbb{R}^{n}_{+}))$.
\end{corollary}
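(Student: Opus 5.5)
The strategy is to copy the proof of Corollary \ref{core}(i), using Lemma \ref{assimilate} at $s=2$. Let $u$ be the solution from Proposition \ref{weak2}, so $u\in C([0,T_{\max});H^{2})$. We show first that $F(u)\in C([0,T_{\max});L^{2})$, and then read off $\partial_t u$ from the equation.

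\emph{Step 1: continuity of $F(u)$ in $L^2$.} Fix $T<T_{\max}$ and put $M=\sup_{[0,T]}\|u(t)\|_{H^2}$. For $n\geq 4$ the hypotheses $0<p<1$ and $(n-4)p<4$ let us run the argument of Lemma \ref{assimilate}(i) at $s=2$. Here $H^{s-2}=L^2$, so no restriction $p>\frac{4-2s}{n-2s}=0$ arises. The steps are: first \eqref{uproar}; then H\"older with the decomposition $\frac12=\frac1{\tilde p}+\frac1{\tilde q}$; then the embedding $H^{2-\varepsilon}\hookrightarrow L^{\tilde q}\cap L^{\frac{2n}{n-2(2-\varepsilon)}}$; finally Gagliardo--Nirenberg. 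This gives some $\theta\in(0,1)$ with
\[
\|F(u(t))-F(u(\tau))\|_{L^{2}}\lesssim M^{p+1-\theta}\|u(t)-u(\tau)\|_{L^{2}}^{\theta}.
\]
The same estimate was already used in the proof of Proposition \ref{weak2} to obtain \eqref{chamber}. For $n<4$ the argument is simpler: $H^2\hookrightarrow L^\infty$ and \eqref{uproar} give $\|F(u(t))-F(u(\tau))\|_{L^2}\lesssim M^p\|u(t)-u(\tau)\|_{L^2}$. In either case $F(u)\in C([0,T];L^2)$, and since $T$ was arbitrary, $F(u)\in C([0,T_{\max});L^2)$.

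\emph{Step 2: the time derivative.} Since $u\in C([0,T_{\max});H^2)$, we have $\Delta u\in C([0,T_{\max});L^2)$. The equation $i\partial_t u=-\Delta u-\lambda F(u)$ holds in the distributional sense on $\mathbb{R}^n_+\times(0,T_{\max})$. Its right-hand side, call it $G$, therefore lies in $C([0,T_{\max});L^2)$. Hence $u(t)=u_0-i\int_0^t G(\tau)\,d\tau$ as an identity in $L^2$, which yields $u\in C^1([0,T_{\max});L^2)$ with $\partial_t u=-iG$.

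\emph{Main obstacle.} The only delicate point is Step 2: passing from a distributional identity to a strong $C^1$ statement. This is routine. The identity $\partial_t u=-iG$ holds in $\mathcal{D}'(0,T;L^2_{\mathrm{loc}})$, and $u$ is already known to lie in $W^{1,\infty}(0,T;L^2)$ from the fixed-point space $X_T$. Consequently $\partial_t u=-iG$ a.e. in $t$, and continuity of $G$ upgrades this to $C^1$. A secondary check is that the exponents in Lemma \ref{assimilate} stay valid at $s=2$ with $p<1$. That is exactly what was used to obtain \eqref{chamber}, so no new computation is needed.
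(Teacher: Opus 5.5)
Your proof is correct and follows the route the paper intends. The paper gives no separate argument for Corollary \ref{core1}; it relies on the template of Corollary \ref{core}(i), which gets $F(u)\in C([0,T_{\max});L^{2})$ from Lemma \ref{assimilate} at $s=2$ (exactly as in \eqref{chamber}), or from $H^{2}\hookrightarrow L^{\infty}$ when $n<4$, and then reads off $\partial_t u$ from the equation. Your appeal to $W^{1,\infty}(0,T;L^{2})$ is harmless but unnecessary: testing the equation against $\chi(x)\eta(t)$ and using the density of $C_c^\infty(\mathbb{R}^n_+)$ in $L^2$ already yields the integral identity $u(t)=u_0-i\int_0^t G(\tau)\,d\tau$.
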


\section{Global well-posedness for the nonlinear Schr\"{o}dinger equation}\label{sec4}
\subsection{$H^{1}$ a priori estimates}\label{sec41}
We first establish the $H^{1}$ a priori estimate for the solution $u$ of the IBVP \eqref{ibvp}.
\begin{proposition}\label{maritime}
	Let $0 < p \leq \frac{4}{n-2}$ if $\lambda < 0$, and $0 < p \leq \frac{4}{n+1}$ if $\lambda > 0$. For any $T>0$, there exists a continuous nondecreasing function $\alpha_T : \overline{\mathbb{R}_{+}\mkern-4mu} \to \overline{\mathbb{R}_{+}\mkern-4mu}$ with $\alpha_T(0) = 0$, such that every solution $u$ to the IBVP \eqref{ibvp} with the regularity:
	\begin{align}
		t&\mapsto u(\cdot,t) \in C\left([0,T);H^{2}(\mathbb{R}^{n}_{+})\right)\cap C^{1}\left([0,T);L^{2}(\mathbb{R}^{n}_{+})\right), \label{reg} \\
		x_{n}&\mapsto u(\cdot,x_{n},\cdot) \in C\left(\,\overline{\mathbb{R}_{+}\mkern-4mu}\,;{H}_{\mathrm{loc}}^{1}(\mathbb{R}^{n-1}\times(0,T))\right)\cap C^{1}\left(\,\overline{\mathbb{R}_{+}\mkern-4mu}\,;{L}_{\mathrm{loc}}^{2}(\mathbb{R}^{n-1}\times(0,T))\right),\label{reg1}
	\end{align}
	satisfies the estimate:
	\begin{equation}\label{drone}
		\sup_{t \in [0,T)} \|u(t)\|_{H^1(\mathbb{R}^n_+)} \leq\alpha_T \left( \|u_0\|_{H^1(\mathbb{R}^n_+)} + \|h_0\|_{H^1(\mathbb{R}^{n-1} \times (0,T))} \right).
	\end{equation}
\end{proposition}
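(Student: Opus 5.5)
The plan is to combine the mass and energy identities with a Morawetz/Pohozaev-type multiplier identity that controls the normal-derivative trace. That yields a closed Gronwall-type inequality for
\[
Y(t):=\sup_{\tau\in[0,t]}\|u(\tau)\|_{H^1(\mathbb{R}^n_+)}^2 .
\]
The regularity \eqref{reg}--\eqref{reg1} is exactly what justifies each integration by parts in $x$ and $t$. It also makes the boundary traces $u|_{x_n=0}=h_0$, $\partial_t u|_{x_n=0}=\partial_t h_0$, $\nabla' u|_{x_n=0}=\nabla' h_0$ and $\partial_n u|_{x_n=0}$ meaningful in $L^2_{\mathrm{loc}}(\mathbb{R}^{n-1}\times(0,T))$. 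Throughout write $\Sigma_t=\mathbb{R}^{n-1}\times(0,t)$ and $B(t):=\|\partial_n u|_{x_n=0}\|_{L^2(\Sigma_t)}$.

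\emph{Step 1: mass and energy identities.} Multiply \eqref{ibvp} by $\bar u$, take imaginary parts and integrate over $\mathbb{R}^n_+\times(0,t)$. This gives
\[
\|u(t)\|_{L^2}^2=\|u_0\|_{L^2}^2\pm 2\,\mathrm{Im}\int_{\Sigma_t} h_0\,\overline{\partial_n u}\,dx'd\tau ,
\]
so $\|u(t)\|_{L^2}^2\le \|u_0\|_{L^2}^2+2\|h_0\|_{L^2(\Sigma_t)}B(t)$. Next multiply by $\partial_t\bar u$ and take real parts, with
\[
E(t)=\|\nabla u(t)\|_{L^2}^2-\tfrac{2\lambda}{p+2}\|u(t)\|_{L^{p+2}}^{p+2}.
\]
This gives $E(t)=E(0)\pm 2\,\mathrm{Re}\int_{\Sigma_t}\partial_t\bar h_0\,\partial_n u$, hence $|E(t)-E(0)|\le 2\|h_0\|_{H^1(\Sigma_t)}B(t)$. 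Thus everything reduces to estimating the normal trace $B(t)$.

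\emph{Step 2: the normal trace via a multiplier.} Multiply the equation by $\phi(x_n)\,\partial_n\bar u$ with $\phi(x_n)=e^{-x_n}$ (so $\phi(0)=1$ and $\phi,\phi'$ are bounded), take real parts and integrate over $\mathbb{R}^n_+\times(0,t)$. Integrating by parts in $x_n$ produces on $\{x_n=0\}$ the quantity $\tfrac12\int_{\Sigma_t}|\partial_n u|^2$. The other boundary contributions are $\tfrac12\int_{\Sigma_t}|\nabla' h_0|^2$, a term $\mathrm{Im}\int_{\Sigma_t} h_0\,\partial_t\bar h_0$ coming from $i\partial_t u$, and the nonlinear trace $\tfrac{\lambda}{p+2}\int_{\Sigma_t}|h_0|^{p+2}$. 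The remaining terms are of three kinds.
\begin{itemize}
\item Time-endpoint terms $\mathrm{Im}\int\phi\, u\,\partial_n\bar u\,dx$ at $\tau=0$ and $\tau=t$, bounded by $\|u_0\|_{H^1}^2+Y(t)$.
\item Interior terms involving $\phi'$, such as $\int\phi'(|\partial_n u|^2-|\nabla' u|^2)$, $\int\phi' \,\mathrm{Im}(u\,\partial_t\bar u)$ rewritten via the equation, and $\int\phi'|u|^{p+2}$. Each is bounded by $\int_0^t\big(\|u\|_{H^1}^2+\|u\|_{L^{p+2}}^{p+2}\big)d\tau$.
\item The boundary nonlinear term $\int_{\Sigma_t}|h_0|^{p+2}$. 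Here we use the Sobolev embedding $H^1(\Sigma_T)\hookrightarrow L^{p+2}(\Sigma_T)$ on the $n$-dimensional set $\Sigma_T$, valid since $p+2\le \frac{2n}{n-2}$.
\end{itemize}
Finally, the interior $L^{p+2}$ norms are handled by Gagliardo--Nirenberg:
\[
\|u\|_{L^{p+2}}^{p+2}\lesssim \|u\|_{L^2}^{p+2-\frac{np}{2}}\|\nabla u\|_{L^2}^{\frac{np}{2}}\lesssim \|u\|_{H^1}^{p+2}.
\]
Combining, we expect
\[
B(t)^2\lesssim \|u_0\|_{H^1}^2+Y(t)+C(\|h_0\|_{H^1(\Sigma_T)})+\int_0^t\big(Y+Y^{\frac{p+2}{2}}\big)\,d\tau .
\]

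\emph{Step 3: closing the estimate.} In the defocusing case $\lambda<0$, $E(t)$ controls $\|\nabla u\|^2$ from above. Inserting Step 2 into Step 1 and using Young's inequality $\|h_0\|_{H^1}B\le \varepsilon B^2+C_\varepsilon\|h_0\|_{H^1}^2$ gives an inequality of the form $Y\le C+\varepsilon Y+\dots$. The $\varepsilon Y$ is absorbed, and the $\int_0^tY^{(p+2)/2}$ contribution is controlled by a refinement. Instead of the crude bound, estimate $\int\phi'|u|^{p+2}$ through $E$ itself, which is already bounded in terms of $B$. Alternatively, write $B\le B^{1/2}\cdot B^{1/2}$ and exploit that $B$ enters the mass and energy identities only linearly. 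Either way one obtains a genuinely linear Gronwall inequality $Y(t)\le C_T(\text{data})+C\int_0^tY$. In the focusing case $\lambda>0$, one must bound $\|u\|_{L^{p+2}}^{p+2}$ in $E$ by Gagliardo--Nirenberg. The mass is no longer conserved and only satisfies $\|u\|_{L^2}^2\lesssim 1+B$. The worst terms are therefore of size $(1+B)^{\frac{p+2}{2}-\frac{np}{4}}Y^{\frac{np}{4}}$ together with $B\lesssim Y^{1/2}+\dots$ from Step 2. The resulting total power of $Y$ must be at most one, which is precisely where the restriction $p\le\frac{4}{n+1}$ should enter. Checking this exponent count, and choosing the Young splittings so that the power is at most linear (with the endpoint case handled by Gronwall rather than absorption), is the step I expect to be the main obstacle. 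The output is a bound $Y(t)\le\alpha_T(\|u_0\|_{H^1}+\|h_0\|_{H^1(\Sigma_T)})$ with $\alpha_T$ continuous, nondecreasing, and vanishing at $0$, since every term above vanishes with the data.
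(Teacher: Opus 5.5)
Your defocusing case does close: bounding $\int_0^t\|u\|_{L^{p+2}}^{p+2}$ and $\int_0^t\|\nabla u\|_{L^2}^2$ through $E(\tau)\le E(0)+2\|h_0\|_{H^1(\Sigma_\tau)}B(\tau)$ makes every term at most linear in $B(t)$. The gap is in the focusing case, and the weight $\phi=e^{-x_n}$ causes it.

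Because $\phi'\neq0$, your trace identity contains $2\int_0^t\!\int e^{-x_n}|\partial_nu|^2\,dx\,d\tau$ with a sign that cannot be discarded. So $B(t)$ is only controlled by $Y(t)^{1/2}$ and $(\int_0^tY)^{1/2}$, and mass then gives only $\|u\|_{L^2}^2\lesssim 1+B\lesssim 1+Y^{1/2}$. Carrying out the exponent count you postponed, the Gagliardo--Nirenberg term has size $Y^{\frac{p+2}{4}+\frac{np}{8}}$. This power is $\le1$ iff $p\le\frac{4}{n+2}$, strictly below the claimed $\frac{4}{n+1}$.

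Gronwall does not rescue this. With $J(t)=\int_0^t(\|\nabla u\|_{L^2}^2+\|u\|_{L^2}^2)\,d\tau$, your inequalities yield only $J'\lesssim 1+J^{\kappa}$ with $\kappa=\frac{p+2-np/2}{4-np}$, and $\kappa\le1$ is again $p\le\frac{4}{n+2}$. So for $\frac{4}{n+2}<p\le\frac{4}{n+1}$ the scheme gives no bound on an arbitrary $[0,T)$.

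The missing idea is to take $\phi\equiv1$, i.e. integrate the normal-momentum identity over the whole half-space. This is legitimate since $u(t)\in H^2(\mathbb{R}^n_+)$ and by \eqref{reg1}. It produces no interior terms, and gives, pointwise in $t$,
\[
B(t)^2\le \|u(t)\|_{L^2}\|\partial_nu(t)\|_{L^2}+\|u_0\|_{L^2}\|\partial_nu_0\|_{L^2}+\|h_0\|_{L^2(\Sigma_t)}\|\partial_th_0\|_{L^2(\Sigma_t)}+\|h_0\|_{H^1(\Sigma_t)}^2+\tfrac{2|\lambda|}{p+2}\|h_0\|_{L^{p+2}(\Sigma_t)}^{p+2}.
\]
Feed $B\le\|u(t)\|_{L^2}^{1/2}\|\partial_nu(t)\|_{L^2}^{1/2}+C^{1/2}$ back into the mass identity and apply Young's inequality. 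This gives the self-improved bound $\|u(t)\|_{L^2}^2\le2\|\partial_nu(t)\|_{L^2}^{2/3}\|h_0\|_{L^2(\Sigma_t)}^{4/3}+C$. Thus $\|u\|_{L^2}$ grows like $\|\nabla u\|_{L^2}^{1/3}$ rather than your $\|\nabla u\|_{L^2}^{1/2}$.

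Gagliardo--Nirenberg then gives the power $\frac{np}{2}+\frac{1}{3}\bigl(p+2-\frac{np}{2}\bigr)=\frac{(n+1)p+2}{3}$ of $\|\nabla u\|_{L^2}$. This is $\le2$ exactly when $p\le\frac{4}{n+1}$. For $p<\frac{4}{n+1}$ it is absorbed by Young's inequality at each fixed $t$, with no Gronwall step; the defocusing case closes the same way, more cheaply than in your version.

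Your endpoint remark is also off. At $p=\frac{4}{n+1}$ the power is exactly $2$, multiplied by a positive power of $\|h_0\|_{L^2(\Sigma_t)}$, which is not small in general. Gronwall cannot treat this. The paper instead partitions $[0,T)$ into finitely many intervals on which $\|h_0\|_{L^2}$ is small, absorbs on each interval, and restarts from $u(t_j)$.
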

\begin{proof}
	We employ the approach of Bona et al. \cite{figment} to establish our result. For simplicity, we write $L^{p}_{t}=L^{p}(\mathbb{R}^{n-1}\times(0,t))$ and $H^{1}_{t}=H^{1}(\mathbb{R}^{n-1}\times(0,t))$. We start with the following three identities for a solution $u$ to \eqref{ibvp}:
	\begin{align} \partial_{t} |u|^{2}&=-2\,\text{Im}\nabla\cdot (\overline{u}\, \nabla u ) ,\label{dove} \\ \partial_{n}\left(|\partial_{n}u|^{2}-|\nabla'u|^{2}+\frac{2\lambda}{p+2}|u|^{p+2}\right)&=-2\,\text{Re}\nabla'\cdot (\overline{\partial_{n}u}\, \nabla' u )-i\partial_{t}(u\overline{\partial_{n}u})+i\partial_{n}(u\overline{\partial_{t}u}),\label{concur} \\ \partial_{t}\left(|\nabla u|^{2}-\frac{2\lambda}{p+2}|u|^{p+2}\right)&=2\,\text{Re}\nabla\cdot (\overline{\partial_{t}u}\nabla u),\label{cult}
	\end{align}
	where $\nabla'$ is the gradient on $\mathbb{R}^{n-1}$. Identity \eqref{dove} can be derived by multiplying both sides of the equation in \eqref{ibvp} by $2\overline{u}$, then taking imaginary parts.  Identities \eqref{concur} and \eqref{cult} follow from multiplying by $2\overline{\partial_{n}u}$ and $2\overline{\partial_{t}u}$, respectively, then taking the real part. Integrating both sides of the identity \eqref{dove} on $\mathbb{R}^{n}_{+}\times[0,t]$ for $t\in[0,T)$, we obtain
	\begin{equation}\label{endow}
		\begin{aligned}	\int_{\mathbb{R}^{n}_{+}}|u(x,t)|^{2}dx&=\int_{\mathbb{R}^{n}_{+}}|u(x,0)|^{2}dx+\int_{\mathbb{R}^{n-1}}\int_{0}^{t}2\,\text{Im}\,[\overline{u}\partial_{n}u](x',0,\tau)d\tau dx'\\&\leq \|u_{0}\|_{L^2}^{2}+2\|h_{0}\|_{L^{2}_{t}}\|\partial_{n}u(\cdot,0,\cdot)\|_{L^{2}_{t}}.
		\end{aligned}
	\end{equation}
	To estimate $\|\partial_{n}u(\cdot,0,\cdot)\|_{L^{2}_{t}}$, we integrate \eqref{concur} on $\mathbb{R}^{n}_{+}\times[0,t]$:
	\begin{equation}\label{fsj}
		\begin{aligned} &\int_{\mathbb{R}^{n-1}}\int_{0}^{t}(|\partial_{n}u|^{2}-|\nabla'u|^{2})(x',0,\tau)d\tau dx'+\frac{2\lambda}{p+2}\int_{\mathbb{R}^{n-1}}\int_{0}^{t}\left|u(x',0,\tau)\right|^{p+2}d\tau dx'\\				=&-i\int_{\mathbb{R}^{n}_{+}}(u\overline{\partial_{n}u})(x,t)-(u\overline{\partial_{n}u})(x,0)dx-i\int_{\mathbb{R}^{n-1}}\int_{0}^{t}(u\overline{\partial_{t}u})(x',0,\tau)d\tau dx'.
		\end{aligned}
	\end{equation}
	Applying H\"{o}lder's inequality to \eqref{fsj} yields
	\begin{equation}\label{naive}
		\begin{aligned}
			\|\partial_{n}u(\cdot,0,\cdot)\|_{L^{2}_{t}}^{2} \leq & \,\|u(t)\|_{L^{2}}\|\partial_{n}u
			(t)\|_{L^{2}}+\|u_{0}\|_{L^{2}}\|\partial_{n}u_{0}\|_{L^{2}}\\
			&+\|h_{0}\|_{L^{2}_{t}}\|\partial_{t}h_{0}\|_{L^{2}_{t}}+\|h_{0}\|_{H^{1}_{t}}^{2}+\frac{2|\lambda|}{p+2}\|h_{0}\|_{L^{p+2}_{t}}^{p+2}.		\end{aligned}	
	\end{equation}
	Substituting \eqref{naive} into \eqref{endow} and using Young's inequality, we obtain
	\begin{equation*}
		\begin{aligned}
			\|u(t)\|_{L^{2}}^{2}&\leq 2\|h_{0}\|_{L^{2}_{t}}\|u(t)\|_{L^2}^{\frac{1}{2}}\|\partial_{n}u(t)\|_{L^2}^{\frac{1}{2}}+C_{1}(t)\\
			&\leq\frac{1}{4}\|u(t)\|_{L^2}^{2}+\frac{3}{2}\|\partial_{n}u(t)\|_{L^2}^{\frac{2}{3}}\|h_{0}\|_{L^2_t}^{\frac{4}{3}}+C_{1}(t),
		\end{aligned}	
	\end{equation*}
	where $C_{1}(t)=2\|u_{0}\|_{H^{1}}^{2}+3\|h_{0}\|^{2}_{H^{1}_{t}}+\frac{2|\lambda|}{p+2}\|h_{0}\|_{L_{t}^{p+2}}^{p+2}$. Therefore, we have
	\begin{equation}\label{smite} \|u(t)\|_{L^2}^{2}\leq2\|\partial_{n}u(t)\|_{L^2}^{\frac{2}{3}}\|h_{0}\|_{L^2_t}^{\frac{4}{3}}+\frac{4}{3}C_{1}(t).
	\end{equation}
	
	By integrating \eqref{cult} on $\mathbb{R}^{n}_{+}\times[0,t]$, we obtain
	\begin{equation}\label{gallop}
		\begin{aligned} &\int_{\mathbb{R}^{n}_{+}}|\nabla u|^{2}(x,t)-|\nabla u|^{2}(x,0)dx-\frac{2\lambda}{p+2}\int_{\mathbb{R}^{n}_{+}}|u|^{p+2}(x,t)-|u|^{p+2}(x,0)dx\\	=&\int_{\mathbb{R}^{n-1}}\int_{0}^{t}-2\,\text{Re}[\overline
			{\partial_{t}u}\partial_{n}u](x',0,\tau)d\tau dx'.
		\end{aligned}
	\end{equation}
	We note that the validity of both the differential equations \eqref{dove}--\eqref{cult} and the integral equations in \eqref{endow}--\eqref{fsj}, \eqref{gallop} is guaranteed by the regularity assumptions \eqref{reg}--\eqref{reg1}.
	
	For $\lambda<0$, from \eqref{gallop} we have
	\begin{equation}\label{411}
		\begin{aligned}	\|u(t)\|_{\dot{H}^1}^{2}\leq\|\partial_{n}u(\cdot,0,\cdot)\|_{L^{2}_{t}}^{2}+\|u_{0}\|_{H^{1}}^{2}+\frac{2|\lambda|}{p+2}\|u_{0}\|_{L^{p+2}}^{p+2}+\|\partial_{t}h_{0}\|_{L^2_t}^{2}.
		\end{aligned}
	\end{equation}
	By substituting \eqref{naive} into \eqref{411} and applying \eqref{smite} together with the Young's inequality, we derive
	\begin{equation}\label{napkin}
		\begin{aligned}
			\|u(t)\|_{\dot{H}^1}^{2}&\leq \|u(t)\|_{L^2}\|\partial_{n}u(t)\|_{L^2}+C_{1}(t)+C_{2}(t) \\ &\leq\left(\frac{1}{2}\|\partial_{n}u(t)\|_{L^2}^{\frac{4}{3}}\right)\left(2^{\frac{3}{2}}\|h_{0}\|_{L^2_t}^{\frac{2}{3}}\right)+\sqrt{\frac{4}{3}C_{1}(t)}\|\partial_{n}u(t)\|_{L^2}+C_{1}(t)+C_{2}(t)\\ &\leq\frac{1}{3}\|\partial_{n}u(t)\|_{L^2}^{2}+\frac{2^{\frac{9}{2}}}{3}\|h_{0}\|_{L^2_t}^{2} +\frac{1}{2}\|\partial_{n}u(t)\|_{L^2}^{2}+\frac{5}{3}C_{1}(t)+C_{2}(t)\\
			&\leq\frac{5}{6}\|\partial_{n}u(t)\|_{L^2}^{2}+C_{3}(t),
		\end{aligned}
	\end{equation}
	where  $C_{2}(t)=\|u_{0}\|_{H^{1}}^{2}+\frac{2|\lambda|}{p+2}\|u_{0}\|_{L^{p+2}}^{p+2}+\|h_{0}\|_{H^{1}_{t}}^{2}, $ $C_{3}(t)=10\left(C_{1}(t)+C_{2}(t)\right)$. The Sobolev embedding $H^{1}\hookrightarrow L^{p+2}$ (valid for the given range of $p$) guarantees the estimate for $\|u(t)\|_{\dot{H}^1}$. Combined with the estimate \eqref{smite}, this completes the proof for the case $\lambda < 0$.
	
	For $\lambda>0$, \eqref{gallop} implies that we need to estimate $\|u(t)\|_{L^{p+2}}$. By using Gagliardo-Nirenberg inequality and \eqref{smite}, we have
	\begin{equation}\label{grease}
		\begin{aligned}
			\|u(t)\|_{L^{p+2}}^{p+2}&\lesssim \|u(t)\|_{\dot{H}^{1}}^{\frac{np}{2}} \|u(t)\|_{L^{2}}^{2+p-\frac{np}{2}}\\
			&\lesssim \|u(t)\|_{\dot{H}^{1}}^{\frac{np}{2}} \left(\|\partial_{n}u(t)\|_{L^2}^{\frac{2+p}{3}-\frac{np}{6}}\|h_{0}\|_{L^2_t}^{\frac{4+2p-np}{3}}+C_{1}(t)^{\frac{4+2p-np}{4}}\right).
		\end{aligned}
	\end{equation}
	Recalling \eqref{napkin}, then from \eqref{grease} we have
	\begin{equation}\label{abide}
		\|u(t)\|_{\dot{H}^{1}}^{2}\leq \frac{5}{6}\|\partial_{n}u(t)\|_{L^2}^{2}+ C\left(\|u(t)\|_{\dot{H}^{1}}^{\frac{2+p+np}{3}}\|h_{0}\|_{L^{2}_{t}}^{\frac{4+2p-np}{3}}+\|u(t)\|_{\dot{H}^{1}}^{\frac{np}{2}}C_{1}(t)^{\frac{4+2p-np}{4}}\right)+C_{3}(t),
	\end{equation}
	where $C>0$ is a constant.		When $0<p<\frac{4}{n+1}$, we have $\frac{2+p+np}{3}$, $\frac{np}{2}<2$, then an application of Young's inequality gives
	\begin{equation}
		\|u(t)\|_{\dot{H}^{1}}^{2}\leq \left(\frac{5}{6}+\varepsilon\right)\|u(t)\|_{\dot{H}^{1}}^{2}+C({\varepsilon}) \left(C_{1}(t)^{\beta_{1}}+C_{1}(t)^{\beta_{2}}\right) +C_{3}(t),
	\end{equation}
	where $\beta_{1},\beta_{2}>0$.	
	
	When $p=\frac{4}{n+1}$, we have $\frac{2+p+np}{3}=2$ and $\frac{np}{2}<2$. Applying the Young's inequality to \eqref{abide}, we obtain
	\begin{equation*}
		\|u(t)\|_{\dot{H}^{1}}^{2}\leq \left(\frac{5}{6}+C \|h_{0}\|_{L^{2}_{t}}^{\nu_{1}}\right)   \|u(t)\|_{\dot{H}^{1}}^{2}+\varepsilon\|u(t)\|_{\dot{H}^{1}}^{2} + C(\varepsilon) C_{1}(t)^{\nu_{2}} +C_{3}(t),
	\end{equation*}
	where $\nu_{1},\nu_{2}>0$. Choosing $\varepsilon = \frac{1}{12}$ yields
	\begin{equation*}	\left(\frac{1}{12}-C\|h_{0}\|_{L^{2}_{t}}^{\nu_{1}}\right)\|u(t)\|_{\dot{H}^{1}}^{2}\lesssim C_{1}(t)^{\nu_{2}}+C_{3}(t).
	\end{equation*}
	On the other hand, we can choose $\delta>0$ such that for all $\rho, \tau \in (0,T)$ with $|\rho-\tau|<\delta$,  $\|h_{0}\|^{\nu_{1}}_{L^{2}(\mathbb{R}^{n-1}\times(\rho,\tau))}\leq \tfrac{1}{24C}$. Set $t_{j}=\frac{jT}{\lfloor\frac{T}{\delta}\rfloor+1}$ ($j=0, 1, \dots , \lfloor\frac{T}{\delta}\rfloor+1$), then $\|h_{0}\|^{\nu_{1}}_{L^{2}(\mathbb{R}^{n-1}\times(t_{j},t_{j+1}))}\leq \frac{1}{24C}$.  First estimate $\|u(t)\|_{H^{1}}$ on $[0,t_{1}]$, then take $u(t_{1})$ as new initial data to extend the estimate to $[t_{1},t_{2}]$. Iterating this process yields the bound for $\|u(t)\|_{H^{1}}$ over all $t \in [0,T)$.
\end{proof}

\subsection{Regularity analysis for $s\in[1,\frac{5}{2})$}\label{sec42}
Building upon the solution properties established in Theorem \ref{chaste} and Corollary~\ref{core}, we introduce the function space $\mathscr{X}^{s,p}_{T}$ to facilitate the subsequent analysis.
\begin{definition}\label{xspt}
	Let $s\in \left[0,\frac{5}{2}\right)$, $p\geq \lceil s\rceil-1$ with $(n-2s)p<4$, and $T\in (0,\infty]$.
	The space $\mathscr{X}^{s,p}_{T}$ consists of all functions
	\[u\in C\left([0,T);H^{s}(\mathbb{R}^{n}_{+})\right)\cap C\left(\overline{\mathbb{R}_{+}\mkern-4mu}\,;\mathcal{H}^{s}_{\mathrm{loc}}(\mathbb{R}^{n-1}\times(0,T))\right)\] that solve \eqref{ibvp} and additionally satisfy:
	\begin{enumerate}[label=(\arabic*)]
		\item For all $s \in \left[1,\frac{5}{2}\right)$,
		$u \in \bigcap_{(q,r)\, \text{admissible}} L^{q}_{\mathrm{loc}}\left(\left[0,T\right);W^{s,r}(\mathbb{R}^{n}_{+})\right)$;
		\item For $s\in \left(\frac{3}{2},\frac{5}{2}\right)$,  $u\in C^{1}\left([0,T);H^{s-2}(\mathbb{R}^{n}_{+})\right)$;
		\item For $s=1$, $u\in C^{1} \left(\overline{\mathbb{R}_{+}\mkern-4mu}\,; \mathcal{H}^{0}_{\mathrm{loc}}(\mathbb{R}^{n-1}\times(0,T))\right)$.
	\end{enumerate}
\end{definition}
We note that the above definition of $\mathscr{X}^{2,p}_{T}$ does not cover the case $0 < p < 1$, since in this range a solution satisfying the boundary data in $\mathcal{H}^{2}$ is absent. The requirement can, however, be weakened by replacing  $\mathcal{H}^{2}$ with $\mathcal{W}^{2}$.   In view of Proposition~\ref{weak2} and Corollary~\ref{core1}, we now define the corresponding space:
\begin{definition}\label{xspt1}
	For $0 < p < 1$ with $(n-4)p<4$ and $T\in (0,\infty]$,	the space $\mathscr{X}^{2,p}_{T}$ consists of all functions
	\begin{equation*}
		u\in C\left([0,T);H^{2}(\mathbb{R}^{n}_{+})\right)\cap C\left(\overline{\mathbb{R}_{+}\mkern-4mu}\,;\mathcal{W}^{2}_{\mathrm{loc}}(\mathbb{R}^{n-1}\times(0,T))\right)\cap C^{1}\left([0,T);L^{2}(\mathbb{R}^{n}_{+})\right)
	\end{equation*}
	that solve \eqref{ibvp}.
\end{definition}

Building upon Proposition \ref{mago}, the following Proposition \ref{zealous1}  establishes regularity properties in the space \(\mathscr{X}^{s,p}_{T}\) for \(s \in (1, 2)\). For the one-dimensional case (see \cite{figment}), the regularity properties for $s\in(1,2)$ is derived by nonlinear interpolation, which relies on the key embedding $H^{1}(\mathbb{R}_{+})\hookrightarrow L^{\infty}(\mathbb{R}_{+})$. However, for $n\geq2$ the embedding $H^{1}(\mathbb{R}_{+}^{n})\hookrightarrow L^{\infty}(\mathbb{R}_{+}^{n})$ fails,  thus this technique cannot be  generalized to higher dimensions.
For $s \in \left(1,\frac{3}{2}\right]$, we resolve the difficulties through the development a decomposition-recursive technique \eqref{421+}--\eqref{423+}. For $s \in \left(\frac{3}{2},2\right)$, the focus is on obtaining pointwise estimates for the time derivative of the nonlinear term. Lemma \ref{imp} provides the difference estimate \eqref{buddy}, and the $L^{p}$-mean continuity transforms them into pointwise one \eqref{ideology}.
\begin{proposition}\label{zealous1}
	Let $s \in (1,2)$ with $1 \leq p < \frac{8}{3(n-2)}$,
	$u_{0}\in H^{s}(\mathbb{R}^{n}_{+})$ and $h_{0}\in \mathcal{H}^{s}(\mathbb{R}_{+}^{n})$ satisfy the compatibility condition. For any $T \in (0,\infty]$, if $u\in\mathscr{X}^{1,p}_T$, then $u \in \mathscr{X}^{s,p}_T$.
\end{proposition}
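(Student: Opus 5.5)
Since the local theory already provides an $H^{s}$ solution $v\in\mathscr{X}^{s,p}_{T_{s}}$ on some maximal interval $[0,T_{s})$, and uniqueness in $H^{1}$ forces $v=u$ there, the statement reduces to a persistence-of-regularity claim. We must show that $\|u(t)\|_{H^{s}}$ cannot blow up on any bounded subinterval $[0,T']\subset[0,T)$. Then the blow-up alternative in Theorem \ref{chaste}(ii) gives $T_{s}\geq T$, and the additional properties in Definition \ref{xspt} follow from Corollary \ref{core}. Accordingly, we fix $T'<T$ and set
\[
K:=\|u\|_{L^{\infty}(0,T';H^{1})}+\sum_{(q,r)}\|u\|_{L^{q}(0,T';W^{1,r})}<\infty ,
\]
which is finite because $u\in\mathscr{X}^{1,p}_{T}$. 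The goal is an a priori bound for $\|u\|_{L^{q}(0,T';W^{s,r})}$ and $\|u\|_{L^{\infty}(0,T';H^{s})}$ in terms of $K$ and the data. The argument is a Gronwall-type bootstrap via Proposition \ref{mago} on short subintervals, closed by Lemma \ref{holmer}.

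\textbf{Case $s\in(1,\tfrac32]$.} We apply \eqref{buffoon} on $[t_{0},t_{0}+\delta]$ with $f=\lambda F(u)$ and a suitable pair $(\alpha,\beta)$ satisfying $\frac1\alpha+\frac n{2\beta}=1+\frac{3n-4s}{12}$. By Lemma \ref{accessory} and the Sobolev embedding, we bound $\|F(u)\|_{W^{s,\beta}}$ by $\|u\|_{L^{a}}^{p}\|u\|_{W^{s,r}}$. Here $a$ is chosen so that $\|u\|_{L^{a}}$ is controlled by the $W^{1,\tilde r}$ Strichartz norm, which is bounded by $K$. The condition $p<\frac{8}{3(n-2)}$ is exactly what should leave a positive power of time after Hölder's inequality in $t$. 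This gives the linear inequality
\[
\|u\|_{L^{q}(t_{0},t;W^{s,r})}\lesssim \text{data}+\|u\|_{L^{\gamma}(t_{0},t;W^{s,r})}
\]
with $\gamma<q$, and Lemma \ref{holmer} then yields the bound.

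If the exponent bookkeeping does not close directly at $s$, we proceed by recursion. First we establish the bound for $s_{1}\in(1,1+\varepsilon)$, where only a small fraction of a derivative must be absorbed. Then we use the $H^{s_{k}}$ bound (via Sobolev embedding into better $L^{a}$) to raise to $s_{k+1}=s_{k}+\varepsilon$, finishing at $s=\frac32$ in finitely many steps. The \textbf{main obstacle} is this step: checking that admissible exponents exist at each stage.

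\textbf{Case $s\in(\tfrac32,2)$.} Here \eqref{aggravate} also requires $\|F(u)\|_{H^{1}(0,T';H^{s-2})}$, since the $H^{s}$ part is handled as above using the $H^{3/2}$ bound just obtained. We write $F(u)(t+h)-F(u)(t)$ via \eqref{slate} and apply Lemma \ref{imp}. The needed $L^{\infty}\cap W^{2-s,n/(2-s)}$ norm of $u$ is supplied by the $W^{s,r}$ Strichartz bounds through Sobolev embedding, integrated in time. This gives the difference estimate
\[
\|F(u)(\cdot+h)-F(u)\|_{L^{2}(V;H^{s-2})}\lesssim \|u\|_{L^{2p}W^{s,r}}^{p}\,\|u(\cdot+h)-u\|_{L^{\infty}H^{s-2}}.
\]
Dividing by $h$ and using Lemma \ref{relay} converts this into a bound on $\partial_{t}F(u)$, with $\partial_{t}u\in C(H^{s-2})$ coming from the equation. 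The bound is again linear in the top norm, and $L^{p}$-mean continuity turns it into a pointwise-in-time bound. Lemma \ref{holmer} then closes the argument on small intervals, which are iterated to cover $[0,T']$.
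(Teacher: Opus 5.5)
Your architecture matches the paper's. For $s\le\frac32$: the blow-up alternative, \eqref{buffoon} with Lemma~\ref{accessory}, a Gronwall-type closing, and an upward recursion in $s$. For $s>\frac32$: \eqref{aggravate}, \eqref{slate}, Lemma~\ref{imp}, Lemma~\ref{relay} and $L^{p}$-mean continuity.

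\textbf{First case.} The step you set aside as the ``main obstacle'' is the entire content of this case, and it is exactly where the hypothesis $p<\frac{8}{3(n-2)}$ must be used. As written, this case is a plan, not a proof. For $n=3,4$ the paper takes $(\alpha,\beta)=(2,\frac{6n}{3n+6-4s})$ and applies Lemma~\ref{accessory} with $\frac1\beta=\frac12+\frac{3-2s}{3n}$. This gives $\|u(t)\|_{H^s}^2\lesssim \|u_0\|_{H^s}^2+\|h_0\|_{\mathcal{H}^s}^2+\langle t\rangle^{\frac{2s}{3}}\int_0^t\|u(\tau)\|_{L^{3np/(3-2s)}}^{2p}\|u(\tau)\|_{H^s}^2\,d\tau$, so Gronwall closes once the weight is in $L^1_t$. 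With the admissible pair $(2p,\frac{2np}{np-2})$, the embedding $W^{\sigma,r}\hookrightarrow L^{3np/(3-2s)}$ is equivalent to $\frac{12-4s}{3n}\le p<\frac{4(3-s)}{3(n-2\sigma)}$; the lower bound is automatic for $p\ge1$. For $\sigma=1$ the upper bound tends to $\frac{8}{3(n-2)}$ as $s\to1^{+}$, which gives the first step. The paper then checks that with $s_k=1+2^{-k}$ and $\sigma=s_{k+1}$ the upper bound stays above $\frac{8}{3(n-2)}$ for all $s\in(s_{k+1},s_k]$, so the recursion reaches $s=\frac32$. In $n=2$ this $(\alpha,\beta)$ is excluded in \eqref{buffoon} for $s\neq\frac32$, so one uses $(\frac{3}{3-s},2)$ with $W^{1,r}\hookrightarrow L^{\infty}$ instead.

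\textbf{Second case.} The case $s\in(\frac32,2)$ contains an actual error. You take the $L^{\infty}\cap W^{2-s,\frac{n}{2-s}}$ norm required by Lemma~\ref{imp} from the $W^{s,r}$ Strichartz norm at the target level $s$. Your difference estimate then bounds $\partial_tF(u)$ by $\|u\|_{W^{s,r}}^{p}\|\partial_tu\|_{H^{s-2}}$, which has degree $p+1$ in the unknown top-order quantities. So it is not linear, only reproduces a short-time bound, and gives no control up to $T_{\max}$. The coefficient must be a norm already known to be finite on $[0,T_{\max})$. The paper uses $u\in L^{q}_{\mathrm{loc}}W^{\frac32,r}$ from the first case, with an admissible pair for which $W^{\frac32,r}\hookrightarrow L^{\infty}\cap W^{2-s,\frac{n}{2-s}}$ and the resulting weight is integrable in time. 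For $n=2,3$ this is $(5p,\frac{10np}{5np-4})$. For $n=4$, $L^\infty$ forces $r>\frac83$ and hence $q<4$; the paper uses $(\frac{11}{3},\frac{11}{4})$ for $s\le\frac74$, then $W^{\frac74,r}$ with $(\frac{16}{3},\frac{32}{13})$. This is what yields the linear pointwise bound \eqref{ideology}. Iterating on short intervals is also counterproductive here. \eqref{aggravate} on an interval of length $\delta$ carries the factor $\delta^{-\frac{2s-3}{4}}$, so absorption is not automatic. The paper instead runs Gronwall on $[0,t]$ after H\"{o}lder in time, which turns that factor into a nonnegative power of $t$ (e.g.\ $t^{1-\frac s2}$ when passing to $L^4_t$).
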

\begin{proof}
	By Theorem \ref{chaste} and Corollary~\ref{core}, there exist $T_{\max}\in(0,\infty]$ and a solution belonging to $\mathscr{X}^{s,p}_{T_{\max}}$. By uniqueness, to establish $u \in \mathscr{X}^{s,p}_{T}$, it suffices to prove $T_{\max} \geq T$. We proceed by contradiction, assuming $T_{\max} < T$.

	We first analyze the case $s \in \left(1,\frac{3}{2}\right]$. When $n=2$, estimate \eqref{buffoon} with $\left(\alpha,\beta\right)=\left(\frac{3}{3-s},2\right)$ yields that, for $t\in [0,T_{\max})$, we have
	\begin{equation}\label{wallet}
		\|u(t)\|_{H^{s}}\leq \|u\|_{C([0,t];H^{s})} \lesssim \|u_{0}\|_{H^{s}}+\|h_{0}\|_{\mathcal{H}^{s}} + \langle t\rangle^{\frac{s}{3}} \|F(u)\|_{L^{\frac{3}{3-s}}(0,t;H^{s})}.
	\end{equation}
	Since $u\in \mathscr{X}^{1,p}_{T}$ and $T_{\max} < T$, Definition \ref{xspt} implies $u\in L^{q}\left(0,T_{\max};W^{1,r}\right)$ for all admissible pairs $(q,r)$. We choose $(q,r)=\left(3p,\frac{6}{3p-2}\right)$, which satisfies $r>2$. By Lemma~\ref{accessory} and the embedding  $W^{1,r}(\mathbb{R}_{+}^{n})\hookrightarrow L^{\infty}(\mathbb{R}_{+}^{n})$, we obtain
	\begin{equation}\label{purse}
		\left\|F(u)(t)\right\|_{H^{s}}\lesssim \|u(t)\|_{L^{\infty}}^{p}\|u(t)\|_{H^{s}}\lesssim \|u(t)\|_{W^{1,r}}^{p}\|u(t)\|_{H^{s}}.
	\end{equation}
	Substituting \eqref{purse} into \eqref{wallet}, we have
	\begin{equation}
		\|u(t)\|_{H^{s}}^{\frac{3}{3-s}}\lesssim \|u_{0}\|_{H^{s}}^{\frac{3}{3-s}}+\|h_{0}\|_{\mathcal{H}^{s}}^{\frac{3}{3-s}}+ \langle t\rangle^{\frac{s}{3-s}} \int_{0}^{t} \|u(\tau)\|_{W^{1,r}}^{\frac{3p}{3-s}}\|u(\tau)\|_{H^{s}}^{\frac{3}{3-s}}d\tau.
	\end{equation}
	An application of Gronwall's inequality gives
	\begin{equation}
		\|u(t)\|_{H^{s}}^{\frac{3}{3-s}}\lesssim \left(\|u_{0}\|_{H^{s}}^{\frac{3}{3-s}}+\|h_{0}\|_{\mathcal{H}^{s}}^{\frac{3}{3-s}}\right) \exp\left(C \langle t\rangle^{\frac{s}{3-s}}\int_{0}^{t}\|u(\tau)\|_{W^{1,r}}^{\frac{3p}{3-s}} d\tau\right),
	\end{equation}
	where $C>0$ is a constant. Since $\frac{3p}{3-s}< 3p=q$,  $\|u(t)\|_{H^{s}}$ is bounded on $[0,T_{\max})$.  This contradicts the blow-up criterion, proving $T_{\max} \geq T$.

	When $3\leq n\leq4$, applying \eqref{buffoon} with $(\alpha,\beta)=(2,\frac{6n}{3n+6-4s})$ and Lemma \ref{accessory} with $\frac{3n+6-4s}{6n}=\frac{1}{2}+\frac{3-2s}{3n}$, we obtain
	\begin{equation}\label{apt}
		\|u(t)\|_{H^{s}}^{2}\lesssim \|u_{0}\|_{H^{s}}^{2}+\|h_{0}\|_{\mathcal{H}^{s}}^{2}+ \langle t\rangle^{\frac{2s}{3}} \int_{0}^{t} \|u(\tau)\|_{L^{\frac{3np}{3-2s}}}^{2p} \|u(\tau)\|_{H^{s}}^{2}d\tau.
	\end{equation}
	We choose the admissible pair $(q,r) = \left(2p, \frac{2np}{np-2}\right)$ to estimate  $\|u(\tau)\|_{L^{\frac{3np}{3-2s}}}^{2p}$. The embedding $W^{1,r}(\mathbb{R}_{+}^{n})\hookrightarrow L^{\frac{3np}{3-2s}}(\mathbb{R}_{+}^{n})$ holds provided that
	\begin{equation*}
		\frac{1}{2}-\frac{1}{np}-\frac{1}{n} <\frac{3-2s}{3np}\leq \frac{1}{2}-\frac{1}{np},
	\end{equation*}
	which is equivalent to
	\begin{equation}\label{dislodge}
		\frac{12-4s}{3n}\leq	p<\frac{4(3-s)}{3(n-2)}.
	\end{equation}
	For any $1\leq p<\frac{8}{3(n-2)}$, there exists $s'\in\left(1,\frac{3}{2}\right)$ such that for all $s\in(1,s']$, the inequality \eqref{dislodge} is satisfied. Applying this  embedding and Gronwall's inequality to \eqref{apt}, we conclude $u\in\mathscr{X}^{s,p}_{T}$ for $s\in (1,s']$.
	While our current result holds only for certain small ranges of $s$ (particularly as $p$ approaches $\frac{8}{3(n-2)}$), the technique developed below will ultimately extend this to the full range $s \in \left(1,\frac{3}{2}\right]$.

	In order to establish $u\in \mathscr{X}^{s,p}_{T}$ for $s\in \left(1,\frac{3}{2}\right]$, we decompose the interval via $\left(1,\frac{3}{2}\right]=\bigcup_{k=1}^{\infty}\left(s_{k+1},s_{k}\right]$, where $s_{k}= 1+\frac{1}{2^{k}}$, $k\in \mathbb{Z}_{+}$.
	The proof reduces to showing that for all  $k\geq 1$, if $u\in\mathscr{X}^{s_{k+1},p}_{T}$, then  $u\in\mathscr{X}^{s,p}_{T}$ for $s\in (s_{k+1},s_{k}]$. The key embedding $W^{s_{k+1},r}(\mathbb{R}_{+}^{n})\hookrightarrow L^{\frac{3np}{3-2s}}(\mathbb{R}_{+}^{n})$ holds for $s\in(s_{k+1},s_{k}]$ provided that
	\begin{equation}\label{421+}		\frac{1}{2}-\frac{1}{np}-\frac{s_{k+1}}{n} <\frac{3-2s}{3np}\leq \frac{1}{2}-\frac{1}{np},
	\end{equation}
	which is equivalent to
	\begin{equation}\label{shoe}
		\frac{12-4s}{3n}\leq p<\frac{2^{k+2}(3-s)}{3(2^{k}(n-2)-1)}.
	\end{equation}
	For $s\in (s_{k+1},s_{k}]$, we verify:
	\begin{equation*}
		\begin{aligned}
			\frac{12-4s}{3n}\leq1\leq p&<\frac{8}{3(n-2)}=\frac{4\cdot 2^{k+1}}{3\cdot2^{k}(n-2)}\\
			&<\frac{4(2^{k+1}-1)}{3(2^{k}(n-2)-1)}
			=\frac{2^{k+2}(3-s_{k})}{3(2^{k}(n-2)-1)}\\
			&<\frac{2^{k+2}(3-s)}{3(2^{k}(n-2)-1)},
		\end{aligned}
	\end{equation*}
	confirming that \eqref{shoe} always holds for $k\geq1$. Combining \eqref{apt} with this embedding yields, for $s\in (s_{k+1},s_{k}]$,
	\begin{equation}\label{423+}
		\|u(t)\|_{H^{s}}^{2}\lesssim \|u_{0}\|_{H^{s}}^{2}+\|h_{0}\|_{\mathcal{H}^{s}}^{2}+\langle t\rangle^{\frac{2s}{3}}\int_{0}^{t}\|u(\tau)\|_{W^{s_{k+1},r}}^{2p} \|u(\tau)\|_{H^{s}}^{2}d\tau.
	\end{equation}
	This completes the proof for $s \in \left(1,\frac{3}{2}\right]$.

	For $s\in \left(\frac{3}{2},2\right)$, using \eqref{aggravate} and H\"{o}lder's inequality in time variable yields:
	\begin{equation}\label{detain}
		\begin{aligned}	
			\|u(t)\|_{H^{s}}+\|\partial_{t}u(t)\|_{H^{s-2}}&
			\lesssim\,\|u_{0}\|_{H^{s}}+\|h_{0}\|_{\mathcal{H}^{s}}+ \|F(u)(0)\|_{H^{s-2}}\\
			&+\left(\langle t\rangle^{\frac{3}{4}}+t^{1-\frac{s}{2}}\right)\|F(u)\|_{L^{4}(0,t;H^{s})}+t^{\frac{1}{4}}\|F(u)\|_{\dot{W}^{1,4}(0,t;H^{s-2})}.
		\end{aligned}
	\end{equation}
	From Lemmas \ref{assimilate} and \ref{imp}, we have
	\begin{equation}\label{lawsuit}
		\|F(u)(0)\|_{H^{s-2}}\lesssim \|u_{0}\|_{H^{s}}^{p+1}.
	\end{equation}
	When $2\leq n\leq3$, we choose $(q, r)=\left(5p, \frac{10np}{5np-4}\right)$ and note that $u\in L^{q}(0,T_{\max};W^{\frac{3}{2},r})$. Then the embedding \[W^{\frac{3}{2},r}(\mathbb{R}_{+}^{n})\hookrightarrow L^{\infty}(\mathbb{R}_{+}^{n})\cap W^{2-s,\frac{n}{2-s}}(\mathbb{R}_{+}^{n})\] holds since $2<r<\frac{n}{2-s}$.
	From Lemma \ref{accessory} and $W^{\frac{3}{2},r}(\mathbb{R}_{+}^{n})\hookrightarrow L^{\infty}(\mathbb{R}_{+}^{n})$, we have
	\begin{equation}\label{radiant}
		\left\|F(u)(t)\right\|_{H^{s}}\lesssim \|u(t)\|_{W^{\frac{3}{2},r}}^{p}\|u(t)\|_{H^{s}}.
	\end{equation}
	Using \eqref{slate}, Lemma \ref{imp} and $W^{\frac{3}{2},r}(\mathbb{R}_{+}^{n})\hookrightarrow W^{2-s,\frac{n}{2-s}}(\mathbb{R}_{+}^{n})$, we obtain
	\begin{equation}\label{buddy}
		\|F(u)(t+h)-F(u)(t)\|_{H^{s-2}}\lesssim \left(\|u(t+h)\|_{W^{\frac{3}{2},r}}^{p}+\|u(t)\|_{W^{\frac{3}{2},r}}^{p}\right) \|u(t+h)-u(t)\|_{H^{s-2}}.
	\end{equation}
	From Definition \ref{xspt}, we have $u\in C^{1}\left([0,T_{\max});H^{s-2}\right)$.
	Then we can conclude that $$F(u)\in W_{\mathrm{loc}}^{1,\frac{q}{p}}(0,T_{\max};H^{s-2})$$ by using \eqref{buddy} and Lemma \ref{relay}. Therefore, $F(u)$ is differentiable almost everywhere with respect to $t$. On the other hand, according to $L^{p}$-mean continuity.  and diagonal rule,  $u\in L^{q}_{\mathrm{loc}}(0,T;W^{\frac{3}{2},r})$ implies that there exists a sequence $\{h_{n}\}$ with $h_{n}\rightarrow0$ as $ n\rightarrow\infty$, such that for almost every $t\in (0,T)$,
	\begin{equation*}
		\lim\limits_{n\rightarrow\infty} \|u(t+h_{n})\|_{W^{\frac{3}{2},r}}=\|u(t)\|_{W^{\frac{3}{2},r}}.
	\end{equation*}
	Replacing $h$ in \eqref{buddy} with $h_{n}$, dividing both sides of the inequality by $h_{n}$, and letting $n\to\infty$ yields for a.e. $t\in(0,T_{\max})$:
	\begin{equation}\label{ideology} \left\|\partial_{t}F(u)(t)\right\|_{H^{s-2}}\lesssim\|u(t)\|_{W^{\frac{3}{2},r}}^{p}\left\|\partial_{t}u(t)\right\|_{H^{s-2}}.
	\end{equation}
	Substituting \eqref{lawsuit},  \eqref{radiant} and  \eqref{ideology} into \eqref{detain}, we have
	\begin{equation}
		\begin{aligned} \|u(t)\|_{H^{s}}^{4}+\|\partial_{t}u(t)\|_{H^{s-2}}^{4}\lesssim&\,\|u_{0}\|_{H^{s}}^{4}+\|h_{0}\|_{\mathcal{H}^{s}}^{4}+\|u_{0}\|_{H^{s}}^{4(p+1)}\\
			&+ \langle t\rangle^{3} \int_{0}^{t}\|u(\tau)\|_{W^{\frac{3}{2},r}}^{4p} \left(\|u(\tau)\|_{H^{s}}^{4}+\|\partial_{t}u(\tau)\|_{H^{s-2}}^{4}\right)d\tau.
		\end{aligned}
	\end{equation}
	An application of Gronwall's inequality implies
	\begin{equation}\label{defy} \|u(t)\|_{H^{s}}^{4}+\|\partial_{t}u(t)\|_{H^{s-2}}^{4}\lesssim  \left(\|u_{0}\|_{H^{s}}^{4}+\|h_{0}\|_{\mathcal{H}^{s}}^{4}+\|u_{0}\|_{H^{s}}^{4(p+1)}\right) \exp\left(C\langle t\rangle ^{3}\int_{0}^{t} \|u(\tau)\|_{W^{\frac{3}{2},r}}^{4p} d\tau\right),
	\end{equation}
	where $C>0$ is a constant. The condition $4p<5p=q$ ensures that $\|u(t)\|_{H^{s}}$ is bounded on $[0,T_{\max})$.
	
	When $n=4$, the required embedding fails, so the analysis splits into two cases based on the range of $s$. For $s\in \left(\frac{3}{2},\frac{7}{4}\right]$, from \eqref{aggravate} and H\"{o}lder's inequality, we have
	\begin{equation}\label{magnify}
		\begin{aligned}
			\|u(t)\|_{H^{s}}+\|\partial_{t}u(t)\|_{H^{s-2}}\lesssim &\, \|u_{0}\|_{H^{s}}+\|h_{0}\|_{\mathcal{H}^{s}}+\|F(u)(0)\|_{H^{s-2}} \\ &+\left(\langle t\rangle^{\frac{11}{8}}+t^{\frac{7}{8}-\frac{s}{2}}\right) \|F(u)\|_{L^{\frac{8}{3}}(0,t;H^{s})}
			+t^{\frac{1}{8}}\|F(u)\|_{\dot{W}^{1,\frac{8}{3}}(0,t;H^{s-2})}.
		\end{aligned}	
	\end{equation}
	The special admissible pair $(q,r)=\left(\frac{11}{3},\frac{11}{4}\right)$ ensures the embedding $$W^{\frac{3}{2},r}(\mathbb{R}_{+}^{n})\hookrightarrow L^{\infty}(\mathbb{R}_{+}^{n})\cap \,W^{2-s,\frac{n}{2-s}}(\mathbb{R}_{+}^{n}),$$ thus \eqref{lawsuit}, \eqref{radiant} and \eqref{ideology} remain valid. Substituting these estimates into \eqref{magnify} and applying Gronwall's inequality yields
	\begin{equation*}
		\|u(t)\|_{H^{s}}^{\frac{8}{3}}\lesssim  \left(\|u_{0}\|_{H^{s}}^{\frac{8}{3}}+\|h_{0}\|_{\mathcal{H}^{s}}^{\frac{8}{3}}+\|u_{0}\|_{H^{s}}^{\frac{8}{3}(p+1)}\right) \exp\left(C\langle t\rangle ^{\frac{11}{3}}\int_{0}^{t} \|u(\tau)\|_{W^{\frac{3}{2},r}}^{\frac{8}{3}p} d\tau\right).
	\end{equation*}
	Since $\frac{8}{3}p<\frac{11}{3}=q$, we conclude $u\in\mathscr{X}^{s,p}_{T}$ for $s\in \left(\frac{3}{2},\frac{7}{4}\right]$. For $s\in \left(\frac{7}{4},2\right)$, taking $(q,r)=\left(\frac{16}{3},\frac{32}{13}\right)$ provides the embedding
	$$W^{\frac{7}{4},r}(\mathbb{R}_{+}^{n})\hookrightarrow L^{\infty}(\mathbb{R}_{+}^{n})\cap W^{2-s,\frac{n}{2-s}}(\mathbb{R}_{+}^{n})$$
	and analogous arguments yield $u\in\mathscr{X}^{s,p}_{T}$.
\end{proof}

Finally, we give the regularity result for \(s \in \left[2, \frac{5}{2}\right)\).
\begin{proposition}\label{zealous2}
	Let $u_{0}\in H^{s}(\mathbb{R}^{n}_{+})$ and $h_{0}\in \mathcal{H}^{s}(\mathbb{R}_{+}^{n})$ satisfy the compatibility condition.  For any $T \in (0,\infty]$,  the following regularity properties hold:
	\begin{enumerate}[label=(\roman*)]
		\item For $s=2$ with $0 < p < \frac{4}{n-2}$, $u \in \mathscr{X}^{1,p}_T\implies u \in \mathscr{X}^{2,p}_T$.
		\item For $s \in \left(2,\frac{5}{2}\right)$ with $2 \leq p < \frac{4}{n-2}$, $u \in \mathscr{X}^{2,p}_T \implies u \in \mathscr{X}^{s,p}_T$.
	\end{enumerate}
\end{proposition}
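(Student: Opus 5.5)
We follow the scheme of Proposition \ref{zealous1}. By Theorem \ref{chaste} (for $p\ge1$), or Proposition \ref{weak2} (for $0<p<1$ and $s=2$), together with Corollaries \ref{core} and \ref{core1}, there is a maximal solution in $\mathscr{X}^{s,p}_{T_{\max}}$. By uniqueness it coincides with $u$. We then argue by contradiction: assuming $T_{\max}<T$, we show that $\|u(t)\|_{H^s}$ stays bounded on $[0,T_{\max})$, which contradicts the blow-up alternative. The bound will come from a Gronwall inequality on the quantity $Y(t):=\|u(t)\|_{H^s}+\|\partial_t u(t)\|_{H^{s-2}}$. Its coefficient will involve a lower-regularity Strichartz norm of $u$, which is already finite on $[0,T_{\max}]$ by hypothesis.

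\textbf{Case (i), $s=2$.} We apply the weak estimate \eqref{begrudge} with $f=-\lambda F(u)$ on $[0,t]$; for $p\ge1$ one may equally use \eqref{dilute}. This requires bounding $\|F(u)\|_{C([0,t];L^2)}$ and $\|F(u)\|_{W^{1,\gamma'}(0,t;L^{\rho'})}$. For the time derivative we use the difference inequality \eqref{uproar}, which gives $\|F(u)(\tau+h)-F(u)(\tau)\|_{L^{\rho'}}\lesssim \|u\|_{L^{a}}^{p}\|u(\tau+h)-u(\tau)\|_{L^{b}}$. We take $(\gamma,\rho)$ and $(q,r)$ admissible with $\frac1{\rho'}=\frac pa+\frac1r$, where $a$ is chosen so that $W^{1,\tilde r}\hookrightarrow L^{a}$ for some admissible $(\tilde q,\tilde r)$ with $\tilde q>p\gamma'$; this is possible because $p<\frac4{n-2}$, exactly as in the energy-subcritical $H^1$ theory. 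Lemma \ref{relay} and the $L^p$-mean continuity argument used for \eqref{ideology} then give the pointwise bound $\|\partial_tF(u)(\tau)\|_{L^{\rho'}}\lesssim\|u(\tau)\|_{W^{1,\tilde r}}^p\|\partial_tu(\tau)\|_{L^{r}}$. For $\|F(u)(\tau)\|_{L^2}$ we use the equation, $F(u)=-\lambda^{-1}(i\partial_tu+\Delta u)$, or alternatively the bound $\|F(u)\|_{C_tL^2}\lesssim \|F(u_0)\|_{L^2}+\|\partial_tF(u)\|_{L^1_tL^2}$ together with the same H\"older splitting. Since $u\in\mathscr{X}^{1,p}_T$, all $W^{1,\tilde r}$-Strichartz norms are finite on $[0,T_{\max}]$. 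We raise to the power $\gamma'$ and apply Gronwall to $\|u\|_{C([0,t];H^2)}+\|\partial_tu\|_{L^q(0,t;L^r)}$. The condition $\tilde q>p\gamma'$ makes the exponential factor finite, and hence $\|u(t)\|_{H^2}$ is bounded.

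\textbf{Case (ii), $s\in(2,\frac52)$.} We use \eqref{aggravate} with H\"older in time, as in \eqref{detain}. This needs $\|F(u)(0)\|_{H^{s-2}}\lesssim\|u_0\|_{H^s}^{p+1}$ from Lemma \ref{imp}. It also needs $\|F(u)(\tau)\|_{H^s}\lesssim\|u\|_{L^\infty}^{p-1}\|u\|_{W^{\cdot,\cdot}}\|u\|_{H^s}$, obtained from Lemma \ref{accessory}, which is legitimate since $p\ge2$ gives $F\in C^2$; when $n\ge4$ the $L^\infty$ factor is replaced by a suitable Lebesgue norm. Finally we need the pointwise derivative bound \eqref{ideology} in $H^{s-2}$, derived from \eqref{slate}, Lemma \ref{imp} and the same $L^p$-mean continuity trick. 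All of these are controlled by $\|u(\tau)\|_{W^{2,\tilde r}}^{p}$ times $Y(\tau)$. Here $W^{2,\tilde r}\hookrightarrow L^\infty\cap W^{s-2,\frac n{s-2}}$ for $n\le3$, and $u\in L^{\tilde q}_{\mathrm{loc}}W^{2,\tilde r}$ because $u\in\mathscr{X}^{2,p}_T$. Gronwall then bounds $Y$ on $[0,T_{\max})$.

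\textbf{Main obstacle.} The main obstacle is the choice of exponents in high dimensions. There the embedding of $W^{2,\tilde r}$ (respectively $W^{1,\tilde r}$) into $L^\infty\cap W^{|s-2|,n/|s-2|}$ fails, so the hypothesis of Lemma \ref{imp} is not directly available. We would first try replacing $L^\infty$ by $L^{a}$ in the product estimates \eqref{trot}--\eqref{riesz}, as was done in \eqref{adorn}. If a single admissible pair with $\tilde q>p\gamma'$ does not exist, we would fall back on splitting the $s$-range, as done for $n=4$ in Proposition \ref{zealous1}. The constraint $p<\frac4{n-2}$ is precisely what should leave room for such a pair.
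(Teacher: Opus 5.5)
\textbf{Case (ii).} This part is fine and is essentially the paper's argument. The paper uses \eqref{ratify} rather than \eqref{aggravate}, which is immaterial. Note that $2\le p<\frac{4}{n-2}$ forces $n\in\{2,3\}$, so the embedding $W^{2,\tilde r}\hookrightarrow L^\infty\cap W^{s-2,\frac{n}{s-2}}$ is always available there, and your ``main obstacle'' never arises in case (ii).

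\textbf{Case (i), the term $\|F(u)\|_{C([0,t];L^2)}$.} This is where the gap is. Case (i) must cover all $n\ge2$ and all $p<\frac{4}{n-2}$, and neither of your two options handles this term of \eqref{begrudge} in that generality.

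Your first option, $F(u)=-\lambda^{-1}(i\partial_tu+\Delta u)$, is circular. It bounds the term by the left-hand side of \eqref{begrudge} times a constant that is not small.

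Your second option needs $\|\,|u|^p\partial_tu\|_{L^2}\le\|u\|_{L^a}^p\|\partial_tu\|_{L^b}$ with $b\le\frac{2n}{n-2}$, since Strichartz is the only spatial integrability you have for $\partial_tu$. This forces $a\ge np$. For $n>4$, neither $W^{1,\tilde r}$ nor $H^2$ reaches beyond $L^{\frac{2n}{n-4}}$, so you need $p\le\frac{2}{n-4}$. That excludes $p\in(\frac{2}{n-4},\frac{4}{n-2})$ when $n\ge7$ (e.g.\ $n=7$, $p=\frac34$).

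The missing idea is the paper's interpolation-and-absorption step. One has $\|F(u)\|_{L^2}\lesssim\|F(u)\|_{W^{1,\rho'}}\lesssim\|u\|_{L^\sigma}^p\|u\|_{W^{1,r}}\lesssim\|u\|_{H^1}^{p+1-\theta}\|u\|_{H^2}^{\theta}$ with $\theta=n(\frac12-\frac1r)<1$. Young's inequality then absorbs $\|u\|_{C([0,t];H^2)}$ into the left side. What remains is a power of $\sup_{[0,T_{\max}]}\|u\|_{H^1}$, which is finite because $u\in\mathscr{X}^{1,p}_T$ and $T_{\max}<T$.

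\textbf{Case (i), the closing Gronwall step.} As stated, this is not a valid Gronwall argument. The left side of \eqref{begrudge} controls $\partial_tu$ only in $L^q(0,t;L^r)$, not $\|\partial_tu(t)\|_{L^r}$ pointwise. So $\int_0^t\|u\|_{W^{1,\tilde r}}^{p\gamma'}\|\partial_tu\|_{L^r}^{\gamma'}\,d\tau$ cannot be fed back into Gronwall unless $r=2$. With $r=2$, the splitting $\frac1{\rho'}=\frac pa+\frac12$ again forces $p\le\frac{2}{n-4}$ when $n>4$.

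The paper avoids time-dependent coefficients altogether:
\begin{enumerate}
\item It picks $2<\sigma,r,\rho<\frac{2n}{n-2}$ with $\frac{1}{\rho'}=\frac{p}{\sigma}+\frac1r$, which is possible precisely because $p<\frac{4}{n-2}$.
\item It uses $H^1\hookrightarrow L^\sigma$ and Lemma \ref{relay} to get $\|F(u)\|_{W^{1,\gamma'}(0,t;L^{\rho'})}\lesssim\|u\|_{C([0,t];H^1)}^p\|u\|_{W^{1,\gamma'}(0,t;L^r)}$.
\item It closes with Lemma \ref{holmer} using $q>\gamma'$, which bounds $\|u\|_{W^{1,q}(0,t;L^r)}$ and hence $\|u\|_{C([0,t];H^2)}$.
\end{enumerate}
If you want to keep your Strichartz coefficient instead, you would have to split $[0,T_{\max}]$ into finitely many intervals on which it is small and restart the linear estimate on each. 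Your proposal does not set this up.
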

\begin{proof}
	For $s=2$, by Theorem \ref{chaste} with Corollary \ref{core}--\ref{core1} and Proposition \ref{weak2}, there exist $T_{\max}\in(0,\infty]$ and a solution belonging to $\mathscr{X}^{2,p}_{T_{\max}}$. As in Proposition \ref{zealous1}, arguing by contradiction, we assume  $0<T_{\max}<T$ and  bound $\|u(t)\|_{H^{2}(\mathbb{R}^{n}_{+})}$ on $[0,T_{\max})$. The  estimate  \eqref{begrudge} gives for all $t \in [0,T_{\max})$:
	\begin{equation}\label{panorama} 		
		\|u\|_{C([0,t];H^{2})}+\|u\|_{W^{1,q}(0,t;L^{r})}
		\lesssim\|u_{0}\|_{H^{2}}+\|h_{0}\|_{\mathcal{H}^{2}}+ \|F(u)\|_{C([0,t];L^{2})}+\|F(u)\|_{W^{1,\gamma'}(0,t;L^{\rho'})}.
	\end{equation}
	For $0< p < \frac{4}{n-2}$, we now determine the exponents appearing in \eqref{panorama}. We choose  $2<\sigma,r,\rho<\frac{2n}{n-2}$, such that the relation
	$\frac{1}{\rho'}= \frac{p}{\sigma}+\frac{1}{r}$ holds. Indeed, consider the function $f_{p}:\mathbb{R}^{3}\rightarrow \mathbb{R}$, $f_{p}(x,y,z)=x+py+z-1$, which satisfies  $f_{p}(\frac{1}{2},\frac{1}{2},\frac{1}{2})>0$ and $f_{p}(\frac{n-2}{2n},\frac{n-2}{2n},\frac{n-2}{2n})<0$. The exponents $q$ and gamma are chosen so that $(q,r)$ and $(\gamma,\rho)$ are admissible pairs; clearly we have  $q>\gamma'$.
	
	From Sobolev embeddings and Gagliardo-Nirenberg inequality, we derive
	\begin{equation}\label{meticulous}
		\|F(u)(t)\|_{L^{2}}\lesssim \|F(u)(t)\|_{W^{1,\rho'}}\lesssim \|u(t)\|_{L^{\sigma}}^{p} \|u(t)\|_{W^{1,r}}\lesssim \|u(t)\|_{H^{1}}^{p+1-\theta} \|u(t)\|_{H^{2}}^{\theta},
	\end{equation}
	where $\theta\in(0,1)$. Substituting \eqref{meticulous} into \eqref{panorama}, and applying  Young's inequality yields
	\begin{equation}\label{transient} \|u\|_{C([0,t];H^{2})}+\|u\|_{W^{1,q}(0,t;L^{r})}\lesssim \|u_{0}\|_{H^{2}}+\|h_{0}\|_{\mathcal{H}^{2}}+ \|u\|_{C([0,t];H^{1})}^{\frac{p+1-\theta}{1-\theta}}+\|F(u)\|_{W^{1,\gamma'}(0,t;L^{\rho'})}.
	\end{equation}
	
	We also have the following inequalities
	\begin{equation}
		\|F(u)(\tau)\|_{L^{\rho'}}\lesssim \|u(\tau)\|_{H^{1}}^{p} \|u(\tau)\|_{L^{r}},
	\end{equation}
	\begin{equation}
		\|F(u)(\tau+h)-F(u)(\tau)\|_{L^{\rho'}}\lesssim \left(\|u(\tau+h)\|_{H^{1}}^{p}+\|u(\tau)\|_{H^{1}}^{p}\right) \|u(\tau)\|_{L^{r}},
	\end{equation}		
	where $\tau$, $\tau+h\in(0,t)$. Then from Lemma \ref{relay}, we deduce that
	\begin{equation}\label{iconography}
		\|F(u)\|_{W^{1,\gamma}(0,t;L^{\rho'})}\lesssim \|u\|_{C([0,t];H^{1})}^{p} \|u\|_{W^{1,\gamma'}(0,t;L^{r})}.
	\end{equation}
	Substituting \eqref{iconography} into \eqref{transient}, we make use of $q>\gamma'$ to apply Lemma \ref{holmer}. Combining this with assumptions $u\in\mathscr{X}^{1,p}_{T}$ and $T_{\max}<T$,  we derive $\|u\|_{W^{1,q}(0,t;L^{r})}\leq M(t)$, where $M\in C_{b}[0,T_{\max})$. Using $q>\gamma'$ once more, we see that $\|u\|_{W^{1,\gamma'}(0,t;L^{r})}$ is controlled. Together with \eqref{transient}, we finally obtain the  bound for $\|u\|_{C([0,t];H^{2})}$.
	
	For $s\in \left(2,\frac{5}{2}\right)$ by using $u\in L^{q}\left(0,T_{\max};W^{2,r}\right)$ where $(q,r)=\left(5p,\frac{10np}{5np-4}\right)$ to ensure the embedding
	\[W^{2,r}(\mathbb{R}_{+}^{n})\hookrightarrow L^{\infty}(\mathbb{R}_{+}^{n})\cap W^{s-2,\frac{n}{s-2}}(\mathbb{R}_{+}^{n}).\]
	From \eqref{ratify} and an analysis analogous to that in \eqref{lawsuit}--\eqref{defy}, we derive the desired result.
\end{proof}
\subsection{Proof of Theorem \ref{global}}
In this section, we mainly use the $H^{1}$ a priori estimate in Section \ref{sec41} and the regularity analysis in Section \ref{sec42} to prove Theorem \ref{global}.

Before proving Theorem \ref{global}, we first establish the existence of the solution $u \in \mathscr{X}^{2,p}_\infty$ for the case $s=2$ under the broader parameter range \eqref{bigp} in Proposition \ref{ag}. This preliminary result serves two key purposes:
(i) To handle the case $s=1$ through an approximation argument;
(ii) To directly construct global solutions for $s=2$ as stated in Theorem \ref{global}, within the framework of Definition~\ref{xspt}.
\begin{proposition}\label{ag}
	Let $u_{0}\in H^{2}(\mathbb{R}_{+}^{n})$ and  $h_{0}\in\mathcal{H}^{2}(\mathbb{R}_{+}^{n})$
	satisfy the compatibility condition. Under the following parameter constraints:
	\begin{equation}\label{bigp}
		\text{when $\lambda<0$, $p<\tfrac{4}{n-2}$}\;\text{or when $\lambda>0$,  $p\leq \tfrac{4}{n+1}$},
	\end{equation}
	there exists a unique solution $u\in\mathscr{X}^{2,p}_{\infty}$.
\end{proposition}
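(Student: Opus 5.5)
We prove Proposition~\ref{ag} in three steps: produce a local $H^{2}$ solution, upgrade it to $\mathscr{X}^{1,p}$ on its whole interval of existence, and use the $H^{1}$ a priori estimate together with the regularity result for $s=2$ to show that this interval is infinite.

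\emph{Step 1: local solution.} When $p\geq1$ we apply Theorem~\ref{chaste} with $s=2$; the hypothesis $(n-4)p<4$ holds because $p<\frac{4}{n-2}$ and $p\leq\frac{4}{n+1}$ are both smaller than $\frac{4}{n-4}$. Together with Corollary~\ref{core}, this gives a maximal solution $u\in\mathscr{X}^{2,p}_{T_{\max}}$ in the sense of Definition~\ref{xspt}. When $0<p<1$ we use Proposition~\ref{weak2} and Corollary~\ref{core1} instead, which give $u\in\mathscr{X}^{2,p}_{T_{\max}}$ in the sense of Definition~\ref{xspt1}. In both cases uniqueness is already supplied by these results. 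If $T_{\max}<\infty$, the blow-up alternative says $\|u(t)\|_{H^{2}}\to\infty$ as $t\to T_{\max}$, so it remains to rule this out.

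\emph{Step 2: $H^{1}$ bound on $[0,T_{\max})$.} Assume $T_{\max}<\infty$ and set $T=T_{\max}$. The solution has the regularity \eqref{reg}: continuity in $H^{2}$ comes from Step 1, and $C^{1}$ in $L^{2}$ comes from Corollary~\ref{core} or Corollary~\ref{core1}. For the regularity \eqref{reg1} we argue as follows.
\begin{itemize}
\item The boundary trace $u(\cdot,x_n,\cdot)$ lies in $\mathcal{H}^{2}$ (or $\mathcal{W}^{2}$) locally in time.
\item By \eqref{pleat} and \eqref{shenjing}, the $\mathcal{H}^{2}$ case embeds into $H^{1}_{\mathrm{loc}}$.
\item For the $\mathcal{W}^{2}$ case we use \eqref{facile} applied to $h$ and $h_t$.
\item The $C^{1}$ dependence in $x_n$ with values in $L^{2}_{\mathrm{loc}}$ follows from $\partial_n u$ having trace in $\mathcal{H}^{0}$, by \eqref{scrap}, \eqref{tentative} and \eqref{crisp}, combined with \eqref{facile}.
\end{itemize}
This is the step I expect to be the main technical obstacle. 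The $\mathcal{W}^{2}$ framework controls only $h$ and $h_t$ in $\mathcal{H}^{0}$, not tangential derivatives, so obtaining $\nabla' u$ in $L^{2}_{\mathrm{loc}}$ at fixed $x_n$ may require an approximation argument. We would approximate the data $(u_0,h_0)$ by smoother, compatible data, use continuous dependence in $\mathcal{W}^{2}$, apply Proposition~\ref{maritime} to the approximating solutions, and pass to the limit in \eqref{drone}.

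With \eqref{reg}--\eqref{reg1} in hand, Proposition~\ref{maritime} applies under \eqref{bigp}, since $p<\frac{4}{n-2}$ for $\lambda<0$ and $p\leq\frac{4}{n+1}$ for $\lambda>0$. Because $h_0\in\mathcal{H}^{2}\subset H^{1}_{\mathrm{loc}}$ by \eqref{pleat}, it yields
\begin{equation*}
\sup_{t<T_{\max}}\|u(t)\|_{H^{1}}\leq \alpha_{T_{\max}}\bigl(\|u_0\|_{H^{1}}+\|h_0\|_{H^{1}(\mathbb{R}^{n-1}\times(0,T_{\max}))}\bigr)<\infty.
\end{equation*}

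\emph{Step 3: $H^{2}$ bound and conclusion.} Restricting $u$ to lower regularity, we have $u\in\mathscr{X}^{1,p}_{T_{\max}}$; the Strichartz bounds in $W^{1,r}$ and the property $C^{1}$ into $\mathcal{H}^{0}$ come from Theorem~\ref{chaste}(iii), \eqref{contagious} and Corollary~\ref{core}. We then run the argument of Proposition~\ref{zealous2}(i). The estimate \eqref{begrudge} combined with \eqref{meticulous}, \eqref{iconography} and the Holmer-type Lemma~\ref{holmer} bounds $\|u\|_{C([0,t];H^{2})}$ in terms of $\sup_{[0,t]}\|u\|_{H^{1}}$ and the data, uniformly for $t<T_{\max}$. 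Since Step 2 makes that supremum finite, $\|u(t)\|_{H^{2}}$ stays bounded up to $T_{\max}$, contradicting the blow-up alternative of Step 1. Hence $T_{\max}=\infty$ and $u\in\mathscr{X}^{2,p}_{\infty}$, with uniqueness inherited from Step 1.
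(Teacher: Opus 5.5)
Your argument is essentially the paper's, with the roles of the two maximal times interchanged. The paper starts from the $H^{1}$ solution of Theorem~\ref{chaste}, so $u\in\mathscr{X}^{1,p}_{T_{\max}}$ comes for free. It upgrades this to $\mathscr{X}^{2,p}_{T_{\max}}$ by Proposition~\ref{zealous2}(i), and then contradicts the $H^{1}$ blow-up criterion using \eqref{drone} and \eqref{pleat}. You instead start from the $H^{2}$ solution and contradict the $H^{2}$ blow-up criterion by rerunning the proof of Proposition~\ref{zealous2}(i) with the $H^{1}$ bound from \eqref{drone}. The two arrangements use the same ingredients and are equivalent.

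The step you flag as the main obstacle is not one, and you should drop the approximation detour. For $0<p<1$ it would not help by itself: the approximating solutions again have only $\mathcal{W}^{2}$-type traces, so the same issue reappears. In addition, no continuous dependence is available for the solutions of Proposition~\ref{weak2}.

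Instead, use the fact you already invoke in Step 3. By uniqueness, your solution coincides with the $H^{1}$ solution, so $u\in\mathscr{X}^{1,p}_{T_{\max}}$, and in particular $u\in C(\overline{\mathbb{R}_{+}};\mathcal{H}^{1}_{\mathrm{loc}}(\mathbb{R}^{n-1}\times(0,T_{\max})))$. Since $|\xi_i|^{2}\le 1+|\xi'|^{2}+|\xi_n|$ for $1\le i\le n-1$, each tangential derivative $\partial_{x_i}u(\cdot,x_n,\cdot)$ lies in $\mathcal{H}^{0}_{\mathrm{loc}}$, continuously in $x_n$. Hence it lies in $L^{2}_{\mathrm{loc}}$ by \eqref{facile}.

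This is exactly how the paper verifies the tangential part of \eqref{reg1}. The $\partial_t u$ part is handled through the $\mathcal{W}^{2}$ (or $\mathcal{H}^{2}$ with \eqref{shenjing}) trace together with \eqref{facile}. The $x_n$-derivative part comes from Definition~\ref{xspt}(3), i.e.\ \eqref{contagious}, again with \eqref{facile}.
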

\begin{proof}
	Since $u_{0} \in H^{2}(\mathbb{R}_{+}^{n}) \subset H^{1}(\mathbb{R}_{+}^{n})$ and $h_{0} \in \mathcal{H}^{2}(\mathbb{R}_{+}^{n}) \subset \mathcal{H}^{1}(\mathbb{R}_{+}^{n})$, Theorem \ref{chaste} guarantees the existence of a maximal existence time $T_{\max} \in (0,\infty]$ and a solution $u\in~\mathscr{X}^{1,p}_{T_{\max}}$. By the blow-up criterion,  if $T_{\max} < \infty$, then
	\begin{equation}\label{contradict}
		\lim_{t \to T_{\max}} \|u(t)\|_{H^{1}(\mathbb{R}^n_+)} = \infty.
	\end{equation}
	From Proposition \ref{zealous2}, we conclude that $u \in \mathscr{X}^{2,p}_{T_{\max}}$, which implies $u$ satisfies the regularity condition \eqref{reg}. Building upon the definitions of $\mathscr{X}^{2,p}_{T_{\max}}$ (see Definitions \ref{xspt}--\ref{xspt1}) and $\mathcal{W}^{2}$~(see \eqref{huaw}), together with \eqref{facile}, we obtain \[\partial_{t}u\in C\left(\,\overline{\mathbb{R}_{+}\mkern-4mu}\,;L_{\mathrm{loc}}^{2}(\mathbb{R}^{n-1}\times (0, T_{\max}) )\right).\]
	The same result also holds for $\partial_{x_{i}}u$ ($1\leq i\leq n-1$) by using the definition of $\mathscr{X}^{1,p}_{T_{\max}}$. Finally, Definition~\ref{xspt}(3) with \eqref{facile} implies
	\[u\in C^{1}\left(\,\overline{\mathbb{R}_{+}\mkern-4mu}\,;L^{2}_{\mathrm{loc}}(\mathbb{R}^{n-1}\times (0,T_{\max}))\right).\]
	Consequently, $u$ satisfies the regularity assumptions required by Proposition \ref{maritime}.
	
	Using the a priori estimate \eqref{drone} and the embedding \eqref{pleat}, we derive that $\|u(t)\|_{H^{1}}$ is bounded on $[0,T_{\max})$.  This contradicts \eqref{contradict}; therefore, we conclude that $T_{\max}=\infty$ and $u\in \mathscr{X}^{1,p}_{\infty}$. Applying Proposition \ref{zealous2} directly gives $u\in \mathscr{X}^{2,p}_{\infty}$.
\end{proof}
Combining results from the above we can now prove Theorem \ref{global}.
\begin{proof}[Proof of Theorem \ref{global}]
	For $s=1$, given $u_0 \in H^1(\mathbb{R}_+^n)$ and $h_0 \in \mathcal{H}^1(\mathbb{R}^{n}_{+}) \cap H^1(\mathbb{R}^{n}_{+})$, Theorem \ref{chaste} and Corollary~\ref{core} establish the existence of a maximal solution $u \in \mathscr{X}^{1,p}_{T_{\max}}$.
	Assume $T_{\max}<~\infty$.
	We construct approximating sequences:
	\begin{enumerate}[label=(\roman*)]
		\item $\{u_{0m}\} \subset H^2(\mathbb{R}_+^n)$ with $\lim\limits_{m\to\infty} u_{0m} = u_0$ in $H^1(\mathbb{R}_+^n)$;
		\item $\{h_{0m}\} \subset \mathcal{H}^2(\mathbb{R}^{n}_{+})$ with $\lim\limits_{m\to\infty} h_{0m} = h_0$ in $H^1(\mathbb{R}^{n}_{+})$,
	\end{enumerate}
	where $u_{0m}$ and $h_{0m}$ satisfy the compatibility condition. These sequences are justified as follows: We first choose $u_{0m}$ satisfying (i). Since $u_{0}-h_{0}\in H_{0}^{1}(\mathbb{R}_{+}^{n})$, we can select $g_{0m}\in\mathcal{H}_{0}^{2}(\mathbb{R}_{+}^{n})$ such that $g_{0m}\rightarrow u_{0}-h_{0}$ in $H^{1}(\mathbb{R}_{+}^{n})$, and finally define $h_{0m}=u_{0m}-g_{0m}$. For these initial and boundary data, it follows from Proposition \ref{ag} that there exists
	$u_{m}\in \mathscr{X}^{1,p}_{\infty}\cap\mathscr{X}^{2,p}_{\infty}$. Applying Proposition~\ref{maritime}, we obtain
	\begin{equation}\label{cram}
		\|u_{m}\|_{L^{\infty}(0,T_{\max};H^{1})}\lesssim  \alpha_{T_{\max}} \left(\|u_{0m}\|_{H^{1}}+\|h_{0m}\|_{H^{1}(\mathbb{R}^{n-1}\times(0,T_{\max}))} \right).
	\end{equation}
	Thus, there exists a subsequence $\{u_{m_{j}}\}$ such that $u_{m_{j}}\rightarrow \tilde{u}$ weakly star in $L^{\infty}(0,T_{\max};H^{1})$.
	
	Since $0<p<\frac{4}{n-2}$, as in the proof of Proposition \ref{zealous2}, we choose $2<\sigma,r,\rho<\frac{2n}{n-2}$, such that $\frac{1}{\rho'}= \frac{p}{\sigma}+\frac{1}{r}$, and take $q$ and $\gamma$ such that $(q,r)$ and $(\gamma,\rho)$ are admissible pairs.
	
	From \eqref{intimate},
	for any $T\in(0,T_{\max})$ and $t\in[0,T]$, we have
	\begin{equation*}
		\|u_{m}-u\|_{L^{q}(0,t;L^{r})}\lesssim \|u_{0m}-u_{0}\|_{L^{2}}+\|h_{0m}-h_{0}\|_{\mathcal{H}^{0}}+\|F(u_{m})-F(u)\|_{L^{\gamma'}(0,t;L^{\rho'})}.
	\end{equation*}
	Using \eqref{uproar}, H\"{o}lder's inequality,  and the embedding $H^{1}(\mathbb{R}^{n}_{+})\hookrightarrow L^{\sigma}(\mathbb{R}^{n}_{+})$, we obtain
	\begin{equation*}
		\|F(u_{m})-F(u)\|_{L^{\gamma'}(0,t;L^{\rho'})}\lesssim	\left(\|u_{m}\|_{L^{\infty}(0,t;H^{1})}^{p}+\|u\|_{L^{\infty}(0,t;H^{1})}^{p}\right) \|u_{m}-u\|_{L^{\gamma'}(0,t;L^{r})}.
	\end{equation*}
	Combining these estimates yields
	\begin{equation}\label{gj}
		\|u_{m}-u\|_{L^{q}(0,t;L^{r})}\leq c_{T}\left( \|u_{0m}-u_{0}\|_{L^{2}}+\|h_{0m}-h_{0}\|_{\mathcal{H}^{0}}\right)+ c_{T}\|u_{m}-u\|_{L^{\gamma'}(0,t;L^{r})}.
	\end{equation}
	By applying Lemma \ref{holmer} to \eqref{gj}, we derive
	\begin{equation}\label{gui}
		\|u_{m}-u\|_{L^{q}(0,T;L^{r})}\lesssim C_{T}  \left(\|u_{0m}-u_{0}\|_{L^{2}}+\|h_{0m}-h_{0}\|_{\mathcal{H}^{0}}\right).
	\end{equation}
	The embedding \eqref{emb} implies $H^{1}(\mathbb{R}^{n}_{+}) \hookrightarrow \mathcal{H}^{0}(\mathbb{R}^{n}_{+})$. From \eqref{gui}, it follows that $u_{m}\rightarrow u$ in $L^{q}(0,T;L^{r})$, hence $u = \tilde{u}$ almost everywhere.
	
	By the properties of weak-star convergence and the continuity of $\alpha_{T_{\max}}$, the estimate \eqref{cram} yields
	\begin{equation*}
		\|u\|_{L^{\infty}(0,T_{\max};H^{1})}
		\leq \varliminf_{m_{j}\rightarrow\infty}\|u_{m_{j}}\|_{L^{\infty}(0,T_{\max};H^{1})}
		\lesssim \alpha_{T_{\max}}\left(\|u_{0}\|_{H^{1}}+\|h_{0}\|_{H^{1}(\mathbb{R}^{n-1}\times(0,T_{\max}))}\right),
	\end{equation*}
	which contradicts the blow-up criterion, proving $T_{\max}=\infty$.
	Consequently, we conclude that the solution $u \in \mathscr{X}^{1,p}_\infty$, which completes the proof of Theorem~\ref{global} for the case $s=1$. An application of Proposition \ref{zealous1} yields the result for $s\in(1,2)$.
	
	For the case $s = 2$, the global solution stated in Theorem~\ref{global} can be obtained by Proposition~\ref{ag}. For higher regularity $s\in\left(2,\frac{5}{2}\right)$, the result follows immediately from Proposition~\ref{zealous2}.
\end{proof}

\subsection{$L^{2}$ a priori estimates}
This section focuses on the establishment of the  $L^{2}$ estimates.
\begin{proposition}\label{vindictive}
	Suppose that $p\leq \frac{2}{n}$ and $(p,n)\neq (1,2)$. Let $u_{0}\in H^{2}(\mathbb{R}^{n}_{+})$ and $h_{0}\in\mathcal{H}^{2}(\mathbb{R}^{n}_{+})$ satisfy the compatibility condition. For any $T>0$, every solution $u$ to IBVP \eqref{ibvp} possessing the regularity
	\begin{align}
		t&\mapsto u(\cdot,t) \in C\left([0,T);H^{2}(\mathbb{R}^{n}_{+})\right)\cap C^{1}\left([0,T);L^{2}(\mathbb{R}^{n}_{+})\right),\label{assuage} \\
		x_{n}&\mapsto u(\cdot,x_{n},\cdot) \in  C^{1}\left(\,\overline{\mathbb{R}_{+}\mkern-4mu}\,;L^{2}_{\mathrm{loc}}(\mathbb{R}^{n-1}\times(0,T))\right)\label{enigmatic},
	\end{align}
	satisfies the  estimate
	\begin{equation}
		\|u(t)\|_{L^{2}}\lesssim  (1+t) A\left(\|u_{0}\|_{L^{2}}+\|h_{0}\|_{\mathcal{H}^{0}}\right) e^{tA\left(\|u_{0}\|_{L^{2}}+\|h_{0}\|_{\mathcal{H}^{0}}\right)},
	\end{equation}
	for all $t\in[0,T)$. Here, $A:\overline{\mathbb{R}_{+}}\rightarrow \overline{\mathbb{R}_{+}}$ is a continuous,  non-decreasing function with $A(0)=0$.	
\end{proposition}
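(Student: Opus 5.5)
The idea, as the introduction indicates, is to compare $u$ with the solution $v$ of the unforced linear IBVP
\begin{equation*}
i\partial_t v+\Delta v=0,\qquad v(\cdot,0)=u_0,\qquad v|_{x_n=0}=h_0 .
\end{equation*}
This $v$ is given by \eqref{turbine} with $f=0$. By Proposition \ref{abss} (case (1)) it satisfies
\begin{equation*}
\|v\|_{C([0,T];L^2)}+\|v\|_{L^q(\mathbb{R}_+;L^r)}\lesssim \|u_0\|_{L^2}+\|h_0\|_{\mathcal{H}^0}=:R
\end{equation*}
for every admissible pair $(q,r)$, with a constant independent of $T$. Set $w=u-v$. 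Then $w$ solves
\begin{equation*}
i\partial_t w+\Delta w=-\lambda F(u),\qquad w(\cdot,0)=0,\qquad w|_{x_n=0}=0 .
\end{equation*}
The regularity \eqref{assuage}--\eqref{enigmatic}, together with the $H^2$ regularity of $v$ (Proposition \ref{weak} or \eqref{dilute}), justifies the identity \eqref{dove} for $w$. Because $w$ has zero trace, the bad boundary term $\mathrm{Im}\,\overline{w}\partial_n w|_{x_n=0}$ vanishes, and we obtain
\begin{equation*}
\frac{d}{dt}\|w(t)\|_{L^2}^2=-2\lambda\,\mathrm{Im}\int_{\mathbb{R}^n_+}\overline{w}\,|u|^pu\,dx .
\end{equation*}
Writing $\overline w=\overline u-\overline v$, the term with $\overline u$ drops out because $\overline u\,|u|^pu$ is real. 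We are left with
\begin{equation*}
\frac{d}{dt}\|w\|_{L^2}^2\le 2|\lambda|\int|v|\,|u|^{p+1}\,dx\le 2|\lambda|\int |v|\,(|w|+|v|)^{p+1}\,dx .
\end{equation*}

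Next I bound the right side so that it is linear in $\|w\|_{L^2}^2$ modulo an integrable-in-time factor coming from $v$. I would use $|v||w|^{p+1}$ and $|v|^{p+2}$ and Hölder in space. For $p\le 2/n$ the exponent condition lets me put $w$ in $L^2$: I need $|v|\,|w|^{p+1}\in L^1$ with $|w|^{p+1}\in L^{2/(p+1)}$, which requires $v\in L^{r_0}$ with $1/r_0=(1-p)/2$. This is where $p\le 1$ enters, and $p\le 2/n\le 1$ holds for $n\ge 2$. For the pair $(q_0,r_0)$ to be admissible we need $2/q_0=n p/2$, i.e. $q_0=4/(np)\ge 2$, which is exactly $p\le 2/n$. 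The excluded case $(p,n)=(1,2)$ is precisely $r_0=\infty$, $q_0=2$, the forbidden endpoint. Writing $y(t)=\|w(t)\|_{L^2}^2$, this gives
\begin{equation*}
y'\lesssim \|v(t)\|_{L^{r_0}}\,y^{\frac{p+1}{2}}+\|v(t)\|_{L^{p+2}}^{p+2}.
\end{equation*}
Since $(p+1)/2\le 1$, I bound $y^{(p+1)/2}\le 1+y$. The function $\|v\|_{L^{r_0}}$ lies in $L^{q_0}_t\subset L^1_{\mathrm{loc}}$, with $\int_0^t\|v\|_{L^{r_0}}\le t^{1-1/q_0}R$. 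Similarly $\|v\|_{L^{p+2}}^{p+2}$ is controlled by Hölder in time using the admissible pair with $r=p+2$, giving a bound $\lesssim (1+t)R^{p+2}$. Gronwall then gives
\begin{equation*}
y(t)\lesssim (1+t)A(R)\,e^{tA(R)},
\end{equation*}
and finally $\|u(t)\|_{L^2}\le \|w(t)\|_{L^2}+\|v(t)\|_{L^2}$. After taking square roots and adjusting $A$, this is the stated form.

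The main obstacle is the rigorous justification of the energy identity for $w$: verifying that $v$ has enough regularity (the $\mathcal{H}^2$ data give $v\in C([0,T];H^2)\cap C^1([0,T];L^2)$) and that the boundary integral of $\overline w\,\partial_n w$ vanishes in the trace sense given by \eqref{enigmatic}. Beyond that, the only delicate point is the exponent bookkeeping that makes $(q_0,r_0)$ admissible, which is where the hypotheses $p\le 2/n$ and $(p,n)\ne(1,2)$ are used.
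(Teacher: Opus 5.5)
Your proposal is correct and follows the paper's proof: compare $u$ with the solution $v$ of the unforced linear IBVP that has the same data, use the zero trace of $w=u-v$ to remove the boundary term in the energy identity, apply H\"older with the admissible pair $(4/(np),\,2/(1-p))$ (which is exactly where $p\le 2/n$ and $(p,n)\neq(1,2)$ enter), then Gronwall and $u=v+w$. The differences are cosmetic: you eliminate $\overline{u}\,|u|^pu$ rather than $|u|^p|w|^2$, which gives a $|v|^{p+2}$ term in place of the paper's $|v|^{p+1}|w|$, and you apply Gronwall in differential form with an $L^1_t$ coefficient instead of H\"older in time followed by Young's inequality.
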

\begin{proof}
	We define $v$ as the solution to the following linear IBVP for the unforced Schr\"{o}dinger equation
	\begin{align}\label{coagulate}
		\left\{\begin{aligned}
			&	i\partial_{t}v+\Delta v=0, \quad (x, t) \in \mathbb{R}_{+}^{n} \times \mathbb{R}_{+}, \\
			&v(x,0)=u_0(x), \quad x\in\mathbb{R}^{n}_{+},\\& v(x',0,t)=h_{0}(x',t), \quad  (x',t)\in \mathbb{R}^{n-1}\times \mathbb{R}_{+}.
		\end{aligned}
		\right.				
	\end{align}
	Then the difference $w:=u-v$ satisfies
	\begin{equation}\label{bolster}
		\left\{\begin{aligned}
			&i\partial_{t}w+\Delta w + \lambda |v+w|^{p} (v+w)=0, \quad (x,t) \in \mathbb{R}^{n}_{+} \times \mathbb{R}_{+},\\
			&w(x,0)=0, \quad x\in\mathbb{R}^{n}_{+},\\& w(x',0,t)=0, \quad  (x',t)\in \mathbb{R}^{n-1}\times \mathbb{R}_{+}.
		\end{aligned}
		\right.
	\end{equation}
	Multilplying $\bar{w}$ to both sides of the equation in \eqref{bolster}, and then taking the imaginary part, we obtain
	\begin{equation}\label{ineffable}
		\partial_{t} |w|^{2} + 2\text{Im} [\Delta w \bar{w}]+ 2\lambda\text{Im}[|v+w|^{p} v \bar{w}]=0, \quad (x,t) \in \mathbb{R}^{n}_{+} \times \mathbb{R}_{+}.
	\end{equation}
	Integrating both sides of \eqref{ineffable} over $\mathbb{R}^{n}_{+}\times (0,t)$, and utilizing integration by parts we derive
	\begin{equation}\label{pulchritudinous}
		\begin{aligned}
			\|w(t)\|_{L^{2}}^{2}=&\,\|w(0)\|_{L^{2}}^{2}+ 2\text{Im}\int_{0}^{t} \int_{\mathbb{R}^{n-1}} (w\partial_{n} w)(x',0,\tau) dx'd\tau \\&+2\text{Im}\int_{0}^{t}\int_{\mathbb{R}^{n}_{+}}  |\nabla  w(x,\tau)|^{2}dxd\tau+ 2\lambda  \text{Im}    \int_{0}^{t} \int_{\mathbb{R}^{n}_{+}} (|v+w|^{p}v\bar{w}) (x,\tau) dxd\tau\\=&\,2\lambda  \text{Im}    \int_{0}^{t} \int_{\mathbb{R}^{n}_{+}} (|v+w|^{p}v\bar{w})(x,\tau) dxd\tau\\\lesssim&\int_{0}^{t}\int_{\mathbb{R}^{n}_{+}} |w|^{p+1} |v| (x,\tau)+ |v|^{p+1} |w| (x,\tau)dx d\tau,
		\end{aligned}
	\end{equation}
	an application of  H\"{o}lder's inequality leads to
	\begin{equation}\label{inextricable}
		\begin{aligned}
			\|w(t)\|_{L^{2}}^{2}\lesssim &\, \|v\|_{L^{\frac{4}{np}}(0,t;L^{\frac{2}{1-p}})} \left(\int_{0}^{t}\|w(\tau)\|_{L^{2}}^{\frac{4(p+1)}{4-np}}d\tau\right)^{\frac{4-np}{4}} \\&+ \|v\|_{L^{\frac{4(p+1)}{np}}(0,t;    L^{2(p+1)})} \left(\int_{0}^{t}\|w(\tau)\|_{L^{2}}^{\frac{4}{4-np}}d\tau\right)^{\frac{4-np}{4}}.
		\end{aligned}
	\end{equation}
	Given the range of $p$, both $\frac{2}{1-p}$ and $2(p+1)$ lie in  $[2,\infty)$ when $n=2$, and in  $\left[2,\frac{2n}{n-2}\right]$, when $n>2$. This guarantees that both
	$\big(\frac{4}{np},\frac{2}{1-p}\big)$ and $\big(\frac{4(p+1)}{np},2(p+1)\big)$ are admissible pairs.	Hence, applying the Strichartz estimate \eqref{intimate} with $f=0$ yields
	\begin{equation}
		\max \Bigl\{ 	\|v\|_{L^{\frac{4(p+1)}{np}}(0,t  ;  L^{2(p+1)})},	\|v\|_{L^{\frac{4}{np}}(0,t;L^{\frac{2}{1-p}})}\Bigr\}\lesssim \|u_{0}\|_{L^{2}}+\|h_{0}\|_{\mathcal{H}^{0}}.
	\end{equation}
	Using the fact that $\frac{4(p+1)}{4-np}\leq \frac{8}{4-np}$, we combine estimate \eqref{inextricable} with Young's inequality, to obtain
	\begin{equation}\label{venerate}
		\|w(t)\|_{L^{2}}^{\frac{8}{4-np}}\leq C(\|u_{0}\|_{L^{2}}+\|h_{0}\|_{\mathcal{H}^{0}}) \left(t+\int_{0}^{t}\|w(\tau)\|_{L^{2}}^{\frac{8}{4-np}}d\tau\right),
	\end{equation}
	where $C(x)=ax^{b}$ with $a,b>0$. Then Gronwall's inequality applied to \eqref{venerate} gives
	\begin{equation}
		\|w(t)\|_{L^{2}}^\frac{8}{4-np}\leq tC(\|u_{0}\|_{L^{2}}+\|h_{0}\|_{\mathcal{H}^{0}})  e^{tC(\|u_{0}\|_{L^{2}}+\|h_{0}\|_{\mathcal{H}^{0}})}.
	\end{equation}
	Recall that $u=v+w$.  Together with the linear estimate $\|v\|_{L^{\infty}(0,t;L^{2})}\lesssim \|u_{0}\|_{L^{2}}+\|h_{0}\|_{\mathcal{H}^{0}}$ (which also provided by \eqref{intimate}), this yields the final estimate for $u$.
\end{proof}
The method developed above is completely adapt to the case $n=1$, providing a new estimate for the one-dimensional IBVP \eqref{ibvpde}.
\begin{corollary}\label{magnanimous}
	 Let $p\leq1$, $u_{0}\in H^{2}(\mathbb{R}_{+})$ and $h_{0}\in H^{\frac{5}{4}}(\mathbb{R}_{+})$ satisfy the compatibility condition. For any $T>0$, every solution $u$ to IBVP \eqref{ibvpde} possessing the regularity
	\begin{align}
		t&\mapsto u(\cdot,t) \in C\left([0,T);H^{2}(\mathbb{R}_{+})\right)\cap C^{1}\left([0,T);L^{2}(\mathbb{R}_{+})\right), \\
		x&\mapsto u(x,\cdot) \in  C^{1}\big(\,\overline{\mathbb{R}_{+}\mkern-4mu}\,;H_{\mathrm{loc}}^{\frac{1}{4}}(0,T)\big),		\end{align}
   satisfies the  estimate
	\begin{equation}
		\|u(t)\|_{L^{2}}\lesssim  (1+t) A (\|u_{0}\|_{L^{2}}+\|h_{0}\|_{H^{\frac{1}{4}}})\; e^{tA\big(\left\|u_{0}\right\|_{L^{2}}+\left\|h_{0}\right\|_{H^{\frac{1}{4}}}\big)},
	\end{equation}
	for all $t\in[0,T)$. Here, $A:\overline{\mathbb{R}_{+}}\rightarrow \overline{\mathbb{R}_{+}}$ is a continuous,  non-decreasing function with $A(0)=0$.	
\end{corollary}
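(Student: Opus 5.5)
The plan is to repeat the proof of Proposition \ref{vindictive} verbatim in one space dimension, with $\mathcal{H}^{0}$ replaced by the one-dimensional boundary space $H^{\frac{1}{4}}(\mathbb{R}_{+})$.

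First, let $v$ solve the unforced linear IBVP on the half line,
\[
i\partial_t v+\partial_x^2 v=0,\qquad v(x,0)=u_0(x),\qquad v(0,t)=h_0(t),
\]
and set $w=u-v$. Then $w$ satisfies the analogue of \eqref{bolster} with zero initial and boundary data. Multiply by $\bar w$, take imaginary parts, and integrate over $\mathbb{R}_+\times(0,t)$. The boundary term $\operatorname{Im}\int_0^t (w\,\partial_x\bar w)(0,\tau)\,d\tau$ vanishes because $w(0,\tau)=0$. The term $\operatorname{Im}\int|\partial_x w|^2$ vanishes because it is real. The nonlinear self-interaction $\operatorname{Im}\,|v+w|^p|w|^2$ also vanishes. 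Justifying these integrations by parts is where the assumed regularity enters: $C([0,T);H^2)\cap C^1([0,T);L^2)$ in time, and $x\mapsto u(x,\cdot)\in C^1(\overline{\mathbb{R}_+};H^{1/4}_{\rm loc}(0,T))$, so that $\partial_x u(0,\cdot)$ makes sense. The linear solution $v$ has the same regularity by the one-dimensional linear theory of \cite{chatter,figment}, because $h_0\in H^{5/4}$ and the data are compatible. This leaves
\[
\|w(t)\|_{L^2}^2\lesssim\int_0^t\!\!\int_{\mathbb{R}_+}\bigl(|w|^{p+1}|v|+|v|^{p+1}|w|\bigr)\,dx\,d\tau .
\]

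Second, apply Hölder exactly as in \eqref{inextricable} with $n=1$. The pairs $\bigl(\tfrac{4}{p},\tfrac{2}{1-p}\bigr)$ and $\bigl(\tfrac{4(p+1)}{p},2(p+1)\bigr)$ are one-dimensional admissible pairs, since $\tfrac{2}{q}=\tfrac12-\tfrac1r$ holds for both. The condition $p\le1$ gives $\tfrac{2}{1-p}\in[2,\infty]$; this is exactly where $p\le 1$ replaces $p\le\tfrac{2}{n}$. The endpoint $p=1$, which gives the pair $(4,\infty)$, is allowed in one dimension. The one-dimensional Strichartz estimates for the linear half-line problem with $L^2$ data and $H^{1/4}$ boundary data (Holmer \cite{chatter}) give
\[
\|v\|_{L^q(0,t;L^r)}+\|v\|_{L^\infty(0,t;L^2)}\lesssim \|u_0\|_{L^2}+\|h_0\|_{H^{1/4}} .
\]
These are the $n=1$ counterpart of \eqref{intimate}, with $H^{\frac{2s+1}{4}}$ at $s=0$ in place of $\mathcal{H}^0$.

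Third, since $\tfrac{4(p+1)}{4-p}\le\tfrac{8}{4-p}$, Young's inequality yields
\[
\|w(t)\|_{L^2}^{\frac{8}{4-p}}\le C\,\bigl(t+\textstyle\int_0^t\|w(\tau)\|_{L^2}^{\frac{8}{4-p}}\,d\tau\bigr),
\]
where $C$ depends polynomially on $\|u_0\|_{L^2}+\|h_0\|_{H^{1/4}}$. Gronwall's inequality, together with $u=v+w$ and the $L^\infty L^2$ bound on $v$, then gives the stated estimate.

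The only step needing care is the first one: ensuring that the boundary integrals are well defined and vanish, and that the required Strichartz estimate for $v$ with $H^{1/4}$ boundary data holds, including at the endpoint $(4,\infty)$ when $p=1$. I would cite \cite{chatter} for the linear estimate; it comes from Kato smoothing \eqref{o} and the boundary-operator representation. If the endpoint is unavailable there, I would fall back to interpolating between admissible pairs near it.
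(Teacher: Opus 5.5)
Your proposal is correct and follows the paper's proof essentially line for line. You subtract the solution $v$ of the unforced linear half-line problem and run the $L^2$ energy identity for $w=u-v$, in which the boundary and self-interaction terms vanish. You then close with H\"older using the one-dimensional admissible pairs $\bigl(\tfrac{4}{p},\tfrac{2}{1-p}\bigr)$ and $\bigl(\tfrac{4(p+1)}{p},2(p+1)\bigr)$, the Strichartz bound $\|v\|_{L^qL^r}\lesssim\|u_0\|_{L^2}+\|h_0\|_{H^{1/4}}$, Young and Gronwall, exactly as the paper does. Your remarks on the regularity of $v$ (coming from $h_0\in H^{5/4}$ and compatibility) and on the admissibility of $(4,\infty)$ when $p=1$ fill in details the paper leaves implicit.
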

\begin{proof}
	Let $v$ be the solution to the linear problem:
	\begin{align}
		\left\{\begin{aligned}
			&	i\partial_t v+\partial_{x}^{2} v=0, \quad (x,t) \in \mathbb{R}_{+} \times \mathbb{R}_{+},\\
			&v(x,0)=u_0(x),\quad  v(0,t)=h_0(t).
		\end{aligned}
		\right.		
	\end{align}
	For $v$, the Strichartz estimate (see \cite{chatter,figment}) states that
	\begin{equation}
		\|v\|_{L^{q}(0,t;L^{r}(\mathbb{R}_{+}))}\lesssim \|u_{0}\|_{L^{2}(\mathbb{R}_{+})} + \|h_{0}\|_{H^{\frac{1}{4}}(\mathbb{R}_{+})}.
	\end{equation}
	In the range $p\leq1$,	both the pairs  $\big(\frac{4}{p},\frac{2}{1-p}\big)$ and $\big(\frac{4(p+1)}{p},2(p+1)\big)$ are admissible. The estimate for $w=u-v$ follows by setting  $n=1$ in the proof of Proposition \ref{vindictive}, this completes the proof.
\end{proof}
Proposition \ref{vindictive} will be applied to the case $s=0$, $n\geq2$ in Theorem \ref{new L2}. We now establish the following proposition to prepare for proving the case $s\in(0,1)$, $n\geq2$ in Theorem \ref{new L2}.
\begin{proposition}\label{pragmatic}
	Suppose $s\in(0,1)$ and $p\leq \min\{1,  \frac{2(s+1)}{n}\}$. Let $u_{0}\in H^{2}(\mathbb{R}^{n}_{+})$ and $h_{0}\in\mathcal{H}^{2}(\mathbb{R}^{n}_{+})$ satisfy the compatibility condition. Then for any solution $u$ to IBVP \eqref{ibvp} that satisfies \eqref{assuage}--\eqref{enigmatic},  the  estimate
	\begin{equation}
		\|u(t)\|_{L^{2}}\lesssim  (1+t) A\left(\|u_{0}\|_{H^{s}}+\|h_{0}\|_{\mathcal{H}^{s}}\right) e^{tA\left(\|u_{0}\|_{H^{s}}+\|h_{0}\|_{\mathcal{H}^{s}}\right)}
	\end{equation}
	holds for all $t\in[0,T)$. Here, $A:\overline{\mathbb{R}_{+}}\rightarrow \overline{\mathbb{R}_{+}}$ is a continuous, non-decreasing function with $A(0)=0$.	
\end{proposition}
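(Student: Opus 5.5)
We mirror the proof of Proposition \ref{vindictive}, splitting $u=v+w$ with $v$ solving the unforced linear IBVP \eqref{coagulate} with data $(u_0,h_0)$ and $w=u-v$ solving \eqref{bolster} with zero initial and boundary data. The regularity \eqref{assuage}--\eqref{enigmatic} justifies the energy identity, so integrating \eqref{ineffable} exactly as in \eqref{pulchritudinous} removes the boundary term and the Laplacian term. This yields
\begin{equation*}
\|w(t)\|_{L^2}^2\lesssim \int_0^t\int_{\mathbb{R}^n_+}\left(|w|^{p+1}|v|+|v|^{p+1}|w|\right)dx\,d\tau .
\end{equation*}
The difference from the $s=0$ case is that now $p$ may exceed $\frac{2}{n}$, up to $\frac{2(s+1)}{n}$. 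In that range the $L^2$-Strichartz control of $v$ alone no longer places the needed Lebesgue exponents in the admissible range. We will instead control $v$ in $W^{s,r}$-Strichartz norms, using \eqref{confine} with $f=0$ (or \eqref{intimate} when no derivative is needed). This gives $\|v\|_{L^q(0,t;W^{s,r})}+\|v\|_{L^\infty(0,t;H^s)}\lesssim \|u_0\|_{H^s}+\|h_0\|_{\mathcal{H}^s}$ for every admissible $(q,r)$, combined with Sobolev embedding $W^{s,r}\hookrightarrow L^{\frac{nr}{n-sr}}$.

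\textbf{Hölder step.} For the term $\int\!\int |v|^{p+1}|w|$, we put $w\in L^\infty_tL^2_x$ on its $L^2$ norm. We then need $v\in L^{(p+1)a}_tL^{2(p+1)}_x$ with a finite time exponent leaving room $t^{\theta}$, $\theta>0$. We choose an admissible pair $(q,r)$ with $W^{s,r}\hookrightarrow L^{2(p+1)}$. Since $p\le 1$, we have $2(p+1)\le 4$, and the scaling condition $p\le \frac{2(s+1)}{n}$ is exactly what makes $\frac{n}{2}(\frac12-\frac1{2(p+1)})-\frac{s}{2}$ compatible with a time exponent $q\ge (p+1)$.

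For the term $\int\!\int |w|^{p+1}|v|$, we use $\|w\|_{L^2}^{p+1}$ in space. The condition $p\le 1$ is exactly what makes $|w|^{p+1}\in L^{\frac{2}{p+1}}_x$ with $\frac{2}{p+1}\ge 1$. The dual exponent $\frac{2}{1-p}$ must then be reached by $v$ through $W^{s,r}\hookrightarrow L^{\frac{2}{1-p}}$ for an admissible $r$. This is the analogue of the pair $(\frac4{np},\frac2{1-p})$, with the extra $s$ derivatives absorbing the excess of $p$ over $\frac2n$. The endpoint $p=1$ is handled by $v\in L^1_tL^\infty_x$-type bounds through the embedding for $s>0$.

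After Hölder in time we obtain, for some exponents $\kappa\ge2$ and $a,b>0$,
\begin{equation*}
\|w(t)\|_{L^2}^{\kappa}\le C\left(\|u_0\|_{H^s}+\|h_0\|_{\mathcal{H}^s}\right)\left(t+\int_0^t\|w(\tau)\|_{L^2}^{\kappa}d\tau\right),
\end{equation*}
as in \eqref{venerate}, using Young's inequality to homogenize the powers $p+1$ and $1$ of $\|w\|_{L^2}$. Gronwall then gives the exponential bound. Adding $\|v\|_{L^\infty_tL^2}\lesssim \|u_0\|_{L^2}+\|h_0\|_{\mathcal{H}^0}$ yields the claim for $u$.

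\textbf{Main obstacle.} The main difficulty is the exponent bookkeeping. For each of the two terms we must exhibit an admissible $(q,r)$ such that the Sobolev target $\frac{nr}{n-sr}$ (or an interpolated exponent between $r$ and it) equals $2(p+1)$, respectively $\frac{2}{1-p}$. We also need the resulting time exponent to be at least the one required by Hölder, strictly larger when a power $t^\theta$ is needed.

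We would first try the explicit choice $\frac1r=\frac1{2(p+1)}+\frac{s'}{n}$ with $s'\le s$, adjusting $s'$ downward so that $r\in[2,\frac{2n}{n-2}]$. We would then check the time condition, which should reduce to $np\le 2(s+1)$. Separately we would verify the $n=2$ non-endpoint exclusion, which is where the restriction $(p,n)\ne(1,2)$ would appear.
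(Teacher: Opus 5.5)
Your proposal is essentially the paper's proof. It has the same splitting $u=v+w$, the same spatial H\"older bound $\|w\|_{L^2}^{p+1}\|v\|_{L^{\frac{2}{1-p}}}+\|v\|_{L^{2(p+1)}}^{p+1}\|w\|_{L^2}$, the same Strichartz bounds \eqref{confine} plus Sobolev embedding for $v$, and then Young and Gronwall. The paper merely defers the range already covered by Proposition \ref{vindictive} and treats two cases separately:
\begin{itemize}
\item $n=2$, where only $p=1$ is new, via $W^{s,r}\hookrightarrow L^\infty$ and the pair $(4,4)$;
\item $n>2$, via $v\in L^2_tW^{s,\frac{2n}{n-2}}$ and $W^{s,\frac{2n}{n-2}}\hookrightarrow L^{\frac{2}{1-p}}$.
\end{itemize}

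Two corrections to your bookkeeping:
\begin{itemize}
\item The hypothesis $p\le\frac{2(s+1)}{n}$ is what lets $v$ reach $L^{\frac{2}{1-p}}$. The time condition $q\ge p+1$ in the other term is automatic, since admissible $q\ge 2\ge p+1$.
\item No exclusion $(p,n)\neq(1,2)$ arises in this statement. That case is exactly what your $p=1$ argument via $L^\infty$ covers.
\end{itemize}
Note also that this $L^\infty$ route needs $s>\frac{n-2}{2}$ strictly. So, like the paper's own argument, it does not cover the borderline point $(n,p,s)=(3,1,\frac12)$, where $W^{\frac12,6}(\mathbb{R}^3_+)\not\hookrightarrow L^\infty$.
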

\begin{proof}
	We define $v$ as the solution of \eqref{coagulate} and $w:=u-v$.  Since the estimate for $v$ is known, it suffices to bound $w$. Applying H\"{o}lder's inequality with respect to $x$ in \eqref{pulchritudinous}, we obtain
	\begin{equation}\label{esoteric}
		\begin{aligned}
			\|w(t)\|_{L^{2}}^{2}& \lesssim \int_{0}^{t} \|w(\tau)\|_{L^{2}}^{p+1}\|v(\tau)\|_{L^{\frac{2}{1-p}}}+ \|v(\tau)\|_{L^{2(p+1)}}^{p+1}\|w(\tau)\|_{L^{2}} d\tau.
		\end{aligned}
	\end{equation}
	We first treat the case $n=2$, for which only  $p=1$ remains, since $p\in(0,1)$ is already proved in Proposition \ref{vindictive}. For any $s>0$, choose $r\in(2,\infty)$ such that $W^{s,r}(\mathbb{R}^{2}_{+})\hookrightarrow L^{\infty}(\mathbb{R}^{2}_{+})$, and let $q\in(2,\infty)$ be such that $(q,r)$ is an admissible pair. Combining this with \eqref{esoteric} leads to
	\begin{equation}
		\begin{aligned}
			\|w\|_{L^{\infty}(0,t;L^{2})}^{2}\lesssim\|v\|_{L^{q}(0,t;W^{s,r})}\|w\|_{L^{2q'}(0,t;L^{2})}^{2}+ t^{\frac{1}{2}}\|v\|_{L^{4}(0,t;L^{4})}^{2}\|w\|_{L^{\infty}(0,t;L^{2})}.
		\end{aligned}
	\end{equation}
	Note that $(4,4)$ is an admissible pair for $n=2$. Applying Young's inequality, we have
	\begin{equation}
		\begin{aligned}	\|w\|_{L^{\infty}(0,t;L^{2})}^{2}&\lesssim\|v\|_{L^{q}(0,t;W^{s,r})}\|w\|_{L^{2q'}(0,t;L^{2})}^{2}+ t\|v\|_{L^{4}(0,t;L^{4})}^{4}\\&\lesssim C(\|u_{0}\|_{L^{2}}+\|h_{0}\|_{\mathcal{H}^{s}}) \left(1+t+\|w\|_{L^{2q'}(0,t;L^{2})}^{2}\right),
		\end{aligned}
	\end{equation}
	where $C(x)=a_{1}x^{b_{1}}+a_{2}x^{b_{2}}$ with $a_{i},b_{i}>0$ for $i=1,2$. From Gronwall's inequality, we bound $w$ as follows:
	\begin{equation}
		\|w\|_{L^{\infty}(0,t;L^{2})}\lesssim (1+t)^{\frac{1}{2}} C(\|u_{0}\|_{H^{s}}+\|h_{0}\|_{\mathcal{H}^{s}})  e^{tC(\|u_{0}\|_{H^{s}}+\|h_{0}\|_{\mathcal{H}^{s}})}.
	\end{equation}
	
	For $n>2$, the estimate in the range $p\leq \frac{2}{n}$ has been proved in Proposition \ref{vindictive}; we therefore consider the remaining range $\frac{2}{n}<p\leq\frac{2(s+1)}{n}$. A direct  computation gives
	\begin{equation}
		\frac{n-2}{2n}-\frac{s}{n}\leq \frac{1-p}{2}< \frac{n-2}{2n},
	\end{equation}
	which implies $W^{s,\frac{2n}{n-2}}(\mathbb{R}^{n}_{+})\hookrightarrow L^{\frac{2}{1-p}}(\mathbb{R}^{n}_{+})$. On the other hand,  from computation we can show for any $p$, there exists $r\in [2,\frac{2}{n-2}]$, such that $W^{s,r}(\mathbb{R}^{n}_{+})\hookrightarrow L^{2(p+1)}(\mathbb{R}^{n}_{+})$. Indeed:
	\begin{itemize}
		\item  when $\frac{2}{n}<p\leq \frac{2}{n-2}$, we have $2<2(p+1)\leq \frac{2n}{n-2}$,
		\item  when $\frac{2}{n-2}\leq p\leq \frac{2(s+1)}{n}$, we have $W^{s,\frac{2n}{n-2}}(\mathbb{R}^{n}_{+})\hookrightarrow L^{2(p+1)}(\mathbb{R}^{n}_{+})$.
	\end{itemize}
	Hence, we can choose $q\in[2,\infty]$ such that $(q,r)$ is an admissible pair. Applying these two embeddings  to \eqref{esoteric}, and then using the estimate \eqref{confine} for $v$, we obtain
	\begin{equation}
		\begin{aligned}
			\|w\|_{L^{\infty}(0,t;L^{2})}^{2}&\lesssim \|v\|_{L^{2}(0,t;W^{s,\frac{2n}{n-2}})} \|w\|_{L^{2(p+1)}(0,t;L^{2})}^{p+1}+t^{1-\frac{p+1}{q}}\|v\|_{L^{q}(0,t;W^{s,r})}^{p+1} \|w\|_{L^{\infty}(0,t;L^{2})}\\&\lesssim C(\|u_{0}\|_{H^{s}}+\|h_{0}\|_{\mathcal{H}^{s}})\left(\|w\|_{L^{2(p+1)}(0,t;L^{2})}^{p+1}+(1+t) \|w\|_{L^{\infty}(0,t;L^{2})}\right).
		\end{aligned}
	\end{equation}
	A use of Young's inequality yields
	\begin{equation}
		\|w\|_{L^{\infty}(0,t;L^{2})}\leq C(\|u_{0}\|_{H^{s}}+\|h_{0}\|_{\mathcal{H}^{s}}) (1+t+\|w\|_{L^{2(p+1)}(0,t;L^{2})}).
	\end{equation}
	Applying  Gronwall's inequality to this estimate leads to
	\begin{equation}
		\|w\|_{L^{\infty}(0,t;L^{2})}\lesssim (1+t) C(\|u_{0}\|_{H^{s}}+\|h_{0}\|_{\mathcal{H}^{s}})  e^{tC(\|u_{0}\|_{H^{s}}+\|h_{0}\|_{\mathcal{H}^{s}})}.
	\end{equation}
\end{proof}
\subsection{Regularity analysis for $s\in[0,1)$}\label{painstakingly}
This section provides the regularity analysis for  $s\in[0,1)$. We still rely on the space $\mathscr{X}^{s,p}_T$ given by Definitions \ref{xspt} and \ref{xspt1}, and benefit from techniques similar to those in Proposition~\ref{zealous1}.
\begin{proposition}\label{laudatory}
	Suppose $s\in(0,1)$ and $p\leq 1$. Further assume that $p\leq \frac{6-4s}{3(n-2)}$ when $n\geq6$, and  $p<\frac{2}{n-2}$ when $n<6$.  Let $u_{0}\in H^{s}(\mathbb{R}^{n}_{+})$ and $h_{0}\in \mathcal{H}^{s}(\mathbb{R}^{n}_{+})$ satisfy the compatibility condition. For any $0<T \leq
	\infty$, if $u \in \mathscr{X}^{0,p}_T$, then  $u \in \mathscr{X}^{s,p}_T$.
\end{proposition}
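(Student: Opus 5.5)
The plan is a continuation argument modeled on the proof of Proposition \ref{zealous1}. By Theorem \ref{chaste} there is a maximal $H^{s}$ solution in $\mathscr{X}^{s,p}_{T_{\max}}$. Uniqueness at the $L^2$ level then identifies it with $u$ on $[0,T_{\max})$; for $s<1$ this uses $p\le\frac{2+2s}{n-2s}$, or else the extra Strichartz integrability that $u\in\mathscr{X}^{0,p}_T$ carries. It therefore suffices to assume $T_{\max}<T$ and show that $\|u(t)\|_{H^{s}}$ stays bounded on $[0,T_{\max})$, which contradicts the blow-up criterion (ii). Since $u\in\mathscr{X}^{0,p}_T$ and $T_{\max}<T$, the a priori information available is $u\in C([0,T_{\max}];L^2)\cap L^{q}(0,T_{\max};L^{r})$ for every admissible pair $(q,r)$.

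For the $H^s$ bound I would use the linear estimate \eqref{buffoon} with $s\in(0,1)$. I would choose $(\alpha,\beta)$ on the line $\frac1\alpha+\frac{n}{2\beta}=1+\frac{3n-4s}{12}$, avoiding the excluded pair when $n=2$. This gives
\[
\|u(t)\|_{H^s}\lesssim \|u_0\|_{H^s}+\|h_0\|_{\mathcal{H}^s}+\langle t\rangle^{\frac s3}\|F(u)\|_{L^{\alpha}(0,t;W^{s,\beta})}.
\]
Lemma \ref{accessory} then gives $\|F(u)\|_{W^{s,\beta}}\lesssim \|u\|_{L^{\sigma}}^{p}\|u\|_{H^{s}}$ with $\frac1\beta=\frac12+\frac p\sigma$.

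For $n\ge3$ I would take $\alpha=2$ and $\beta=\frac{6n}{3n+6-4s}$, as in \eqref{apt}. This forces $\sigma=\frac{3np}{3-2s}$ and yields the Gronwall-type inequality
\[
\|u(t)\|_{H^s}^2\lesssim C_0+\langle t\rangle^{\frac{2s}{3}}\int_0^t\|u(\tau)\|_{L^{\sigma}}^{2p}\|u(\tau)\|_{H^s}^2\,d\tau .
\]
Closing it requires $\int_0^{T_{\max}}\|u\|_{L^\sigma}^{2p}\,d\tau<\infty$, i.e.\ $u\in L^{\tilde q}(0,T_{\max};L^{\sigma})$ with $\tilde q\ge 2p$. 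We know this for the admissible pair with $r=\sigma$, provided $\sigma\in[2,\frac{2n}{n-2}]$; its time exponent $q=\frac{4\sigma}{n(\sigma-2)}$ must then satisfy $q\ge 2p$. The condition $\sigma\le\frac{2n}{n-2}$ is exactly $p\le\frac{2(3-2s)}{3(n-2)}=\frac{6-4s}{3(n-2)}$, which is the stated hypothesis for $n\ge6$. When $\sigma<2$ (small $p$), I would instead use the $L^2$ bound and Hölder on the half-space truncation, or change $(\alpha,\beta)$ along the admissible line so that $\sigma$ lands back in $[2,\frac{2n}{n-2}]$.

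For $n<6$ the restriction is $p<\frac{2}{n-2}$ instead. Here I expect one has to move $(\alpha,\beta)$ along the line (with $\alpha<2$) to gain room in $\sigma$, choosing $\sigma$ slightly below $\frac{2n}{n-2}$ and checking $\alpha p<q$ via Hölder in time, similar to the $n=2$ treatment \eqref{wallet}. For $n=2$ I would use $(\alpha,\beta)=(\frac{3}{3-s},2)$ type choices, or $\beta$ slightly above $1$, with $\sigma<\infty$ arbitrary, since every $r<\infty$ is admissible.

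If a single choice fails near the endpoint, I would bootstrap in $s$ as in \eqref{421+}--\eqref{423+}: once $u\in\mathscr{X}^{s_k,p}_T$, the bound $u\in L^q W^{s_k,r}$ and Sobolev embedding give $L^\sigma$ control for larger $\sigma$, which lets me raise $s$. Once Gronwall bounds $\|u(t)\|_{H^s}$ on $[0,T_{\max})$, the contradiction gives $T_{\max}\ge T$. The remaining properties in Definition \ref{xspt} then follow from Theorem \ref{chaste}(iii).

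The main obstacle is the exponent bookkeeping: finding one $(\alpha,\beta,\sigma,q)$ for which both $\sigma$ is a Strichartz-admissible spatial exponent (using only $L^2$-level information) and $\alpha p<q$. This is where the thresholds $\frac{6-4s}{3(n-2)}$ and $\frac2{n-2}$ must come out.
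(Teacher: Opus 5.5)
Your skeleton matches the paper's: a blow-up contradiction, then \eqref{buffoon} combined with Lemma \ref{accessory}, then Gronwall. Your $n\ge3$ computation for $p\le\frac{6-4s}{3(n-2)}$ is also correct, and the time condition there is automatic, since $q\ge2\ge2p$ by $p\le1$. For $\sigma<2$ only your second remedy works. $L^2$ does not control $L^\sigma$ with $\sigma<2$ on the infinite-measure half-space. The paper avoids this case split by always taking the endpoint spatial exponent $\sigma=\frac{2n}{n-2}$, $\frac1\beta=\frac12+\frac{p(n-2)}{2n}$, and letting $\alpha\le2$ adjust. For $n=2$, the choice $\beta=2$ forces $\sigma=\infty$, which is not admissible; you need $\beta<2$ close to $2$ with $\frac1\beta-\frac12\le\frac p2$.

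The genuine gap is the case $n\in\{3,4,5\}$ with $\frac{6-4s}{3(n-2)}<p<\frac2{n-2}$, which your proposal does not actually prove.
\begin{itemize}
\item \emph{Your main suggestion cannot work.} Along the line $\frac1\alpha+\frac n{2\beta}=1+\frac{3n-4s}{12}$ one has $\frac p\sigma=\frac1\beta-\frac12\le\frac{6-4s}{6n}$, with equality only at $\alpha=2$. So taking $\alpha<2$ only increases $\sigma$ beyond $\frac{3np}{3-2s}$.
\item \emph{So the bootstrap is the only route.} With $L^2$-level Strichartz information ($\sigma\le\frac{2n}{n-2}$), the argument is capped at exactly $p\le\frac{6-4s}{3(n-2)}$ for every $n\ge3$. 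The bootstrap in $s$ is therefore not a fallback but the only available argument, and it must be verified.
\end{itemize}

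Three things are needed to close it.
\begin{itemize}
\item \emph{Base case.} Since $\frac{6-4s}{3(n-2)}\uparrow\frac2{n-2}$ as $s\downarrow0$ and $p<\frac2{n-2}$ strictly, the direct argument already gives $u\in\mathscr{X}^{s,p}_T$ for all $s\in(0,\varepsilon]$.
\item \emph{Step.} Take $\alpha=2$ and $s'<s$, and use the endpoint bound $u\in L^2(0,T_{\max};W^{s',\frac{2n}{n-2}})$; here $2p\le2$ is where $p\le1$ enters. Sobolev embedding into $L^{\frac{3np}{3-2s}}$ then requires $\frac{n-2}{2n}-\frac{s'}{n}<\frac{3-2s}{3np}$, i.e. $p<\frac{2(3-2s)}{3(n-2-2s')}$. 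This is vacuous when $n-2-2s'\le0$.
\item \emph{Uniformity over the steps.} With geometric steps $s'=rs$, the condition becomes $p<\frac{2(3-2x)}{3(n-2-2rx)}$ for all $x\in(0,1]$. This function tends to $\frac2{n-2}$ as $x\to0$ and is increasing in $x$ iff $3r>n-2$, so $r=\frac34$ works for $n=3,4$. For $n=5$ it is decreasing, with minimum $\frac{2}{3(3-2r)}$, which tends to $\frac23$ as $r\to1$, so $r$ must be taken close to $1$.
\end{itemize}
This computation is exactly where the hypothesis $p<\frac2{n-2}$ for $n<6$ comes from. It is the paper's argument with $s_k=r^k$; without it, the $n<6$ case of the statement is not established.
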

\begin{proof}
	Following the contradiction scheme of Proposition \ref{zealous1}, we assume $0<T_{\max}<T$ and proceed to bound $\|u(t)\|_{H^{s}(\mathbb{R}^{n}_{+})}$ on $[0,T_{\max})$. For $n=2$, select $\beta\in \left(\frac{3n}{3n+6-4s},2\right)$ satisfying $\frac{1}{\beta}-\frac{1}{2}\leq \frac{p}{2}$, and take $\alpha\in\left(\frac{3}{3-s},2\right)$ such that $(\alpha,\beta)$ satisfy the condition in \eqref{buffoon}. Define $\sigma$ by $\frac{1}{\sigma}=\frac{1}{\beta}-\frac{1}{2}$. Applying \eqref{buffoon} together with \eqref{riesz} yields
	\begin{equation}
		\|u(t)\|_{H^{s}}^{\alpha}\lesssim \|u_{0}\|_{H^{s}}^{\alpha}+\|h_{0}\|_{\mathcal{H}^{s}}^{\alpha}+ \langle t\rangle^{\frac{\alpha}{3}}\int_{0}^{t} \|u(\tau)\|_{L^{\sigma p}}^{\alpha p} \|u(\tau)\|_{H^{s}}^{\alpha}  d\tau.
	\end{equation}
	Note that $\sigma p\in[2,\infty)$ and $\alpha p<2$, and $u\in\mathscr{X}^{0,p}_{T}$ implies that $$u\in L^{\alpha p}(0,T_{\max};L^{\sigma p}(\mathbb{R}^{2}_{+})).$$ The bound for  $\|u(t)\|_{H^{s}(\mathbb{R}^{2}_{+})}$ then follows from Gronwall's inequality.
	
	We now consider $n>2$ with  $p\leq\frac{6-4s}{3(n-2)}$. Define $\sigma=\frac{2n}{p(n-2)}$, then take  $\beta$ such that $\frac{1}{\beta}=\frac{1}{2}+\frac{1}{\sigma}$, and the range of $p$ implies  $\beta\in \left[\frac{3n}{3n+6-4s},2\right)$.  Then we can choose  $\alpha\in\left(\frac{3}{3-s},2\right]$ such that $(\alpha,\beta)$ satisfy the condition of the estimate \eqref{buffoon}. Combining \eqref{buffoon} with \eqref{riesz}, we have
	\begin{equation}
		\begin{aligned}
			\|u(t)\|_{H^{s}}^{\alpha}&\lesssim \|u_{0}\|_{H^{s}}+\|h_{0}\|_{\mathcal{H}^{s}}+ \int_{0}^{t} \|F(u)(\tau)\|_{W^{s,\beta}}  d\tau\\& \lesssim  \|u_{0}\|_{H^{s}}+\|h_{0}\|_{\mathcal{H}^{s}}+ \langle t\rangle^{\frac{\alpha}{3}}\int_{0}^{t} \|u(\tau)\|_{L^{p\sigma}}^{\alpha p} \|u(\tau)\|_{H^{s}}  d\tau.
		\end{aligned}
	\end{equation}
	Note that $p\sigma=\frac{2n}{n-2}$ and $u\in\mathscr{X}^{0,p}_{T}$ implies that $u\in L^{\alpha p}\big(0,T_{\max};L^{\frac{2n}{n-2}}\big)$. Thus we obtain the bound for $\|u(t)\|_{H^{s}(\mathbb{R}^{n}_{+})}$.    This means that we have completed the proof of the case $n\geq6$ with $p\leq \frac{6-4s}{3(n-2)}$.

Next, we prove the case $n<6$ with $p<\frac{2}{n-2}$. Observe that  $\frac{6-4s}{3(n-2)}<\frac{2}{n-2}$ for $s\in(0,1)$.  The proof above shows that the proposition holds for $n<6$ with $p<\frac{6-4s}{3(n-2)}$. Therefore, we only need to prove  the remaining range $n<6$ with $\frac{6-4s}{3(n-2)}<p <\frac{2}{n-2}$. The above argument shows that for any  $p<\frac{2}{n-2}$, there exists some $\varepsilon>0$ such that  $u\in\mathscr{X}^{0,p}_{T}$ implies $u\in \mathscr{X}^{s,p}_{T}$ for $s\in (0,\varepsilon]$. We now extend this result to the full range $s\in(0,1)$. Taking $(\alpha,\beta)=\left(2,\frac{6n}{3n+6-4s}\right)$ in estimate \eqref{buffoon} yields
	\begin{equation}
		\|u(t)\|_{H^{s}}^{2}\lesssim \|u_{0}\|_{H^{s}}^{2}+\|h_{0}\|_{\mathcal{H}^{s}}^{2}+ \langle t\rangle^{\frac{2s}{3}} \int_{0}^{t} \|u(\tau)\|_{L^{\frac{3np}{3-2s}}}^{2p} \|u(\tau)\|_{H^{s}}^{2}d\tau.
	\end{equation}
	Following the procedure used in  Proposition \ref{zealous1}, we define $s_{k}=r^{k}$ with $r\in(0,1)$. The proof reduces to verifying that for any $k\geq0$, if $u\in\mathscr{X}^{s_{k+1},p}_{T}$, then $u\in\mathscr{X}^{s,p}_{T}$ for $s\in(s_{k+1},s_{k}]$. Since $u\in\mathscr{X}^{s_{k+1},p}_{T}$ implies $u\in L^{2}(0,T_{\max};W^{s_{k+1},\frac{2n}{n-2}})$, it
	this is then converted to  deriving the embedding $W^{s_{k+1},\frac{2n}{n-2}}\hookrightarrow L^{\frac{3np}{2-2s}}$ for $s\in(s_{k+1},s_{k}]$, provided that
	\begin{equation}\label{laudable}
		\frac{n-2}{2n}-\frac{s_{k+1}}{n}< \frac{3-2s}{3np}\leq \frac{n-2}{2n}.
	\end{equation}
	The inequality  $\frac{3-2s}{3np}\leq \frac{n-2}{2n}$ follows from the assumption $p> \frac{6-4s}{3(n-2)}$. Thus, only the left-hand inequality needs to be satisfied, which requires
	\begin{equation}
		p< f_{r}(k)=\frac{2(3-2r^{k})}{3(n-2-2r^{k+1})}\quad\text{for}\quad k\geq0 .
	\end{equation}
	By computation, for $n=3,4$, we have $p<\frac{2}{n-2}\leq f_{\frac{3}{4}}(k)$.  For $n=5$, there exists $r\in(0,1)$ such that $p<f_{r}(0)\leq\frac{2}{n-2}$; and note that $f_{r}(k)\rightarrow \frac{2}{n-2}$ as $k\rightarrow\infty$, then $p<f_{r}(k)$ holds. Therefore, inequality \eqref{laudable} holds for all $k$, which completes the proof.
\end{proof}
\begin{corollary}\label{parsimony}
	Suppose $s\in(0,1)$, $p\leq 1$, $u_{0}\in H^{s}(\mathbb{R}_{+})$ and $h_{0}\in H^{\frac{2s+1}{4}}(\mathbb{R}_{+})$ satisfy the compatibility condition. For any $0<T \leq
	\infty$, if
	\begin{equation*}
		u\in  C([0,T);L^{2}(\mathbb{R}_{+}))\cap C\big(\overline{\mathbb{R}_{+}};H^{\frac{1}{4}}_{\mathrm{loc}}(0,T)\big)
	\end{equation*}
	satisfies \eqref{ibvpde} and $u\in\bigcap_{(q,r)\, \text{admissible}} L^{q}_{\mathrm{loc}}\left(\left[0,T\right);L^{r}(\mathbb{R}_{+})\right)$, then $u \in C([0,T);H^{s}(\mathbb{R}_{+}))$.
\end{corollary}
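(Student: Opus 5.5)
The plan is to run the same contradiction scheme as in Proposition \ref{laudatory}, now in dimension one. By Holmer's local theory \cite{chatter} (and \cite{figment}) for \eqref{ibvpde}, there is a maximal $H^{s}(\mathbb{R}_{+})$ solution on $[0,T_{\max})$. Since $s<\frac12$ or $s\ge\frac12$ with $p\le1$, the uniqueness argument of Theorem \ref{chaste} applies (via Lemma \ref{holmer} in the class $\bigcap L^q_{\mathrm{loc}}L^r$), so this solution coincides with $u$ on $[0,\min\{T,T_{\max}\})$. Assume $T_{\max}<T$. It then suffices to bound $\|u(t)\|_{H^{s}(\mathbb{R}_{+})}$ on $[0,T_{\max})$, which contradicts the blow-up criterion.

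For the bound, I would use the one-dimensional analogue of \eqref{buffoon}. This is the Duhamel/Strichartz estimate of \cite{chatter,figment}: $u$ is written as in \eqref{turbine} with $f=-\lambda F(u)$, and the boundary operator $S(t)$ is controlled through the Kato smoothing estimate \eqref{o} and its inhomogeneous version \eqref{faction}, interpolated between $s=0$ and higher $s$. The target inequality is
\begin{equation*}
\|u(t)\|_{H^{s}}\lesssim \|u_{0}\|_{H^{s}}+\|h_{0}\|_{H^{\frac{2s+1}{4}}}+\langle t\rangle^{c}\,\|F(u)\|_{L^{\alpha}(0,t;W^{s,\beta})},
\end{equation*}
valid for suitable $1\le\alpha,\beta\le2$ with a dual-admissible scaling relation.

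Next, by Lemma \ref{accessory}, $\|F(u)\|_{W^{s,\beta}}\lesssim\|u\|_{L^{\sigma p}}^{p}\|u\|_{H^{s}}$, where $\frac1\beta=\frac12+\frac1\sigma$. I would choose $\beta$ close to $2$ so that $\sigma p\in[2,\infty]$. The point is that in $n=1$ every $r\in[2,\infty]$ is admissible, with $q=\frac{4r}{r-2}\ge4$. So $u\in L^{q}_{\mathrm{loc}}L^{\sigma p}$ with $q$ large, and in particular $\alpha p<q$, since $\alpha\le2$ and $p\le1$. Raising the inequality to the power $\alpha$ and applying Hölder in time gives
\begin{equation*}
\|u(t)\|_{H^{s}}^{\alpha}\lesssim C_0+\langle t\rangle^{c'}\int_{0}^{t}\|u(\tau)\|_{L^{\sigma p}}^{\alpha p}\|u(\tau)\|_{H^{s}}^{\alpha}\,d\tau .
\end{equation*}
Gronwall's inequality then yields boundedness on $[0,T_{\max})$, because $\|u\|_{L^{\sigma p}}^{\alpha p}$ is integrable on $(0,T_{\max})$ by the hypothesis $u\in\bigcap L^q_{\mathrm{loc}}L^r$. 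The $L^{2}$ level needs no separate input here, since it is contained in the assumed Strichartz integrability.

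The main obstacle is the first step: justifying the one-dimensional linear estimate with a forcing norm $L^{\alpha}(0,t;W^{s,\beta})$ with $\alpha<2$ or $\beta<2$, while keeping an $H^{\frac{2s+1}{4}}$ time-trace bound. This is where the paper's Proposition \ref{p33} relied on $\mathcal{H}^{s}$ and the Kato estimate. I would mimic that proof: first prove the $s=\frac32$ (or $s=1$) endpoint in $L^{2}_tH^{s}$ using \eqref{faction} and the equation, then interpolate with the $s=0$ Strichartz/trace estimate from \cite{chatter}. If this interpolation is delicate near $s=\frac12$ (compatibility), I would fall back on the simpler choice $\alpha=1$, $\beta=2$. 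That choice uses $\|F(u)\|_{L^1H^s}\lesssim\int\|u\|_{L^\infty}^p\|u\|_{H^s}$, and only requires $u\in L^{p}_{\mathrm{loc}}L^{\infty}$. This holds because $(4,\infty)$ is admissible and $p\le1<4$.
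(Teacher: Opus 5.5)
Your main route is the paper's. You use a one-dimensional Duhamel/time-trace estimate with forcing in $L^{\alpha}_tW^{s,\beta}_x$, then Lemma \ref{accessory}, then integrability supplied by the assumed Strichartz bounds, and finally Gronwall inside the blow-up contradiction. The paper simply takes $(\alpha,\beta)=(2,2)$, citing $\|u\|_{C([0,t];H^{s})}\lesssim\|u_0\|_{H^s}+\|h_0\|_{H^{\frac{2s+1}{4}}}+\langle t\rangle^{\frac12}\|F(u)\|_{L^2(0,t;H^s)}$ from \cite{figment}. It combines this with $\|F(u)\|_{H^s}\lesssim\|u\|_{L^\infty}^{p}\|u\|_{H^s}$ and the admissible pair $(4,\infty)$, so that $2p\le 2<4$. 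You do not need $\beta<2$, since Lemma \ref{accessory} allows $q_2=\infty$. Your sketch of deriving the linear estimate (an $s=\frac32$ endpoint via \eqref{faction} and the equation, interpolated with $s=0$) matches the mechanism of Proposition \ref{p33}.

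Your fallback $(\alpha,\beta)=(1,2)$, however, fails exactly where you propose to use it.
\begin{itemize}
\item In the decomposition \eqref{turbine}, the boundary operator needs the time trace of the Duhamel term in $H^{\frac{2s+1}{4}}_t$. An $L^1_tH^s_x$ bound for that trace (the one-dimensional analogue of Proposition \ref{tardy}) relies on multiplication by $\chi_{\{t>\tau\}}$ being bounded on $H^{\frac{2s+1}{4}}(\mathbb{R}_t)$. This holds only when $\frac{2s+1}{4}<\frac12$, i.e. $s<\frac12$.
\item For $s\in[\frac12,1)$ the bound is false. Take forcing concentrating at a time $\tau_0$, $f\approx\delta(\tau-\tau_0)\phi$ with $\phi$ smooth and $\phi(0)\neq0$. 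It has bounded $L^1_tH^s_x$ norm, but the trace of the Duhamel term jumps by $\phi(0)$ at $t=\tau_0$, and a jump is not in $H^{\frac{2s+1}{4}}_t$ once $\frac{2s+1}{4}\ge\frac12$.
\item The nonlinearity gives no escape, since its boundary value is $|h_0|^{p}h_0\neq0$.
\item The same obstruction appears in the scaling relation. For $n=1$ the relation of Proposition \ref{p33} reads $\frac1\alpha+\frac1{2\beta}=\frac{15-4s}{12}$, which with $\beta=2$ forces $\alpha=\frac{3}{3-s}>1$.
\end{itemize}
So restrict the fallback to $s<\frac12$, or drop it. Whatever care the compatibility near $s=\frac12$ requires has to be handled inside the $L^{\frac{3}{3-s}}_tH^s$ estimate (and hence the $L^2_tH^s$ estimate, by H\"older in time); the $L^1_t$ shortcut is not available there.
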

\begin{proof}
	Within the same framework as Proposition \ref{laudatory}, we proceed to obtain an $H^{s}$ bound. According to the linear estimate in \cite{figment}, we can obtain a unified linear estimate
	\begin{equation*}
		\|u\|_{C([0,t];H^{s})} \lesssim  \|u_{0}\|_{H^{s}}+ \|h_{0}\|_{H^{\frac{2s+1}{4}}}+ \langle t\rangle^{\frac{1}{2}} \|F(u)\|_{L^{2}(0,t;H^{s})}.
	\end{equation*}
	Applying it to \eqref{ibvpde} gives
	\begin{equation}\label{475}
		\begin{aligned}
			\|u(t)\|_{H^{s}}^{2}\lesssim  \|u_{0}\|_{H^{s}}^{2}+\|h_{0}\|_{H^{\frac{2s+1}{4} }}^{2}+ \langle t\rangle \|F(u)\|^2_{L^{2}(0,t;H^{s})}.
		\end{aligned}
	\end{equation}
	Using  $\|F(u)(t)\|_{H^{s}}\lesssim \|u(t)\|_{L^{\infty}}^{p} \|u(t)\|_{H^{s}}$, and the fact that $(q,r)=(4,\infty)$ is an admissible pair in the one-dimensional setting, together with \eqref{475}, we obtain
	\begin{equation}
		\|u(t)\|_{H^{s}}^{2}\lesssim \|u_{0}\|_{H^{s}}^{2}+\|h_{0}\|_{H^{\frac{2s+1}{4} }}^{2}+\langle t\rangle \int_{0}^{t}\|u(\tau)\|_{L^{\infty}}^{2p} \|u(\tau)\|_{H^{s}}^{2}d\tau,
	\end{equation}
	thereby establishing the desired $H^{s}$ bound.
\end{proof}
\begin{remark}
	Propositions \ref{pragmatic}--\ref{laudatory} and Corollary~\ref{parsimony} will be used to prove Theorem~\ref{new L2} in the next section. Indeed, they also hold for $s=1$, thus the results for $s\in (0,1)$ in Theorem \ref{new L2} hold for $s=1$. This yields two improvements for the global result in  $H^{1}(\mathbb{R}^{n}_{+})$. First, for $n=2,3$ and $p\leq1$, the additional $H^{1}$ condition on the boundary data can be removed in Theorem \ref{global}. For $n=1$ and $p\leq1$ the one-dimensional result in \cite{figment} is also improved, where the requirement on the boundary data can be weakened from $H^{1}(\mathbb{R}_{+})$ to $ H^{\frac{3}{4}}(\mathbb{R}_{+})$. Second, for the focusing Schr\"{o}dinger equation, the  global  $H^{1}$ solutions for the IVP are known to exist   for $p< \frac{4}{n}$  \cite{Ginibre}. In contrast, for the nonhomogeneous IBVP, Theorem~\ref{global} establishes global solutions for  $p\leq\frac{4}{n+1}$, due to the use of  Gagliardo-Nirenberg inequality to the nonhomogeneous boundary setting in the proof of Proposition~\ref{maritime}. It is unknown if the upper bound $\frac{4}{n+1}$ can be improved to  $\frac{4}{n}$; here, in the four-dimensional setting, we achieve $p\in(0,1)$, offering a possibility to narrow this gap.
\end{remark}
\subsection{Proof of Theorem \ref{new L2}}

\begin{proof}[Proof of Theorem \ref{new L2}]
We present a detailed proof only for the case  $n\geq2$. The proof for $n=1$ follows analogously by applying Corollaries~\ref{magnanimous} and \ref{parsimony}, and Theorem~5.3 in \cite{figment}.

For  initial data $u_0 \in L^2(\mathbb{R}_+^n)$ and   boundary data $h_0 \in \mathcal{H}^0(\mathbb{R}^{n}_{+})$, Theorem~\ref{chaste}   establishes the existence of a maximal solution $u\in\mathscr{X}^{0,p}_{T_{\max}}$. Assume $T_{\max}<\infty$. By a density argument, we choose the following approximating sequences:
	\begin{enumerate}[label=(\roman*)]
		\item $\{u_{0m}\} \subset H_{0}^2(\mathbb{R}_+^n)$ with $\lim\limits_{m\to\infty} u_{0m} = u_0$ in $L^{2}(\mathbb{R}_+^n)$;
		\item $\{h_{0m}\} \subset \mathcal{H}_{0}^2(\mathbb{R}^{n}_{+})$ with $\lim\limits_{m\to\infty} h_{0m} = h_0$ in $\mathcal{H}^0(\mathbb{R}^{n}_{+})$,
	\end{enumerate}
	then $u_{0m}$ and $h_{0m}$ satisfy the compatibility condition naturally. For $u_{0m}$ and $h_{0m}$, it follows from Proposition \ref{ag} that there exists $u_{m}\in \mathscr{X}^{2,p}_{\infty}$. Applying Proposition~\ref{vindictive} to $u_{m}$, we obtain the estimate
	\begin{equation}
		\|u_{m}(t)\|_{L^{2}}\leq (1+t) A\left(\|u_{0m}\|_{L^{2}}+\|h_{0m}\|_{\mathcal{H}^{0}}\right) e^{CtA\left(\|u_{0m}\|_{L^{2}}+\|h_{0m}\|_{\mathcal{H}^{0}}\right)}.
	\end{equation}
	From the continuous dependence, for any $t\in[0,T_{\max})$,  $$\|u_{m}(t)-u(t)\|_{L^{2}(\mathbb{R}^{n}_{+})}\rightarrow0.$$ It then follows that for $t\in[0,T_{\max})$
	\begin{equation}
		\|u(t)\|_{L^{2}}\leq (1+t) A\left(\|u_{0}\|_{L^{2}}+\|h_{0}\|_{\mathcal{H}^{0}}\right) e^{CtA\left(\|u_{0}\|_{L^{2}}+\|h_{0}\|_{\mathcal{H}^{0}}\right)}.
	\end{equation}
	This bound contradicts the assumption $T_{\max}<\infty$. Hence we complete the proof for the case $n\geq2$ with $s=0$.
	
	For $s\in(0,1)$, $u_0 \in H^{s}(\mathbb{R}_+^n)$ and $h_0 \in \mathcal{H}^s(\mathbb{R}^{n}_{+})$, it follows from Theorem~\ref{chaste}   that  there exists a maximal solution $u\in\mathscr{X}^{0,p}_{T_{\max}}$. We take
	\begin{enumerate}[label=(\roman*)]
		\item $\{u_{0m}\} \subset H^2(\mathbb{R}_+^n)$ with $\lim\limits_{m\to\infty} u_{0m} = u_0$ in $H^{s}(\mathbb{R}_+^n)$;
		\item $\{h_{0m}\} \subset \mathcal{H}^2(\mathbb{R}^{n}_{+})$ with $\lim\limits_{m\to\infty} h_{0m} = h_0$ in $\mathcal{H}^s(\mathbb{R}^{n}_{+})$,
	\end{enumerate}
	where $u_{0m}$ and $h_{0m}$ satisfy the compatibility condition. These sequences are justified as follows: If $s\in(0,\frac{1}{2}]$, this follows from a density argument by taking $u_{0m}\in H_{0}^{2}(\mathbb{R}^{n}_{+})$ and $h_{0m}\in\mathcal{H}_{0}^{2}(\mathbb{R}^{n}_{+})$. If $s\in(\frac{1}{2},1)$, for $h_{0}\in\mathcal{H}^{s}(\mathbb{R}^{n}_{+})$, we first take a sequence $\{h_{0m}\}\subset \mathcal{H}^{2}(\mathbb{R}^{n}_{+})$ such that $h_{0m}\rightarrow h_{0}$ in $\mathcal{H}^{s}(\mathbb{R}^{n}_{+})$. By the trace identity of $\mathcal{H}^{s}(\mathbb{R}^{n}_{+})$, this implies $h_{0m}(\cdot,0)\rightarrow h_{0}(\cdot,0)$ in $H^{s-\frac{1}{2}}(\mathbb{R}^{n-1})$. Then, utilizing the inverse trace theorem, we choose $\{u_{0m}\}\subset  H^{2}(\mathbb{R}^{n}_{+})$ with $u_{0m}(\cdot,0)=h_{0m}(\cdot,0)$ and
	\begin{equation} \|u_{0m}-u_{0}\|_{H^{s}(\mathbb{R}^{n}_{+})}\lesssim \|  h_{0m}(\cdot,0)\rightarrow h_{0}(\cdot,0)\|_{H^{s-\frac{1}{2}}(\mathbb{R}^{n-1})}\lesssim \|h_{0m}-h_{0}\|_{\mathcal{H}^{s}(\mathbb{R}^{n}_{+})}\rightarrow0.
	\end{equation}
	For $u_{0m}$ and $h_{0m}$, Proposition \ref{ag} shows the existence of $u_{m}\in\mathscr{X}^{2,p}_{\infty}$. Combining  Proposition~\ref{pragmatic}  with continuous dependence leads to
	\begin{equation}
		\|u(t)\|_{L^{2}}\leq (1+t) A\left(\|u_{0}\|_{H^{s}}+\|h_{0}\|_{\mathcal{H}^{s}}\right) e^{CtA\left(\|u_{0}\|_{H^s}+\|h_{0}\|_{\mathcal{H}^{s}}\right)},
	\end{equation}
	which yields $u\in\mathscr{X}^{0,p}_{\infty}$. The case $s\in(0,1)$ then follows from  Proposition~\ref{laudatory}.
\end{proof}

\section*{Acknowledgements}
\addcontentsline{toc}{section}{Acknowledgements}
This work is supported by the National Natural Science Foundation of China (Grant No. 12271104).

\end{document}